\numberwithin{equation}{section} \theoremstyle{plain}
\newtheorem{theorem}[subsection]{Theorem}
\newtheorem{proposition}[subsection]{Proposition}
\newtheorem{lemma}[subsection]{Lemma}
\newtheorem{corollary}[subsection]{Corollary}
\newtheorem{definition}[subsection]{Definition}
\newtheorem*{mainthm3-repeat}{Theorem \ref{mainthm3}}
\newtheorem*{gromov-cor-repeat}{Corollary \ref{gromov-cor}}
\newtheorem*{sum-product-rpt}{Theorem \ref{sum-product-fp}}
\newtheorem*{mainthm-repeat}{Theorem \ref{mainthm2}}
\newtheorem*{virtually-solvable-repeat}{Lemma \ref{virtually-solvable}}
\renewcommand{\leq}{\leqslant}
\renewcommand{\geq}{\geqslant}
\newsavebox{\proofbox}
\savebox{\proofbox}{\begin{picture}(7,7)  \put(0,0){\framebox(7,7){}}\end{picture}}
\newcommand\Z{\mathbb{Z}}
\newcommand\R{\mathbb{R}}
\newcommand\C{\mathbb{C}}
\newcommand\N{\mathbb{N}}
\newcommand\G{\mathbf{G}}
\newcommand\A{\mathbb{A}}
\newcommand\SL{\operatorname{SL}}
\newcommand\Upp{\operatorname{Upp}}
\newcommand\GL{\operatorname{GL}}
\newcommand\PGL{\operatorname{PGL}}
\newcommand\Rad{\operatorname{Rad}}
\newcommand\ess{\operatorname{ess}}
\newcommand\rk{\operatorname{rk}}
\newcommand\Hom{\operatorname{Hom}}
\newcommand\Inn{\operatorname{Inn}}
\newcommand\Out{\operatorname{Out}}
\newcommand\Aut{\operatorname{Aut}}
\renewcommand\P{\mathbb{P}}
\newcommand\F{\mathbb{F}}
\newcommand\eps{\varepsilon}
\newcommand\id{\operatorname{id}}
\def\endproof{\hfill{\usebox{\proofbox}}\vspace{11pt}}
\begin{document}

\title{Approximate subgroups of linear groups}
\author{Emmanuel Breuillard}
\address{Laboratoire de Math\'ematiques\\
B\^atiment 425, Universit\'e Paris Sud 11\\
91405 Orsay\\
FRANCE}
\email{emmanuel.breuillard@math.u-psud.fr}

\author{Ben Green}
\address{Centre for Mathematical Sciences\\
Wilberforce Road\\
Cambridge CB3 0WA\\
England }
\email{b.j.green@dpmms.cam.ac.uk}

\author{Terence Tao}
\address{Department of Mathematics, UCLA\\
405 Hilgard Ave\\
Los Angeles CA 90095\\
USA}
\email{tao@math.ucla.edu}

\subjclass{20G40, 20N99}

\begin{abstract}  We establish various results on the structure of approximate subgroups in linear groups such as $\SL_n(k)$ that were previously announced by the authors. For example, generalising a result of Helfgott (who handled the cases $n = 2$ and $3$), we show that any approximate subgroup of $\SL_n(\F_q)$ which generates the group must be either very small or else nearly all of $\SL_n(\F_q)$. The argument generalises to other absolutely almost simple connected (and non-commutative) algebraic groups $\G$ over a finite field $k$.  In a subsequent paper, we will give applications of this result to the expansion properties of Cayley graphs.
\end{abstract}

\maketitle
\tableofcontents

\section{Introduction}

The purpose of this paper is to study so-called \emph{approximate subgroups} of linear groups. In particular, we will study approximate subgroups of absolutely almost simple algebraic groups over an arbitrary field $k$, such as $\SL_n(k)$. (We review the definition and basic properties of absolutely almost simple algebraic groups in Section \ref{simple-sec}.)  These results were announced in \cite{bgst-announce}; closely related results have also been independently announced in \cite{pyber}.  We give an application of these results here to a conjecture of Babai and Seress \cite{babai}; in a subsequent paper \cite{bgt-expanders} we shall also apply these results to obtain expansion properties for various families of Cayley graphs.

We begin by recalling the notion of an approximate subgroup, first introduced (in the non-abelian setting) in \cite{tao-noncommutative}. See \cite{green-survey} for a more extensive motivating discussion.

\begin{definition}[Approximate subgroups] \label{def1.1} Let $K \geq 1$. A nonempty finite set $A$ in some ambient group $G$ is
called a \emph{$K$-approximate subgroup} of $G$ if
\begin{enumerate}
\item It is a \emph{symmetric subset of $G$}, by which we mean that if $a \in A$ then $a^{-1} \in A$, and that the identity lies in $A$;
\item There is a symmetric subset $X \subseteq G$ with $|X| \leq K$ such that $A \cdot A \subseteq X \cdot A$, where $A \cdot A = \{a_1 a_2 : a_1, a_2 \in A\}$ is the product set of $A$ with itself.
\end{enumerate}
We will refer to $K$-approximate subgroups informally as \emph{approximate groups} when the constant $K$ and the ambient group $G$ are either irrelevant to the discussion, or are clear from context.
\end{definition}

Note in particular that a $1$-approximate subgroup of $G$ is the same thing as a finite subgroup of $G$. For the rest of the paper we will assume that $K \geq 2$. In this regime there are $K$-approximate subgroups which are not necessarily close to genuine groups, the simplest example being that of a geometric progression $\{g^n : |n| \leq N\}$. There also exist higher-dimensional and nilpotent generalisations of this.  Again, \cite{green-survey} may be consulted for further discussion.

Many papers have been written in which the aim is to classify a certain class of approximate subgroups. For example, the Fre\u{\i}man-Ruzsa theorem \cite{ruz-freiman} provides a description of approximate subgroups of the integers $\Z$. ``Classification'' in this context must be interpreted quite loosely. The following notion of \emph{control}, first introduced in \cite{tao-solvable}, has proved useful in this context.

\begin{definition}[Control] \label{def1.2}Suppose that $A$ and $B$ are two sets in some ambient group,
and that $K \geq 1$ is a parameter. We say that $A$ is \emph{$K$-controlled by} $B$, or that $B$ \emph{$K$-controls} $A$, if $|B| \leq K|A|$ and there is some set $X$ in the ambient group with $|X | \leq K$
and such that $A \subseteq (X \cdot B) \cap (B \cdot X)$.
\end{definition}

Given this definition, one may describe the classification problem for approximate subgroups as follows: given some class $\mathscr{C}$ of approximate subgroups of a given group $G$, find some smaller, more highly-structured, class of approximate subgroups $\mathscr{C}'$ such that every object in $\mathscr{C}$ is efficiently controlled by an object in $\mathscr{C}'$.  In the next section we describe our results of this type in the case that the ambient group $G$ is an almost simple algebraic group, such as $\SL_d(k)$.
\vspace{11pt}

\noindent\textsc{notation.} The letter $C$ always denotes an absolute constant, but different instances of the notation may refer to different constants. If $C$ depends on some other parameter (for example, if we are working in $\SL_n$, $C$ might need to depend on $n$) then we will indicate this dependence with subscripts. If $A$ is a finite set then $|A|$ denotes the cardinality of $A$.
For non-negative quantities $X, Y$, we use $X \lesssim Y$ or $Y \gtrsim X$ to denote the estimate $X \leq K^C Y$, and $X \sim Y$ to denote the estimates $X \lesssim Y \lesssim X$. The symbol $p$ always denotes a prime number, and $\F_p$ denotes the field of order $p$. Finally, we use $A^n := \{ a_1 \ldots a_n: a_1,\ldots,a_n \in A\}$ to denote the $n$-fold product set of a collection $A$ of group elements, noting that if $A$ is a $K$-approximate subgroup then $|A^{n}| \leq K^{n-1}|A|$ for all positive integers $n$.

If $a, b \in G$, we write $b^a := a^{-1} ba$, and more generally if $B, A \subseteq G$, we write $B^a := \{ b^a: b \in B \}$, $b^A := \{ b^a: a \in A \}$, and $B^A := \{ b^a: a \in A, b \in B \}$.
\vspace{11pt}

\noindent\textsc{Acknowledgments.} EB is supported in part by the ERC starting grant 208091-GADA.
BG was, while this work was being carried out, a Fellow at the Radcliffe Institute at Harvard. He is very happy to thank the Institute for proving excellent working conditions.
TT is supported by a grant from the MacArthur Foundation, by NSF grant DMS-0649473, and by the NSF Waterman award.

The authors are particularly indebted to Tom Sanders for many useful discussions, and to Harald Helfgott for many useful discussions on product expansion estimates, and their relationship with the sum-product phenomenon. We also acknowledge the intellectual debt we owe to prior work of Helfgott, especially the paper \cite{helfgott-sl3}, and to the model-theoretic work of Hrushovski \cite{hrush}, without which we would not have started this project. We are also very grateful to Brian Conrad for some remarks on an earlier draft of this paper.\vspace{11pt}

Pyber and Szabo \cite{pyber} have independently announced a set of results which have significant overlap with those presented here, in particular establishing an alternate proof of Theorem \ref{mainthm2}, which also extends to cover all finite simple groups of Lie type.  There are some similarities in common in the argument (in particular, in the reliance on Lemma \ref{crucial}) but the arguments and results are slightly different in other respects.

\section{Statement of results}

In a celebrated paper \cite{helfgott-sl2}, H.~Helfgott provided a satisfactory solution to the classification problem for the group $\SL_2(\F_p)$. His methods adapt easily to (and in fact are rather easier in) $\SL_2(\C)$, a setting also studied by Chang \cite{chang}.  Helfgott's arguments give the following result.

\begin{theorem}[Helfgott]\label{helfgott-strong}  Suppose that $A \subseteq \SL_2(k)$ is a $K$-approximate subgroup of $SL_2(k)$.
\begin{enumerate}
\item If $k = \C$, then $A$ is $K^C$-controlled by $B$, an \emph{abelian} $K^C$-approximate subgroup of $\SL_2(k)$;
\item If $k = \F_p$, then $A$ is $K^C$-controlled either by a \emph{solvable} $K^C$-approxim- ate subgroup of $\SL_2(k)$ or by $\SL_2(k)$ itself.
\end{enumerate}
\end{theorem}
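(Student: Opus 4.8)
Let me think about how to prove this. We have an approximate subgroup $A$ of $\SL_2(k)$, and we want to show it's controlled either by a solvable/abelian approximate subgroup or it's essentially all of $\SL_2(k)$.

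The key insight in Helfgott's work is a "sum-product" / "growth" dichotomy. The strategy:

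1. **Reduce to studying the orbit structure under conjugation / the trace map.** Consider the trace map $\tr: \SL_2(k) \to k$. Elements of $A$ with the same trace (other than $\pm 2$) lie in a single conjugacy class, which is a maximal torus (or its conjugates).

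2. **Case analysis based on whether $A$ is "concentrated" in a Borel/torus.**
   - If $A$ is concentrated in a proper subgroup (Borel, torus, normalizer of torus), then $A$ is controlled by a solvable approximate subgroup.
   - If $A$ is "spread out" — i.e., not concentrated in any conjugate of a Borel — then we want to show $A$ grows, i.e., $|A^3| \gg |A|^{1+\delta}$, unless $A$ is already comparable to $\SL_2(k)$.

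3. **The growth mechanism / escape from subvarieties.** The heart is: if $A$ generates $\SL_2(k)$ (or at least is not trapped in a Borel), then there is enough "non-commutativity" that the product set expands. Concretely, Helfgott uses the fact that if $A$ is not contained in a Borel, you can find elements whose commutators generate, and then uses a sum-product estimate in $\F_p$ (or in $\C$, just the fact that $\C$ has no small subfields and Elekes-type estimates) to upgrade this to genuine polynomial growth.

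4. **The sum-product input.** In $\F_p$: if $A \subseteq \F_p$ is a set with $|A + A|, |A \cdot A| \leq K|A|$, then $|A| \leq K^C$ or $|A| \geq p/K^C$. This gets leveraged: pin down a torus $T$ and look at $A \cap T$, which corresponds to a subset of $\F_p^\times$; the group multiplication on $T$ is "multiplication" while conjugating by an element not normalizing $T$ mixes in "addition"-like structure on the unipotent directions. So a non-growing $A$ would force a non-growing subset of $\F_p$ under both operations, hence $A \cap T$ is tiny or huge.

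5. **Assemble.** If every torus meets $A$ in a tiny set but $A$ is large, a counting/pigeonhole argument (covering $\SL_2$ by tori) forces $A$ to be small overall — contradiction — unless $A$ fills up $\SL_2(k)$. Running the Balog–Szemerédi–Gowers / Ruzsa covering machinery converts the growth statement $|A^3| \gtrsim |A|^{1+\delta}$ into the desired "controlled by a proper (solvable) approximate subgroup or all of $G$" conclusion.

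**Main obstacle.** The crux — and the genuinely hard part — is step 3–4: establishing the *growth estimate* $|A^3| \geq |A|^{1+\delta}$ for $A$ not trapped in a Borel. This requires the sum-product theorem in $\F_p$ as a black box and a clever choice of "pivot" elements (elements of $A$ or $A^3$ that are regular semisimple and whose tori are in general position) to set up the sum-product configuration; controlling the combinatorics of how conjugation interacts with the Bruhat decomposition is delicate. Over $\C$, the subfield obstruction disappears (no finite subfields, and one uses the Elekes–Szabó / Solymosi sum-product bounds), which is why the $\C$ case is "easier" and yields the cleaner *abelian* conclusion rather than merely *solvable*. I expect the author to invoke Helfgott's original arguments more or less directly here rather than reprove them, since this theorem is quoted as background.

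\begin{proof}[Proof sketch]
This is Helfgott's theorem and we only indicate the strategy. One first shows a growth dichotomy: either $A$ is essentially contained in a conjugate of a Borel subgroup, or else $|A^3| \geq |A|^{1+\delta}$ for some absolute $\delta > 0$ (unless $|A| \gtrsim |\SL_2(k)|$, when $A$ already fills the group). In the first case, the Borel is solvable, and Ruzsa covering shows $A$ is $K^C$-controlled by a solvable $K^C$-approximate subgroup (abelian if one further passes to a torus, which is forced when $k = \C$). In the second case, one iterates the growth estimate until $A^n$ has size $\gtrsim |\SL_2(k)|$ for some bounded $n$, whence $A$ is $K^C$-controlled by $\SL_2(k)$ itself. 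The growth estimate is the core: choosing regular semisimple elements of $A^{O(1)}$ whose maximal tori are in general position, one sets up a sum-product configuration in $k$ via the interaction of conjugation with the Bruhat decomposition, and applies the sum-product theorem in $\F_p$ (resp. the Elekes--Szabó estimates over $\C$, where the absence of finite subfields removes the ``structured'' alternative and yields the stronger abelian conclusion).
\end{proof}
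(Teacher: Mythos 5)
The paper does not prove Theorem~\ref{helfgott-strong}: it is stated as background and attributed to Helfgott \cite{helfgott-sl2} (and Chang \cite{chang} for the $\C$ case), so there is no in-paper proof to compare against. Your sketch is a reasonable high-level account of Helfgott's \emph{original} argument --- the trace map, concentration in a Borel/torus versus growth, and the use of the sum-product theorem in $\F_p$ as a black box via pivots. It is worth noting, though, that the main line of \emph{this} paper (Sections~\ref{larsen-pink-sec}--\ref{simple-sec}) runs in the opposite direction: the authors prove the $\SL_n$ analogue via the Larsen--Pink inequality and a conjugation-of-involved-tori pivot argument that does \emph{not} input the sum-product theorem, and in Section~\ref{sumproduct-sec} they \emph{derive} the sum-product theorem over $\F_p$ as a consequence. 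So while your sketch is faithful to the source you name, it is not the strategy the present paper develops for the general case.

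One substantive inaccuracy in your sketch: the explanation you give for why the $\C$ case yields \emph{abelian} rather than merely \emph{solvable} control (``absence of finite subfields removes the structured alternative'') is not the right mechanism. The reason $\F_p$ cannot give an abelian conclusion is that $\SL_2(\F_p)$ contains a genuine finite non-abelian solvable subgroup --- the Borel $B(\F_p)$, of order $p(p-1)$ --- which is itself a $1$-approximate subgroup and obviously not controlled by any abelian approximate subgroup of comparable size. Over $\C$ no such finite obstruction exists, and one instead applies a Freiman-type theorem for solvable (or metabelian) linear groups, in the spirit of the main result of \cite{bg-2}, which gives \emph{nilpotent} control; since every nilpotent subgroup of $\SL_2$ is abelian (the Borel is solvable but not nilpotent), this upgrades to abelian control. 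Citing Elekes--Szab\'o is also slightly off for the $\C$ sum-product input --- the relevant real/complex sum-product estimates are those of Elekes and Solymosi; Elekes--Szab\'o is a different statement about algebraic relations.
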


Helfgott's theorem has found many applications, for example to proving that certain Cayley graphs are expanders \cite{bourgain-gamburd} and to certain nonlinear sieving problems \cite{bgs}. For these applications, only the following somewhat weaker statement is necessary.

\begin{theorem}[Helfgott]\label{helfgott-weak}
Suppose that $A \subseteq \SL_2(\F_p)$ is a $K$-approximate subgroup that generates $\SL_2(\F_p)$. Then $A$ is $K^C$-controlled by either $\{\id\}$ or by $\SL_2(\F_p)$ itself.
\end{theorem}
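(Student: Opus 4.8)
The plan is to deduce Theorem~\ref{helfgott-weak} from Theorem~\ref{helfgott-strong}(ii) essentially by a dichotomy argument, showing that the ``solvable'' alternative in the strong theorem is incompatible with the hypothesis that $A$ generates $\SL_2(\F_p)$ unless $A$ is tiny. By Theorem~\ref{helfgott-strong}(ii), since $A \subseteq \SL_2(\F_p)$ is a $K$-approximate subgroup, it is $K^C$-controlled either by $\SL_2(\F_p)$ itself or by a solvable $K^C$-approximate subgroup $B$. In the first case, $K^C$-control by $\SL_2(\F_p)$ means $|\SL_2(\F_p)| \leq K^C |A|$ and $A$ is covered by $K^C$ translates of $\SL_2(\F_p)$, which is automatic; so $|A| \gtrsim |\SL_2(\F_p)|$, and together with $A \subseteq \SL_2(\F_p)$ one checks directly that $A$ is $K^C$-controlled by $\SL_2(\F_p)$ in the sense of Definition~\ref{def1.2} (indeed $A \subseteq X \cdot \SL_2(\F_p)$ with $|X|=1$, and $|\SL_2(\F_p)| \leq K^C|A|$). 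This is the ``$\SL_2(\F_p)$'' conclusion. So the real work is the second case.

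First I would treat the case where $A$ is $K^C$-controlled by a solvable $K^C$-approximate subgroup $B$. The strategy is to pass from $B$ to an actual solvable subgroup of $\SL_2(\F_p)$ and derive a contradiction with the generation hypothesis, except when everything is of bounded size. Since $B$ is solvable, it lies inside a solvable subgroup; but more usefully, control passes to products: $A \subseteq X B$ with $|X| \leq K^C$ gives $A^2 \subseteq X^2 B^2 \subseteq X^2 X' B$ (using that $B$ is a $K^C$-approximate subgroup), and iterating, every bounded power $A^n$ is covered by $K^{C_n}$ left-translates of $B$, hence $|A^n| \leq K^{C_n}|B| \leq K^{C_n}|A|$. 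Thus $A$ has bounded ``tripling'' and in fact polynomial growth up to constants. Now I would invoke the structure of solvable subgroups of $\SL_2$ over a finite field: a solvable subgroup of $\SL_2(\F_p)$ is, up to bounded index, either contained in a Borel (upper-triangular) subgroup — hence metabelian of size $O(p)$ — or is one of the finitely many exceptional small solvable subgroups (the normaliser of a torus, or subgroups of bounded order coming from $A_4, S_4, A_5$-type phenomena, which are absolutely bounded). In either case a solvable subgroup of $\SL_2(\F_p)$ has order at most $C p$. Hence $|B| \leq |A| \cdot K^C$ forces... wait, more carefully: $B$ being a $K^C$-approximate subgroup with solvable ``structure'' means its bounded powers $B^n$ are covered by $K^{C}$ translates of $B$ and are contained (up to bounded-index and bounded-size corrections) in an honest solvable subgroup $H$ with $|H| \leq Cp$; so $|B| \leq |H| \leq Cp$, giving $|A| \sim |B| \leq C_K\, p \ll |\SL_2(\F_p)| \sim p^3$.

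The contradiction then comes from the generation hypothesis. Here I would use a product-growth or escape argument: if $A$ generates $\SL_2(\F_p)$ but $A$ is contained (up to bounded translates) in a proper solvable subgroup $H$, then some bounded product $A^n$ must leave the subgroup, and a standard argument (conjugation tricks: if $g \in A \setminus H$ and $h \in H$, then $h^g$ is a new element, and the conjugates $H^g$ generate a larger subgroup) shows that $A^{O(1)}$ already contains at least two ``independent'' tori or unipotents, whose products blow up: $|A^{O(1)}| \gtrsim p^{3}$ or at least $\gtrsim p |A|$, contradicting the polynomial-growth bound $|A^n| \leq K^{C_n}|A| \leq K^{C_n} C p$ unless $p \leq C_K$, i.e. $p$ is bounded. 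When $p$ is bounded, $\SL_2(\F_p)$ has bounded size, so $A \subseteq \SL_2(\F_p)$ is trivially $K^C$-controlled by $\SL_2(\F_p)$ (take $K^C \geq |\SL_2(\F_p)|$), and also if $A = \{\id\}$ we are in the first conclusion; so in all remaining cases one of the two stated conclusions holds.

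The main obstacle I anticipate is making the ``escape from a proper solvable subgroup forces growth'' step quantitative and uniform in $p$: one needs that a bounded-size set generating $\SL_2(\F_p)$ but sitting inside bounded translates of a Borel or torus-normaliser produces genuinely superlinear growth in a bounded number of steps, with constants independent of $p$. This is where Helfgott's product-expansion estimates (the sum-product phenomenon in $\F_p$, controlling the interaction of a torus with a unipotent) do the real work, and quoting the appropriate lemma from Helfgott's paper (or re-deriving the needed special case of the key growth dichotomy) is the crux. The bookkeeping of control constants through the chain ``approximate subgroup $\to$ bounded powers $\to$ honest solvable subgroup $\to$ size bound $O(p)$ $\to$ contradiction'' is routine once that escape lemma is in hand.
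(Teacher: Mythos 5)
The paper does not actually prove Theorem~\ref{helfgott-weak}: both Theorems~\ref{helfgott-strong} and~\ref{helfgott-weak} are stated as results of Helfgott and cited to \cite{helfgott-sl2}. The paper's own contribution is the generalisation Theorem~\ref{mainthm2}, proved in Section~\ref{simple-sec} via the Larsen--Pink inequality and the ``involved tori'' pivot argument (Theorem~\ref{mainthm-preciseform} plus a Schwarz--Zippel/Lang--Weil density argument), of which Theorem~\ref{helfgott-weak} is the special case $\G=\SL_2$, $k=\F_p$. So your route --- deriving~\ref{helfgott-weak} from~\ref{helfgott-strong} by ruling out the solvable alternative --- is genuinely different from anything in the paper, and it is a reasonable thing to attempt.

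That said, there are two concrete problems with the sketch. First, a factual error: a Borel subgroup of $\SL_2(\F_p)$ has order $p(p-1)\sim p^2$, not $O(p)$, so a solvable subgroup of $\SL_2(\F_p)$ has order up to $\sim p^2$. Your claimed bound $|A|\lesssim p$ from ``$|B|\leq |H|\leq Cp$'' is therefore wrong; from the size of solvable subgroups alone you only get $|A|\lesssim K^{O(1)}p^2$, which is not yet enough to conclude control by $\{\id\}$. Second, and more substantively, the crucial ``escape forces growth'' step is left as a black box and you attribute it to ``Helfgott's product-expansion estimates (the sum-product phenomenon).'' If that is really what is being invoked, the argument becomes essentially circular, since the sum-product machinery is what underlies the proofs of both~\ref{helfgott-strong} and~\ref{helfgott-weak} in Helfgott's paper. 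In fact this step needs no sum-product at all: from $A\subseteq XB$ and $\langle B\rangle=H$ solvable one gets $|A^2\cap H|\geq K^{-C}|A|$ by pigeonhole; taking $H$ inside a Borel $P$ and any $g\in A\setminus P$ (which exists since $A$ generates), the fibre-counting bound $|A^2\cap H|^2\leq |A^5|\cdot|A^4\cap(H\cap H^{g^{-1}})|$ together with $H\cap H^{g^{-1}}\subseteq P\cap P^{g^{-1}}=T$ (a maximal torus) yields $|A^4\cap T|\geq K^{-O(1)}|A|$; repeating with $T$ and some $g'\in A\setminus N(T)$, and using $T\cap T^{(g')^{-1}}=Z(\SL_2(\F_p))=\{\pm I\}$, gives $|A^4\cap T|^2\leq 2|A^9|\leq 2K^8|A|$ and hence $|A|\leq K^{O(1)}$. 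This two-step torus descent, using only the subgroup structure of $\SL_2(\F_p)$ and the tripling bound for approximate groups, is what actually closes the gap, and spelling it out (rather than deferring to sum-product) would make the deduction self-contained and non-circular.
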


Recently, but before our work, this result had been extended to fields $\F_q$ of prime power order $q=p^j$ by Dinai \cite{dinai}.  An extension to $\SL_n(\F_p)$ in the case of small $A$ (specifically, $|A| \leq p^{n+1-\delta}$ for some $\delta > 0$) had been obtained by Gill and Helfgott \cite{gill}.

The proof that such a statement suffices for the sieving work of Bourgain, Gamburd and Sarnak \cite{bgs} is contained in a very recent preprint of Varj\'u \cite{varju}; the original argument of \cite{bgs} required a careful analysis of the \emph{proof} of Helfgott's result.

Our first main result generalises Theorem \ref{helfgott-weak} as follows.

\begin{theorem}[Main theorem]\label{mainthm2}
Let $k$ be a finite field and let $\G$ be an absolutely almost simple algebraic group defined over $k$. Suppose that $A \subseteq \G(k)$ is a $K$-approximate subgroup that generates $\G(k)$. Then $A$ is $K^{C_{\dim(\G)}}$-controlled by either $\{\id\}$ or by $\G(k)$ itself.
\end{theorem}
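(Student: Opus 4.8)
The plan is to establish a dichotomy: either the approximate subgroup $A$ grows when multiplied with itself (in which case iterating the growth shows $A^m = \G(k)$ for some bounded $m$, and then $A$ is $K^{C}$-controlled by $\G(k)$), or it does not grow (in which case $A$ should be shown to be controlled by $\{\id\}$, i.e. $|A| \lesssim 1$). The engine driving the first horn is a \emph{product theorem}: there is $\delta = \delta(\dim\G) > 0$ such that for a generating approximate subgroup $A$ one has $|A^3| \geq |A|^{1+\delta}$ unless $|A| \lesssim 1$ or $|A| \gtrsim |\G(k)|$. Granting such a statement, the main theorem follows by a now-standard pivoting argument: applying it to the $K$-approximate subgroup repeatedly (using $|A^n| \leq K^{n-1}|A|$) forces, after $O_{\dim\G}(1)$ steps, either $|A| \leq K^{C}$ or $|A| \geq |\G(k)|/K^{C}$; in the latter case a covering argument (Ruzsa-type, using that $A$ generates $\G(k)$) upgrades this to $A^{O(1)} = \G(k)$, whence $A$ $K^{C}$-controls, and is $K^C$-controlled by, all of $\G(k)$.

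The bulk of the work is therefore the product theorem, and I would prove it by the Larsen--Pink / Hrushovski-style strategy of playing the combinatorics of $A$ against the algebraic geometry of $\G$. First, reduce to the case where $A$ generates $\G(k)$ and is ``sufficiently Zariski-dense'': by a dimension-reduction argument, if $A$ concentrates near a proper subvariety one can either pass to a smaller-dimensional problem or conclude directly. The key algebraic input is a Larsen--Pink-type nonconcentration inequality: for any proper subvariety $V \subseteq \G$ of bounded complexity, a generating $A$ cannot have $|A \cap V|$ too large, quantitatively $|A \cap V| \lesssim |A|^{\dim V/\dim \G}$ — this is presumably the role played by the ``Lemma \ref{crucial}'' mentioned in the introduction. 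One then exploits the abundance of semisimple regular elements in $A^{O(1)}$ (whose centralisers are maximal tori) together with the structure of tori and unipotent subgroups: pick a regular semisimple $g \in A^{O(1)}$, look at its conjugacy class, and use that the conjugation map $\G \times \{g\} \to \G$ has controlled fibres to convert multiplicative energy bounds for $A$ into bounds on how $A$ sits relative to the centraliser $T = Z_\G(g)$; a sum-product / escape-from-subvarieties argument (the ``sum-product phenomenon'' alluded to in the acknowledgments) then produces genuine growth unless $A$ is already essentially all of $\G(k)$.

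I would organise the escape-from-subvarieties machinery as a separate lemma: if $A$ generates $\G(k)$ then for any proper subvariety $V$ there is $m = O_{\dim\G}(1)$ and $a \in A^m$ with $a \notin V$, with the quantitative refinement that $A^m$ is not concentrated on $V$. This, combined with the nonconcentration inequality above and an inductive dimension argument on subgroups of $\G$ (tori, then their normalisers, then Borels, then all of $\G$), should let one propagate growth from the torus level up to the whole group. The main obstacle, I expect, is precisely the quantitative Larsen--Pink nonconcentration estimate uniformly over all proper subvarieties of bounded complexity — making it effective, with the exponent $\dim V/\dim\G$, and uniform in the field $k$ (in particular independent of $\mathrm{char}(k)$, so one cannot simply quote the characteristic-zero statement). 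Handling small characteristic, where unipotent elements and the Lie algebra exponential behave badly, and where $\G(k)$ may fail to be perfect for very small $q$, will require extra care; I anticipate that one either restricts to $q$ large (absorbing small $q$ into the constant $C_{\dim\G}$ by a compactness or trivial counting argument) or works with $\mathbb{F}_p$-points of $\G$ over the algebraic closure and descends.
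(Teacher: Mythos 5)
Your proposal correctly identifies several of the paper's ingredients: the Larsen--Pink nonconcentration inequality $|A \cap V| \lesssim |A^{O(1)}|^{\dim V/\dim\G}$, the centrality of regular semisimple elements and their centralising maximal tori, escape from subvarieties, a pivoting argument in the style of sum-product proofs, and the reduction of the small-characteristic/small-field issues to constants via counting (this last point matches the paper's use of a Schwarz--Zippel bound together with Lang--Weil). The reformulation as a product theorem $|A^3| \geq |A|^{1+\delta}$ is equivalent to the paper's statement by standard manipulations (it is their Corollary~\ref{helfgott-type-expansion}), so nothing is lost there.

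However, the central mechanism you propose --- an ``inductive dimension argument on subgroups of $\G$ (tori, then their normalisers, then Borels, then all of $\G$)'' that ``propagates growth from the torus level up to the whole group'' --- is not how the paper's proof works, and as stated it is too vague to evaluate and I do not see how to make it run. The actual engine is a counting argument on the set $\mathscr{T}$ of \emph{involved} maximal tori, i.e.\ tori $T$ such that $A^2 \cap T$ contains a regular semisimple element. You have misidentified Lemma~\ref{crucial}: it is not the nonconcentration inequality (that is Theorem~\ref{lpi}/Corollary~\ref{lpi-cor}), but rather the statement that $\mathscr{T}$ is invariant under conjugation by $\langle A\rangle$ and has cardinality $\sim K^{O(1)} |A|^{1-\dim T/\dim\G}$, with both the upper and lower bounds derived from Larsen--Pink plus the ``centralisers are rich'' estimate. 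The punch line (Theorem~\ref{mainthm-preciseform}) is then the orbit--stabiliser theorem for the conjugation action of $\langle A\rangle$ on $\mathscr{T}$: the stabiliser of any involved torus $T$ sits in $N(T)$, and since $\langle A\rangle$ is itself a $1$-approximate group, Larsen--Pink applied to $\langle A\rangle$ gives $|\langle A\rangle \cap N(T)| \ll |\langle A\rangle|^{\dim T/\dim\G}$; hence the orbit has size $\gg |\langle A\rangle|^{1-\dim T/\dim\G}$. Comparing with the upper bound $|\mathscr{T}| \ll K^{O(1)}|A|^{1-\dim T/\dim\G}$ yields $|\langle A\rangle| \ll K^{O(1)}|A|$ in one step. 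There is no induction over a chain of subgroups (tori, normalisers, Borels, \ldots), and in fact Borel subgroups play no role at all. Without this specific torus-counting / orbit-stabiliser idea your outline would stall: the Larsen--Pink inequality alone bounds intersections with subvarieties from above, but you also need the matching lower bound for intersections with centralisers of elements of $A$ (Lemma~\ref{centra-rich}) and the conjugation-invariance of $\mathscr{T}$ to close the loop. A further small misattribution: the paper explicitly does \emph{not} use the sum-product theorem in $\F_p$ (they deduce it as a corollary); the relevant ``product--conjugation phenomenon'' is instead the purely algebro-geometric Lemma~\ref{escape} at the level of subvarieties of $\G$, which is what drives the Larsen--Pink induction.
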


\begin{remark} When we first announced this result in \cite{bgst-announce}, we restricted $\G$ to be an (implicitly $k$-split) Chevalley group.  Simultaneously with this announcement, Pyber and Szabo \cite{pyber} also announced Theorem \ref{mainthm2}, in the more general setting of arbitrary finite simple groups of Lie type.  Subsequently, while writing this paper, we realised that the argument did not use the $k$-split assumption anywhere in the proof, and in fact holds in the generality stated above.
\end{remark}

\begin{remark}  Note that the constant $C_{\dim(\G)}$ does not depend on the field $k$.
In particular, this theorem can be applied with $\G = \SL_d$ for any fixed $d$, and the constants now only depend on $d$.  In the case $\G(k) = \SL_3(\F_p)$ with $p$ prime, this result was established previously in \cite{helfgott-sl3}.  Our arguments share some features in common with those in \cite{helfgott-sl3}, most notably an emphasis on upper and lower bounds on the intersection of $A$ with maximal tori, and the use of a ``pivot'' argument inspired by proofs of the sum-product phenomenon.  \end{remark}

Theorem \ref{mainthm2} will be proven in Section \ref{simple-sec}.  In fact, we will prove a more precise result, Theorem \ref{mainthm-preciseform}, in which $A$ does not need to generate $\G(k)$, but merely has to be \emph{sufficiently Zariski dense} in the sense that it is not contained in a proper algebraic subgroup of $\G$ of bounded complexity (we will make these notions more precise later). Then the conclusion is that $A$ is controlled by either $\{\id\}$ or by the group $\langle A \rangle$ it generates. With this generalisation, the field $k$ can now be taken to be an infinite field, such as $\C$.

We will deduce Theorem \ref{mainthm2} from Theorem \ref{mainthm-preciseform} in Section \ref{simple-sec}.

As mentioned earlier one may use the arguments in \cite{bourgain-gamburd2} and the later paper \cite{varju} together with Theorem \ref{mainthm2} to generalise the aforementioned results on expanders and on the affine sieve. A particular application is to showing that the Suzuki groups (a family of simple groups constructed as subgroups of the symplectic groups $\mbox{Sp}_4(\F_q)$, $q = 2^{2n + 1}$) can be made into expanders. This result (established in our companion paper \cite{suzuki}) removes the lacuna in \cite{kassabov-nikolov-lubotzky}, which established that the family of all non-abelian simple groups \emph{other than} the Suzuki groups can be made into expanders. In another forthcoming paper \cite{bgt-expanders}, we will discuss in detail the application of our results and those of Pyber and Szabo to expansion in groups of Lie type in general.

By combining Theorem \ref{mainthm2} with standard results from noncommutative product set theory (see \cite{tao-noncommutative}) we obtain the following alternative formulation of our main theorem.

\begin{corollary}\label{helfgott-type-expansion}
Let $d \in \N$. Then there are $\epsilon(d)>0, C_d>0$ such that for every absolutely almost simple algebraic group $\G$ with $\dim(\G)\leq d$ defined over a finite field $k$, and every finite subset $A$ in $\G(k)$ generating $\G(k)$, and for all $0 < \epsilon < \epsilon(d)$, one of the following two statements holds:
\begin{enumerate}
\item[(i)] $|A| \gg_d |\G(k)|^{1-C_d\epsilon}$;
\item[(ii)] $|A^3| \geq |A|^{1+\epsilon}$.
\end{enumerate}
\end{corollary}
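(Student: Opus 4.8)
The plan is to reduce to Theorem \ref{mainthm2} by a standard dichotomy argument from noncommutative product set theory. Suppose that (ii) fails, i.e. $|A^3| < |A|^{1+\epsilon}$; I want to show that (i) holds. The first step is to pass from $A$ to an approximate subgroup. Since $A$ generates $\G(k)$, we may assume $A$ is symmetric (replace $A$ by $A \cup A^{-1} \cup \{\id\}$, which only changes things by a bounded factor and does not affect $|A^3| \le |A|^{1+\epsilon}$ up to adjusting $\epsilon$ and the implied constants). Now by the noncommutative Pl\"unnecke--Ruzsa theory (see \cite{tao-noncommutative}), the condition $|A^3| \le |A|^{1+\epsilon} = K|A|$ with $K := |A|^{\epsilon}$ implies that $A$ is $K^{O(1)}$-controlled by a $K^{O(1)}$-approximate subgroup $H$ of $\G(k)$; more precisely, one takes $H := (A^3)^{-1} A^3 = A^3 A^{-3}$ or a similar bounded power, which is a $K^{O(1)}$-approximate group with $|H| \sim |A|$ and $A \subseteq X H$ for some $X$ of size $K^{O(1)}$.

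The second step is to check the hypotheses of Theorem \ref{mainthm2} for $H$. The group $\langle H \rangle$ contains $\langle A \rangle = \G(k)$ since $A \subseteq H$ up to translation and $H$ is symmetric containing the identity — more carefully, $A \subseteq X H$ does not immediately give $A \subseteq H$, so instead one uses that $A^3 \subseteq H$ (if $H = A^3 A^{-3}$, this uses $\id \in A^{-3}$, i.e. $\id \in A$) hence $\langle H \rangle \supseteq \langle A \rangle = \G(k)$, and since $H \subseteq \G(k)$ we get $\langle H \rangle = \G(k)$. Thus $H$ is a $K^{O(1)}$-approximate subgroup of $\G(k)$ generating $\G(k)$, and Theorem \ref{mainthm2} applies: $H$ is $K^{C_{\dim\G}}$-controlled by $\{\id\}$ or by $\G(k)$. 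In the first case $|H| \le K^{C_{\dim\G}}$, hence $|A| \lesssim |H| \le K^{C_{\dim\G}} = |A|^{C_{\dim\G}\epsilon}$, which forces $|A| \le C_d$ once $\epsilon < \epsilon(d)$ is small enough; but $A$ generates the group, so this can only happen if $\G(k)$ itself is of bounded order, in which case (i) holds trivially (adjusting constants). In the second case, $\G(k)$ is $K^{C_{\dim\G}}$-controlled by $H$, so in particular $|\G(k)| \le K^{C_{\dim\G}} |H| \lesssim K^{C_{\dim\G}} |A| = |A|^{1 + C_{\dim\G}\epsilon}$, whence $|A| \ge |\G(k)|^{1/(1+C_{\dim\G}\epsilon)} \ge |\G(k)|^{1 - C_d \epsilon}$ for a suitable $C_d$, giving (i).

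The main obstacle — such as it is — is the bookkeeping in the first step: extracting a genuine approximate subgroup from the small-tripling hypothesis $|A^3| \le |A|^{1+\epsilon}$ when $A$ is a priori not symmetric, and making sure that the symmetrisation and the passage $A \rightsquigarrow H \rightsquigarrow A$ only costs powers of $K = |A|^\epsilon$ rather than powers of $|A|$. This is precisely the content of the noncommutative Ruzsa covering and Pl\"unnecke-type lemmas in \cite{tao-noncommutative}, so the work is purely a matter of invoking them with the right parameters; there is a minor subtlety that one should choose $\epsilon(d)$ small enough (depending on $C_{\dim\G}$, hence on $d$) so that exponents like $1/(1+C_{\dim\G}\epsilon)$ can be bounded below by $1 - C_d\epsilon$, and so that the degenerate "$\{\id\}$-controlled" case genuinely forces $|A|$ bounded. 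No new ideas beyond Theorem \ref{mainthm2} are needed.
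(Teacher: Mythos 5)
The overall plan — symmetrise $A$, use small tripling to manufacture a $K^{O(1)}$-approximate group with $K=|A|^\epsilon$, apply Theorem~\ref{mainthm2}, and split on the two possible controllers — is the same route the paper takes, and your Case~2 (controlled by $\G(k)$) is handled correctly. However, your Case~1 (controlled by $\{\id\}$) contains a genuine error. After deducing that $|A|=O_d(1)$, you assert that since $A$ generates $\G(k)$, the group $\G(k)$ must itself have bounded order, and so (i) holds trivially. That inference is false: a generating set of bounded size can generate an arbitrarily large group (e.g. $\SL_2(\F_p)$ is generated by two elements for every prime $p$). So "$|A|$ bounded $+$ $A$ generates $\G(k)$" does not force $|\G(k)|=O_d(1)$.

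The correct way to finish this case — which is the step the paper actually performs — is to argue directly about the \emph{dichotomy} for bounded $A$: since $A$ is symmetric, contains $\id$, and generates $\G(k)$, either $A = \G(k)$ (in which case $|\G(k)|=|A|=O_d(1)$ and (i) holds trivially), or else $A$ is not a subgroup and hence $|A^3| \ge |A|+1$. In the latter situation, once $|A| \le N_d$ is bounded and $\epsilon < \epsilon(d)$ is chosen small enough (so that $|A|^{1+\epsilon} < |A|+1$ for all $2 \le |A| \le N_d$), one gets $|A^3| > |A|^{1+\epsilon}$, i.e. conclusion (ii). So the bounded-$|A|$ case concludes via (i) \emph{or} (ii), not via (i) alone, and not via any bound on $|\G(k)|$. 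Patching this one step makes your proof essentially identical to the paper's.
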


\begin{proof} If conclusion (ii) does not hold, then $(A \cup A^{-1} \cup \{\id\})^3$ is a $K^C$-approximate subgroup of $G:=\G(k)$ of cardinality at most $K^C |A|$ for some absolute constant $C>0$ and $K=\max(2,|A|^{\epsilon})$ (see \cite[Corollary 3.10]{tao-noncommutative}).

We are thus in a position to apply Theorem \ref{mainthm2}. If $\epsilon$ is small enough depending on $K^{C_{\dim(\G)}}$, it is not possible for $(A \cup A^{-1} \cup \{\id\})^3$ to be $K^{C_{\dim(\G)}}$-controlled by $\{\id\}$ for cardinality reasons unless $|A|=O_{\dim(\G)}(1)$, in which case (i) or (ii) is clear (for $\epsilon$ sufficiently small) just because the fact that $A$ generates $\G(k)$ implies that either $A=G$, or $|A^3| \geq |A|+1$.  Thus we need only consider the case when $A$ is controlled instead by $G=\G(k)$, in which case conclusion (i) easily follows.
\end{proof}

The fact that the above results extend to more general simple algebraic groups than $\SL_n$ conveys certain advantages. Over $\C$, for example, the general structure theory of algebraic groups implies, roughly speaking, that every closed connected (in the Zariski topology) subgroup of $\GL_n(\C)$ admits a quotient by a closed normal solvable subgroup which is an almost direct product of simple Lie groups over $\C$. By exploiting this theory, the generalisation of Theorem \ref{mainthm2} mentioned earlier, and the main result of \cite{bg-2}, we are able to establish the following result.

\begin{theorem}[Freiman-type theorem in $\GL_n$]\label{mainthm3}
Suppose that $A \subseteq \GL_n(k)$ is a $K$-approximate subgroup of $\GL_n(k)$, where $k$ is a field of characteristic zero. Then $A$ is $O_n(K^{O_n(1)})$-controlled by $B$, an $O_n(K^{O_n(1)})$-approximate group that generates a nilpotent group of nilpotency class \textup{(}i.e. step\textup{)} at most $n-1$.
\end{theorem}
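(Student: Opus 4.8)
The plan is to pass to the Zariski closure $G := \overline{\langle A\rangle}$ of the group generated by $A$, use the structure theory of algebraic groups to split off the semisimple part of $G$, show by means of the Zariski-dense refinement of the main theorem (Theorem~\ref{mainthm-preciseform}) that $A$ is essentially invisible in that semisimple part, and thereby reduce to the case where $\langle A\rangle$ is \emph{virtually solvable}, which is covered by the main result of \cite{bg-2}. Since $A$ is finite its matrix entries generate a finitely generated extension of $\Q$, which embeds into $\C$; as the conclusion only concerns subsets of $\GL_n$, we may take $k = \C$ throughout, and we argue by induction on $\dim G$.

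Write $G^{\circ}$ for the identity component of $G$, $R := \Rad(G^{\circ})$ for its solvable radical (a connected solvable subgroup, normal in all of $G$ by maximality), and $S := G^{\circ}/R = S_1 \cdots S_m$ for the semisimple quotient, expressed as an almost direct product of (absolutely) almost simple connected groups with $m \leq \dim G$. The conjugation action of $G/R$ on its identity component $S$ permutes the factors and acts on each $S_i$ by an automorphism; since the symmetric group on $m$ letters and each $\Out(S_i)$ have size $O_n(1)$, there is a subgroup $G^{\dagger} \leq G$ of index $O_n(1)$ acting on every $S_i$ by inner automorphisms. A standard covering argument (pigeonhole $A$ into the $O_n(1)$ cosets of $G^{\dagger}$ and replace $A$ by $A_1 := A^2 \cap G^{\dagger}(\C)$, which is a $K^{O(1)}$-approximate subgroup of $G^{\dagger}(\C)$ with $|A_1| \geq |A|/O_n(1)$) shows that $A$ is $O_n(K^{O_n(1)})$-controlled by $A_1$. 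Put $G_1 := \overline{\langle A_1\rangle} \subseteq G^{\dagger}$. If $\dim G_1 < \dim G$ we finish by induction applied to $A_1$, using transitivity of control and the standard fact that a nilpotent subgroup of $\GL_n$ automatically has step at most $n-1$. Otherwise $G_1^{\circ} = G^{\circ}$, so $G_1 \supseteq G^{\circ}$, the group $G_1$ still acts on each $S_i$ by inner automorphisms, and $\langle A_1\rangle$ is Zariski dense in $G_1$; replacing $(A,G)$ by $(A_1,G_1)$ we may henceforth assume $\langle A\rangle$ is Zariski dense in $G$ and $G$ acts on each $S_i$ by inner automorphisms.

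Now the key step. If $m = 0$ then $G^{\circ}$ is solvable, $\langle A\rangle$ is virtually solvable, and the main result of \cite{bg-2} directly gives that $A$ is $O_n(K^{O_n(1)})$-controlled by an approximate group generating a nilpotent group (necessarily of step $\leq n-1$), as required. Assume $m \geq 1$, and for each $i$ let $\rho_i : G \to \Inn(S_i) =: \bar S_i$ be the homomorphism recording the inner automorphism of $S_i$ induced by conjugation. Because $\rho_i$ restricted to (the image of) $S_i$ is the adjoint quotient and $G \supseteq G^{\circ}$, the map $\rho_i$ is surjective, so $\langle\rho_i(A)\rangle = \rho_i(\langle A\rangle)$ is Zariski dense in the (absolutely) almost simple adjoint group $\bar S_i$. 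In particular $\rho_i(A)$, which is a $K^{O(1)}$-approximate subgroup of $\bar S_i(\C)$ as the homomorphic image of one, lies in no proper algebraic subgroup of $\bar S_i$ at all, so Theorem~\ref{mainthm-preciseform} applies and yields that $\rho_i(A)$ is $O_n(K^{O_n(1)})$-controlled either by $\{\id\}$ or by $\langle\rho_i(A)\rangle$. The second alternative cannot occur: a set controlling $\rho_i(A)$ has cardinality $\leq K^{O_n(1)}|\rho_i(A)| < \infty$, whereas $\langle\rho_i(A)\rangle$ is infinite, being Zariski dense in the infinite group $\bar S_i(\C)$. Hence $|\rho_i(A)| \leq K^{O_n(1)}$ for each $i$, and so, with $\rho := (\rho_i)_i : G \to \prod_i \bar S_i$, we get $|\rho(A)| \leq K^{O_n(1)}$. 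Covering $A$ by the at most $|\rho(A)|$ fibres of $\rho$ it meets and choosing a representative $a_s \in A$ in each, one checks $A \cap \rho^{-1}(s) \subseteq a_s(A^2 \cap \ker\rho(\C))$ for all $s$, and symmetrically on the right; so with $B_1 := A^2 \cap \ker\rho(\C)$ it follows that $B_1$ is a $K^{O(1)}$-approximate subgroup of $\ker\rho(\C)$, that $|B_1| \geq |A|/K^{O_n(1)}$, and that $A$ is $O_n(K^{O_n(1)})$-controlled by $B_1$. Finally $\ker\rho$ is virtually solvable: modulo the connected solvable group $R$ it is the finite group $C_{G/R}(S)$, which is finite because it meets $S = (G/R)^{\circ}$ in the finite centre $Z(S)$ and injects modulo this into the component group of $G/R$. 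Applying the main result of \cite{bg-2} to $B_1$ produces an approximate group $B$ generating a nilpotent subgroup of $\GL_n(\C)$, hence of step $\leq n-1$, that $O_n(K^{O_n(1)})$-controls $B_1$; by transitivity, $B$ also $O_n(K^{O_n(1)})$-controls $A$, which is the assertion of Theorem~\ref{mainthm3}.

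The main obstacle will be convincing oneself that the semisimple factors can genuinely be discarded. The mechanism that achieves this is the cardinality contradiction above: over the infinite field $\C$ a Zariski-dense subgroup of an almost simple group is infinite, so the alternative ``controlled by $\langle\rho_i(A)\rangle$'' in Theorem~\ref{mainthm-preciseform} is vacuous, forcing the image of $A$ in each simple factor to have bounded size. Beyond this, the work is bookkeeping: taming the (possibly enormous) component group of $G$ and the automorphism groups of the $S_i$ through bounded-index reductions, verifying the approximate-subgroup and control estimates for intersections of $A^2$ with algebraic subgroups (routine Ruzsa-type covering), and invoking the solvable-case theorem of \cite{bg-2}, which is where the genuine difficulty of the Freiman problem in this setting resides once the semisimple part has been removed.
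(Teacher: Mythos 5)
Your argument follows essentially the same route as the paper's proof: pass to $G = \overline{\langle A\rangle}$, form the semisimple quotient $S = G^\circ/\Rad(G^\circ)$, use the boundedness of $\Out$ and the permutation of simple factors to reduce to a bounded-index subgroup acting by inner automorphisms, apply Theorem~\ref{mainthm-preciseform} to see that the image of $A$ in each adjoint simple factor has size $K^{O_n(1)}$ (the ``Zariski-dense hence infinite'' observation ruling out option (iii) is exactly the mechanism the paper uses inside Proposition~\ref{semisimple-c}), and then pass to $B_1 := A^2 \cap \ker\rho$ inside a virtually solvable group. Your factor-by-factor maps $\rho_i$ are just an inlined version of the paper's $\pi\colon G\to\Aut(H)$ together with Proposition~\ref{semisimple-c}, and your induction on $\dim G$ is a technical substitute for the paper's appeal to Lemma~\ref{finite-index}, which instead shows directly that $A^{2d-1}\cap\Gamma_0$ generates $\Gamma_0$ and hence that the pushforward stays Zariski dense. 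These are reorganizations, not different ideas.

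There is, however, a real gap at the final step and also in your $m=0$ case. You apply the main result of \cite{bg-2} to $B_1$, but $B_1$ generates a subgroup of $\ker\rho$, which you have shown to be \emph{virtually} solvable, whereas \cite{bg-2} (as its title and the paper's own usage indicate) concerns approximate subgroups generating \emph{solvable} linear groups. To bridge this you need that a virtually solvable subgroup of $\GL_n(\C)$ contains a normal solvable subgroup of index $O_n(1)$ --- this is precisely Lemma~\ref{virtually-solvable} (the Malcev--Platonov lemma, proved in Appendix~\ref{solv-app}), and the bound $O_n(1)$ on the index is not automatic: a priori $[\langle B_1\rangle : (\text{solvable})]$ could be as large as $|\pi_0(G/R)|$, which is finite but uncontrolled in terms of $n$. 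After invoking that lemma one must pigeonhole $B_1$ into the $O_n(1)$ cosets of the solvable piece and replace $B_1$ by $B_1^2$ intersected with it before applying \cite{bg-2}; the paper does this explicitly in Case~1 and by reference in Case~2. As written, your proof skips this reduction, so the appeal to \cite{bg-2} is not licensed. The fix is routine given Lemma~\ref{virtually-solvable}, but that lemma is a non-trivial input that your argument needs and does not mention.
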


This result is proven in Section \ref{freiman-sec2}. As a straightforward corollary we obtain the following special case of Gromov's theorem \cite{gromov}, first established by combining the Tits Alternative \cite{tits} with a theorem of Milnor \cite{milnor} and Wolf \cite{wolf}.

\begin{corollary}[Gromov's theorem for linear groups over $\C$]\label{gromov-cor}
Suppose that $k$ is a field of characteristic zero and that $G$ is a \textup{(}finitely-generated\textup{)} subgroup of $\GL_d(k)$ with polynomial growth. Then $G$ is virtually nilpotent.
\end{corollary}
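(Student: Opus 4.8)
The plan is to deduce Corollary \ref{gromov-cor} from the Freiman-type theorem for $\GL_n$ (Theorem \ref{mainthm3}) by the standard route: polynomial growth of a finitely generated group forces some ball to be an approximate group, and Theorem \ref{mainthm3} then forces that approximate group (hence a large chunk of $G$) to lie in a virtually nilpotent subgroup, from which one concludes by a pigeonholing/commensurability argument that $G$ itself is virtually nilpotent.

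\medskip

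\noindent First I would record the elementary observation that if $G = \langle S \rangle$ with $S$ finite and symmetric containing the identity, and $G$ has polynomial growth, say $|S^n| \leq C n^d$ for infinitely many $n$, then there is a scale $n$ at which the growth is almost multiplicatively stable: by a standard pigeonhole argument (iterating $|S^{2n}| \leq 2^{d+1} |S^n|$ cannot hold for all $n$ in a long dyadic range if growth is polynomial) one finds arbitrarily large $n$ with $|S^{2n}| \leq K |S^n|$ for a fixed $K = K(C,d)$. Then $A := S^n$ satisfies $|A \cdot A| \leq |S^{2n}| \leq K|A|$, so (by \cite[Corollary 3.10]{tao-noncommutative}, exactly as in the proof of Corollary \ref{helfgott-type-expansion} above) the symmetrised set $(A \cup A^{-1} \cup \{\id\})^3$ is a $K^{O(1)}$-approximate subgroup of $\GL_d(k)$ of size comparable to $|A|$.

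\medskip

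\noindent Next I would apply Theorem \ref{mainthm3}: this approximate group is $O_d(K^{O_d(1)})$-controlled by an approximate group $B$ generating a nilpotent subgroup $N \leq \GL_d(k)$ of step at most $d - 1$. Control gives a bounded set $X$ with $A \subseteq X \cdot B \cdot X$ and $|B| \leq O_d(K^{O_d(1)}) |A|$; in particular $A = S^n$ is covered by $O_d(K^{O_d(1)})$ translates of the subgroup $N$ (enlarging $B$ to lie inside $N$ costs only a bounded factor since $B$ generates $N$ and $|B| \geq |N \cap B|$ is already comparable). Since $S^n$ contains $S$ and generates $G$, and $S^n$ meets boundedly many cosets of $N$, it follows that $[G : G \cap N]$ — or rather, the subgroup generated by those finitely many returning cosets — is finite: concretely, finitely many elements $g_1, \dots, g_m \in S^n$ with $g_i N$ exhausting all cosets met, together with the relations coming from $S^n \cdot S \subseteq S^{n+1} \subseteq S^{2n}$ still meeting only boundedly many cosets, show that $G$ acts on the finite coset space $G / (G \cap N)$ with finite image, so $H := \bigcap_{g \in G} (G \cap N)^g$ has finite index in $G$. (One can also phrase this as: a finitely generated group covered by finitely many cosets of a subgroup $N$ has $N \cap G$ of finite index.) Finally $H$ is a subgroup of the nilpotent group $N$ of step $\leq d-1$, hence nilpotent, so $G$ is virtually nilpotent.

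\medskip

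\noindent The main obstacle is the pigeonhole step producing the stable scale $n$ together with the bookkeeping that turns "$S^n$ is covered by boundedly many cosets of a nilpotent $N$" into "$G \cap N$ has finite index in $G$"; the subtlety is that Theorem \ref{mainthm3} only controls a \emph{single} ball $S^n$, so one must argue that the \emph{same} finite-index phenomenon propagates to all of $G$ rather than degrading as $n$ grows. This is handled by noting that $G = \langle S \rangle \subseteq \bigcup_{r} S^r$ and that once $S^n$ lies in $m$ cosets of $N$, the set of cosets of $N$ meeting $S^{kn}$ is contained in the product of at most $m^k$ cosets in $G/(G\cap N)$ only if $G\cap N$ is normal — which it need not be — so instead one passes to the normal core and uses that a subgroup of a finitely generated group with finitely many conjugates under $G$ and finite index in each conjugate's... more cleanly: since $S^n$ generates $G$ and lies in finitely many cosets of $N$, the image of $G$ in $\mathrm{Sym}(G/(G\cap N))$ is finitely generated and every generator moves the base coset to one of finitely many cosets, forcing that image to be finite (a group permuting a set with a finite orbit under each generator and finitely many cosets total is finite), whence $[G : H] < \infty$ for $H$ the kernel, which is contained in $N$ and therefore nilpotent. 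The rest is routine.
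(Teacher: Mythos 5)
Your outline matches the paper's own argument through the first two steps: pigeonhole a ``stable'' scale $r$ so that some ball $S^{O(r)}$ is a $K$-approximate group with $K = O(1)^d$, then apply Theorem \ref{mainthm3} to find a nilpotent subgroup $H \leq \GL_d(k)$ (which one may take inside $G$ after replacing $H$ by the group generated by $A^2 \cap H$) with $|A^2 \cap H| \gg |A|$, equivalently with $A$ covered by boundedly many cosets of $H$. The gap is in your final step, passing from ``$S^n$ lies in boundedly many cosets of $N$ and generates $G$'' to ``$[G : G \cap N] < \infty$''. That implication does not hold as stated. A single ball $S^n$ being covered by $m$ cosets of $N$ gives no bound on $[G:G\cap N]$; as you observe, one cannot multiply coset bounds since $G\cap N$ need not be normal, and your fallback is circular: you assert the image of $G$ in $\mathrm{Sym}(G/(G\cap N))$ is finite because ``finitely many cosets total'', but that is exactly the finiteness you are trying to prove. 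Nor is it true that each generator $s$ has a finite orbit on $G/(G\cap N)$: the powers $s^k$ quickly leave $S^n$, so the covering of $S^n$ by $m$ cosets controls nothing about $s^k(G\cap N)$ for $k$ large.

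The paper closes this gap not by propagating coset coverings but by a contradiction that invokes the polynomial growth hypothesis a second time, at a slightly larger scale. Having chosen a good $r$ with $|S^{7r}| \leq 8^d |S^r|$ and established $|S^{6r} \cap H| \geq c_{n,d} |S^r|$, one assumes $[G:H] = \infty$. Then the number of right cosets of $H$ met by $S^m$ must be strictly increasing in $m$ (if it ever stabilises, $G$ is a finite union of cosets of $H$), so $S^m$ meets at least $m$ right cosets $Hg_1, \ldots, Hg_m$. The disjoint sets $(S^{6r}\cap H)g_i \subset S^{6r+m}$ give $|S^{6r+m}| \geq m \, c_{n,d}\, |S^r|$, and choosing $m > 8^d/c_{n,d}$ and then a good $r > m$ contradicts $|S^{7r}| \leq 8^d|S^r|$. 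This is the mechanism by which the local information at scale $r$ is upgraded to a global finite-index statement; a purely local covering argument of the kind you sketch cannot achieve this, because it never re-uses the growth hypothesis. You should adopt this contradiction argument in place of your final paragraph.
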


We will recall the various notions mentioned here in \S \ref{freiman-sec2}, where the corollary is proved.

In the special case where $A$ is contained in $\SL_2(\C)$ or $\SL_3(\Z)$, Theorem \ref{mainthm3} was established by Chang \cite{chang}. The particular case of subsets of $\SL_2(\R)$ was established earlier by Elekes and Kir\'aly \cite{elekes-kiraly}. Note that, by the well-known fact that every finitely generated field of characteristic zero embeds in $\C$, the proof of Theorem \ref{mainthm3} is immediately reduced to the case $k=\C$. We are also able to say something about the structure of $K$-approximate subgroups of $\GL_n(k)$ where $k$ is a finite field, at least in the case $k = \F_p$, but it is not yet clear to us what the final form of such a result will be.  We hope to address these issues in a subsequent paper.

Qualitative forms of the above theorems follow from the work of Hrushovski \cite{hrush} (see in particular \cite[Theorem 1.3]{hrush} and \cite[Corollary 1.4]{hrush}). The main novelty of our work lies in the polynomial dependence on the approximation parameter $K$, which is absolutely essential for applications.

The dependence of constants on the dimension $n$ (or on $\dim(\G)$) in the above theorems are in principle explicitly computable. However, if one is willing to sacrifice such information, a key portion of the argument (the proof of the Larsen-Pink inequalities, described in \S \ref{larsen-pink-sec}) can be significantly simplified by the use of an ultrafilter argument; this is the approach we have chosen to take in this paper.

We remark that a key ingredient in Helfgott's work was the sum-product theorem in $\F_p$ \cite{bkt}. Our approach does not use this result (and neither does that of Pyber and Szabo). Moreover, we may in fact reverse the implication and deduce the sum-product theorem from the so-called Katz-Tao lemma \cite{katz} and Theorem \ref{mainthm2}.

\begin{theorem}[Sum-product theorem over $\F_p$]\label{sum-product-fp}
Let $p$ be a prime, and suppose that $A$ is a finite subset of $\F_p$ such that $|A\cdot A|, |A+ A| \leq K|A|$. Then either $|A| \leq K^C$ or $|A| \geq K^{-C}p$.
\end{theorem}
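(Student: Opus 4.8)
The plan is to manufacture from $A$ a genuine $K^{O(1)}$-approximate subgroup of $\SL_2(\F_p)$ which generates $\SL_2(\F_p)$ and whose cardinality is comparable to $|A|^3$, and then to invoke Theorem \ref{mainthm2} with $\G=\SL_2$ --- an absolutely almost simple group of dimension $3$ --- over $k=\F_p$. Since $|\SL_2(\F_p)|=p^3-p$, the two alternatives there (control by $\{\id\}$, or by $\SL_2(\F_p)$) become, after extracting cube roots, exactly $|A|\leq K^C$ or $|A|\geq K^{-C}p$. Throughout we may assume $|A|$ exceeds any prescribed power of $K$, since otherwise the first alternative holds trivially.

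The first step is the Katz--Tao lemma \cite{katz}, combined with the Pl\"unnecke--Ruzsa inequalities: from $|A+A|,|A\cdot A|\leq K|A|$ it follows that, after replacing $A$ by a comparable controlled set, we may assume $A$ is a $K^{O(1)}$-approximate subring $R$ of $\F_p$ --- meaning $R$ is symmetric, $0,\pm 1\in R$, $|R|\sim|A|$, and (the substantive point) every element of $\F_p$ obtainable from $R$ by a bounded number of ring operations lies in a set of cardinality $\leq K^{O(1)}|R|\sim K^{O(1)}|A|$. In particular $|R\cdot R+R\cdot R|\leq K^{O(1)}|R|$, and the same remains true if one first adjoins a single extra element to $R$.

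Next I would build the approximate subgroup. Writing $m=|R|$ and $r(x)=\#\{(a,d)\in R^2:ad=x\}$, we have $\sum_x r(x)=m^2$, $r(0)=2m-1$, and $r(x)\leq m$ for $x\neq 0$, so $\sum_x r(x)^2=O(m^3)$; thus all but $O(m^3)$ of the $m^4$ quadruples in $R^4$ have $ad\neq bc$. Since $ad-bc$ takes at most $|R\cdot R+R\cdot R|\leq K^{O(1)}m$ nonzero values on $R^4$ and $m\gg 1$, some $v_0\neq 0$ is attained $\gg K^{-O(1)}m^3$ times. Let $\tilde R$ consist of all elements obtainable from $R\cup\{v_0^{-1}\}$ by at most $L$ ring operations, where $L$ is a sufficiently large absolute constant; by the first step $|\tilde R|\leq K^{O(1)}|R|\sim|A|$, and $\tilde R$ is symmetric and contains $R$, $v_0^{-1}R$, $0$, and $\pm 1$. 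Put $\mathcal A:=\{g\in\SL_2(\F_p):\text{every entry of }g\text{ lies in }\tilde R\}$. Since fixing three entries of a determinant-one matrix forces the fourth, $\#(\SL_2(\F_p)\cap T^{2\times 2})\leq 2|T|^3$ for every $T\subseteq\F_p$; taking $T=\tilde R$ yields $|\mathcal A|\leq 2|\tilde R|^3$, and taking $T$ to be the (bounded) ring word in $R\cup\{v_0^{-1}\}$ containing all entries of threefold products of elements of $\mathcal A$ --- a set of cardinality $\leq K^{O(1)}|A|$ if $L$ was chosen large enough --- yields $|\mathcal A^3|\leq K^{O(1)}|A|^3$. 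On the other hand, multiplying the top row by $v_0^{-1}$ embeds the $\gg K^{-O(1)}m^3$ matrices of determinant $v_0$ with entries in $R$ into $\mathcal A$, so $|\mathcal A|\gg K^{-O(1)}|A|^3$. Hence $|\mathcal A^3|\leq K^{O(1)}|\mathcal A|$, and by standard noncommutative product set estimates \cite{tao-noncommutative} the set $\mathcal B:=\mathcal A^2$ is a $K^{O(1)}$-approximate subgroup of $\SL_2(\F_p)$ with $|\mathcal B|\sim|A|^3$. It is symmetric, contains $\id$, and (as $0,1\in\tilde R$) contains $\begin{pmatrix}1&1\\0&1\end{pmatrix}$ and $\begin{pmatrix}1&0\\1&1\end{pmatrix}$, which generate $\SL_2(\F_p)$; hence $\langle\mathcal B\rangle=\SL_2(\F_p)$.

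Finally I would apply Theorem \ref{mainthm2} to $\mathcal B$. If $\mathcal B$ is $K^C$-controlled by $\{\id\}$, then $|A|^3\sim|\mathcal B|\leq K^C$, giving $|A|\leq K^{C'}$. If $\mathcal B$ is $K^C$-controlled by $\SL_2(\F_p)$, then $p^3-p=|\SL_2(\F_p)|\leq K^C|\mathcal B|\leq K^{O(1)}|A|^3$, giving $|A|\geq K^{-C'}p$. This is the desired dichotomy. The only genuinely hard ingredient is the Katz--Tao lemma, which is precisely what upgrades ``few sums and few products'' into honest control of bounded ring words; granting it, the rest is an elementary energy/pigeonhole argument, the counting bound $\#(\SL_2(\F_p)\cap T^{2\times 2})\leq 2|T|^3$, and a direct application of Theorem \ref{mainthm2}, with the one technical point being the determinant normalization encoded by $\tilde R$.
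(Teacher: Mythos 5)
Your proof is correct and reaches the same endpoint via a genuinely different construction. The paper also routes through the Katz--Tao lemma and $\SL_2(\F_p)$, but it defines
\[
X := \left\{ \begin{pmatrix} a_1 & a_2 \\ a_3 & (1+a_2a_3)/a_1 \end{pmatrix} : a_1, a_2, a_3 \in A \right\},
\]
a bijective three-parameter family (so $|X| = |A|^3$ for free, at the cost of insisting $0 \notin A$), shows $|X^3| \leq K^C|X|$ by the same ``entries lie in a controlled set $T$, intersect $T^{2\times 2}$ with $\SL_2$'' counting you use, and then applies Corollary \ref{helfgott-type-expansion}. The drawback of the paper's parametrization is that one cannot simply exhibit the elementary matrices inside $X$ (since $a_3 = 0$ is excluded), so the paper must separately rule out the possibility that $X$ generates a proper subgroup: it invokes Dickson's classification of subgroups of $\SL_2(\F_p)$ to reduce to $2$-step solvability and then applies the combinatorial Nullstellensatz to a commutator identity in twelve variables from $A$. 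Your construction pays a small price up front — you over-parametrize $\mathcal{A}$ by all four entries, so you need both the $\SL_2 \cap T^{2\times 2}$ upper bound \emph{and} the energy/pigeonhole argument producing a popular nonzero value $v_0$ of $ad - bc$ to get the matching lower bound $|\mathcal{A}| \gg K^{-O(1)}|A|^3$ — but in exchange the set $\tilde R$ is allowed to contain $0$ and $1$, so $\begin{pmatrix}1&1\\0&1\end{pmatrix}, \begin{pmatrix}1&0\\1&1\end{pmatrix} \in \mathcal{A}$, and generation of $\SL_2(\F_p)$ is immediate. This neatly eliminates the entire Dickson/Nullstellensatz step. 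One detail worth stating explicitly, though you have it implicitly right: after the Katz--Tao step, the ``approximate subring'' property for $R \cup \{v_0^{-1}\}$ follows because $v_0 = ad-bc$ with $a,b,c,d \in R$, so any bounded ring word in $R \cup \{v_0^{-1}\}$ is a bounded rational function evaluated on $R$, and Katz--Tao applies directly; similarly, adjoining $0, \pm 1$ and closing under negation preserves the rational-function bound since those operations are themselves rational functions or constants.
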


We give the deduction of Theorem \ref{sum-product-fp} from Theorem \ref{mainthm2} in Section \ref{sumproduct-sec}.
In principle one could also establish a very general form of the sum-product theorem, namely that every ``approximate subfield'' of some field $k$ is close to a genuine subfield of $k$. Such a result is stated for instance in \cite[Corollary 2.56]{tv-book}, and is essentially contained in \cite{bourgain-glibichuk-konyagin,bkt}. To obtain such a result by applying our arguments here would require the classification of maximal subgroups of $\SL_2(k)$, which is rather more complicated than the classification of subgroups of $\SL_2(\F_p)$. Therefore we do not give any such derivation here, leaving the details to the interested reader.

As remarked in the introduction, an immediate application of our results (noted in this context by Helfgott) is to cases of a conjecture of Babai and Seress on rapid generation in nonabelian finite simple groups. We state and prove these results in \S \ref{babai-sec}.

\section{Algebraic varieties}

In this section we review the definition and basic properties of algebraic varieties and algebraic groups. This material is needed in \S \ref{larsen-pink-sec} below. Readers may wish to refer to that section now for additional motivation.

In the classical presentation of the theory of algebraic geometry, varieties are defined at a qualitative level, with considerations such as the degree of the polynomials used to define the varieties being secondary to the main theory.  However, due to the quantitative nature of our analysis, we will need to present the foundations of algebraic geometry in a similarly quantitative manner, in particular assigning a ``complexity'' to each of the algebraic varieties that we encounter.  The precise definition of this complexity will not be important for us\footnote{If however one wished (say) to effectively control the constants $C_{\dim(G)}$ in Theorem \ref{mainthm2}, then one may wish to pay more attention to exactly how complexity is defined.}, but we will still need to fix one such definition of this concept in order to proceed with the rest of the proof.

We now quickly review the foundations of quantitative algebraic geometry, but with the proofs of various complexity bounds deferred to Appendix \ref{compact-sec}.  Our presentation here will be classical in nature, viewing algebraic varieties in terms of solutions of polynomials in affine (or projective) space, rather than by the modern machinery of schemes\footnote{Of course, it is quite likely one could recast the arguments here in scheme-theoretic language, although one might have to take care with issues of multiplicity when perfoming such tasks as counting the number of points of intersection $|A \cap V|$ between a finite set $A$ and a variety or scheme $V$.  One would also require a more intrinsic notion of complexity.}.  In particular, the basic notion of a \emph{complexity} of a variety will depend not just on the abstract variety itself, but also on how we choose to embed it into an affine or projective space.

Throughout this discussion $k$ will be an algebraically closed field.  We let $\A^n(k) = k^n$ be the standard $n$-dimensional affine space over $k$, and $\P^n(k) \equiv (k^{n+1} \backslash \{0\})/(k \backslash \{0\})$ the standard projective space.  Observe that $\P^n(k)$ can be covered by $n+1$ copies $\A_0^n(k),\ldots,\A_n^n(k)$, where $\A_i^n(k)$ is formed from $\P^n(k)$ by deleting the $i^{th}$ coordinate hyperplane.

\begin{definition}[Varieties]  Let $M \geq 1$ be an integer.
\begin{enumerate}
\item An \emph{affine variety} over $k$ of complexity at most $M$ is a subset $V \subseteq \A^n(k)$ of the form
$$ V = \{ x \in \A^n(k): P_1(x) = \ldots = P_m(x) = 0 \}$$
where $0 \leq n, m \leq M$, and $P_1,\ldots,P_m: \A^n(k) \to k$ are polynomials of degree at most $M$.

\item A \emph{projective variety} over $k$ of complexity at most $M$ is a subset $V \subset \P^n(k)$ of the form
$$ V = \{ x \in \P^n(k): P_1(x) = \ldots = P_m(x) = 0 \}$$
where $0 \leq n, m \leq M$, and $P_1,\ldots,P_m: k^{n+1} \to k$ are homogeneous polynomials of degree at most $M$.

\item A \emph{quasiprojective variety} over $k$ \textup{(}or \emph{variety} for short\textup{)} of complexity at most $M$ is a set of the form $V \backslash W \subset \P^n(k)$, where $V, W$ are projective varieties of complexity at most $M$.

\item A \emph{constructible set} over $k$ of complexity at most $M$ is a boolean combination of at most $M$ projective varieties of complexity at most $M$.
\end{enumerate}
We omit the phrase ``over $k$'' when the underlying field is clear from context.

The \emph{Zariski closure} of any subset of a constructible set $V$ is the intersection of $V$ with all the projective varieties containing that set.  We say that a set $A$ is \emph{Zariski dense} in a variety $V$ if its Zariski closure contains $V$.  A variety $W$ is \emph{closed} in another $V$ if its Zariski closure in $V$ is equal to $W$.

A variety is \emph{irreducible} if it cannot be expressed as the union of two proper closed varieties \textup{(}of any complexity\textup{)}.  In particular, an affine \textup{(}resp. projective\textup{)} variety is irreducible if it cannot be expressed as the union of two proper affine \textup{(}resp. projective\textup{)} varieties.
\end{definition}

\begin{remarks} An affine or projective variety of complexity at most $M$ is automatically a quasiprojective variety of complexity $O_M(1)$, and every quasiprojective variety of complexity $M$ is a constructible set of complexity $O_M(1)$.  Thus we may work without loss of generality with quasiprojective varieties or with constructible sets.  If we allow $M$ to be arbitrarily large, then these quantitative notions of a variety coincide with the usual ones in classical algebraic geometry.

The intersection or union of two affine varieties $V, W \subset \A^n(k)$ of complexity at most $M$ will be another affine variety of complexity $O_M(1)$.  Similarly, the Cartesian product of two affine varieties $V, W$ of complexity at most $M$ be an affine variety of complexity $O_M(1)$.  Analogous statements hold for projective and quasiprojective varieties (using the Segre embedding to model Cartesian products), and for constructible sets, with the exception that the union of quasiprojective varieties is merely a constructible set rather than a variety in general.

Our notion of complexity bounds the ambient dimension $n$, the maximum degree $d$ of the defining polynomials, and the number $m$ of such polynomials.  As it turns out, the third bound is essentially redundant, as it is controlled by the first two.  Indeed, the space of polynomials of degree at most $d$ that vanish on $V$ is a linear subspace of the space of all polynomials of degree at most $d$ on the affine space $\A^n(k)$, and thus has dimension $O_{d,n}(1)$.
\end{remarks}

We now give a useful result, that asserts that the Zariski closure operation is well-behaved with respect to complexity.

\begin{lemma}[Zariski closure preserves bounded complexity]\label{zar}  Let $V \subset W$ be constructible sets of complexity at most $M$.  Then the Zariski closure of $V$ in $W$ has complexity at most $O_M(1)$.
\end{lemma}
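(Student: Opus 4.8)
The plan is to reduce the statement to the classical fact that the homogeneous ideal of a projective variety of bounded degree, sitting inside a projective space of bounded dimension, is generated by forms of bounded degree. Since $W$ has complexity at most $M$, and the intersection of two constructible sets of complexity $O_M(1)$ is again of complexity $O_M(1)$, it is enough to bound the complexity of $\overline{V} \subseteq \P^n(k)$, the intersection of all projective varieties of $\P^n(k)$ containing $V$ (this is a projective variety by Noetherianity, and the Zariski closure of $V$ in $W$ in the sense of the definition above is then $\overline{V} \cap W$). The ambient dimension $n$ is already at most $M$, so the task is to produce $O_M(1)$ homogeneous polynomials, each of degree $O_M(1)$, whose common zero locus is $\overline{V}$.

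The first step is to bound the degree and dimension of $\overline{V}$. Putting the boolean combination defining $V$ into disjunctive normal form writes $V = \bigcup_{i=1}^{m}(P_i \setminus Q_i)$ with $m \leq 2^M$ and each $P_i, Q_i$ a projective variety of complexity $O_M(1)$ (take the $P_i \setminus Q_i$ to be the atoms lying in $V$ of the boolean algebra generated by the varieties defining $V$, using that a finite union of projective varieties is again a projective variety of comparable complexity). Zariski closure commutes with finite unions, and $P_i \setminus Q_i$ is open in $P_i$, so $\overline{P_i \setminus Q_i}$ is the union of those irreducible components of $P_i$ that are not contained in $Q_i$; hence $\overline{V}$ is a union of $O_M(1)$ irreducible components of varieties of complexity $O_M(1)$. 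The B\'ezout inequality, applied to the $O_M(1)$ defining equations (of degree at most $M$) of each $P_i$, bounds the sum of the degrees of the irreducible components of $P_i$, and hence the degree of $\overline{V}$, by some $\delta_M = O_M(1)$; also $\dim \overline{V} \leq n \leq M$.

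It then remains to invoke the quantitative input: for fixed $n$ and $\delta$, the homogeneous ideal of any projective variety in $\P^n$ of degree at most $\delta$ is generated by forms of degree at most some $R(n,\delta)$. This is classical, following from standard bounds on the Castelnuovo--Mumford regularity (the reducible case being reduced to the irreducible one by applying these to the components together with a Mayer--Vietoris estimate; alternatively one may argue via generic Noether normalisation), and even crude doubly-exponential estimates are harmless here since $n \leq M$ and $\delta = \delta_M$ are fixed. A basis of the finite-dimensional space of forms of degree at most $R(M,\delta_M)$ vanishing on $V$ then consists of $O_M(1)$ forms cutting out $\overline{V}$ exactly; intersecting with $W$ finishes the proof.

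The main obstacle is this last step: it is the one place where a genuinely quantitative commutative-algebra input is needed. One can keep that input purely qualitative by running instead the kind of ultrafilter argument used elsewhere in the appendix — supposing the lemma fails along a sequence $(k_j, V_j \subseteq W_j)$ of complexity at most $M$, pass to an ultraproduct $V \subseteq W$ over an algebraically closed field $k$, write $\overline{V}$ (over $k$) as the zero locus of finitely many forms of some finite degree $D^{*}$, push these forms down to the $k_j$, and verify by transfer that for $\mathcal U$-almost every $j$ the bounded-complexity variety so obtained coincides with $\overline{V_j}$. Even this verification, however, relies on the qualitative statement that the regularity of a variety is bounded in terms of its degree and ambient dimension (needed to cap the degree of a form witnessing a proper containment), combined with the uniform degree bound $\delta_M$ above; so the ultrafilter route does not entirely circumvent the algebraic input for this particular lemma.
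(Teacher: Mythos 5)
Your first, direct argument is correct, but it follows a genuinely different route from the paper.  The paper proves Lemma~\ref{zar} by the compactness/ultraproduct method it uses throughout Appendix~\ref{compact-sec}: it negates the statement, passes to an ultraproduct $V = \prod_{\alpha\to\alpha_\infty} V_\alpha$, and invokes Lemma~\ref{cont-zarisk} (``continuity of Zariski closure''), which in turn is reduced, via continuity of dimension (Lemma~\ref{dimcont}) and of irreducibility (Lemma~\ref{cont-irred}), to Theorem~\ref{klei} (``degree controls complexity'').  Crucially, Theorem~\ref{klei} only asserts that an irreducible variety of degree $D$ in $\P^n$ is cut out \emph{as a set} by forms of degree $\leq D$, and it is proved by Mumford's generic-cone construction, not by any regularity bound.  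This is all that is required, since the paper's notion of complexity is set-theoretic: one only needs equations that carve out the variety as a point set, not generators of its homogeneous ideal.

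Your argument instead reduces, via a DNF decomposition and a B\'ezout estimate, to the statement that the homogeneous ideal of a variety of bounded degree in bounded ambient dimension is generated in bounded degree (Castelnuovo--Mumford regularity bounds).  This is stronger than what is needed and rests on a noticeably heavier classical theorem; the cone argument of Theorem~\ref{klei} would suffice at exactly that point and keeps the input elementary.  On the other hand, your direct route has the virtue of being effective (doubly-exponential at worst), whereas the paper's ultrafilter argument is deliberately ineffective.  Your closing paragraph, suggesting that even the ultrafilter route ``relies on'' a regularity bound to compare the pushed-down variety with $\overline{V_\alpha}$, is not accurate as a description of what the paper does: Lemma~\ref{cont-zarisk} sidesteps this by decomposing $\overline{V}$ into irreducible components at the ultraproduct level and transferring irreducibility and dimension through the ultralimit, so the only quantitative input that appears is the degree--complexity bound of Theorem~\ref{klei}, again in its weak, set-theoretic form.
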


\begin{proof} See Appendix \ref{compact-sec}.
\end{proof}

Next, we recall the notion of a regular map between varieties, but with a notion of complexity attached.

\begin{definition}[Regular map]\label{regmap}  Let $V \subset \P^n(k)$ and $W \subset \P^m(k)$ be varieties, and let $M \geq 1$.  A map $f: V \to W$ is said to be \emph{regular} with complexity at most $M$ if $V, W$ are individually of complexity at most $M$, and if one can cover $V$ by some varieties $V_1,\ldots,V_r$ of complexity at most $M$ for some $r \leq M$ such that
\begin{enumerate}
\item for each $1 \leq j \leq r$, $f(V_j)$ is contained in an affine space $\A^m_{i_j}(k) \subset \P^m(k)$;
\item the map $f|_{V_j}$ has the form $(P_{j,1}/Q_{j,1},\dots, P_{j,m}/Q_{j,m})$, where the $P_{j,l}, Q_{j,l}$ are homogeneous polynomial maps from $k^{n+1}$ to $k$ with $\deg(P_{j,l}) = \deg(Q_{j,l}) \leq M$, and the $Q_{j,l}$ are non-vanishing on $V_j$.
\end{enumerate}
A regular map $\phi: V \to W$ is \emph{dominant} if $V$ is irreducible and $\phi(V)$ is Zariski-dense in $W$.
\end{definition}

Again, if we allow $M$ to be arbitrarily large, then this notion of a regular map coincides with the usual one in classical algebraic geometry.

The class of all varieties and the regular maps between them form a category.  This category is well-behaved with respect to complexity:

\begin{lemma}[Composition]\label{compos}
The composition of two regular maps of complexity at most $M$ is of complexity $O_M(1)$.  In particular, the restriction of a regular map of complexity at most $M$ to a subvariety of complexity at most $M$ is a regular map of complexity at most $O_M(1)$.

The image (or preimage) of a variety $V$ of complexity at most $M$ by a regular map of complexity at most $M$ is a constructible set of complexity $O_M(1)$.  In particular, by Lemma \ref{zar}, the Zariski closure of this image is a variety of complexity $O_M(1)$.
\end{lemma}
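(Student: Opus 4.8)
The plan is to dispatch the three assertions in turn, each time reducing matters to elementary manipulations of the rational functions supplied by Definition \ref{regmap}, together with Lemma \ref{zar}, the remarks above on intersections and products, and one genuinely substantial input: a quantitative version of Chevalley's theorem (recorded in Appendix \ref{compact-sec}), to the effect that the image of a complexity-$\leq M'$ constructible set under a coordinate projection is constructible of complexity $O_{M'}(1)$. For the composition statement, let $f : V \to W$ and $g : W \to U$ be regular of complexity at most $M$, with associated covers $V = \bigcup_{j \leq r} V_j$ and $W = \bigcup_{l \leq s} W_l$, $r, s \leq M$, on which $f$ and $g$ are given by tuples of rational functions of degree $\leq M$ with non-vanishing denominators. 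I would refine the cover of $V$ to the $rs \leq M^2$ pieces $V_{j,l} := (f|_{V_j})^{-1}(W_l)$: since $f|_{V_j}$ is a genuine morphism of $V_j$ into the affine chart $\A^m_{i_j}(k)$ and $W_l = Y_l \setminus Z_l$ with $Y_l, Z_l$ projective of complexity $\leq M$, substituting the rational functions defining $f|_{V_j}$ into the homogeneous equations of $Y_l$ and $Z_l$ and clearing denominators exhibits $(f|_{V_j})^{-1}(Y_l)$ and $(f|_{V_j})^{-1}(Z_l)$ as closed subsets of $V_j$ of complexity $O_M(1)$, so $V_{j,l}$ is a variety of complexity $O_M(1)$, and the $V_{j,l}$ cover $V$ because the $W_l$ cover $W \supseteq f(V)$. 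On $V_{j,l}$ the composite $g \circ f$ takes values in the single affine chart containing $g(W_l)$ and is obtained by substituting the rational functions defining $f|_{V_j}$ into those defining $g|_{W_l}$; clearing denominators once more (the new denominators being products of the $Q$'s of $f$ with pullbacks of the $Q$'s of $g$, non-vanishing on $V_{j,l}$) presents it as a tuple of rational functions of degree $O_M(1)$. This is exactly the data witnessing that $g \circ f$ is regular of complexity $O_M(1)$. The restriction statement is then the special case in which $g$ is the inclusion of a subvariety $V' \subseteq V$ of complexity $\leq M$ into $V$, which is visibly regular of complexity $O_M(1)$; alternatively one just intersects the cover $\{V_j\}$ with $V'$.

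For the preimage of a variety $U = Y \setminus Z$ of complexity $\leq M$ under $f$, the same substitution-and-clearing computation on each $V_j$ realizes $(f|_{V_j})^{-1}(Y)$ and $(f|_{V_j})^{-1}(Z)$ as closed subsets of $V_j$ of complexity $O_M(1)$, so $(f|_{V_j})^{-1}(U)$ is a variety of complexity $O_M(1)$ and $f^{-1}(U) = \bigcup_{j \leq r} (f|_{V_j})^{-1}(U)$ is a boolean combination of $O_M(1)$ varieties of complexity $O_M(1)$, hence a constructible set of complexity $O_M(1)$. For the image, I would pass to the graph: on each $V_j$ the graph $\Gamma_j := \{ (x, f(x)) : x \in V_j \} \subseteq V_j \times \A^m_{i_j}(k)$ is cut out inside $V_j \times \A^m_{i_j}(k)$ by the equations $y_\ell Q_{j,\ell}(x) = P_{j,\ell}(x)$, hence (products preserving bounded complexity) is a variety of complexity $O_M(1)$, and $f(V) = \bigcup_{j \leq r} \pi(\Gamma_j)$ where $\pi$ denotes projection to the second factor. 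Applying the quantitative Chevalley theorem of Appendix \ref{compact-sec} to each $\pi(\Gamma_j)$ shows $f(V)$ to be constructible of complexity $O_M(1)$, and the final sentence — that its Zariski closure is a variety of complexity $O_M(1)$ — is then immediate from Lemma \ref{zar}.

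The only step that is not pure bookkeeping is the quantitative Chevalley input used for the image: bounding the complexity of a projection of a constructible set requires effective elimination theory (degree bounds for iterated resultants or Gröbner bases), or, in the spirit of the ultrafilter argument the authors favour elsewhere in the paper, a compactness argument — in either case this is exactly the type of complexity bound the excerpt defers to Appendix \ref{compact-sec}. Everything else amounts to substituting one tuple of rational functions into another while tracking degrees and keeping the denominators away from the relevant varieties, together with the elementary facts (recorded in the Remarks above) that finite intersections, unions, and Cartesian products of bounded-complexity varieties remain of bounded complexity.
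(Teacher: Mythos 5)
Your proposal is correct, but it takes a genuinely different route from the paper. The paper proves all of Lemma \ref{compos} (composition, images, and preimages) by a single uniform ultrafilter/compactness argument: one supposes the complexity bound fails along a sequence, forms ultraproducts of the maps and varieties using the correspondence of Lemma \ref{ultralim}, invokes the \emph{qualitative} facts from classical algebraic geometry (compositions of morphisms are morphisms; Chevalley's theorem that images of constructible sets are constructible), and then ``undoes'' the ultralimit to conclude that the complexity was in fact bounded for $\alpha$ close to $\alpha_\infty$ --- a contradiction. You instead argue directly: for composition and preimages you refine the covers and substitute rational functions, clearing denominators and tracking degrees, which gives \emph{explicit} complexity bounds with no compactness at all; for the image you pass to the graph and reduce to a quantitative Chevalley (constructibility of coordinate projections with controlled complexity). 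This is a fair trade: your treatment of composition and preimages is more elementary and, unlike the paper's, effective; the paper's compactness argument is shorter and uniform across all three claims, but deliberately sacrifices effectivity (a choice the authors flag at the start of the appendix). One caveat on how you locate the substantial input: the paper's appendix does not actually prove a free-standing quantitative Chevalley theorem for you to cite --- it proves Lemma \ref{compos} itself by compactness, with qualitative Chevalley used inside --- so if you wanted your reduction to stand on its own you would still need to supply the quantitative Chevalley step, either by the very same ultrafilter argument specialised to projections, or by effective elimination theory as you note. A minor slip in phrasing: in your restriction-as-special-case remark, the restriction of $f$ to $V'$ is $f \circ \iota$ with $\iota : V' \hookrightarrow V$ the inclusion, i.e.\ $\iota$ plays the role of the \emph{first} map in the composite, not the second; the idea is of course right.
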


\begin{proof} See Appendix \ref{compact-sec}.
\end{proof}

Informally, the first part of Lemma \ref{compos} asserts that the class of \emph{bounded complexity} varieties and \emph{bounded complexity} regular maps also form a category.

To each non-empty (closed) affine variety $V \subset \A^n(k)$, we can assign a \emph{dimension} $\dim(V)$, which is an integer between $0$ and $n$, and can be defined as the maximal length $d$ of a proper nested sequence $\emptyset \subsetneq V_0 \subsetneq \ldots \subsetneq V_d \subset V$ of irreducible (closed) affine varieties in $V$.   The dimension of a projective variety is defined similarly.  The dimension of a quasiprojective variety or constructible set is then defined as the dimension of its Zariski closure.  We adopt the convention that the empty set has dimension $-1$.

We recall the following basic estimate.

\begin{lemma}[Weak Bezout]\label{bezout}  Let $V$ be a $0$-dimensional constructible set of complexity at most $M$.  Then $V$ is finite with cardinality $O_M(1)$.
\end{lemma}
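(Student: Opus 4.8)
The plan is to reduce, using Lemmas \ref{zar} and \ref{compos}, to the essentially trivial case of a finite subvariety of $\P^1(k)$, whose cardinality is visibly controlled by the degree of a single defining form. First I would pass to the Zariski closure: by Lemma \ref{zar} the Zariski closure $\overline{V}$ of $V$ is a projective variety of complexity $O_M(1)$, still of dimension $0$. Since a $0$-dimensional projective variety is a finite union of irreducible projective varieties of dimension $0$, each of which is a single point, $\overline V$ is a finite set, hence so is $V \subseteq \overline{V}$; this already gives the finiteness assertion. As $|V| \leq |\overline V|$, it suffices to bound $|\overline V|$, so I may assume $V$ is itself a projective variety of complexity at most $M$, lying in some $\P^n(k)$ with $n \leq M$.

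Next I would localise to an affine chart. Cover $\P^n(k)$ by the $n+1$ standard charts $\A^n_i(k)$. Each $V \cap \A^n_i(k)$ is a quasiprojective variety of complexity $O_M(1)$ (it is $V$ with one coordinate hyperplane removed), it is $0$-dimensional, and $V = \bigcup_{i=0}^n (V \cap \A^n_i(k))$; so up to the factor $n+1 \leq M+1$ I may assume $V \subseteq \A^n(k)$ is a finite quasiprojective variety of complexity $O_M(1)$. If $V$ is empty we are done; otherwise, since $k$ is algebraically closed and hence infinite, the (finite-dimensional $k$-vector) space of linear maps $\ell\colon \A^n(k)\to\A^1(k)$ is not covered by the finitely many proper subspaces $\{\ell : \ell(p-q)=0\}$ with $p\neq q$ in the finite set $V$, so I may pick such an $\ell$ that is injective on $V$. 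Such an $\ell$ is a regular map of complexity $O(1)$, so by Lemma \ref{compos} the image $\ell(V)$ is a constructible set of complexity $O_M(1)$, and by Lemma \ref{zar} its Zariski closure $W$ in $\P^1(k)$ is a projective variety of complexity at most some $M'=O_M(1)$. Moreover $W$ is finite (it contains the finite set $\ell(V)$ and is $0$-dimensional) and $|\ell(V)|=|V|$.

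It then remains to bound $|W|$ for $W \subseteq \P^1(k)$ a finite nonempty projective variety of complexity at most $M'$. Writing $W = \{x\in\P^1(k) : P_1(x)=\cdots=P_r(x)=0\}$ with the $P_j$ homogeneous of degree at most $M'$ in two variables, finiteness of $W$ forces some $P_j$ to be nonzero; as a nonzero homogeneous polynomial in two variables over the algebraically closed field $k$ factors into at most $M'$ linear forms, the variety $\{P_j=0\}$, and a fortiori $W$, has at most $M'$ points. Hence $|V| = |\ell(V)| \leq M' = O_M(1)$, which is the claim.

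I do not expect a serious obstacle: the argument is essentially bookkeeping of complexity through two reductions (Zariski closure, and image under a linear map), both licensed by Lemmas \ref{zar} and \ref{compos}, followed by the elementary count on $\P^1$. The only points needing a little care are checking that the affine charts and the chosen linear functional have bounded (indeed absolute) complexity, and keeping straight that a $0$-dimensional projective variety genuinely is a finite point set so that all the reductions make sense. As an alternative to the linear-functional step one could instead induct on the ambient dimension $n$, projecting away one coordinate at a time and bounding each finite fibre over $\A^1(k)$ by the same single-variable degree estimate; this works equally well but is marginally more cumbersome in the complexity bookkeeping.
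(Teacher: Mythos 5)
Your proof is correct, but it takes a genuinely different route from the paper's. The paper's proof of Lemma~\ref{bezout} is a one-paragraph compactness argument: if the lemma failed, one would have a sequence of $0$-dimensional constructible sets $V_\alpha$ of uniformly bounded complexity with $|V_\alpha|\to\infty$; passing to the ultralimit $V=\prod_{\alpha\to\alpha_\infty}V_\alpha$ and invoking continuity of dimension (Lemma~\ref{dimcont}) shows $V$ is $0$-dimensional, hence finite, which forces $|V_\alpha|=|V|$ to be bounded near $\alpha_\infty$ --- contradiction. Your argument instead reduces to $\P^1$ by a generic linear projection and applies the trivial univariate count. Both are valid, but note the logical bookkeeping: the paper's proof rests only on Lemma~\ref{dimcont}, while yours invokes Lemmas~\ref{zar} and~\ref{compos}, which in this paper are themselves proved by ultrafilters and which (via Lemma~\ref{cont-zarisk}, Lemma~\ref{cont-irred}, Theorem~\ref{klei}) sit logically \emph{above} \ref{dimcont} in the appendix. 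So your route is not more elementary in this framework; it actually leans on heavier machinery to prove what is essentially the simplest lemma in the chain. What it does buy is a transparent picture of where the bound comes from (a single binary form of degree $\leq M'$) and a clear path to a fully effective bound, should one later replace \ref{zar} and \ref{compos} by explicit versions. Two tiny quibbles: the complexity of your linear functional $\ell$ (and of the ambient chart $\A^n$) is $O_M(1)$ rather than $O(1)$ since $n\leq M$ enters; and the final inequality picks up the factor $n+1$ from the chart decomposition, so the conclusion is $|V|\leq (n+1)M'=O_M(1)$. Neither affects the argument.
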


\begin{proof} See Appendix \ref{compact-sec}.
\end{proof}

Recall that a property holds for \emph{generic} points in an variety\footnote{With this convention, if the variety is reducible, then any lower-dimensional components of the variety will not be considered generic.  Note that this is distinct from other conventional definitions of genericity in the literature.} if it holds at all points outside of a variety of strictly smaller dimension.  We introduce a quantitative notion of this concept:

\begin{definition}[Generic points]
If $M \geq 1$, and $V$ is a variety of complexity at most $M$, we say that a property $P(x)$ holds for \emph{$M$-generic points $x \in V$} if it holds for all points in $V$ outside of a subvariety $U$ of $V$ of complexity $M$ and dimension strictly less than that of $V$.  The set $V \backslash U$ will be called an \emph{$M$-Zariski open} subset of $V$.
\end{definition}

If $\phi: V \to W$ is a regular map, then it is well-known that $\phi(V)$ has dimension less than or equal to that of $V$.  Furthermore, the fibres $\phi^{-1}(x)$ generically have dimension $\dim(W)-\dim(V)$.  These facts behave well with respect to complexity, as the following lemma shows.

\begin{lemma}\label{dimlem}   Let $V, W$ be varieties of complexity at most $M$, and let $\phi: V \to W$ be a regular map of complexity at most $M$.  Then there exists an $O_M(1)$-Zariski open subset $V'$ of $V$, and a subvariety $W'$ of $W$ of dimension at most $\dim(V)$, with the following two properties:
\begin{enumerate}
\item \textup{(Generic mapping)} $\phi(V') \subset W'$.
\item \textup{(Generic fibres)}  For any $w \in W'$, the set $\{ v \in V': \phi(v)=w \}$ is a constructible set of complexity $O_M(1)$ and dimension at most $\dim(V) - \dim(W')$.
\end{enumerate}
We refer to $W'$ as an \emph{essential range} of $\phi$.

Finally, if $\phi: V \to W$ is a dominant map, then we may take $W'$ to be a $O_M(1)$-Zariski open subset of $W$.
\end{lemma}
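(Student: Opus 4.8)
The plan is to reduce to the case of a dominant morphism of irreducible varieties and then to upgrade the classical theorem on the dimension of fibres to a statement carrying complexity bounds.

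First I would make two reductions. Since both asserted properties only constrain the generic, hence top-dimensional, part of $V$, and since the irreducible components of $V$ are varieties of complexity $O_M(1)$ by Lemma~\ref{zar}, one may assume that $V$ is irreducible. Next, replace $W$ by the Zariski closure $W_0 := \overline{\phi(V)}$ of the image of $\phi$: by Lemma~\ref{compos} this is a variety of complexity $O_M(1)$, it is irreducible because $V$ is, and $\dim W_0 \le \dim V$; moreover $\phi : V \to W_0$ is dominant and still regular of complexity $O_M(1)$. Any pair $(V',W')$ produced for $\phi : V \to W_0$ works for the original map with $W' \subseteq W_0 \subseteq W$, while if $\phi$ was already dominant then $W_0 = W$, which is precisely the final clause. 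So it suffices to treat a dominant regular map $\phi : V \to W$ of complexity at most $M$ between irreducible varieties; write $n = \dim V$, $m = \dim W$, so $m \le n$.

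Here the classical theorem on the dimension of fibres provides a proper closed subvariety $Z \subsetneq W$ such that $\dim \phi^{-1}(w) \le n - m$ for every $w \in W \setminus Z$. Granting, for the moment, that $Z$ may be chosen of complexity $O_M(1)$, put $W' := W \setminus Z$ (so $\dim W' = m$) and $V' := \phi^{-1}(W')$. Property~(1) is then immediate. For property~(2), given $w \in W'$ one has $\{v \in V' : \phi(v) = w\} = \phi^{-1}(w)$, which is a constructible set of complexity $O_M(1)$ by Lemma~\ref{compos} and of dimension at most $n - m = \dim V - \dim W'$ by the choice of $Z$. Finally $V'$ really is a Zariski-open dense, hence $O_M(1)$-Zariski open, subset of $V$: if $\phi^{-1}(Z)$ were all of $V$ then, $V$ being irreducible, $\overline{\phi(V)} \subseteq Z \subsetneq W$, contradicting dominance; hence $V \setminus V' = \phi^{-1}(Z)$ is a proper closed subvariety of $V$, of complexity $O_M(1)$ again by Lemma~\ref{compos}. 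The last clause of the lemma is the case $W = W_0$, in which $W'$ is $O_M(1)$-Zariski open in $W$.

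The one substantive point, and the main obstacle, is that $Z$ (equivalently $W'$) may indeed be chosen of complexity $O_M(1)$: this is a quantitative form of the upper semicontinuity of fibre dimension and is not part of the classical statement. I would prove it by a compactness (ultrafilter) argument of the kind already used elsewhere in the paper. Were it false, then for each $j \in \N$ there would be an algebraically closed field $k_j$ and a dominant regular map $\phi_j : V_j \to W_j$ of complexity at most $M$ between irreducible varieties for which no proper closed subvariety of $W_j$ of complexity at most $j$ contains the locus where $\dim \phi_j^{-1}(w) > \dim V_j - \dim W_j$; passing to an ultraproduct over a non-principal ultrafilter on $\N$ produces a dominant regular map $\phi : V \to W$ of complexity at most $M$ over an algebraically closed field, to which the classical theorem applies, yielding a suitable $Z$ of some finite complexity $M_0$; descending $Z$ back to the factors and invoking {\L}o\'{s}'s theorem then contradicts the choice of $\phi_j$ for any $j > M_0$. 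The technical cost is to check that ``complexity at most $M$'', irreducibility, dominance and the conditions ``$\dim \phi^{-1}(w) \le e$'' pass through such ultraproducts of bounded-complexity data in both directions -- standard ``definability of dimension'' facts that nonetheless need to be set up with care. (Alternatively, one can run a direct elimination-theoretic argument with explicit degree bounds building on Lemmas~\ref{zar}, \ref{compos} and~\ref{bezout}, at the price of considerably more bookkeeping.)
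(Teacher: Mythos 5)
Your proof is correct and follows essentially the same strategy as the paper's: reduce to the classical theorem on fibre dimensions for a dominant map of irreducible varieties, and obtain the complexity bounds from a non-principal ultrafilter compactness argument using the "continuity" of dimension, irreducibility and dominance under ultraproducts of bounded-complexity data. The only real difference is organizational: the paper isolates a fully qualitative analogue (Lemma~\ref{dimlem-quali}, proven by the same reduction to the irreducible/dominant case and citation to Shafarevich~\S I.6.3) and then runs a single ultrafilter argument on the entire quantitative statement, whereas you carry the complexity bounds through the classical reductions explicitly and localize the compactness argument to the one genuinely non-definable point, namely that the bad locus $Z$ can be chosen of complexity $O_M(1)$; the latter makes it slightly clearer where compactness is actually doing work. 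One small correction: the fact that the irreducible components of a bounded-complexity variety themselves have bounded complexity (and that there are boundedly many of them) is Lemma~\ref{quivar}, not Lemma~\ref{zar}. Also note that your reduction to irreducible $V$ (``both properties only constrain the top-dimensional part'') glosses over the combination of components whose essential ranges have different dimensions, which can disturb the bound $\dim V - \dim W'$; however the paper's own proof of Lemma~\ref{dimlem-quali} dismisses this with the same ``follows by taking unions,'' so this is an inherited simplification rather than a new gap.
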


\begin{proof} See Appendix \ref{compact-sec}.
\end{proof}

\begin{remark} If $V$ is irreducible, then we can automatically make $\phi$ dominant by replacing $W$ with the Zariski closure of $\phi(V)$ (using Lemma \ref{compos} to keep the complexity bounded).  On the other hand, if $V$ is not irreducible, then the (Zariski closure of the) essential range need not be unique.  Indeed, consider the example $V = (\{0\} \times k) \cup \{(1,0)\} \subset \A^2(k)$ with the projection map $\phi: \A^2(k) \to \A^1(k)$ defined by $\phi(x,y) := \phi(x)$.  Then both $\{0\}$ and $\{0, 1\}$ would qualify as essential ranges.
\end{remark}

\begin{lemma}[Slicing lemma]\label{slice-lem}   Let $V, W$ be varieties of complexity at most $M$, and let $S$ be a subvariety of $V \times W$ of complexity at most $M$ and dimension strictly less than $\dim(V) + \dim(W)$.  Then for $O_M(1)$-generic $v \in V$, the set $\{ w \in W: (v,w) \in S \}$ is a constructible set of complexity $O_M(1)$ and dimension strictly less than $\dim(W)$.
\end{lemma}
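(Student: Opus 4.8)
The plan is to analyze the first-coordinate projection $\pi_1 : V \times W \to V$ restricted to $S$, and to read off the statement about the slices $S_v := \{ w \in W : (v,w) \in S \}$ from the generic fibre-dimension theory packaged in Lemma \ref{dimlem}. Write $\phi := \pi_1|_S : S \to V$; this is a regular map of complexity $O_M(1)$ by Lemma \ref{compos}, and under the identification $\{v\} \times W \cong W$ the slice $S_v$ is exactly the fibre $\phi^{-1}(v)$. So the task is to show that for $O_M(1)$-generic $v \in V$ the fibre $\phi^{-1}(v)$ is a constructible set of complexity $O_M(1)$ and dimension strictly less than $\dim W$.

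I would first reduce to the case that $S$ and $V$ are both irreducible, using that a variety of complexity at most $M$ decomposes into $O_M(1)$ irreducible closed subvarieties, each of complexity $O_M(1)$. Writing $S = \bigcup_j S_j$ and $V = \bigcup_i V^{(i)}$ in this way, it suffices to treat each pair $(S_j, V^{(i)})$: the hypothesis survives since $\dim(S_j \cap (V^{(i)} \times W)) \le \dim S < \dim V + \dim W = \dim V^{(i)} + \dim W$; an $O_M(1)$-generic point of $V$ lies in precisely one top-dimensional component and in no lower-dimensional one; and $S_v$ is the union of the $O_M(1)$ slices of the $S_j$, so a dimension bound for each slice valid off an $O_M(1)$-complexity proper subvariety yields one for $S_v$ off an $O_M(1)$-complexity proper subvariety of $V$.

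With $S$ and $V$ irreducible, let $V_0$ be the Zariski closure of $\phi(S)$ in $V$, a subvariety of complexity $O_M(1)$ by Lemmas \ref{compos} and \ref{zar}, with $\dim V_0 \le \dim S < \dim V + \dim W$. If $V_0 \ne V$ then $\dim V_0 < \dim V$ by irreducibility of $V$, so $V \setminus V_0$ is $O_M(1)$-Zariski open and $S_v = \emptyset$ there, which has dimension $-1 < \dim W$. Otherwise $\phi$ is dominant, and the dominant clause of Lemma \ref{dimlem} yields an $O_M(1)$-Zariski open $S' \subseteq S$ and an $O_M(1)$-Zariski open $V' \subseteq V$ such that for $v \in V'$ the set $\phi^{-1}(v) \cap S'$ has complexity $O_M(1)$ and dimension at most $\dim S - \dim V' = \dim S - \dim V$, which is $< \dim W$ by hypothesis. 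The remaining piece of the fibre comes from $S \setminus S'$, a subvariety of $V \times W$ of complexity $O_M(1)$ with $\dim(S \setminus S') < \dim S$; here I would induct on $\dim S$, the inductive hypothesis applying because $\dim(S \setminus S') < \dim S < \dim V + \dim W$. Intersecting the finitely many genericity conditions accumulated (the recursion has depth at most $\dim S + 2 \le M+2$, and each level contributes $O_M(1)$ of them) finishes the argument.

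The main obstacle is the interface bookkeeping: Lemma \ref{dimlem} controls fibres only over the essential range intersected with a Zariski-open subset of the domain, not over the whole target, so the possible reducibility of $V$ forces the dominant/non-dominant dichotomy, and the passage from $S'$ to all of $S$ forces the induction on $\dim S$ to dispose of the leftover lower-dimensional piece. One must then check that the $O_M(1)$ factor picked up at each of the $O_M(1)$ recursion levels compounds to a quantity that is still $O_M(1)$; this is routine but is the place where a little care is needed.
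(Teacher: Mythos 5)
Your argument is correct, but it takes a genuinely different route from the paper's. The paper proves Lemma \ref{slice-lem} in Appendix A by first establishing a purely \emph{qualitative} slicing lemma (Lemma \ref{slice-lem-quali}, with no complexity bounds) via the projection $\pi:S\to V$ and the qualitative dimension lemma, and then obtains the quantitative version with $O_M(1)$ bounds by the ``usual ultralimit argument'': a compactness step in which a hypothetical sequence of counterexamples of uniformly bounded complexity is packaged into an ultraproduct and shown to contradict the qualitative statement. That transfer is short and clean but, as the paper itself notes at the start of Appendix A, inherently ineffective. You instead carry the complexity bookkeeping through by hand: decompose $V$ and $S$ into $O_M(1)$ irreducible components of complexity $O_M(1)$ (this is Lemma \ref{quivar}), apply the dominant clause of Lemma \ref{dimlem} to $\pi_1|_S:S\to V$ to control the fibres over an $O_M(1)$-Zariski open subset and intersected with an $O_M(1)$-Zariski open $S'\subseteq S$, and dispose of the residual $S\setminus S'$ by an induction on $\dim S$. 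This is exactly the ``more involved effective analogue'' the paper alludes to but chooses not to carry out. What you buy is, in principle, explicit constants; what you pay is the induction on $\dim S$ together with the need to track how the complexity bound grows at each of the $O_M(1)$ recursion levels. You correctly flag that latter bookkeeping as the delicate spot: formally one should prove, by induction on $d=\dim S$, a statement of the form ``there is a function $f_d$ with the lemma holding with $O_M(1)$ replaced by $f_d(M)$,'' noting that passing to $S\setminus S'$ replaces $M$ by some $g(M)$ and that $d\leq O_M(1)$, so $f_{\dim S}(M)=O_M(1)$; the argument does not become circular because each level works uniformly in the complexity parameter. One further small point worth tightening: after decomposing, the sets $S_j\cap (V^{(i)}\times W)$ need not themselves be irreducible, so strictly speaking you should decompose them once more before entering the irreducible-case argument; this costs only another $O_M(1)$ factor and is in the same spirit as what you already wrote.
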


\begin{proof} See Appendix \ref{compact-sec}.
\end{proof}

\textsc{Algebraic groups.} We now recall the definition of an algebraic group, again with a notion of complexity attached to it.

\begin{definition}[Algebraic groups]\label{algdef}  Let $M\geq 1$.  An \emph{algebraic group} $G$ of complexity at most $M$ over an algebraically closed field $k$ is a variety $G$ of complexity at most $M$ which is also a group, with the group operations $\cdot: G \times G \to G$ and $()^{-1}: G\to G$ given by regular maps of complexity at most $M$.  If $G$ is an irreducible variety, we say that $G$ is a \emph{connected} algebraic group.
\end{definition}

\emph{Remark.} Recall that we are taking a classical algebraic geometry viewpoint here, so that our algebraic groups $G$ are not abstract, but instead come equipped with an embedding into affine or projective space.  This is necessary in order to make the notion of complexity well-defined.

\emph{Remark.} Our main theorem Theorem \ref{mainthm2} deals with \emph{linear} algebraic groups, that is algebraic groups whose underlying algebraic variety is affine. Nevertheless a fair amount of what we do in this paper, all of \S 4 and Appendix \ref{compact-sec}, and in particular the key Larsen-Pink inequality, hold for arbitrary algebraic groups including for instance abelian varieties.

\emph{Remark.} If a connected linear algebraic group is nilpotent or semisimple, and the underlying field has characteristic zero, then it is possible to show that after a change of variables (which may have unbounded complexity), one can find an isomorphic copy of this group whose complexity is bounded in terms of its dimension only.  However this is not true in general, and in particular fails for solvable groups over $\C$.  To see this, consider, for each $k \in \N$, the subgroup $G_k$ of $\GL_3(\C)$ defined by
\[ G_k := \left\{ \begin{pmatrix} x & 0 & 0  \\  0 & x^k & t \\ 0 & 0 & 1 \end{pmatrix} : x \in \C^{\times}, t \in \C \right\}.\]
One easily checks that $G_k$ is a connected solvable algebraic group of dimension $2$ whose center is isomorphic to the group of $k$-roots of unity. Lemma \ref{centralem} below shows that bounded complexity algebraic groups have a bounded complexity center. Thus the complexity of $G_k$ is not bounded as $k$ grows.  As this argument was purely algebraic, it also shows that any isomorphic copy of $G_k$ must also have complexity that is unbounded in $k$.

In the characteristic zero case, it turns out that such solvable examples, consisting entirely of upper-triangular examples, are essentially the \emph{only} way in which unbounded complexity of an algebraic subgroup can occur; we will formalise this observation (which greatly simplifies the proof of Theorem \ref{mainthm3}) in Section \ref{freiman-sec2}.



Given two varieties $V, W$ in $G$ of complexity at most $M$, the product set $V \cdot W$ is the image of $V \times W$ under the regular product map $\cdot: G \times G \to G$, and is thus a constructible set of complexity $O_M(1)$ by Lemma \ref{compos}. Furthermore, from Lemma \ref{dimlem} we can construct an essential range of the product map $\cdot: V \times W \to G$; we call such a range an \emph{essential product} of $V$ and $W$.

If $V$ is a subvariety of $G$, we observe from the invertibility of the conjugation map $g \mapsto a^{-1}ga$ in the category of regular maps that $V^a := \{ a^{-1}ga: g \in V \}$ is also a subvariety of $G$ of the same dimension.  From Lemma \ref{compos} we see that if $V, G$ have complexity at most $M$, then $V^a$ has complexity $O_M(1)$.  In a similar spirit, we have the following.

\begin{lemma}\label{centralem}  If $G$ is an algebraic group of complexity at most $M$, then for each $a \in G$, the
\emph{conjugacy class}
$$ a^G := \{ g^{-1} a g: g \in G \}$$
and the \emph{centraliser}
$$ Z(a) := \{ g \in G: ga = ag \}$$
are constructible sets of complexity $O_M(1)$.  Similarly, if $H$ is an algebraic subgroup of $G$ of complexity at most $M$, then the \emph{normaliser}
$$ N(H) := \{ g \in G: g^{-1} H g = H \}$$
and \emph{centraliser}
$$ Z(H) := \{ g \in G: gh=hg \hbox{ for all } h \in H \}$$
are algebraic subgroups of complexity at most $O_M(1)$.
\end{lemma}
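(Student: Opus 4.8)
The plan is to realise each of the four sets as an image or preimage of a bounded-complexity set under a regular map of bounded complexity built from the group operations on $G$, and to read off all complexity bounds from Lemmas \ref{zar} and \ref{compos}. The one genuine difficulty is that $Z(H)$ and $N(H)$ are a priori intersections of one condition for each element of $H$, with no obvious bound on how many conditions are needed; I will get around this with an incidence-variety argument.

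First, for $a^G$ and $Z(a)$: the conjugation map $c_a\colon g\mapsto g^{-1}ag$ is a composition of the inversion map, the constant map $g\mapsto a$, the diagonal $g\mapsto(g,g)$, and the multiplication map, each regular of complexity $O_M(1)$, so $c_a$ is regular of complexity $O_M(1)$ by Lemma \ref{compos}, whence $a^G=c_a(G)$ is constructible of complexity $O_M(1)$ by the same lemma. Likewise $f\colon g\mapsto gag^{-1}a^{-1}$ is regular of complexity $O_M(1)$, and $Z(a)=f^{-1}(\{\id\})$ is the preimage of the variety $\{\id\}$ of complexity $O(1)$, hence constructible of complexity $O_M(1)$ by Lemma \ref{compos}.

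Next, for $Z(H)$, introduce the incidence set $W:=\{(g,h)\in G\times H:gh=hg\}$. Since $G\times H$ is a variety of complexity $O_M(1)$ and $W$ is the preimage of $\{\id\}$ under the regular map $(g,h)\mapsto ghg^{-1}h^{-1}$ of complexity $O_M(1)$, $W$ is a closed subvariety of $G\times H$ of complexity $O_M(1)$. Now $g\notin Z(H)$ precisely when $(g,h)\notin W$ for some $h\in H$, i.e. when $g$ lies in the image of the quasiprojective variety $(G\times H)\setminus W$ under the first projection $\pi\colon G\times H\to G$ (regular of complexity $O(1)$); by Lemma \ref{compos} this image is constructible of complexity $O_M(1)$, and therefore so is $Z(H)=G\setminus\pi((G\times H)\setminus W)$. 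Moreover $Z(H)=\bigcap_{h\in H}Z(h)$ is an intersection of Zariski-closed sets, hence Zariski closed, so it equals its own Zariski closure, which has complexity $O_M(1)$ by Lemma \ref{zar}; thus $Z(H)$ is a variety of complexity $O_M(1)$. Finally $Z(H)$ is a subgroup of $G$, and the ambient multiplication and inversion maps restrict to it as regular maps of complexity $O_M(1)$ by Lemma \ref{compos}, so $Z(H)$ is an algebraic subgroup of complexity $O_M(1)$.

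For $N(H)$, observe that $g^{-1}Hg=H$ if and only if $g^{-1}Hg\subseteq H$ and $gHg^{-1}\subseteq H$, so $N(H)=N_1\cap N_2$ with $N_1:=\{g:g^{-1}Hg\subseteq H\}$ and $N_2:=\{g:gHg^{-1}\subseteq H\}$. The set $N_1$ has exactly the shape just treated: with $W_1:=\{(g,h)\in G\times H:g^{-1}hg\in H\}$, which is the preimage of the closed variety $H$ under a regular map of complexity $O_M(1)$ and hence closed of complexity $O_M(1)$, one has $N_1=G\setminus\pi((G\times H)\setminus W_1)$, constructible of complexity $O_M(1)$, while $N_1=\bigcap_{h\in H}\{g:g^{-1}hg\in H\}$ is Zariski closed; $N_2$ is symmetric. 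Hence $N(H)=N_1\cap N_2$ is a Zariski-closed constructible set of complexity $O_M(1)$ and, exactly as for $Z(H)$, an algebraic subgroup of complexity $O_M(1)$. The step I expect to be the main obstacle is precisely this conversion, via the projection of the complement of the incidence variety, of the $H$-indexed conjunction defining $Z(H)$ and $N(H)$ into a single bounded-complexity operation; everything else is routine bookkeeping with Lemmas \ref{zar} and \ref{compos}.
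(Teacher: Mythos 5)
Your proof is correct, but it takes a genuinely different route from the paper's. The paper disposes of Lemma \ref{centralem} in one sentence in Appendix \ref{compact-sec}: it invokes the classical qualitative facts (that $a^G$, $Z(a)$ are constructible and $N(H)$, $Z(H)$ are closed subgroups) and then extracts the uniform complexity bound by the ``usual ultralimit argument,'' i.e.\ assuming a sequence of counterexamples with complexities tending to infinity, passing to ultraproducts via Lemma \ref{ultralim-gp}, and deriving a contradiction from the qualitative statement over the ultraproduct field. That argument is short but inherently ineffective. You instead give a direct construction: for the $a$-indexed sets you express them as an image and a preimage under a single bounded-complexity regular map and invoke Lemma \ref{compos}; for the $H$-indexed sets you correctly identify the real obstacle (an intersection over all $h\in H$, with no visible bound on the number of conditions) and resolve it with the incidence-variety device $Z(H)=G\setminus\pi\bigl((G\times H)\setminus W\bigr)$ and its analogue for the normaliser, converting the infinite conjunction into one projection of one bounded-complexity quasiprojective variety. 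You also correctly split $N(H)=N_1\cap N_2$ into the two one-sided containments, which is needed because $g^{-1}Hg\subseteq H$ alone does not give equality when $H$ is disconnected. What your route buys is effectivity and self-containment relative to Lemmas \ref{zar} and \ref{compos}: it gives, in principle, computable constants without re-invoking the ultrafilter mechanism in this particular lemma. What the paper's route buys is brevity and uniformity of method, since the same ultralimit template is being applied across all the appendix lemmas, and effectivity was deliberately sacrificed throughout. Both are valid; yours is the argument one would write if one wanted an effective Appendix \ref{compact-sec}.
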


\begin{proof} See Appendix \ref{compact-sec}.
\end{proof}

It is a standard fact that the Zariski closure of a group is an algebraic group.  The following lemma records a quantitative variant of this fact. It has been used in the past in several places in the literature, in particular in the work of Eskin-Mozes-Oh \cite{emo} on uniform exponential growth for linear groups (see also \cite{breuillard-gelander}). It was then put to use in additive combinatorics by Helfgott in \cite{helfgott-sl3} who called it ``escape from subvarieties".

\begin{lemma}[Escape from subvarieties]\label{zark}  For every $M$ there exists an integer $m \geq 1$ such that the following statement holds: for every algebraic group $G$ of complexity at most $M$, every subvariety $V$ of $G$ of complexity at most $M$, and every symmetric subset $A$ of $G$ containing $\id$ such that $A^m \subset V$, we have $A \subset H$ for some algebraic subgroup $H$ of $G$ contained in $V$ of complexity $O_M(1)$.
\end{lemma}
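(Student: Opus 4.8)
The plan is to let $H$ be the Zariski closure of the group $\langle A \rangle$ generated by $A$. This is an algebraic subgroup of $G$ by the standard fact that the Zariski closure of a subgroup is a subgroup, and the content of the lemma is twofold: that $H$ has complexity $O_M(1)$, and that $H \subset V$ provided $m$ is chosen large enough in terms of $M$. The complexity bound is the easier half: since $A$ is symmetric and contains $\id$, the sets $A^j$ are nondecreasing in $j$, and one runs the usual ascending-chain argument on Zariski closures. Concretely, let $H_j$ denote the Zariski closure of $A^j$; by Lemma \ref{zar} each $H_j$ has complexity $O_M(1)$ — crucially \emph{uniformly} in $j$, since $A^j \subset G$ and $G$ has complexity $M$, so $H_j$ is the Zariski closure of a subset of the fixed bounded-complexity variety $G$. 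The $H_j$ form an increasing chain of closed subvarieties of $G$, each of complexity $O_M(1)$; since subvarieties of $G$ of complexity $O_M(1)$ satisfy a descending/ascending chain condition with a bound on the chain length depending only on $M$ (this is where a Noetherianity-with-complexity statement is needed — it follows from the dimension theory and Weak Bezout, Lemma \ref{bezout}, applied componentwise), the chain stabilises at some $H_{m_0}$ with $m_0 = O_M(1)$. One checks that the stabilised set $H := H_{m_0}$ is a group: it is closed under multiplication because $H_{m_0} \cdot H_{m_0}$ has Zariski closure contained in $H_{2m_0} = H_{m_0}$ (using that Zariski closure is compatible with products, Lemma \ref{compos}, and that the product of the closures is contained in the closure of the product), and it is symmetric and contains $\id$ since each $A^j$ is.

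Now choose $m := m_0$ (or $2m_0$ to be safe). The hypothesis $A^m \subset V$ then gives $A^{m_0} \subset V$, and since $H = H_{m_0}$ is the Zariski closure of $A^{m_0}$ while $V$ is a variety (hence Zariski closed in itself), we get $H \subset \overline{A^{m_0}}^{\,V} \subset V$. Finally $A \subset A^{m_0} \subset H$, and $H$ has complexity $O_M(1)$ by the above. The integer $m$ produced depends only on $M$, as required.

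The main obstacle is making the ascending-chain argument \emph{quantitative}: one must know that any strictly increasing chain of closed subvarieties of $G$, each of complexity at most $O_M(1)$, has length bounded purely in terms of $M$, so that the stabilisation index $m_0$ is $O_M(1)$. Abstract Noetherianity gives finiteness of such chains but not a uniform bound; here one instead argues that at each step of the chain either the dimension drops or the number of irreducible components (of bounded complexity, hence bounded number and bounded degree by Weak Bezout) changes in a controlled way, yielding a bound on the total chain length in terms of $\dim(G) \le M$ and the complexity bound. This is precisely the kind of "algebraic geometry with complexity" statement whose proof is deferred to Appendix \ref{compact-sec}, so in the body of the paper one simply invokes it. A secondary point requiring a little care is verifying that the stabilised $H_{m_0}$ is genuinely closed under multiplication: this uses that $\overline{A^{m_0} \cdot A^{m_0}} \subset \overline{A^{2m_0}}$ together with the inclusion $\overline{A^{m_0}} \cdot \overline{A^{m_0}} \subset \overline{A^{m_0} \cdot A^{m_0}}$, the latter being a standard consequence of the continuity of multiplication in the Zariski topology (formalised via Lemma \ref{compos}).
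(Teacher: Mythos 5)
Your plan — take $H$ to be the Zariski closure of $\langle A\rangle$ and bound its complexity by a quantitative ascending-chain argument on $H_j := \overline{A^j}$ — has a genuine gap at the very first step, namely the assertion that each $H_j$ has complexity $O_M(1)$ uniformly in $j$. This does not follow from Lemma~\ref{zar}. That lemma takes as input a \emph{constructible set of complexity at most $M$} and returns a bounded-complexity closure; but $A^j$ is an arbitrary subset of $G$, not a constructible set of bounded complexity, and its natural description as a constructible set has complexity comparable to $|A^j|$. The Zariski closure of an arbitrary subset of a bounded-complexity variety can have arbitrarily high complexity. Concretely, take $G = V = \mathbb{G}_m = k^\times$ (complexity $O(1)$) and $A = \mu_N$, the $N$-th roots of unity, with $N$ large. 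Then $A^j = A$ for every $j \geq 1$, so your chain stabilises instantly at $H_1 = \overline{\langle A\rangle} = \mu_N$, which is cut out by $x^N - 1$ and has complexity $\sim N$, not $O_M(1)$. Nothing in your argument detects this, and so neither the uniform bound on the $H_j$ nor the quantitative ACC step (which presupposes that bound) can be carried out.

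The same example shows a second, more structural, problem: the correct $H$ in Lemma~\ref{zark} is \emph{not} $\overline{\langle A\rangle}$ in general. In the example one must take $H = \mathbb{G}_m$ itself — a strictly larger, bounded-complexity subgroup enveloping $\mu_N$ — and producing such an $H$ requires an extra ``saturation'' step that your argument does not contain. This is precisely what the ultrafilter proof in Appendix~\ref{compact-sec} does for free: after passing to the ultraproduct, $\overline{\langle A\rangle}$ becomes a single variety over the limit field of \emph{some} finite complexity $N$ (finite by definition of a variety, not a priori controlled in terms of $M$, but fixed), and undoing the ultralimit as in Lemma~\ref{ultralim-gp} automatically yields subgroups $H_\alpha$ of complexity $\leq N$ uniformly, which in the example are equal to $\mathbb{G}_m$ rather than to $\mu_{N_\alpha}$. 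In other words, the ultraproduct replaces the sequence of ``too-small'' closures $\overline{\langle A_\alpha\rangle}$ by the correct bounded-complexity envelopes. A direct effective proof in the spirit of your sketch is possible (this is roughly the route of Eskin--Mozes--Oh and Breuillard--Gelander cited just before the lemma), but it needs a more careful bookkeeping than the naive chain $\overline{A} \subset \overline{A^2} \subset \cdots$, precisely because the closures themselves do not have bounded complexity.
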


\begin{proof} See Appendix \ref{compact-sec}.
\end{proof}

The (almost) simplicity assumption in our main theorem, Theorem \ref{mainthm2}, will be used in a key way via the following lemma.

\begin{lemma}[Product-conjugation phenomenon for varieties]\label{escape}  Let $G$ be an algebraic group of complexity at most $M$ for some $M \geq 1$.  Let $V, W$ be algebraic varieties in $G$ of complexity at most $M$ such that
$$ 0 < \dim(V), \dim(W) < \dim(G).$$
Then at least one of the following holds:
\begin{enumerate}
\item \textup{(}$G$ is not sufficiently almost simple\textup{)} $G$ contains a proper normal algebraic subgroup $H$ of complexity $O_M(1)$ and positive dimension.
\item \textup{(}Product-conjugation phenomenon\textup{)} For $O_M(1)$-generic $a \in G$, there exists an essential product $(V^a \cdot W)^{\ess}$ of $V^a := a^{-1}Va$ and $W$ of dimension strictly greater than $\dim(W)$.
\end{enumerate}
\end{lemma}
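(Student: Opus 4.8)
The plan is to argue by contradiction: assume that conclusion (1) fails, so that $G$ has no proper normal algebraic subgroup of complexity $O_M(1)$ and positive dimension, and show that conclusion (2) must then hold. The key object to study is the map $\mu\colon G \times V \times W \to G$ defined by $\mu(a,v,w) := a^{-1}va \cdot w$, together with its restriction to each slice $\{a\}\times V\times W$. For a fixed $a$, the closure of $\mu(\{a\}\times V\times W)$ is exactly the closure of the essential product $(V^a\cdot W)^{\ess}$; since $\dim(V^a) = \dim(V)$, Lemma~\ref{dimlem} gives that this dimension is at least $\dim(W)$ for $O_M(1)$-generic $a$. So the real task is to rule out the possibility that $\dim(V^a \cdot W) = \dim(W)$ for generic $a$, i.e., that multiplying $W$ on the left by \emph{any} generic conjugate of $V$ fails to increase the dimension.

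First I would reduce to the case where $V$ and $W$ are irreducible: pass to an irreducible component of $V$ of maximal dimension and likewise for $W$ (this keeps complexity $O_M(1)$ and preserves the dimension bounds $0 < \dim(V),\dim(W) < \dim(G)$), and note that an essential product of the components embeds in an essential product of the originals up to the genericity fudge. Next, suppose for contradiction that for $O_M(1)$-generic $a\in G$ we have $\dim\overline{V^a\cdot W} = \dim(W)$. Fix a generic point $w_0\in W$; then for generic $a$ the set $V^a\cdot w_0$ has dimension $\dim(V)>0$ and lies inside $\overline{V^a\cdot W}$, a variety of dimension $\dim(W)$. Consider the set
\[
\Sigma := \{(a,w)\in G\times W : V^a\cdot w \subseteq \overline{V^a\cdot W}\,\}.
\]
The point is to extract, from the failure to grow, a bounded-complexity subvariety of $G$ that is invariant under left and right translation by a positive-dimensional set, and hence to manufacture a proper normal subgroup of positive dimension, contradicting the failure of (1). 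Concretely, the stabiliser-type argument runs as follows: if left multiplication by every generic conjugate $a^{-1}Va$ preserves the variety $U := \overline{V^a\cdot W}$ up to dimension, then $U$ is (generically) a union of cosets of the subgroup generated by $\langle (V^a)^{-1}V^a\rangle$ for varying $a$; by Lemma~\ref{zark} (escape from subvarieties), applied to a suitable symmetric generating set built from $V$ and its conjugates, one obtains an algebraic subgroup $H$ of complexity $O_M(1)$ with $\dim(H)\geq \dim(V)>0$. The fact that $H$ is built symmetrically from \emph{all} generic conjugates of $V$ forces $H$ to be normalised by a Zariski-dense subset of $G$, hence normal; and since $\dim(V)<\dim(G)$ one checks $H$ is proper — giving conclusion (1), a contradiction.

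The main obstacle I anticipate is making the ``stabiliser'' step precise with bounded complexity and correctly identifying which subgroup to feed into Lemma~\ref{zark}. The naive statement ``$V^a\cdot W$ has the same dimension as $W$ for all generic $a$'' does not immediately say $W$ is a coset of a subgroup; one must instead look at the generic fibres of the multiplication map and use a Chevalley/upper-semicontinuity argument (Lemma~\ref{dimlem}) to see that the ``stabiliser'' $\{g : gW \subseteq \overline{\text{something}}\}$ is a genuine bounded-complexity algebraic subgroup containing a translate of $V^a$, uniformly in $a$. Then the delicate point is promoting ``invariant under each individual generic conjugate of $V$'' to ``normalised by all of $G$'': here one uses that the subgroup $H$ produced is canonically associated to $G,V,W$ (it is the closure of the group generated by $\bigcup_a (V^a)^{-1}V^a$ over generic $a$), so it is automatically $\Aut$-invariant and in particular $\Inn(G)$-invariant, hence normal. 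Once that is in hand, the dimension count $0<\dim(H)$ and $H\neq G$ (which follows since $H$ is contained in a variety built from $W$, which has dimension $<\dim(G)$, or alternatively since $V^a\cdot W\neq G$) completes the dichotomy.
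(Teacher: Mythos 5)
Your high-level strategy — assume (1) fails, suppose for contradiction that the essential product does not grow for generic $a$, extract a positive-dimensional stabiliser-type subgroup, and then argue that it must be normal — is essentially the same strategy as the paper's. But there are two genuine gaps in the execution, and one structural mismatch.

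The first gap is at the very heart of the argument: you never actually produce the positive-dimensional subgroup. The set you introduce, $\Sigma = \{(a,w) : V^a\cdot w \subseteq \overline{V^a\cdot W}\}$, is vacuously all of $G\times W$ (since $V^a\cdot w \subseteq V^a\cdot W$ always), so it carries no information. Likewise the assertion ``$U$ is a union of cosets of $\langle (V^a)^{-1}V^a\rangle$'' is stated but not derived. The step that does the work in the paper is the following finiteness observation: for fixed $a$, each translate $v^a W$ (with $v^a\in V^a$) is an irreducible variety of dimension $\dim(W)$; if the family $\{v^a W : v^a\in V^a\}$ were infinite, any essential product $(V^a\cdot W)^{\mathrm{ess}}$ would have dimension strictly larger than $\dim(W)$. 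Hence the family is finite, a constructibility/pigeonhole argument shows $v^a W$ is a fixed $W'$ for generic $v\in V$, and therefore $(v^{-1}v')^a W = W$ for generic $v,v'\in V$. Only at this point do you get your hands on the genuine algebraic subgroup $S=\{g\in G: gW = W\}$, together with the lower bound $\dim(S)>0$ (since $S^{a^{-1}}$ contains the positive-dimensional set of generic $v^{-1}v'$). Without this finiteness step, the link from ``dimension fails to grow'' to ``there is a stabiliser of positive dimension'' is missing.

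The second gap is the normality claim. Saying $H$ is ``canonically associated to $G,V,W$'' and therefore ``automatically $\mathrm{Aut}$-invariant, hence normal'' does not work as stated: the stabiliser $S$ itself depends on $W$ and is certainly not normal, and there is no canonical single subgroup in sight — there is a whole family of conjugates $S^{a^{-1}}$. The paper handles this by a Noetherian minimality trick: consider finite intersections of the $S^{a^{-1}}$ (over good $a$), take a minimal such intersection $H$; then $H^a$ is again such an intersection for generic $a$, minimality forces $H=H^a$ for generic $a$, so $N(H)$ is Zariski dense and closed, hence $N(H)=G$ and $H$ is normal. This is the part of the argument your sketch replaces with heuristics.

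Finally, a structural point worth knowing: you are trying to keep complexity bounds throughout (invoking Lemma~\ref{zark} to manufacture bounded-complexity subgroups). The paper instead proves a purely qualitative version (Lemma~\ref{escape-2}, with no complexity bounds at all, using the Noetherian property freely) and then obtains the $O_M(1)$ complexity bounds uniformly by the ultrafilter transfer argument of the appendix. In particular the paper never needs to bound the complexity of $S$ directly — which is fortunate, since $S$ is defined as the (a priori infinite) intersection $\bigcap_{w\in W}\{g: gw\in W\}$ and its complexity is not manifestly controlled. Your idea of feeding $(V^a)^{-1}V^a$ into Lemma~\ref{zark} could conceivably be made to work for a more effective argument, but you would first need to establish that its powers stay inside a fixed bounded-complexity variety, which again requires the finiteness-of-translates step you skipped.
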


\begin{proof} See Appendix \ref{compact-sec}.
\end{proof}

In practice, Lemma \ref{escape} only becomes useful when $G$ is almost simple (and in particular, when $G$ is connected).

We use the terminology ``product-conjugation phenomenon'' here in analogy with the ``sum-product phenomenon'' in fields.  The latter phenomenon asserts, roughly speaking, that the only way for a finite subset of a field to be approximately closed under both addition and multiplication is if it is essentially a subfield of the original field.  Similarly, the product-conjugation phenomenon refers to the heuristic that the only (algebraic) sets in an algebraic group that are approximately closed under both products and conjugation are normal algebraic subgroups.  This tension between the product operation and the conjugation operation is the key to our methods for controlling approximate groups, particularly in Lemma \ref{crucial}, in which the behaviour of the intersection of an approximate subgroup with a maximal torus is studied with respect to conjugation of that torus.

\section{The Larsen-Pink inequality for approximate subgroups}\label{larsen-pink-sec}

Suppose that $G$ is a simple algebraic group over some field and that $A \subseteq G$ is a $K$-approximate subgroup of $G$ which is ``sufficiently dense'' in $G$. The Larsen-Pink inequality asserts, roughly speaking, that one has a bound of shape $|A \cap V| \ll K^{O(1)} |A|^{\dim V/\dim G}$ for all bounded complexity subvarieties $V \leq G$. Here is a precise statement of this fact, which is fundamental to our work.

\begin{theorem}[Larsen-Pink inequality for symmetric subsets]\label{lpi}  Let $M \geq 1$, let $k$ be an algebraically closed field, and let $G$ be an algebraic group of over $k$ complexity at most $M$ and positive dimension.  Let $A$ be a symmetric subset of $G$.  Then at least one of the following statements hold:
\begin{enumerate}
\item[(i)] \textup{(}$G$ is not sufficiently almost simple\textup{)} $G$ contains a proper normal algebraic subgroup $H$ of complexity $O_M(1)$ and positive dimension;
\item[(ii)] \textup{(}$A$ is not sufficiently Zariski dense\textup{)} $A$ is contained in a subvariety of $G$ of complexity $O_M(1)$ and dimension strictly less than $G$;
\item[(iii)] For every subvariety $V$ of $G$ of complexity at most $M$, one has
\begin{equation}\label{av}
 |A \cap V| \leq C |A^C|^{\dim V/\dim G}
\end{equation}
for some $C = O_M(1)$.
\end{enumerate}
\end{theorem}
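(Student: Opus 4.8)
The plan is to run an induction on $\dim V$. The case $\dim V = 0$ follows immediately from Weak Bezout (Lemma \ref{bezout}): a $0$-dimensional subvariety of bounded complexity is finite of size $O_M(1)$, so $|A \cap V| = O_M(1) \leq C |A^C|^0$. Now suppose $1 \leq \dim V < \dim G$ and that \eqref{av} has been established for all bounded-complexity subvarieties of strictly smaller dimension. We are also free to assume we are not in case (ii), so $A$ is Zariski dense in $G$ (its Zariski closure is an algebraic group of bounded complexity by escape from subvarieties, Lemma \ref{zark}, that contains $A$ and is not contained in any proper subvariety of bounded complexity; one checks it must be all of $G$). By replacing $V$ with the translates needed and using symmetry of $A$, we may also assume $\id \in V$ and that $V$ is irreducible of positive dimension.

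The heart of the argument is to exploit the product--conjugation phenomenon, Lemma \ref{escape}. Apply that lemma with $W := V$ and (a suitable bounded-complexity subvariety containing) $V$. Since we are not in case (i), conclusion (2) of Lemma \ref{escape} holds: for $O_M(1)$-generic $a \in G$, there is an essential product $(V^a \cdot V)^{\ess}$ of dimension strictly greater than $\dim V$. The key observation is the containment
\begin{equation*}
(A \cap V)^a \cdot (A \cap V) \subseteq A^{C} \cap (V^a \cdot V),
\end{equation*}
valid for every $a \in A$ (here $(A\cap V)^a \subseteq A^3$ since $a \in A$ and $A$ is symmetric, so the left side lies in $A^C$). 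We want to choose $a \in A$ so that $a$ is also generic in the sense of Lemma \ref{escape}; this is possible because the set of non-generic $a$ lies in a subvariety of bounded complexity and strictly smaller dimension than $G$, and $A$ is Zariski dense in $G$ — in fact, by the escape lemma applied to $A$ and this bad subvariety, a bounded power $A^m$ contains an element avoiding it, and after passing from $A$ to $A^m$ (which only changes $K$-powers by $O_M(1)$ factors) we may pick such an $a \in A$.

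Once $a$ is fixed, stratify $V^a \cdot V$ into fibres of the product map $V^a \times V \to V^a \cdot V$. By Lemma \ref{dimlem}, over the essential range $(V^a \cdot V)^{\ess}$ the generic fibre has dimension $2\dim V - \dim((V^a\cdot V)^{\ess}) \leq 2\dim V - (\dim V + 1) = \dim V - 1$, and the non-generic locus of the base has strictly smaller dimension. A counting/fibering argument (this is the step modelled on sum-product proofs and is the main obstacle) then gives
\begin{equation*}
|(A\cap V)^a \cdot (A \cap V)| \gtrsim \frac{|A \cap V|^2}{\max_{w} |(A\cap V)^a \times (A\cap V) \cap \{\text{fibre over }w\}|},
\end{equation*}
and each fibre, being a bounded-complexity subvariety of $V^a \times V$ of dimension at most $\dim V - 1$, meets $A^C \times A^C$ in at most $C|A^C|^{(\dim V - 1)/\dim G}$ points by the inductive hypothesis (applied in $G \times G$, which is again almost simple-free of the relevant type, or more carefully by slicing via Lemma \ref{slice-lem} to reduce back to $G$). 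Combining, $|A \cap V|^2 \lesssim |A^C| \cdot |A^C|^{(\dim V - 1)/\dim G}$, whence $|A\cap V|^2 \lesssim |A^C|^{1 + (\dim V-1)/\dim G} \leq |A^C|^{2\dim V/\dim G}$ using $1 \leq \dim V$ and $\dim V \leq \dim G$ (so that $1 + (\dim V - 1)/\dim G \leq 2\dim V/\dim G$). Taking square roots gives \eqref{av}. The main obstacle, as noted, is making the fibering-count rigorous with all complexities controlled — in particular handling the contribution of the non-generic locus of the base and the non-generic fibres, which requires a second application of the inductive hypothesis to lower-dimensional pieces — and keeping track that all the auxiliary varieties produced have complexity $O_M(1)$ via Lemmas \ref{zar}, \ref{compos}, and \ref{dimlem}.
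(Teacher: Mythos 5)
The proposal correctly identifies the key mechanism (apply Lemma \ref{escape} to raise the dimension of the image and lower the fibre dimension, then use a pigeonhole/fibering count), but the exponent bookkeeping in the final step is wrong, and this breaks the induction. You derive
\[
|A \cap V|^2 \lesssim |A^C|^{\,1 + (\dim V - 1)/\dim G},
\]
and then claim $1 + (\dim V - 1)/\dim G \leq 2\dim V/\dim G$ ``using $1 \leq \dim V$ and $\dim V \leq \dim G$.'' Clearing denominators, that inequality is $\dim G + \dim V - 1 \leq 2 \dim V$, i.e. $\dim V \geq \dim G - 1$. So it holds \emph{only} when $V$ is a hypersurface; for $\dim V < \dim G - 1$ the deduced exponent $(\dim G + \dim V - 1)/(2\dim G)$ is strictly larger than the target $\dim V / \dim G$. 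Concretely, with $\dim G = 3$ and $\dim V = 1$ your argument yields $|A \cap V| \lesssim |A^C|^{1/2}$, which is weaker than the required $|A^C|^{1/3}$. The reason is that you bound the product set $(A \cap V)^a \cdot (A \cap V)$ crudely by $|A^C|$, whereas one needs the (not yet known) Larsen--Pink bound for the \emph{higher}-dimensional image variety $(V^a \cdot V)^{\ess}$ to make the exponents close. Your single induction on $\dim V$ cannot supply that bound, since it only controls lower dimensions.

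This is precisely why the paper does not induct directly on $\dim V$. Instead it tracks a \emph{pair} $(V^-, V^+)$, initialised to $(V,V)$, and Lemma \ref{ind-lp} replaces it by a new pair $(\tilde V^-, \tilde V^+)$ with $\tilde d^- < d^-$, $\tilde d^+ > d^+$ (and the product $|A\cap V^-||A\cap V^+|$ controlled by $|A^4 \cap \tilde V^-||A^4\cap \tilde V^+|$). The iteration continues until both $\tilde d^- = 0$ (so Bezout applies) \emph{and} $\tilde d^+ = \dim G$ (so the trivial bound applies); only then do the exponents close, via the inequality \eqref{to-use} maintained across iterations. There is also a secondary alternative (Case 2 of Lemma \ref{ind-lp}, where the set $S$ of bad pairs absorbs most of $A\times A$) in which the dimension sum strictly decreases, and the paper needs the strengthened bound \eqref{stronger} to show the algorithm still terminates in $O_D(1)$ steps. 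Your proposal compresses this entire iterative scheme into one application of Lemma \ref{escape} plus one appeal to an inductive hypothesis, and gestures at the remaining difficulties (non-generic loci, the $G\times G$ vs.\ $G$ issue, complexity tracking) as the ``main obstacle.'' The main obstacle is actually more fundamental: a one-step argument gives the wrong exponent for all $V$ with $\dim V < \dim G - 1$, so the framework itself has to be the two-ended iteration, not a single induction on $\dim V$.

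A smaller point: the fibre $\{(v^-,v^+) : (v^-)^a v^+ = w\}$ should be identified with a subvariety of $V^-$ (via $(v^-,v^+) \mapsto v^-$, since $v^+$ is then determined), which is how the paper gets a subvariety $\tilde V^-$ of $G$ rather than of $G\times G$; your suggestion to ``apply the inductive hypothesis in $G\times G$'' is not available since the whole theorem is being proved for a fixed $G$. But this is readily fixed; the exponent issue is the genuine gap.
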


\emph{Remarks.} As the name suggests, the proof of this inequality will follow the arguments of Larsen and Pink \cite{larsen-pink}, who treated the case when $A$ was a genuine finite subgroup; but it turns out that the arguments extend without much difficulty to arbitrary  symmetric subsets.  The idea that the techniques of Larsen and Pink might be useful to us came to us through an analogous application of these ideas by Hrushovski \cite{hrush} (see also \cite{hrush-wagner}). It is also worth remarking that several special cases of Theorem \ref{lpi} are established in Helfgott's work \cite{helfgott-sl3} (for example in certain cases where $V$ is a maximal torus). The Larsen-Pink and Helfgott arguments are fundamentally rather similar, although this has only become clear rather recently.  A very similar inequality has also been recently established by Pyber and Szab\'o\cite{pyber2}.

\emph{Remark.} In this section we work with an arbitrary algebraic group $G$. In particular $G$ is not assumed to be linear and may for instance be an abelian variety.

We have stated the theorem for symmetric subsets $A$ in general, but will apply it when $A$ is an approximate subgroup (see Corollary \ref{lpi-cor} below).\vspace{11pt}

\textsc{Proof of Larsen-Pink.} The basic idea of the proof of Theorem \ref{lpi} is not especially difficult to describe\footnote{Our argument here is inspired by the presentation of the Larsen-Pink inequality in \cite[Proposition 5.5]{hrush}, though the notation and language used there is rather different from that given here.}, but the details require some effort. Suppose that $V^-, V^+$ is any pair of subvarieties of $G$ with $\dim(V^-) < \dim(V^+)$. Then we bound $|A \cap V^-|$ and $|A \cap V^+|$ in terms of $|A^{O(1)} \cap \tilde V^-|$ and $|A^{O(1)} \cap \tilde V^+|$, where the pair $\tilde V^-, \tilde V^+$ is somehow ``easier'' to deal with than $V^-, V^+$. By iterating this replacement algorithm or variants of it, we can reduce to the fairly trivial task of bounding $|A^{O(1)} \cap V|$ when $\dim(V) = 0$ or $\dim(G)$.

The idea behind the construction of $\tilde V^-$ and $\tilde V^+$ is not especially difficult to describe either. If $G$ is not sufficiently almost-simple then (i) holds. Otherwise, Lemma \ref{escape} applies and it roughly states that a generic $g \in G$ has the property that the dimension of the product $(V^-)^g \cdot V^+$ is strictly greater than $\dim(V^+)$. If no such $g$ lies in $A$ then $A$ cannot be sufficiently Zariski-dense, in which case (ii) holds. If there is some $g \in A$ with this property then set $\tilde V^+ := (V^-)^g \cdot V^+$ and take $\tilde V^-$ to be a suitable fibre of the product map
\[ (V^-)^g \times V^+ \rightarrow \tilde V^+.\]
Generically we will have $\dim(\tilde V^-) < \dim(V^-)$ and $\dim(\tilde V^+) > \dim(V^+)$, and this qualifies the pair $\tilde V^-, \tilde V^+$ as simpler than $V^-, V^+$ (in that this pair of varieties lies closer to a pair of varieties for which we can apply the trivial bound).
Furthermore it is clear than we can hope to bound $|A \cap V^-|$ and $|A \cap V^+|$ in terms of $|A \cap \tilde V^-|$ and $|A \cap \tilde V^+|$, as stated.

In reality the argument is slightly more complicated, this being due to our rather cavalier use of the word ``generically''. Furthermore the actual details of the iterative argument are tricker to handle than one might hope.

We begin by formulating a more precise lemma encapsulating the above observations.

\begin{lemma}[Inductive step]\label{ind-lp}
Suppose that $A$ is a symmetric subset of $G$ and that $V^-, V^+$ are subvarieties of $G$ of complexity at most $M$ with dimensions $d^- , d^+$ satisfying $0 < d^- \leq d^+ < \dim(G)$. Then one of the following three alternatives holds:
\begin{enumerate}
\item[(i)] \textup{(}$G$ is not sufficiently almost simple\textup{)} $G$ contains a proper normal algebraic subgroup $H$ of complexity $O_M(1)$ and positive dimension;
\item[(ii)] \textup{(}$A$ is not sufficiently Zariski dense\textup{)} $A$ is contained in a subvariety of $G$ of complexity $O_M(1)$ and dimension strictly less than $G$;
\item[(iii)] There are subvarieties $\tilde V_-$ and $\tilde V_+$ of complexity $O_M(1)$ such that
\begin{equation}\label{vv-bound} |A \cap V^-| |A \cap V^+| \leq 4|A^4 \cap \tilde V^-| |A^4 \cap \tilde V^+|.\end{equation}
and whose dimensions $\tilde d^-, \tilde d^+$ satisfy either
\begin{enumerate}
\item[(1)] $\tilde d^- < d^-$, $\tilde d^+ > d^+$ and $\tilde d^- + \tilde d^+ = d^- + d^+$ or
\item[(2)] $\tilde d^- + \tilde d^+ < d^- + d^+$.
\end{enumerate}
\end{enumerate}
\end{lemma}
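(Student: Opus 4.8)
The plan is to apply Lemma \ref{escape} to the pair $V^-, V^+$, and then to carefully track how the finite set $A$ intersects the varieties produced. First, if alternative (i) of Lemma \ref{escape} holds, then $G$ contains a proper normal algebraic subgroup of complexity $O_M(1)$ and positive dimension, and we are done (this is alternative (i) of the present lemma). Otherwise, the product-conjugation phenomenon applies: for $O_M(1)$-generic $a \in G$, the essential product $(V^{a} \cdot V^+)^{\ess}$ of $V^a := a^{-1} V^- a$ and $V^+$ has dimension strictly greater than $d^+$. Here we must be careful: the exceptional set of $a$ for which this fails is a subvariety $U \subsetneq G$ of complexity $O_M(1)$. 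If $A \subseteq U$, then $A$ is contained in a proper subvariety of $G$ of complexity $O_M(1)$, so alternative (ii) of the present lemma holds and we are done. So we may assume there exists $a \in A$ for which the product-conjugation conclusion holds, and we fix such an $a$.

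Now set $W := (V^a \cdot V^+)^{\ess}$, a variety of complexity $O_M(1)$ (using Lemma \ref{compos} and Lemma \ref{dimlem}) with $\dim(W) > d^+$. Consider the regular product map $\mu : V^a \times V^+ \to W$, which by construction is dominant onto its essential range. Apply Lemma \ref{dimlem} to $\mu$: we obtain an $O_M(1)$-Zariski open subset of $V^a \times V^+$ over which the fibres of $\mu$ are constructible of complexity $O_M(1)$ and dimension exactly $\dim(V^a) + \dim(V^+) - \dim(W) = d^- + d^+ - \dim(W) < d^-$ (since $\dim(W) > d^+$). The key combinatorial observation is now a pigeonhole on the finite set: every pair $(x,y) \in (A^a \cap V^a) \times (A \cap V^+)$ — where $A^a := a^{-1} A a \subseteq A^3$, so $A^a \cap V^a$ has size at least $|A \cap V^-|$ up to conjugation bookkeeping — maps under $\mu$ into $W \cap A^{O(1)}$, since $\mu(x,y) = xy \in A^{O(1)}$. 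By pigeonhole, some fibre $\mu^{-1}(w)$ with $w \in A^{O(1)} \cap W$ contains at least $|A \cap V^-||A \cap V^+| / |A^{O(1)} \cap W|$ of these pairs. Take $\tilde V^+$ to be (the Zariski closure of) $W$ intersected suitably so that $\dim \tilde V^+ > d^+$, and take $\tilde V^-$ to be (the Zariski closure of) the fibre $\mu^{-1}(w)$, viewed inside an appropriate product-and-conjugate copy so that $A$-points of it correspond to elements of a bounded power $A^{O(1)}$; by the generic fibre dimension bound, $\dim \tilde V^- < d^-$.

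Rearranging the pigeonhole inequality gives exactly a bound of the shape $|A \cap V^-||A \cap V^+| \leq C |A^C \cap \tilde V^-||A^C \cap \tilde V^+|$; the stated form \eqref{vv-bound} with the constant $4$ and fourth powers is then a matter of being slightly more economical in the pigeonholing and in counting how many applications of the product map and conjugation are used (one conjugation by $a \in A$ costs $A \to A^3$, one product costs the fibre relation; this can be arranged to land inside $A^4$). Finally we check the dimension bookkeeping for alternative (iii): in the generic case $\dim W > d^+$ could a priori overshoot, so we intersect $W$ with a generic subvariety to arrange $\tilde d^+ = d^+ + 1$ if desired, but it is cleaner to allow two cases — either $\tilde d^- < d^-$ and $\tilde d^+ > d^+$ with $\tilde d^- + \tilde d^+ = d^- + d^+$ (achievable by choosing the fibre to have the exact generic dimension and $W$ to be cut down to dimension exactly $d^- + d^+ - \tilde d^-$), which is case (1); or, if the generic fibre turns out to have dimension strictly smaller than $d^- + d^+ - \dim W$ forces, we land in case (2) with $\tilde d^- + \tilde d^+ < d^- + d^+$.

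The main obstacle I anticipate is the bookkeeping around conjugation and the passage between $A$-points of $V^-$ and $A$-points of $V^a$: one needs $a \in A$ (not merely $a \in G$) so that conjugation stays inside a bounded power of $A$, which is precisely why the "if $A \subseteq U$ then (ii)" escape is essential, and one must verify that the generic-$a$ hypotheses of Lemma \ref{escape} and Lemma \ref{dimlem} can be met \emph{simultaneously} by some single $a \in A$ — this requires that the union of the two exceptional subvarieties is still a proper subvariety of complexity $O_M(1)$, so that $A \not\subseteq$ (that union) unless (ii) holds. A secondary subtlety is ensuring the fibre $\mu^{-1}(w)$ we pick is the generic one (lies in the good Zariski-open set of Lemma \ref{dimlem}); this again costs only a bounded-complexity exceptional set and a further pigeonhole, or can be absorbed by choosing $w$ in the essential range $W'$ rather than all of $W$.
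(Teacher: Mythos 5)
There is a genuine gap in your proposal, and it corresponds precisely to the part of the paper's argument that produces alternative (2) of conclusion (iii).

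Your setup is correct and matches the paper's: apply Lemma \ref{escape}, handle alternatives (i) and (ii), and fix a good $a \in A$ so that $W := (V^a \cdot V^+)^{\ess}$ has dimension $> d^+$. But then you write ``every pair $(x,y) \in (A^a \cap V^a) \times (A \cap V^+)$ maps under $\mu$ into $W \cap A^{O(1)}$,'' and this is false. The essential range $W$ coming from Lemma \ref{dimlem} comes paired with an exceptional subvariety $S \subset V^a \times V^+$ of dimension strictly less than $d^-+d^+$; only pairs \emph{outside} $S$ are guaranteed to map into $W$ (and to have the good fibre dimension bound). You acknowledge a ``secondary subtlety'' about picking a generic $w$ and propose absorbing it ``by choosing $w$ in the essential range $W'$ rather than all of $W$,'' but this does not resolve the real problem: it could happen that the \emph{majority} of the pairs in $(A\cap V^-)\times(A\cap V^+)$ lie in $S$. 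In that regime your pigeonhole on good fibres has almost nothing to work with, and no amount of further pigeonholing recovers the bound.

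This is exactly where the paper splits into two cases on the quantity $|(A\times A)\cap S|$. When at most half the pairs lie in $S$, your pigeonhole argument goes through and yields conclusion (iii)(1), as you describe. When at least half lie in $S$, the paper invokes the slicing lemma (Lemma \ref{slice-lem}) applied to $S \subset V^- \times V^+$: for generic $v^- \in V^-$ the slice $\{v^+ : (v^-,v^+)\in S\}$ has dimension $< d^+$, with a lower-dimensional exceptional set $V^{\prime-} \subset V^-$. A further split (whether or not most of the $A$-pairs in $S$ have $v^- \in V^{\prime-}$) then yields $\tilde V^\pm$ with $\tilde d^- + \tilde d^+ < d^- + d^+$, i.e. conclusion (iii)(2). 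Your description of when case (2) should arise --- ``if the generic fibre turns out to have dimension strictly smaller than $d^-+d^+-\dim W$ forces'' --- misidentifies its source: (2) arises not from an unexpectedly small generic fibre of the product map, but from concentration of $A\times A$ on the exceptional set $S$, which requires the slicing-lemma argument rather than a pigeonhole on fibres of $\mu$. Without this second branch the proof is incomplete.
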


We will prove this lemma below, but first we show how it implies Theorem \ref{lpi}.  The beef of such a deduction lies in a proper handling of the inequalities in (iii).

\emph{Proof of Theorem \ref{lpi} given Lemma \ref{ind-lp}.} Set $D := \dim(G)$. If $\dim(V) = 0$ then the result follows from Bezout's theorem (Lemma \ref{bezout}) whilst if $\dim(V) = \dim(G)$ then the result is trivial. We refer to these cases as the \emph{endpoint bounds}. Suppose, then, that $0 < \dim (V) < \dim(G)$. We shall inductively define pairs $(V^-_i, V^+_i)$ of varieties of complexities at most $M_i = O_{i,M}(1)$, $i = 0,1,2,\dots$, initialising so that $V^-_0, V^+_0 := V$. Suppose we are at the $i$th stage of the iteration: if the algorithm has not already stopped then the dimensions $d^{\pm}_i = \dim(V^{\pm}_i)$ will satisfy $0 < d_- \leq d_+ < \dim(G)$. This means that it is valid to apply Lemma \ref{ind-lp}, and we do this with $A^{4^i}$ in place of $A$ and $V^- := V^-_i$, $V^+ := V^+_i$. If case (i) or (ii) occurs then, provided the algorithm has only run for $O_{M,D}(1)$ steps, we obtain conclusions (i) and (ii) of Theorem \ref{lpi} respectively. In the more interesting case that (iii) occurs then we obtain varieties $\tilde V^-$ and $\tilde V^+$ such that
\eqref{vv-bound} holds and such that the dimensions $\tilde d^-, \tilde d^+$ satisfy (1) or (2) of Lemma \ref{ind-lp}.
 We then distinguish several cases, as follows.

\emph{Case 0.} If $\tilde d^- = 0$ and $\tilde d^+ = \dim(G)$ then \textsc{stop};

\emph{Case 1.} If $\tilde d^{-} = 0$ then set $V^-_{i+1} = V^-_i$ and $V^+_{i+1} = \tilde V^+$;

\emph{Case 2.} If $\tilde d^+ = \dim(G)$ then set $V^+_{i+1} = V^+_{i}$ and $V^-_{i+1} = \tilde V^-_i$;

\emph{Case 3.} If $0 < \tilde d^- \leq \tilde d^+ < \dim(G)$ and
\[ |A^{4^{i+1}} \cap \tilde V^+|^{1/\tilde d^+} \geq |A^{4^{i+1}} \cap \tilde V^-|^{1/\tilde d^-}\]  then set $V^-_{i+1} = V^-_i$ and $V^+_{i+1} = \tilde V^+$;

\emph{Case 4.} If $0 < \tilde d^- \leq \tilde d^+ < \dim(G)$ and
\[ |A^{4^{i+1}} \cap \tilde V^+|^{1/\tilde d^+} \leq |A^{4^{i+1}} \cap \tilde V^-|^{1/\tilde d^-},\]  then set $V^+_{i+1} = V^+_{i}$ and $V^-_{i+1} = \tilde V^-_i$.

By induction on $i$ one may check the lower bound
\begin{equation}\label{to-use} |A^{4^i} \cap V^{\pm}_i| \gg_{i,M} |A \cap V|^{d^{\pm}_i/\dim V},\end{equation} at each step.   To see this in cases 1 and 2, we use \eqref{vv-bound} (for $A^{4^i}$) and the endpoint bounds.  To see this in cases 3 and 4, we observe from \eqref{to-use} and the inequality $\tilde d^+ + \tilde d^- \leq d^+ + d^-$ that
\begin{align}\nonumber
 \max( |A^{4^{i+1}} \cap \tilde V^+_i|^{1/\tilde d^+}, & |A^{4^{i+1}} \cap \tilde V^-_i|^{1/\tilde d^-}) \\ &
\gg \max( |A^{4^{i+1}} \cap V^+_i|^{1/d^+}, |A^{4^{i}} \cap V^-_i|^{1/d^-})\label{sl}
\end{align}
from which the claim \eqref{to-use} follows.

If the algorithm we have described terminates in time $O_{D}(1)$, then Theorem \ref{lpi} now follows. Indeed upon termination we have $\dim(V^+_i) = \dim(G)$, and so the theorem is indeed an immediate consequence of \eqref{to-use}.

It is not immediately clear that the algorithm \emph{does} terminate, however.
To analyse this situation, note that by a similar inductive argument we may in fact establish the stronger bound
\begin{equation}\label{stronger} |A^{4^i} \cap V^{\pm}_i| \gg_{i,M} |A \cap V|^{(1 + c_D)^m d^{\pm}_i/\dim V}\end{equation} for some constant $c_D > 0$, where $m$ is the number of invocations of alternative (2) of Lemma \ref{ind-lp}.  Indeed, when (2) occurs thwn we have $\tilde d^+ + \tilde d^- < d^+ + d^-$, which allows us to raise the right-hand side of \eqref{sl} by $1+c_D$ for $c_D$ small enough, and the claim \eqref{stronger} then follows by repeating the proof of \eqref{to-use}.

If $m > 2\log D/\log(1 + c_D)$ then \eqref{stronger} already implies Theorem \ref{lpi} simply by applying the trivial bound $|A^{4^i} \cap V^{\pm}_i| \leq |A^{4^i}|$ to the left-hand side of \eqref{stronger}.  Suppose, then, that there are just $O_D(1)$ invocations of alternative (2) of Lemma \ref{ind-lp}. Observe that the algorithm cannot run for more than $D$ steps using only invocations of (1), since at any such invocation we have the inequality
\[ d^+_{i+1} - d^-_{i+1} > d^+_i - d^-_i.\] It follows that in this case the algorithm does indeed terminate in time $O_D(1)$, and so the proof of Theorem \ref{lpi} follows by our earlier remarks.\hfill $\Box$\vspace{11pt}

\emph{Remark.} A more careful inspection of the above argument shows that the constant $c_D$ can be bounded below effectively by $\gg D^{-2}$. This allows one to control the number of iterations here effectively, and this in turn means that in the bound $C_1|A^{C_2}|^{\dim V/\dim G}$ in Theorem \ref{lpi} one can take the exponent $C_2$ to be an effective quantity of the form $\exp(O((\dim G)^{O(1)}))$. Our arguments do not make the multiplicative factor $C_1$ effective, however, due to our reliance on ultrafilters. By laboriously replacing all of those arguments by effective algebraic geometry lemmas it ought to be possible to furnish an explicit dependence of $C_1$ on $\dim(G)$ and the complexity bound $M$, but this would be a considerable amount of work, the bounds would likely be bad and we do not at present have any applications for such a result.\vspace{11pt}

It remains, of course, to establish Lemma \ref{ind-lp}. Suppose then that we have subvarieties $V^-, V^+$ as in the statement of that lemma and that neither (i) nor (ii) of the lemma holds. That is to say, $G$ has no proper normal subgroups $H$ of complexity $O_M(1)$ and positive dimension, and $A$ is ``sufficiently Zariski-dense'' in the sense that it is not contained in any subvariety of complexity $O_M(1)$ and dimension strictly less than $\dim(G)$. We are free to choose these unspecified constants $O_M(1)$ as we please during the proof.

By Lemma \ref{escape}, we see that for a $O_M(1)$-generic set of points $a \in G$, there exists an essential product $(V^a_- \cdot V_+)^{\ess}$ of $V_-^a$ and $V_+$ of dimension strictly greater than $d'_+$.  Since $A$ is sufficiently Zariski-dense, at least one of these points lies in $A$. Henceforth we fix such an $a$ such that the essential product $W := (V^a_- \cdot V_+)^{\ess}$ of $V_-^a$ and $V_+$ of dimension $> d'_+$.

By definition of essential product, $W$ has complexity $O_M(1)$, and there exists a subvariety $S$ of $V_- \times V_+$ of dimension $< d_- + d_+ $ and complexity $O_M(1)$ such that
$$ (v^-)^a v^+ \in W$$
for all $(v^-,v^+) \in (V^- \times V^+) \backslash S$, and such that for any $w \in W$, the set
\begin{equation}\label{vav}
 \{ (v^-,v^+) \in V^- \times V^+: (v^-)^a v^+ = w \}
\end{equation}
is contained in a variety of complexity $O_M(1)$ and dimension at most
$$ \dim(V^- \times V^+) - \dim(W) < d'_- .$$
We distinguish two cases:

\emph{Case 1.} $|(A \times A) \cap S| \leq \frac{1}{2}|A \cap V^-||A \cap V^+|$;

\emph{Case 2.} $|(A \times A) \cap S| \geq \frac{1}{2}|A \cap V^-||A \cap V^+|$.

These will correspond to options (1) and (2) of alternative (iii) of Lemma \ref{ind-lp} respectively, as we shall now see.

Suppose first that we are in Case 1. For each $w \in W$, set
\[ F_w := \{ v^- \in A \cap V^- : (v^-)^a v^+ = w\},\]
By simple double-counting we have
\[ \frac{1}{2}|A \cap V^-| |A \cap V^+|  \leq |(A \times A) \cap S|  \leq \sum_{w \in W} |F_w|  \leq |F_{w_0}||A^4 \cap W|\]
where $w_0 \in W$ maximises the cardinality of $|F_{w_0}|$.  Applying Lemma \ref{compos} to \eqref{vav}, we see that $F_{w_0}$ is contained in a subvariety $\tilde V^-$ of $G$ with dimension $< d'_-$ and complexity $O_M(1)$; it is also clearly contained in $A^4$.  Setting $\tilde V^+ := W$, we obtain conclusion (1) of Lemma \ref{ind-lp} (iii) in this case.

Now suppose alternatively that we are in Case 2. By Lemma \ref{slice-lem}, we can find a subvariety $V^{\prime -}$ of $V^-$ of complexity $O_M(1)$ and dimension $< d^-$ such that for every $v^- \in V^- \backslash V^{\prime -}$ the fibre $F_{v^-} := \{ v^+ \in V^+: (v^-, v^+) \in S \}$ is contained in a variety of dimension  $< d^{ +}$ and complexity $O_M(1)$.  Write
\[ S_0 := \{(v^-, v^+) \in S : v^- \in V^{\prime -}\}.\] We divide into two further subcases:

\emph{Case 2a.} $|(A \times A) \cap S_0| \geq \frac{1}{2}|(A \times A) \cap S|$;

\emph{Case 2b.} $|(A \times A) \cap (S \setminus S_0)| \geq \frac{1}{2}|(A \times A) \cap S|$.

Recall that both are subcases of Case 2, which means that $|(A \times A) \cap S| \geq \frac{1}{2} |A \cap V^-||A \cap V^+|$.

In Case 2a, simply take $\tilde V^- := V^{\prime -}$ and $\tilde V^+ := V^+$ and note that
\[ |(A \times A) \cap S_0| \leq |A \cap V^{\prime -}| |A \cap V^+| = |A \cap \tilde V^-||A \cap \tilde V^+|.\]
This verifies alternative (2) of Lemma \ref{ind-lp} (iii) in this case.

In Case 2b, take $\tilde V^- := V^-$ and take $\tilde V^+$ to be that amongst the fibres $F_{v^-}$, $v^- \in V^- \backslash V^{\prime -}$, having largest intersection with $A$. Then

\[ |(A \times A) \cap S| \leq 2|(A \times A) \cap (S \setminus S_0)| \leq \sum_{v^- \in V^-} |F_{v^-} \cap A| \leq |A \cap \tilde V^-| |A \cap \tilde V^+|,\] confirming alternative (2) of Lemma \ref{ind-lp} (iii) in this case.

All eventualities having been covered, the proof of Lemma \ref{ind-lp} (and hence that of Theorem \ref{lpi}) is complete.\hfill $\Box$\vspace{11pt}

If $A$ is a $K$-approximate subgroup of a group $G$, then clearly we have $|A^m| \leq K^{m-1} |A|$ for any $m \in \N$.  As a consequence of Lemma \ref{zark} and Theorem \ref{lpi} (applied to $A^{m_0}$ for some large integer $m_0$), we obtain the following corollary.

\begin{corollary}[Larsen-Pink for approximate subgroups]\label{lpi-cor}  Let $K, m_0, M \geq 1$, let $k$ be an algebraically closed field, and let $G$ be an algebraic group of over $k$ of complexity at most $M$ and positive dimension.  Let $A$ be a $K$-approximate subgroup of $G$.  Then at least one of the following statements hold.
\begin{enumerate}
\item[(i)] \textup{($G$ is not sufficiently almost simple)} $G$ contains a proper normal algebraic subgroup of complexity $O_{M,m_0}(1)$ and positive dimension;
\item[(ii)] \textup{($A$ is not sufficiently Zariski dense)} $A$ is contained in an algebraic \emph{subgroup} of $G$ of complexity $O_{M,m_0}(1)$ and dimension strictly less than $G$;
\item[(iii)] For every $1 \leq m \leq m_0$ and every subvariety $V$ of $G$ of complexity at most $M$, one has
\begin{equation}\label{av-mult2}
 |A^m \cap V| \ll_{M,m_0} K^{O_{M,m_0}(1)} |A|^{\dim V/\dim G}.
\end{equation}
\end{enumerate}
\end{corollary}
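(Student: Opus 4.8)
\emph{Proof proposal.} The strategy is to reduce this to the symmetric-set version already established, Theorem \ref{lpi}, by applying that theorem not to $A$ but to a power $A^{m_1}$ for a suitably large $m_1 = O_{M,m_0}(1)$, and then matching up the three alternatives. First I would fix $m_1 := m_0 + m^{*}$, where $m^{*} = O_M(1)$ is the threshold integer furnished by Lemma \ref{zark} (escape from subvarieties) corresponding to the complexity bound $O_M(1)$ appearing in alternative (ii) of Theorem \ref{lpi}. Since $A$ is a $K$-approximate subgroup it is symmetric and contains the identity, hence so is $A^{m_1}$, and Theorem \ref{lpi} may be applied to it, producing one of its three alternatives.

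Next I would dispose of these three cases. If alternative (i) of Theorem \ref{lpi} holds for $A^{m_1}$, we are immediately in case (i) here, since its complexity bound $O_M(1)$ is in particular $O_{M,m_0}(1)$. If its alternative (ii) holds, then $A^{m_1}$ lies in a subvariety $V_0$ of complexity $O_M(1)$ with $\dim(V_0) < \dim(G)$; because $\id \in A$ the powers of $A$ are nested, so $A^{m^{*}} \subseteq A^{m_1} \subseteq V_0$, and Lemma \ref{zark} then upgrades this to an algebraic \emph{subgroup} $H$ with $A \subseteq H \subseteq V_0$ of complexity $O_M(1)$, whence $\dim(H) \leq \dim(V_0) < \dim(G)$ — this is case (ii) here. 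Finally, if its alternative (iii) holds, then for every subvariety $V$ of complexity at most $M$ one has $|A^{m_1} \cap V| \leq C|A^{m_1 C}|^{\dim V/\dim G}$ with $C = O_M(1)$ uniform in $V$; combining this with the approximate-subgroup bound $|A^{n}| \leq K^{n-1}|A|$ and with the inclusion $A^m \cap V \subseteq A^{m_1} \cap V$, valid for all $1 \leq m \leq m_0 \leq m_1$ (the case of empty $V$ being trivial), gives $|A^m \cap V| \ll_{M,m_0} K^{O_{M,m_0}(1)} |A|^{\dim V/\dim G}$, which is case (iii) here since $m_1 C = O_{M,m_0}(1)$.

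The argument is essentially bookkeeping, so I do not expect a serious obstacle; the one point that needs care is the choice of the power $m_1$. It must be at least the escape-from-subvarieties threshold $m^{*}$ so that Lemma \ref{zark} bites in the second case, and it must also be at least $m_0$ so that a single application of Theorem \ref{lpi} to $A^{m_1}$ simultaneously controls $|A^m \cap V|$ for every $m$ up to $m_0$; one must also verify that $m^{*}$, and hence $m_1$, depends only on $M$ and $m_0$, which holds because all complexity bounds emitted by Theorem \ref{lpi} are $O_M(1)$. No genuinely new idea is required beyond Theorem \ref{lpi} and Lemma \ref{zark}.
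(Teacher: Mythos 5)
Your proposal is correct and follows exactly the route the paper indicates: it applies Theorem \ref{lpi} to a sufficiently large power $A^{m_1}$ of $A$, uses Lemma \ref{zark} to upgrade the subvariety in alternative (ii) to an algebraic subgroup containing $A$, and in alternative (iii) uses $(A^{m_1})^C = A^{m_1 C}$ together with the approximate-group bound $|A^n| \leq K^{n-1}|A|$ and the nesting $A^m \subseteq A^{m_1}$ to get the stated inequality. The paper only gives a one-sentence justification, and your write-up is a correct and complete expansion of that sentence.
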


We note the following important consequence of this corollary.

\begin{lemma}[Centralisers are rich]\label{centra-rich}  Let $G$ be an algebraic group of complexity at most $M$, let $A$ be a $K$-approximate subgroup in $G$, and let $a \in A$.  Then at least one of the following statements holds:
\begin{enumerate}
\item[(i)] \textup{($G$ is not sufficiently almost simple)} $G$ contains a proper normal algebraic subgroup $H$ of complexity $O_{M}(1)$ and positive dimension;
\item[(ii)] \textup{($A$ is not sufficiently Zariski dense)} $A$ is contained in an algebraic subgroup of $G$ of complexity $O_{M}(1)$ and dimension strictly less than $G$;
\item[(iii)] For every $a \in A$, one has
\begin{equation}\label{av-mult4}
 |A^2 \cap Z(a)| \sim_M  |A|^{\dim(Z(a))/\dim(G)},
\end{equation}
in the sense that the left and right sides are equal up to multiplication by a quantity of the form $O_M(K^{O_M(1)})$.
\end{enumerate}
\end{lemma}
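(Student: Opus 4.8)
The plan is to deduce this from Corollary \ref{lpi-cor} applied with a suitable bounded value of $m_0$. First I would apply Corollary \ref{lpi-cor} to $G$ and $A$ with $m_0 := 2$ (or any fixed small integer; $m_0=2$ suffices since we only need to control $A^2 \cap Z(a)$). If alternative (i) or (ii) of that corollary holds, then the corresponding alternative (i) or (ii) of the present lemma holds, with the same quality of bounds since $m_0$ is an absolute constant. So I may assume alternative (iii) of Corollary \ref{lpi-cor} holds, and my task is to prove \eqref{av-mult4} for every $a \in A$.

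For the upper bound, I observe that by Lemma \ref{centralem} the centraliser $Z(a)$ is a constructible set of complexity $O_M(1)$; passing to its Zariski closure via Lemma \ref{zar} (which only inflates complexity by a bounded amount), $Z(a)$ is contained in a subvariety $V$ of $G$ of complexity $O_M(1)$ with $\dim V = \dim Z(a)$. Applying \eqref{av-mult2} with $m = 2$ and this $V$ yields
\[
 |A^2 \cap Z(a)| \leq |A^2 \cap V| \ll_M K^{O_M(1)} |A|^{\dim(Z(a))/\dim(G)},
\]
which is the desired upper bound. The lower bound is where the real content lies. Here I would use the orbit–stabiliser philosophy: for each $a \in A$, the conjugates $\{ a^g : g \in A \}$ all lie in $A^3$ (since $a^g = g^{-1} a g$ with $g, g^{-1}, a \in A$), so $|a^A| \leq |A^3| \leq K^2 |A|$. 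On the other hand, the fibres of the map $g \mapsto a^g$ from $A$ to $a^A$ are cosets of the stabiliser, which is exactly $Z(a)$ intersected appropriately; more precisely, if $a^{g_1} = a^{g_2}$ then $g_1 g_2^{-1} \in Z(a)$, so each fibre has size at most $|A^2 \cap Z(a)|$ (after noting $g_1 g_2^{-1} \in A^2$ and $g_1 g_2^{-1} \in Z(a)$). Double counting gives $|A| \leq |a^A| \cdot |A^2 \cap Z(a)| \leq K^2 |A| \cdot |A^2 \cap Z(a)|$, which is useless as stated; the point is instead to bound $|a^A|$ \emph{from below} in terms of $|A|$. To do this I use that $a^A \subseteq a^G$, and $a^G$ is a constructible set of complexity $O_M(1)$ by Lemma \ref{centralem}, with $\dim(a^G) = \dim(G) - \dim(Z(a))$ by the orbit–stabiliser theorem for algebraic groups. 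Applying the upper bound part of \eqref{av-mult2} to the variety $\overline{a^G}$ (of that dimension) gives $|A^3 \cap a^G| \ll_M K^{O_M(1)} |A|^{(\dim G - \dim Z(a))/\dim G}$. Combining $|A| \leq |A^3 \cap a^G| \cdot |A^2 \cap Z(a)|$ (from the double counting, since every $g \in A$ has $a^g \in A^3 \cap a^G$ and fibres have size $\leq |A^2 \cap Z(a)|$) with this upper bound on $|A^3 \cap a^G|$ yields
\[
 |A^2 \cap Z(a)| \gg_M K^{-O_M(1)} |A|^{\dim(Z(a))/\dim(G)},
\]
as required.

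The main obstacle I anticipate is making the fibre/orbit–stabiliser bookkeeping precise in the approximate-group setting: one must be careful that the double-counting inequality $|A| \leq |A^3 \cap a^G| \cdot |A^2 \cap Z(a)|$ is set up with the correct power of $A$ on each side so that all the quantities appearing are controlled by \eqref{av-mult2} (which requires the relevant set to sit inside $A^m$ for $m \leq m_0$, forcing the choice $m_0 = 3$ rather than $2$ — a harmless adjustment). A secondary technical point is verifying $\dim(a^G) = \dim G - \dim Z(a)$ with bounded complexity, but this is exactly the content of Lemma \ref{dimlem} applied to the conjugation-orbit map $g \mapsto a^g$, whose image has Zariski closure $\overline{a^G}$ and whose generic fibre is a coset of $Z(a)$; so this follows from the machinery already set up. No genuinely new idea beyond Corollary \ref{lpi-cor} and orbit–stabiliser should be needed.
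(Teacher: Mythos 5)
Your proposal is correct and follows essentially the same route as the paper: upper bound from Larsen--Pink applied to $Z(a)$, lower bound by noting that the conjugation image $a^A \subseteq A^3 \cap a^G$ is small (again by Larsen--Pink, since $\dim a^G = \dim G - \dim Z(a)$), and then observing that a large fibre of $g \mapsto a^g$ injects into $A^2 \cap Z(a)$. The paper phrases the last step as a pigeonhole argument producing one big fibre rather than as the product inequality $|A| \leq |A^3 \cap a^G| \cdot |A^2 \cap Z(a)|$, but these are the same computation.
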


\begin{proof}  The upper bound implicit in \eqref{av-mult4} comes from Corollary \ref{lpi-cor} and Lemma \ref{centralem}, so we focus on the lower bound.    We allow all implied constants to depend on $M$.

The fibres of the regular map $\phi: G \to G$ defined by $\phi(g) := g^{-1} a g$ are all cosets of $Z(a)$, and thus have dimension $\dim(Z(a))$.  As a consequence, $\phi(G) = a^G$ has dimension $\dim(G) - \dim(Z(a))$.  By Lemma \ref{compos}, this variety has complexity $O(1)$.  Applying Corollary \ref{lpi-cor}, we thus have either (i) or (ii), or else
$$ |\phi(A)| \ll K^{O(1)} |A|^{1-\dim(Z(a))/\dim(G)}.$$
By the pigeonhole principle, there thus exists $b \in a^G$ such that
$$ |\{ g \in A: g^{-1} a g = b \}| \gg  K^{-O(1)} |A|^{\dim(Z(a)) / \dim(G)}.$$
But if $g, h$ lie in the above set, then $hg^{-1}$ lies in $A^2 \cap Z(a)$.  This gives the desired lower bound for \eqref{av-mult4}.
\end{proof}

\emph{Remarks.} We remark on the connection between our proof of the Larsen-Pink inequalities and Helfgott's work in \cite{helfgott-sl3}. If $A$ is an approximate subgroup of $G = \SL_n(\F_p)$, Helfgott obtains a bound of the form $|A \cap T| \ll K^{O(1)} |A|^{1/(n+1)}$, where $T$ is any maximal torus in $G$; this is a special case of the upper bound in \eqref{av-mult4}. He does this by examining products $T^{a_1} \cdot \dots \cdot T^{a_{n+1}}$ of conjugates of $T$, showing that for tuples $(a_1,\dots, a_{n+1})$ in a Zariski-dense set the fibres of the map
\[ (t_1,\dots, t_{n+1}) \mapsto t_1^{a_1} \dots t_{n+1}^{a_{n+1}}\] have size $O_n(1)$. In a sense, this argument is precisely the same as our argument above in this special case, except that the necessary algebraic geometry is done in a completely explicit fashion (which has the advantage that effective bounds are produced).  It is very helpful in the arguments in \cite{helfgott-sl3} that $\dim T$ exactly divides $\dim G$, but this is probably not essential.
We remark that Helfgott also obtains lower bounds for the intersection of certain maximal tori with $A$, but not for all tori containing a regular semisimple element of $A$.

\section{The case of almost simple algebraic groups}\label{simple-sec}

We turn now to the main business of the paper and, in particular, the proof of Theorem \ref{mainthm2}.  We begin by reviewing some standard terminology concerning almost simple algebraic groups. In this section we deal with \emph{linear algebraic groups} only.

Let $k$ be an arbitrary field and $\overline{k}$ an algebraic closure of $k$. By definition, an \emph{absolutely almost simple $k$-algebraic group} $\G$ is a \emph{non-abelian} connected linear algebraic group defined over $k$ with no non-trivial proper normal connected algebraic subgroup defined over $\overline{k}$; in particular, $\G$ has no proper normal algebraic subgroups of positive dimension. When $k$ is algebraically closed we simply talk about \emph{almost simple algebraic groups}. We denote by $G=\G(k)$ its group of $k$-points.

From now on, unless otherwise stated, we will assume that $k$ is algebraically closed because our main result, i.e. Theorem \ref{mainthm-preciseform}, is formulated in this setting. We will consider non-algebraically closed fields only when applying Theorem  \ref{mainthm-preciseform} to the finite field case in order to prove Theorem \ref{mainthm2} from the Introduction.

When $k$ is algebraically closed (as we now assume), almost simple algebraic groups have a well-known structure and are parametrised by a pair $(\mathfrak{g},\Lambda)$, where $\mathfrak{g}$ is a finite dimensional simple complex Lie algebra (given in terms of its defining root system $\Delta$) and where $\Lambda$ is a certain free abelian group to be chosen among a finite collection of such. We refer the reader to Humphreys' book \cite{humphreys} for a pleasant and thorough introduction to this material.

Finite dimensional simple complex Lie algebras are parametrised by (reduced, irreducible) root systems. According to the Cartan-Killing classification, these root systems fall into $4$ infinite families $(A_n)_{n \geq 1}, (B_n)_{n \geq 2}, (C_n)_{n \geq 3}$ and $(D_n)_{n \geq 4}$ or are one of the $5$ exceptional ones $E_6$, $E_7$, $E_8$, $F_4$ and $G_2$. The infinite families correspond to the so-called \emph{classical Lie algebras} $\mathfrak{sl_{n+1}}, \mathfrak{so_{2n+1}}, \mathfrak{sp_{2n}}$ and $\mathfrak{so_{2n}}$.

To every (abstract, reduced, irreducible) root system $\Delta$ are attached two free abelian groups of the same common finite rank, the lattice of roots $\Lambda_r$ and the lattice of weights $\Lambda_w$ which contains it. Their rank $\rk$ is called the \emph{rank} of the root system, or of $\mathfrak{g}$. It can be seen that the group $\Lambda_w/\Lambda_r$ has cardinality bounded by $\rk+1$.


To every pair $(\mathfrak{g},\Lambda)$ one may associate in a unique fashion an almost simple algebraic group $\G$ defined over $k$  associated to $(\mathfrak{g},\Lambda)$. Its center can be identified with $\Hom(\Lambda/\Lambda_r,k^{*})$, hence is of size at most $\rk(\G)+1$ (equality being realized for instance when $\G=\SL_n$ and $k=\C$). When $\Lambda=\Lambda_r$, $\G$ is said to be \emph{adjoint}, while when $\Lambda=\Lambda_w$, $\G$ is said to be \emph{simply connected}. Note that adjoint groups $\G(k)$ are center-free, and hence abstractly simple. For the explicit construction of $\G$ from $(\mathfrak{g},\Lambda)$, see \cite{humphreys}.

It follows in particular from this construction that $\dim \G = \dim \mathfrak{g}$, which in turn implies that there are only finitely many isomorphism classes of almost simple algebraic groups over $k$ whose dimension is given, because the same holds for complex Lie algebras as a result of the Cartan-Killing classification.

Furthermore, the complex Lie algebra $\mathfrak{g}$ admits a free abelian subgroup of full rank, denoted by $\mathfrak{g}_{\Z}$, such that $\mathfrak{g}_\Z$ is stable under braket, and $\mathfrak{g}_{\Z}\otimes k$ coincides with the Lie algebra $\mathfrak{g}_{k}$ of $\G(k)$ defined as the Zariski tangent space at the identity of $\G(k)$. The group $\G(k)$ acts naturally on $\mathfrak{g}_{k}$ via the \emph{adjoint representation}.

\emph{Example.} If $\mathfrak{g}$ is a complex simple Lie algebra of type $A_n$, then $\mathfrak{g}$ is isomorphic to $\mathfrak{sl}_{n+1}(\C)$, the Lie algebra of traceless complex square matrices of size $n+1$. Let $\mathfrak{h}$ be the subalgebra of diagonal matrices. The root system $\Delta$ of $\mathfrak{g}$ consists of $\frac{n(n-1)}{2}$ roots of $\mathfrak{g}$ defined as the linear forms $(\lambda_i - \lambda_j)_{i \neq j}$, where $\lambda_i \in \mathfrak{h}^*$ is the $i$-th diagonal coefficient. The lattice $\Lambda_r$ is the subgroup of $\mathfrak{h}^*$ generated by the $(\lambda_i - \lambda_j)_{i \neq j}$, while the lattice of weights $\Lambda_w$ is the subgroup of $\mathfrak{h}^*$ generated by the $(\lambda_i)_i$'s. Here $\Lambda_w/\Lambda_r \simeq \Z/(n+1)\Z$ and $\rk = n$. When $\Lambda=\Lambda_w$, then the group associated to $(\mathfrak{g},\Lambda)$ and an algebraically closed field $k$ is exactly the classical group $\SL_{n+1}(k)$ of determinant $1$ matrices over the field $k$. When $\Lambda=\Lambda_r$ the corresponding group is $\PGL_{n+1}(k)$ of projective linear transformations of $k^{n+1}$.

A \emph{torus} $S$ in $\G$ is a connected diagonalisable algebraic subgroup of $\G$. By \emph{diagonalisable}, we mean that under some faithful algebraic embedding of $\G$ in $\GL_n$ (recall that every linear algebraic group admits a faithful algebraic finite dimensional representation), $S$ is a subgroup of the diagonal matrices for some choice of basis. This notion is well-defined (see the books of Humphreys \cite{humphreys} or Borel \cite{borel}). A \emph{maximal torus} is a torus of maximal dimension in $\G$. It can be shown that they are all conjugate in $\G(k)$.

Given a maximal torus $T$ of $\G$, the group of characters $X(T):=\Hom(T,k^{*})$ can be identified with the lattice subgroup $\Lambda$. Roots $\alpha \in \Delta$ thus give rise to characters.

The centraliser of $T$ coincides with $T$, while $T$ has finite index in its normaliser $N(T)$. The quotient group $N(T)/T$ is the so-called Weyl group of $\G$. It permutes the roots of $\G$ and can be identified with the abstract Weyl group of the root system $\Delta$.

The subgroups $T_\alpha = \ker \alpha$  are the so-called \emph{maximal singular tori}; they are subtori of $T$. An element $g$ in $\G(k)$ is said to be \emph{semisimple} if it is contained in some torus. A semisimple element of $T$ is called \emph{regular semisimple} if it is not contained in any of the $T_\alpha$, $\alpha \in \Delta$. Regular semisimple elements are precisely those elements $g$ of $\G(k)$ such that the multiplicity of the eigenvalue $1$ in the matrix representation $\operatorname{Ad}(g)$ on $\mathfrak{g}_{k}$ is minimal. This is clearly a Zariski-open condition. The centraliser $Z(a)$ of a regular semisimple element $a$ has minimal possible dimension, namely the rank $\rk \G$. Its connected component\footnote{We thank J.-P.~Serre for pointing out to us that $Z(a)$ may not be connected if $\G$ is not simply connected.} of the identity is the unique maximal torus $T=Z(a)_0$ containing $a$. In particular $Z(a) \leq N(T)$ and $|Z(a)/Z(a)_0| \leq |N(T)/T|$.

If $k$ is not algebraically closed, then $\G(\overline{k})$ is of the type described above, but there are in general several isomorphism classes of $k$-algebraic groups which are nonetheless isomorphic over $\overline{k}$. These are called the $k$-forms of $\G$. When $k$ is finite, they have been entirely classified by Steinberg \cite{steinberg2} and Tits \cite{tits2, tits3}.

For the above and more background on almost simple algebraic groups and Chevalley groups, we refer the reader to the books by Humphreys \cite{humphreys} and Borel \cite{borel} and to Steinberg's lecture notes \cite{steinberg}. We record some of the above facts in the form of a lemma.

\begin{lemma}\label{maxtor}  Let $M \geq 1$, and let $\G$ be an almost simple\footnote{Recall that we require almost simple groups to be non-abelian, thus excluding the degenerate one-dimensional case.} algebraic group over an algebraically closed field $k$ of complexity bounded by $M$.
\begin{enumerate}
\item Regular semisimple elements of $G=\G(k)$ form a Zariski open subset of $\G$ whose complement is of complexity at most $O_M(1)$.
\item Each regular semisimple element $a$ is contained in precisely one maximal torus, which is the connected component of the centraliser $Z(a)$ of $a$.
\item All maximal tori are conjugate, and their common dimension is the rank $\rk(\G)$. We have $\rk(\G) < \dim(\G)$.
\item Given a maximal torus $T$, the \emph{Weyl group} $N(T)/T$ has cardinality $O_M(1)$.
\end{enumerate}
\end{lemma}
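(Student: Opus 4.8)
The plan is to reduce each of the four assertions to a standard fact from the structure theory of almost simple algebraic groups (as found in Humphreys \cite{humphreys} or Borel \cite{borel}), with the only additional work being the verification that each of the relevant sets is cut out by equations of complexity $O_M(1)$. For (1), recall from the discussion above that an element $g$ is regular semisimple precisely when the multiplicity of the eigenvalue $1$ of $\operatorname{Ad}(g)$ acting on $\g_k$ is minimal (equal to $\rk(\G)$). Fixing a faithful representation $\G \hookrightarrow \GL_n$ of complexity $O_M(1)$, the adjoint action is a regular map of complexity $O_M(1)$, and the condition that $\dim\ker(\operatorname{Ad}(g) - \id) \geq \rk(\G)+1$ is the vanishing of all $(\dim\g - \rk(\G))\times(\dim\g - \rk(\G))$ minors of $\operatorname{Ad}(g) - \id$; this is a system of polynomial equations of degree $O_M(1)$, so by Lemma \ref{zar} (and Lemma \ref{compos}) the complement of the regular semisimple locus is a subvariety of complexity $O_M(1)$. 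That it is a \emph{proper} subvariety — i.e. that regular semisimple elements exist and are Zariski dense — is classical: a maximal torus $T$ exists, contains regular semisimple elements, and $\G$ is irreducible, so the complement is proper, hence of dimension $< \dim\G$.

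Assertion (2) is exactly the footnoted structural fact attributed to Serre and recalled in the paragraph preceding the lemma: for a regular semisimple $a$, the centraliser $Z(a)$ has dimension $\rk(\G)$, its identity component $Z(a)_0$ is a torus, necessarily maximal (as $\rk(\G)$ is the maximal dimension of a torus), and it is the unique maximal torus containing $a$ — any torus through $a$ lies in $Z(a)$, hence in $Z(a)_0$, and equals it by dimension count. Nothing here needs a complexity bound; it is a direct quotation of the cited structure theory.

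For (3), the conjugacy of all maximal tori and the equality of their common dimension with $\rk(\G)$ is again classical (Humphreys, Borel). The inequality $\rk(\G) < \dim(\G)$ follows since $\G$ is non-abelian (our standing convention for almost simple, as the footnote stresses): a maximal torus $T$ is a proper connected subgroup, for if $T = \G$ then $\G$ would be abelian, contradiction; hence $\dim(T) < \dim(\G)$. Finally, for (4), the Weyl group $N(T)/T$ is canonically identified with the finite Weyl group of the root system $\Delta$, whose order depends only on $\Delta$; since, as observed in the text, there are only finitely many isomorphism classes of almost simple groups of each fixed dimension, and complexity $\leq M$ forces $\dim(\G) = O_M(1)$, the order of the Weyl group is $O_M(1)$. (Alternatively, $N(T)$ is an algebraic subgroup of complexity $O_M(1)$ by Lemma \ref{centralem}, $T = N(T)_0$ is its identity component, and the number of components of a bounded-complexity algebraic group is $O_M(1)$.)

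The main obstacle is the complexity bookkeeping in part (1): one must be slightly careful that passing to a faithful embedding $\G \hookrightarrow \GL_n$, forming the adjoint representation on $\g_k$, and writing down the minor conditions are all operations that stay within the bounded-complexity category. This is where Lemmas \ref{zar}, \ref{compos} and \ref{bezout} do the work; the remaining parts are essentially citations plus the elementary non-abelian observation for the strict inequality $\rk(\G) < \dim(\G)$.
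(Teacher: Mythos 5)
Your strategy matches the paper's: parts (2)--(4) are recalled from standard structure theory, and part (1) reduces to exhibiting a polynomial condition on $\operatorname{Ad}(g)$ of complexity $O_M(1)$. However, the specific polynomial condition you chose for (1) is not the right one, and this is a genuine gap. You require $\dim\ker(\operatorname{Ad}(g)-\id) \geq \rk(\G)+1$, i.e.\ you use the \emph{geometric} multiplicity of the eigenvalue $1$. That condition cuts out the non-\emph{regular} locus, not the non-regular-\emph{semisimple} locus: a regular unipotent element $u$ also has $\dim\ker(\operatorname{Ad}(u)-\id) = \rk(\G)$. For example, taking $u = \left(\begin{smallmatrix}1&1\\0&1\end{smallmatrix}\right)$ in $\SL_2$, one computes that $\ker(\operatorname{Ad}(u)-\id)$ is one-dimensional (spanned by the raising operator $E$), so $u$ passes your test even though it is not semisimple. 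The subvariety defined by your minors is therefore a \emph{proper subset} of the complement of the regular semisimple locus --- it misses all regular non-semisimple elements --- so you have not bounded the complexity of the set that the lemma asserts, which is precisely what is used via Larsen--Pink in Lemma~\ref{crucial}.

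The ``multiplicity'' in the paper's preliminary discussion (and in its proof, phrased there as ``maximum number of distinct eigenvalues'') is the \emph{algebraic} multiplicity of $1$ as a root of the characteristic polynomial of $\operatorname{Ad}(g)$. Since $\operatorname{Ad}(g)$ and $\operatorname{Ad}(s)$ have the same characteristic polynomial, where $s$ is the semisimple part of $g$, this quantity is insensitive to the unipotent part and is strictly larger than $\rk(\G)$ for unipotent and more generally non-regular-semisimple $g$; so the closed set it defines does contain the complement of the regular semisimple locus. Concretely one cuts out the complement by the vanishing of the coefficient of $t^{\rk(\G)}$ in $\det\bigl((t+1)\cdot\id - \operatorname{Ad}(g)\bigr)$, a single polynomial of bounded degree in the entries of $\operatorname{Ad}(g)$; composing with the bounded-complexity regular map $g \mapsto \operatorname{Ad}(g)$ and invoking Lemma~\ref{compos} gives the required complexity bound. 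The key point is that the defining condition must depend on $\operatorname{Ad}(g)$ only through its characteristic polynomial (as the paper does), not through its kernel (as your minors do). With that substitution your complexity bookkeeping goes through unchanged, and your treatment of parts (2)--(4) is correct.
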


\begin{proof}  Most of these results are standard, except perhaps for the complexity bound in (i). This follows from the fact that the adjoint representation of $\G(k)$ on $\mathfrak{g}_k$ has complexity $O_M(1)$ and the condition that $\mbox{Ad}(g)$ has the maximum number of distinct eigenvalues is clearly given by the non-vanishing of polynomials of degree $O_{\dim(\mathfrak{g})}(1)$ in the entries of $\mbox{Ad}(g)$.
\end{proof}

\emph{Example.}  If $\G = \SL_n$, then the semisimple elements are the diagonalisable matrices in $\G(k)$, and the regular semisimple elements are those matrices whose eigenvalues are distinct.  All maximal tori are conjugate to the group of diagonal matrices in $\SL_n$, and the Weyl group is isomorphic to the permutation group $S_n$, which has order cardinality $n!$.  We recommend to readers who are unfamiliar with the general theory of algebraic groups that they use this model case $\G = \SL_n$ as a running example.

\emph{Remark.} The above classification of almost simple algebraic groups over an algebraically closed field shows that \emph{up to isomorphism} there are only finitely many such groups in each given dimension. In particular there always exists a model of $\G$ whose complexity is bounded in terms of $\dim(\G)$ only. The same applies to semisimple algebraic groups (see \cite{humphreys}), they are isomorphic to a direct product of simple algebraic groups modulo a finite central kernel (whose size is bounded in terms of $\dim(\G)$ only). In characteristic zero, the situation is even better: there are only finitely many conjugacy classes of almost simple algebraic subgroups of $\GL_d$, and their complexity is thus $O_d(1)$ (this follows easily from complete reducibility and the classification of irreducible linear representations of simple algebraic groups via their highest weight). However things seem more delicate in positive characteristic.

With these preliminaries out of the way, we now come to a key definition.

\begin{definition}[Involved torus]
If $A$ is a symmetric subset of $\G(k)$, and $T$ is a maximal torus of $\G$, we say that $T$ is \emph{involved} with $A$ if $A^2 \cap T$ contains a regular semisimple element.  We let $\mathscr{T} = \mathscr{T}(A)$ denote the collection of all involved tori.
\end{definition}

The following lemma is fundamental to our work.  It was inspired by the proofs of the sum-product phenomenon by Bourgain, Glibichuk, and Konyagin \cite{bourgain-glibichuk-konyagin} and its interpretation given by Helfgott in \cite{helfgott-sl3} in terms of group actions. It has also been discovered independently by Pyber and Szabo \cite{pyber}. We thank Harald Helfgott for pointing out a simplification to the proof of this lemma which, in the form below, is closer to the argument of Pyber and Szabo than it is to our original one. Since neither argument is particularly long, we give the slightly simpler version here, referring the reader to our announcement \cite{bgst-announce} for the original.

\begin{lemma}[Crucial lemma]\label{crucial}  Let $M, K \geq 1$, and let $\G$ be an almost simple algebraic group with complexity at most $M$ over an algebraically closed field $k$. Let $A$ be a $K$-approximate subgroup of $\G$.  Then at least one of the following statements holds.
\begin{itemize}
\item[(i)] \textup{(}$A$ is not sufficiently Zariski dense\textup{)} $A$ is contained in an algebraic subgroup of $\G$ of complexity $O_{M}(1)$ and dimension strictly less than $\G$.
\item[(ii)] \textup{(}$A$ is small\textup{)} $|A| \ll_M K^{O_M(1)}$.
\item[(iii)] The set $\mathscr{T}$ of involved tori $T$ has the cardinality bounds
\begin{equation}\label{tcard}
 K^{-O_M(1)} |A|^{1 - \frac{\dim(T)}{\dim(\G)}} \ll_M |\mathscr{T}| \ll_M K^{O_M(1)} |A|^{1 - \frac{\dim(T)}{\dim(\G)}}
 \end{equation}
and is invariant under conjugation by $A$ \textup{(}and hence by the group $\langle A \rangle$ generated by $A$\textup{)}.  In other words, if $T$ is involved, then so is $a^{-1} T a$ for any $a \in A$.
\end{itemize}
\end{lemma}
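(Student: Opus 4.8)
The plan is to derive alternative (iii) under the assumption that we are in neither (i) nor (ii): that is, $A$ is not contained in any proper algebraic subgroup of $\G$ of bounded complexity, and $|A|$ exceeds any prescribed power $K^{C_M}$ of $K$. Under these hypotheses the machinery of the previous section is fully available. Since $\G$ is almost simple it has no proper normal algebraic subgroup of positive dimension, so alternative (i) of both Corollary \ref{lpi-cor} and Lemma \ref{centra-rich} is vacuous and alternative (ii) of those results is exactly our (i); hence one may use the Larsen--Pink upper bound $|A^m \cap V| \ll_M K^{O_M(1)} |A|^{\dim V/\dim \G}$ for bounded-complexity subvarieties $V$ and bounded $m$, together with the lower bound $|A^2 \cap Z(a)| \gg K^{-O_M(1)} |A|^{\dim Z(a)/\dim \G}$. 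The latter holds for $a \in A$ by Lemma \ref{centra-rich}, and, running the same pigeonhole argument, also for $a$ in any fixed bounded power of $A$ (the ``$A^2$'' on the left merely records the two factors $g,h \in A$ producing $hg^{-1}$, independently of where $a$ lives).

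Two structural inputs drive the count. First, by Lemma \ref{maxtor}, a regular semisimple element lies in exactly one maximal torus, so $s \mapsto Z(s)_0$ partitions the regular semisimple points of any set according to the tori they generate; in particular distinct involved tori contribute disjoint families of regular semisimple points. Second, the non-regular-semisimple locus of $\G$, and each singular subtorus $T_\alpha = \ker \alpha$ of a maximal torus $T$ (of complexity $O_M(1)$, being a conjugate of a fixed maximal torus), are subvarieties of complexity $O_M(1)$ and of codimension at least $1$ in $\G$, respectively in $T$. Combining the second input with Larsen--Pink, the number of non-regular-semisimple points of $A^m$ is $\ll K^{O_M(1)} |A|^{1 - 1/\dim \G}$, which is less than half of $|A^m|$ once $|A| \gg K^{O_M(1)}$; hence at least half the points of $A^2$ are regular semisimple, and likewise whenever $|A^m \cap T| \gg K^{-O_M(1)} |A|^{\dim T/\dim \G}$ a positive proportion of those points are regular semisimple.

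For the lower bound on $|\mathscr T|$: each of the $\gg K^{-O_M(1)} |A|$ regular semisimple points $s \in A^2$ witnesses that $Z(s)_0 \in \mathscr T$, while by Larsen--Pink at most $|A^2 \cap T| \ll K^{O_M(1)} |A|^{\dim T/\dim \G}$ of them can share a fixed torus $T$; dividing gives $|\mathscr T| \gg K^{-O_M(1)} |A|^{1 - \dim T/\dim \G}$. For the upper bound I would first show that every involved torus is ``rich''. If $T$ is involved, fix a regular semisimple $s \in A^2 \cap T$, so $\dim Z(s) = \dim T$ and $|A^2 \cap Z(s)| \gg K^{-O_M(1)} |A|^{\dim T/\dim \G}$ by the bounded-power form of Lemma \ref{centra-rich}; since $Z(s) \leq N(T)$ and $|N(T)/T| = O_M(1)$, pigeonholing over the cosets of $T$ in $Z(s)$ yields $g \in N(T)$ with $|A^2 \cap gT| \gg K^{-O_M(1)} |A|^{\dim T/\dim \G}$, and left-translating this set by the inverse of one of its members (which lies in the symmetric set $A^2$) lands it injectively inside $A^4 \cap T$. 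Thus $|A^4 \cap T| \gg K^{-O_M(1)} |A|^{\dim T/\dim \G}$, a positive proportion of which is regular semisimple by the previous paragraph. Since these regular semisimple subsets of $A^4 \cap T$ are pairwise disjoint over $T \in \mathscr T$, summing yields $|\mathscr T| \cdot K^{-O_M(1)} |A|^{\dim T/\dim \G} \leq |A^4| \leq K^3 |A|$, i.e. $|\mathscr T| \ll K^{O_M(1)} |A|^{1 - \dim T/\dim \G}$.

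It remains to treat conjugation-invariance, which is the soft part. The richness computation just sketched in fact shows more: for every fixed bounded $m$, a maximal torus $T$ for which $A^m \cap T$ contains a regular semisimple element necessarily satisfies $|A^4 \cap T| \gg K^{-O_M(1)} |A|^{\dim T/\dim \G}$ (apply the bounded-power Lemma \ref{centra-rich} to that element and repeat the coset chase), hence already $A^4 \cap T$ contains a regular semisimple element; so the family $\{ T : A^m \cap T \text{ contains a regular semisimple element} \}$ is independent of $m$ for all $m \geq 4$, once $|A|$ is large. Now for $a \in A$ and $T$ involved, conjugating the witness $s \in A^2 \cap T$ by $a$ gives a regular semisimple element of $A^4 \cap a^{-1}Ta$, so $a^{-1}Ta$ belongs to this stabilised family; iterating over products of elements of the symmetric generating set $A$ gives invariance under $\langle A \rangle$. (The harmless discrepancy between ``$A^2$'' and ``$A^4$'' in the definition of involved is absorbed by this stabilisation, or one may simply build a fixed bounded power of $A$ into the definition without altering any of the estimates above.) I expect the main obstacle to be not any single deep step — Larsen--Pink and ``centralisers are rich'' do the heavy lifting — but the bookkeeping: routing every ``$|A|$ too small'' eventuality into alternative (ii) with a uniform power $K^{O_M(1)}$, and carrying out the coset chase that upgrades ``centralisers are rich'' to ``involved tori are rich'' while keeping all powers of $A$ bounded.
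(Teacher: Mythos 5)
Your overall strategy mirrors the paper's quite closely: assume neither (i) nor (ii), invoke the Larsen--Pink upper bound together with a bounded-power extension of Lemma~\ref{centra-rich}, partition the regular semisimple elements of $A^2$ by the maximal tori they determine to get the lower bound on $|\mathscr T|$, coset-chase inside $Z(s) \leq N(T)$ to show involved tori are ``rich'' (giving the upper bound on $|\mathscr T|$ by disjointness), and conjugate a witnessing regular semisimple element to get $\langle A\rangle$-invariance. The cardinality bounds are carried out correctly, and the remark that the pigeonhole step in Lemma~\ref{centra-rich} applies to $a$ in any fixed bounded power of $A$ is exactly the right way to fill the cases the paper uses implicitly.

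There is, however, a genuine gap at the $A^2$-versus-$A^4$ discrepancy you flag at the end, and it is not harmless. Your coset chase, starting from a regular semisimple $s \in A^m \cap T$, lands in $A^4 \cap T$: pigeonholing over cosets of $T$ in $Z(s)$ produces some $|A^2 \cap gT| \gg K^{-O_M(1)}|A|^{\dim T/\dim \G}$, and left-translating by the inverse of one of its members (which lies in $A^2$) gives a subset of $A^4 \cap T$, not of $A^2 \cap T$. Your ``stabilisation'' therefore shows that the families $\{T: A^m \cap T \hbox{ contains an rss element}\}$ agree for $m \geq 4$, but says nothing about whether the $m=4$ family equals $\mathscr T$ (the $m=2$ family). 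Conjugating the witness $s \in A^2 \cap T$ by $a \in A$ only places $a^{-1}sa$ in $A^4 \cap a^{-1}Ta$, so you conclude that $a^{-1}Ta$ lies in the $m=4$ family, not that it is involved. The paper closes exactly this gap with an additional pigeonhole you have not included: from $|A^6 \cap \tilde T| \gg K^{-O_M(1)}|A|^{\dim T/\dim \G}$ observe $|A \cdot (A^6 \cap \tilde T)| \leq |A^7| \ll K^{O_M(1)}|A|$, so some $g$ has $\gg K^{-O_M(1)}|A|^{\dim T/\dim \G}$ representations $g = at$ with $a \in A$, $t \in A^6 \cap \tilde T$; any two such give $t'\,t^{-1} = (a')^{-1}a \in A^2 \cap \tilde T$, whence $|A^2 \cap \tilde T| \gg K^{-O_M(1)}|A|^{\dim T/\dim \G}$, and after removing the non-rss part (which is $\ll K^{O_M(1)}|A|^{(\dim T - 1)/\dim \G}$ by Larsen--Pink) one obtains a regular semisimple element of $A^2 \cap \tilde T$ once $|A| \gg K^{O_M(1)}$. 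Your alternative fix --- build a higher power into the definition of ``involved'' --- proves a variant of the lemma rather than the lemma as stated; the extra pigeonhole is what makes the $A^2$ definition work.
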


\begin{proof}  We allow all implied constants to depend on $M$.

Let us first show the cardinality bounds \eqref{tcard}.  By Lemma \ref{maxtor} and Corollary \ref{lpi-cor}, either (i) holds, or the number of elements of $A^2$ which are not regular semisimple is
$O( K^{O(1)} |A|^{1-\frac{1}{\dim(\G)}} )$ (note that the first option in Corollary \ref{lpi-cor} cannot occur because $\G$ is almost simple.)  Thus, either (i) or (ii) holds, or the number of regular semisimple elements of $A^2$ is comparable to $|A^2| = K^{O(1)} |A|$.  By Lemma \ref{maxtor}, these regular semisimple elements can be partitioned into involved tori, and by Lemma \ref{centra-rich}, either (i) holds, or each involved torus $T$ contains a number of elements of $A^2$ comparable to $K^{O(1)} |A|^{\dim(T)/\dim(\G)}$.  By Lemma \ref{maxtor} and Corollary \ref{lpi-cor}, either (i) or (ii) holds, or the number of elements in $A^2 \cap T$ that are not regular semisimple is at most $K^{O(1)} |A|^{(\dim(T)-1)/\dim(\G)}$.  Thus, either (i) or (ii) holds, or every involved torus $T$ contains a number of \emph{regular semisimple elements} in $A^2$ comparable to $|A|^{1 - \frac{\dim(T)}{\dim(\G)}}$.  The claim \eqref{tcard} follows.

Now, let $T$ be an involved torus, let $a \in A$, and let $\tilde T := a^{-1} T a$ be the conjugate of $T$ by $a$.  Being involved, $T$ is the connected component of the centraliser $Z(b)$ of some regular semisimple $b \in A^2$.  Applying Lemma \ref{centra-rich}, we either have (i), or
$$ |A^2 \cap Z(b)| \gg K^{-O(1)} |A|^{\dim(T) / \dim(\G)}.$$
Suppose we are in the latter case. Since $|Z(b)/T| \leq |N(T)/T| = O(1)$ one coset of $T$ in $Z(b)$ must have a large intersection with $A^2$, in particular $$ |A^4 \cap T| \gg K^{-O(1)} |A|^{\dim(T) / \dim(\G)}.$$ Conjugating by $a$, we conclude
$$ |A^6 \cap \tilde T| \gg K^{-O(1)} |A|^{\dim(T) / \dim(\G)}.$$
On the other hand, we have
$$ |A \cdot (A^6 \cap \tilde T)| \leq |A^7| \ll K^{O(1)} |A|.$$
Thus, by the pigeonhole principle, we can find an element $g$ of $A \cdot (A^6 \cap \tilde T)$ which has $\gg K^{-O(1)} |A|^{\dim(T)/\dim(\G)}$ representations of the form $g = a t$, where $a \in A$ and $t \in A^6 \cap \tilde T$.  But if $at = a't'$ for some $a, a' \in A$ and $t, t' \in A^6 \cap \tilde T$, then $(a')^{-1} a = t' t^{-1} \in A^2 \cap \tilde T$.  Thus we have
$$ |A^2 \cap \tilde T| \gg K^{-O(1)} |A|^{\dim(T)/\dim(\G)}.$$
On the other hand, by Lemma \ref{maxtor} and Corollary \ref{lpi-cor}, the number of elements of $A^2 \cap \tilde T$ which are not regular semisimple is at most \[ O( K^{O(1)} |A|^{(\dim(T)-1)/\dim(\G)}).\]  Thus we are either in case (ii) or case (iii), and the claim follows.
\end{proof}

To proceed further, we need to divide into two cases depending on whether the group $\langle A \rangle$ generated by $A$ is finite or infinite.  In the infinite case, we use the following\footnote{In the case $k=\C$, one could also use Jordan's lemma here.} simple lemma.

\begin{lemma}\label{cover}  Let $M \geq 1$, and let $\G$ be an almost simple algebraic group of complexity at most $M$ over an algebraically closed field $k$.  Let $H$ be an infinite subgroup of $\G(k)$, which can be covered by finitely many left cosets $aT$ of a torus $T$ of complexity at most $M$ inside $G$.  Then there exists a torus $S$ \textup{(}not necessarily maximal\textup{)} of complexity $O_M(1)$ such that $H \cap S$ is infinite, and $H$ is contained in the normaliser $N(S)$ of $S$.
\end{lemma}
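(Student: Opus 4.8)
The plan is to exploit the rigidity of tori in algebraic groups: although $H$ may meet many different cosets of $T$, these cosets cannot be ``independent'' when $H$ is a group, and the obstruction to independence is precisely a smaller torus that $H$ normalises. First I would pass to the Zariski closure $\overline{H}$ of $H$ in $\G$. Since $H$ is covered by finitely many left cosets $a_1 T,\dots,a_r T$, so is $\overline{H}$ (the closure of a finite union of cosets of a closed set is the union of the closures, and each $a_i T$ is already closed), hence $\dim \overline{H} \le \dim T$. On the other hand $\overline{H}$ is an algebraic subgroup of $\G$ of complexity $O_M(1)$ by Lemma~\ref{zar} and the standard fact that the Zariski closure of a subgroup is an algebraic subgroup; because $H$ is infinite, $\overline{H}$ has positive dimension, so its connected component $\overline{H}^{\circ}$ is a positive-dimensional connected algebraic subgroup of complexity $O_M(1)$.

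Next I would analyse $\overline{H}^{\circ}$. It is a connected subgroup contained in a finite union of cosets of the torus $T$; I claim it is itself diagonalisable, hence a torus. Indeed, being connected and of dimension at most $\dim T$ while lying in finitely many translates of the abelian group $T$, it cannot contain any nontrivial unipotent one-parameter subgroup: such a subgroup $U \cong \mathbb{G}_a$ would have to meet $T$ (since $\overline{H}^\circ$ meets some coset $aT$ in a set which is Zariski-dense in $aT \cap \overline{H}^\circ$, one forces $U$, after translation, into $T$), contradicting that $T$ has no unipotents. More carefully: a connected linear algebraic group with no nontrivial unipotent elements is a torus (see Humphreys~\cite{humphreys} or Borel~\cite{borel}), and the coset-covering hypothesis together with the structure of $T$ rules unipotents out. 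So set $S := \overline{H}^{\circ}$; this is a torus of complexity $O_M(1)$, and $H \cap S$ has finite index in $H$ (since $S$ is the identity component of $\overline H$ and $H$ is Zariski-dense in $\overline H$), hence is infinite because $H$ is.

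Finally, since $S = \overline{H}^{\circ}$ is the identity component of the Zariski closure of $H$, it is normalised by every element of $\overline{H}$, and in particular by $H$ itself; that is, $H \subseteq N(S)$. By Lemma~\ref{centralem}, $N(S)$ is an algebraic subgroup of complexity $O_M(1)$, which confirms all the required bounds. The main obstacle is the middle step---verifying that the connected component is genuinely a torus and controlling its complexity---but this follows cleanly from the standard structure theory of linear algebraic groups together with the quantitative Zariski-closure bound of Lemma~\ref{zar}; the coset-covering hypothesis is exactly what forces $\overline H^\circ$ into the diagonalisable world.
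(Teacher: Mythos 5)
The route you take is genuinely different from the paper's (the paper iterates by intersecting $T$ with conjugates $h^{-1}Th$, shrinking the dimension at each step), and unfortunately your version has a real gap: the complexity bound on $S=\overline{H}^{\circ}$ does not hold.

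There are two related problems. First, the appeal to Lemma~\ref{zar} is not justified: that lemma bounds the complexity of the Zariski closure of a \emph{constructible set of complexity at most $M$}, but $H$ is an abstract subgroup, and the only constructible set you know it sits inside is $\bigcup_{i=1}^{r} a_i T$, whose complexity grows with $r$, and $r$ is merely assumed finite, not bounded in terms of $M$. Second, and more decisively, even after passing to the identity component $\overline{H}^{\circ}\subseteq T$, the resulting subtorus need not have bounded complexity. For a concrete counterexample, take $\G=\SL_3$, let $T$ be the diagonal torus, and let $H=\{\operatorname{diag}(\zeta,\zeta^{n},\zeta^{-n-1}):\zeta \text{ a root of unity}\}$. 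Then $H$ is infinite and covered by the single coset $T$, but $\overline{H}^{\circ}=\{\operatorname{diag}(t,t^{n},t^{-n-1}):t\in k^{*}\}$ is cut out by equations of degree $n$, so its complexity is unbounded as $n\to\infty$, even though $M$ stays fixed. (The lemma is still true here: one should take $S=T$, not $\overline{H}^{\circ}$.) This is precisely why the paper's proof does not try to take the Zariski closure of $H$: instead it works only with tori built from $T$ and boundedly many of its conjugates via intersections, each such intersection having complexity $O_M(1)$ (Lemma~\ref{quivar}), and the iteration terminates in $\leq \dim T = O_M(1)$ steps, keeping every object of bounded complexity throughout. Your observations that $\overline{H}^{\circ}\subseteq T$, that it is a torus, that $H\cap S$ is infinite, and that $H\subseteq N(S)$ are all correct; the only --- but fatal --- missing ingredient is a complexity bound on $S$, and no such bound is available for $S=\overline{H}^{\circ}$.
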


\begin{proof}  It is clear from the assumption that $H \cap T$ is clearly infinite, so if $H$ is contained in $N(T)$ then we are done. Suppose, then, that there exists $h \in H$ such that $\tilde T := h^{-1} T h \neq T$.  Conjugating $H$ by $h$, we see that $H$ can be covered by finitely many left cosets of $\tilde T$.  The intersection of a left-coset of $T$ and a left-coset of $\tilde T$ is either empty or a left-coset of the subgroup $T \cap \tilde T$, which has complexity $O_M(1)$ and has strictly smaller dimension. On the other hand, Lemma \ref{quivar} guarantees that the connected component $S$ of $T \cap \tilde T$ has complexity $O_M(1)$. We thus see that $H$ can be covered by finitely many left cosets of the torus $S$.  As we cannot have an infinite descent of tori of decreasing dimension, the claim follows by iterating this argument.
\end{proof}

We may now state and prove a result that is, essentially, a precise form of our main theorem. Sometimes (for example for applications to expanders) this precise form is more helpful than Theorem \ref{mainthm2} itself. We give the deduction of Theorem \ref{mainthm2} afterwards.

\begin{theorem}\label{mainthm-preciseform}  Let $M, K \geq 1$, and let $\G$ be an almost simple algebraic group of complexity at most $M$ over an algebraically closed field $k$, and let $A$ be a $K$-approximate subgroup of $\G(k)$.  Then at least one of the following statements hold.
\begin{itemize}
\item[(i)] \textup{(}$A$ is not sufficiently Zariski dense\textup{)} $A$ is contained in an algebraic subgroup of $\G$ of complexity $O_{M}(1)$ and dimension strictly less than $\G$.
\item[(ii)] \textup{(}$A$ is small\textup{)} $|A| \ll_M K^{O_M(1)}$.
\item[(iii)] \textup{(}$A$ controlled by $\langle A \rangle$\textup{)} The group $\langle A \rangle$ generated by $A$ is finite, and has cardinality $|\langle A \rangle| \ll_M K^{O_M(1)} |A|$.
\end{itemize}
\end{theorem}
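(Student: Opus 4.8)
The plan is to assume that neither (i) nor (ii) holds and deduce (iii), using the Crucial Lemma (Lemma \ref{crucial}) as the main engine. Since (i) fails, $A$ is sufficiently Zariski dense; since (ii) fails, $|A|$ is not $\ll_M K^{O_M(1)}$. Under these hypotheses, alternative (iii) of Lemma \ref{crucial} applies, so the set $\mathscr{T} = \mathscr{T}(A)$ of involved tori is nonempty, satisfies the two-sided bound \eqref{tcard}, and is invariant under conjugation by $\langle A \rangle$. Fix one involved torus $T$. The strategy is to produce a single algebraic subgroup $H$ of $\G$, of bounded complexity, that contains $\langle A \rangle$; since $\G$ is almost simple and $A$ is Zariski dense, $H$ can only be a finite-index (indeed finite, after intersecting with a suitable structure) object, and a counting argument against \eqref{tcard} will pin down $|\langle A \rangle|$.

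First I would exploit conjugation-invariance of $\mathscr{T}$. The group $\langle A \rangle$ acts on the finite set $\mathscr{T}$ by conjugation; let $H_0 := \mathrm{Stab}_{\langle A \rangle}(T)$ be the stabiliser of our fixed torus $T$. Then $H_0 \subseteq N(T)$ (any element fixing $T$ under conjugation normalises $T$), and by Lemma \ref{centralem} (applied to the algebraic subgroup $T$) the normaliser $N(T)$ is an algebraic subgroup of complexity $O_M(1)$; by Lemma \ref{maxtor}(iv) the Weyl group $N(T)/T$ has cardinality $O_M(1)$. Now I split on whether $\langle A \rangle$ is finite or infinite. If $\langle A \rangle$ is infinite: then the orbit of $T$ is finite (it is contained in the finite set $\mathscr{T}$), so $H_0$ has finite index in $\langle A \rangle$, hence $H_0$ is an infinite subgroup of $N(T)$, so $\langle A \rangle$ is covered by finitely many cosets $aT$ of the torus $T$. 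Lemma \ref{cover} then produces a torus $S$ of complexity $O_M(1)$ with $\langle A \rangle \subseteq N(S)$, and since $N(S)$ is a proper algebraic subgroup of $\G$ of complexity $O_M(1)$ (proper because $S$ is a torus, hence of dimension $\le \rk(\G) < \dim(\G)$, and its normaliser in an almost simple group cannot be all of $\G$ — one should check this, e.g. via the fact that $N(S)$ has dimension bounded by that of $N(T)$ for a maximal torus $T$, which is $< \dim \G$), we conclude $A \subseteq N(S)$, contradicting the failure of (i). Hence $\langle A \rangle$ is finite.

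It remains to bound $|\langle A \rangle|$ when $\langle A \rangle$ is finite. Here the idea is a counting estimate. On one hand, the orbit–stabiliser theorem gives $|\langle A \rangle| = |\mathrm{Orb}(T)| \cdot |H_0| \le |\mathscr{T}| \cdot |H_0|$, and since $H_0 \subseteq N(T)$ with $|N(T)/T| = O_M(1)$, we have $|H_0| \le O_M(1) \cdot |H_0 \cap T|$. Now $H_0 \cap T = \langle A \rangle \cap T$ is a finite subgroup of the torus $T$; the point is to bound its size by $\ll_M K^{O_M(1)} |A|^{\dim(T)/\dim(\G)}$. Intersecting with $A$-translates and using the approximate-subgroup structure, one should show $|\langle A \rangle \cap T|$ is comparable to $|A^{O(1)} \cap T|$, which by the Larsen-Pink bound (Corollary \ref{lpi-cor}, whose alternatives (i) and (ii) are excluded) is $\ll_M K^{O_M(1)} |A|^{\dim(T)/\dim(\G)}$; conversely $|\langle A\rangle \cap T| \ge |A^2 \cap T| \gg K^{-O_M(1)} |A|^{\dim(T)/\dim(\G)}$ by Lemma \ref{centra-rich}. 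Combining with the upper bound $|\mathscr{T}| \ll_M K^{O_M(1)} |A|^{1 - \dim(T)/\dim(\G)}$ from \eqref{tcard} gives
\[ |\langle A \rangle| \le |\mathscr{T}| \cdot O_M(1) \cdot |\langle A \rangle \cap T| \ll_M K^{O_M(1)} |A|^{1 - \frac{\dim(T)}{\dim(\G)}} \cdot |A|^{\frac{\dim(T)}{\dim(\G)}} = K^{O_M(1)} |A|, \]
which is exactly conclusion (iii).

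The main obstacle I anticipate is the step bounding $|\langle A \rangle \cap T|$ by $\ll_M K^{O_M(1)} |A^{O_M(1)} \cap T|$ — i.e. showing that passing from the approximate group $A$ to the genuine group $\langle A \rangle$ does not blow up the intersection with the torus. This requires knowing that $\langle A \rangle$ itself behaves like a bounded power of $A$ when intersected with $T$; one natural route is to first establish (via \eqref{tcard} applied to $T$ and the orbit bound) that $\langle A \rangle$ has cardinality $\ll_M K^{O_M(1)} |A|$ as an \emph{intermediate} estimate using only the upper half of the argument in a bootstrapping fashion, or alternatively to invoke that a finite subgroup covered by boundedly many cosets of $A$ has $\langle A \rangle \cap T$ contained in boundedly many cosets of $A^2 \cap T$. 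Making this rigorous — ensuring the powers of $A$ and factors of $K$ stay bounded in terms of $M$ only, and that the orbit of $T$ under $\langle A\rangle$ is genuinely controlled by $\mathscr{T}$ rather than a larger set of tori — is the delicate bookkeeping at the heart of the proof, together with the (easier but necessary) verification that $N(S)$ is a \emph{proper} subgroup of $\G$ in the infinite case.
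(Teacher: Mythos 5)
Your high-level strategy is the same as the paper's: invoke Lemma \ref{crucial} to get the conjugation-invariant set $\mathscr{T}$ with the two-sided bound \eqref{tcard}, split on whether $\langle A \rangle$ is finite, dispose of the infinite case by Lemma \ref{cover} (obtaining a contradiction with the failure of (i)), and in the finite case use orbit--stabiliser against \eqref{tcard}. The infinite case is essentially correct as you wrote it; your worry about $N(S)$ being proper is resolved exactly as you suggest (if $\dim N(S)=\dim\G$ then by connectedness $N(S)=\G$, so $S$ would be a proper normal subgroup of positive dimension, contradicting almost simplicity).

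The genuine gap is the one you flag in the finite case: bounding $|\langle A \rangle \cap T|$ by $K^{O_M(1)}|A|^{\dim T/\dim\G}$. None of your suggested workarounds (bootstrapping off \eqref{tcard}, or covering $\langle A\rangle\cap T$ by boundedly many cosets of $A^2\cap T$) is the right move, and neither obviously succeeds: the passage from $A$ to $\langle A\rangle$ can involve unboundedly many products, so there is no \emph{a priori} containment of $\langle A\rangle\cap T$ in a bounded power of $A$. The paper's fix is cleaner and sidesteps this entirely: apply Corollary \ref{lpi-cor} to $\langle A\rangle$ \emph{itself}, viewed as a $1$-approximate subgroup of $\G(k)$ (option (ii) is excluded for $\langle A\rangle$ since it is excluded for $A\subseteq\langle A\rangle$). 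This yields $|\langle A\rangle\cap T|\ll_M |\langle A\rangle|^{\dim T/\dim\G}$ --- a bound in terms of $|\langle A\rangle|$, not $|A|$. Your orbit--stabiliser inequality then reads
$$|\langle A\rangle| \;\leq\; |\mathscr{T}|\cdot O_M(1)\cdot |\langle A\rangle\cap T| \;\ll_M\; K^{O_M(1)}|A|^{1-\frac{\dim T}{\dim\G}}\cdot|\langle A\rangle|^{\frac{\dim T}{\dim\G}},$$
which is an inequality in which $|\langle A\rangle|$ appears on both sides with a net positive exponent $1-\dim T/\dim\G>0$ on the left (using $\dim T=\rk\G<\dim\G$ from Lemma \ref{maxtor}); dividing and raising to the power $(1-\dim T/\dim\G)^{-1}$ gives $|\langle A\rangle|\ll_M K^{O_M(1)}|A|$, which is (iii). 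So the missing idea is precisely: treat $\langle A\rangle$ as a $1$-approximate group and let Larsen--Pink bound the stabiliser in terms of $|\langle A\rangle|$, then solve the resulting self-referential inequality.
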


\begin{proof}  We allow all constants to depend on $M$.  From Lemma \ref{maxtor} (i) we see in particular that $\G$ has complexity $O_M(1)$.  There are two cases, depending on whether $\langle A \rangle$ is finite or not.

First suppose that $\langle A \rangle$ is finite.  From Lemma \ref{crucial}, either (i) or (ii) holds, or the set $\mathscr{T}$ of involved tori is invariant under conjugation by $\langle A \rangle$.  On the other hand,
$\langle A \rangle$ is certainly a $1$-approximate subgroup of $G=\G(k)$, so by Corollary \ref{lpi-cor}, either (i) holds, or the intersection of $\langle A \rangle$ with any maximal torus has cardinality $\ll |\langle A \rangle|^{\dim(T)/\dim(\G)}$.  Since $|N(T)/T| \ll 1$ by Lemma \ref{maxtor}, we conclude that the stabiliser of the action of $\langle A \rangle$  on any torus in $\mathscr{T}$ has cardinality $\ll |\langle A \rangle|^{\dim(T)/\dim(\G)}$.  By the orbit-stabiliser theorem, this implies that
$$ |\mathscr{T}| \gg |\langle A \rangle|^{1 - \frac{\dim(T)}{\dim(\G)}}.$$
Comparing this with \eqref{tcard} we obtain (iii).

Now suppose instead that $\langle A \rangle$ is infinite.  The orbit-stabiliser theorem and Lemma \ref{crucial} then implies that either (i) or (ii) holds, or a finite index subgroup of $\langle A \rangle$ lies in $N(T)$ for some $T \in \mathscr{T}$.  In the latter case, since $N(T)/T$ is finite, this implies that $\langle A \rangle$ is covered by a finite number of cosets of a torus $T$.  By Lemma \ref{cover}, we thus see that $\langle A \rangle$ is contained in the normaliser $N(S)$ of a torus $S$ of dimension at least one and complexity $O_M(1)$.  But such normalisers have complexity $O_M(1)$ by Lemma \ref{centralem}, and dimension strictly less than $\dim(\G)$ by Lemma \ref{maxtor}, so we obtain (i) in this case.
\end{proof}

We can now prove Theorem \ref{mainthm2}, whose statement we first recall.

\begin{mainthm-repeat}[Main theorem]
Let $k$ be a finite field and let $\G$ be an absolutely almost simple algebraic group defined over $k$, and suppose that $A \subseteq \G(k)$ is a $K$-approximate group that generates $\G(k)$. Then $A$ is $K^{C_{\dim(\G)}}$-controlled by either $\{\id\}$ or by $\G(k)$ itself.
\end{mainthm-repeat}

\begin{proof}
Let $\overline{k}$ be the algebraic closure of $k$. The algebraic group $\G$ is a $k$-form of the almost simple algebraic group $\G(\overline{k})$. Such $k$-forms over finite fields have been entirely classified by Steinberg \cite{steinberg2} and Tits \cite{tits2, tits3}. In particular, for every type of Dynkin diagram there is a bounded number (independently of $k$) of such forms.  As the statement of the theorem is invariant with respect to group isomorphism, we may thus assume that the complexity of $\G$ is bounded in terms of $\dim(\G)$ only.

Applying Theorem \ref{mainthm-preciseform}, it suffices to establish that option (i) of that theorem cannot hold for $|k| \geq C_{\dim(\G)}$. In other words, we must establish that $\G(k)$ is ``sufficiently-Zariski-dense'' in $\G(\overline{k})$ in the sense that the lowest complexity $M_k$ of any proper subvariety of $\G(\overline{k})$ containing $\G(k)$ tends to infinity with $|k|$. Suppose, then, that $V \subseteq \G(\overline{k})$ is a proper subvariety of complexity $M$ (not necessarily defined over $k$). The required statement then follows by combining the following two facts:
\begin{itemize} \item $V(k) = V \cap \G(k)$ has cardinality at most $O_{M}(|k|^{\dim V})$;
\item (Lang-Weil, \cite{lang-weil}) The cardinality of $\G(k)$ is $\gg_M |k|^{\dim \G}$.
\end{itemize}
In connection with the first fact, a well-known variant of the Schwarz-Zippel lemma (see \cite[Lemma A.3]{ell-ob-tao} for a simple proof) asserts that an irreducible affine variety $V$ defined by $m$ equations of degrees $\leq d$ and coefficients in $\overline{k}$ has at most $d^m (|k| + 1)^{\dim V}$ points over $k$. In connection with the second, we note that in all cases (both split or non-split, see e.g. \cite{carter}) an exact formula for $|\G(\F_q)|$ in terms of $q$ is known, and so the somewhat deep Lang-Weil estimate is really overkill here. For example, it is extremely well-known and easy to see that
\[ |\SL_n(\F_q)| = \frac{(q^n - 1) (q^n - q)(q^n - q^2) \dots (q^n - q^{n-1})}{q-1}.\]

\noindent This concludes the proof of Theorem \ref{mainthm2}.\end{proof}

\section{Approximate subgroups of $\GL_n(\C)$} \label{freiman-sec2}

The objective of this section is to establish Theorem \ref{mainthm3}, characterising approximate subgroups of linear groups in characteristic zero.  As remarked after the statement of that theorem, we may assume without loss of generality that the ambient field is the complex numbers, and so we may rephrase the statement there with the following one.

\begin{mainthm3-repeat}  Let $K, n \geq 1$, and suppose that $A \subseteq \GL_n(\C)$ is a $K$-approximate subgroup. Then $A$ is $O(K^{O_n(1)})$-controlled by $B$, a $CK^{C_n}$-approximate subgroup of $\GL_n(\C)$ that generates a nilpotent group of step at most $n-1$.
\end{mainthm3-repeat}

There is an appetising strategy for establishing this result, and to outline it we recall some basic definitions and facts from algebraic group theory. In particular let us recall that if $G$ is an algebraic group over some algebraically closed field $k$ then the \emph{radical} $\Rad(G)$ is the maximal normal connected solvable algebraic subgroup of $G$. An algebraic group with trivial radical is said to be \emph{semisimple}; the quotient $G/\Rad(G)$ has this property.

\begin{proposition}[Semisimple algebraic groups over $\C$]\label{classification}
Suppose that $G$ is a connected semisimple linear algebraic group over $\C$. Then $G$ is an almost direct product of almost simple algebraic groups $G_1,\dots,G_k$, each arising from one of the irreducible root systems mentioned earlier. More precisely, there is a map $\pi : G_1 \times \dots \times G_k \rightarrow G$ with $|\ker \pi| = O_{\dim(G)}(1)$. Note also that \textup{(}as a consequence of the classification of irreducible root systems\textup{)} each $G_i$ may be realised as a linear algebraic group of complexity $O_{\dim(G)}(1)$.
\end{proposition}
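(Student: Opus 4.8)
\emph{Proof proposal.} The plan is to reduce the proposition to the classical structure theory of semisimple algebraic groups over an algebraically closed field (Humphreys \cite{humphreys}, Borel \cite{borel}) and then to insert the complexity bookkeeping using facts already recorded earlier in the paper.

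First I would recall the standard decomposition: a connected semisimple linear algebraic group $G$ over $\C$ possesses only finitely many minimal closed connected normal subgroups of positive dimension, say $G_1,\dots,G_k$; these are each almost simple, they pairwise commute, they generate $G$, and the multiplication map $\pi : G_1 \times \dots \times G_k \to G$ given by $(g_1,\dots,g_k) \mapsto g_1 \cdots g_k$ is a surjective morphism of algebraic groups with $\ker \pi$ central in $G_1 \times \dots \times G_k$. Concretely, fixing a maximal torus $T \leq G$, the root system $\Delta = \Delta(G,T)$ breaks into its irreducible components $\Delta_1,\dots,\Delta_k$, and $G_i$ is the almost simple subgroup generated by the root subgroups $U_\alpha$ with $\alpha \in \Delta_i$; this presentation makes transparent the claim that each $G_i$ arises from one of the irreducible root systems listed earlier.

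Next I would do the counting. Since each $G_i$ is almost simple, hence non-abelian, we have $\dim(G_i) \geq 3$; moreover $\pi$ surjective with finite kernel forces $\sum_i \dim(G_i) = \dim(G)$, so $k \leq \dim(G)/3 = O_{\dim(G)}(1)$ and $\dim(G_i) \leq \dim(G)$ for every $i$. The kernel of $\pi$ lies in $Z(G_1 \times \dots \times G_k) = Z(G_1) \times \dots \times Z(G_k)$, and by the description of centers recalled above, $|Z(G_i)| \leq \rk(G_i) + 1 \leq \dim(G_i)+1$; therefore $|\ker\pi| \leq \prod_{i=1}^{k}(\dim(G_i)+1) = O_{\dim(G)}(1)$. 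Note that we do not need $\pi$ itself to have bounded complexity, only its kernel to be bounded in size, so no additional care is required on that point.

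Finally, the statement that each $G_i$ may be realised as a linear algebraic group of complexity $O_{\dim(G)}(1)$ is exactly the observation already made in the remark following Lemma \ref{maxtor}: by the Cartan--Killing classification there are, up to isomorphism, only finitely many almost simple algebraic groups over $\C$ in each given dimension, and each such group admits a faithful representation whose complexity is bounded purely in terms of its dimension (via complete reducibility together with the highest-weight classification of irreducible representations of simple groups). Applying this to each $G_i$ and using $\dim(G_i) \leq \dim(G)$ gives the claim. I expect no essential obstacle in carrying this out: the proposition is a repackaging of standard theory, and the one thing to watch is that every finite or bounded quantity produced (the number $k$ of factors, the orders $|Z(G_i)|$, the complexities of the simple models) is controlled by $\dim(G)$ alone and not by any a priori complexity of the given $G$ --- which is automatic, since all of these are governed by the discrete classification data (root systems and weight lattices) of rank at most $\dim(G)$.
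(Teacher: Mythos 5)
Your proposal is correct and takes essentially the same approach as the paper, whose proof simply cites \cite[Proposition 14.10]{borel} for the structural decomposition and leaves the quantitative bookkeeping implicit. You have filled in those details --- the bound $k\leq \dim(G)/3$, the central-kernel estimate $|\ker\pi|\leq\prod_i(\rk(G_i)+1)$, and the bounded-complexity realisation of each $G_i$ via the Cartan--Killing classification --- all of which are exactly the facts recorded earlier in Section~\ref{simple-sec} and in the remark following Lemma~\ref{maxtor}.
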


\begin{proof} See \cite[Proposition 14.10]{borel}.  Note that this proposition in fact establishes some additional facts, for instance one can take the $G_i$ to be the minimal nontrivial (smooth) connected normal (closed) $k$-subgroups of $G$ that pairwise commute, and one can also take the kernel $\ker \pi$ to be central.  We will not need this additional structure here.
\end{proof}

As a consequence of this classification and of Theorem \ref{mainthm-preciseform} it is not hard to prove the following result.

\begin{proposition}\label{semisimple-c}
Suppose that $G$ is a connected semisimple linear algebraic group over $\C$ and that $A \subseteq G$ is a $K$-approximate group generating a Zariski-dense subgroup of $G$. Then $|A| = O(K^{O_{\dim(G)}(1)})$.
\end{proposition}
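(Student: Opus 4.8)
The plan is to lift $A$ through the isogeny furnished by Proposition \ref{classification}, reducing to the almost simple case, and then apply Theorem \ref{mainthm-preciseform} factor by factor.

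First I would invoke Proposition \ref{classification} to write $\tilde G := G_1 \times \cdots \times G_k$ together with the almost direct product map $\pi : \tilde G \to G$, which is surjective with $|\ker \pi| = O_{\dim(G)}(1)$, each $G_i$ almost simple of complexity $O_{\dim(G)}(1)$, and $k \leq \dim(G)$. Put $\tilde A := \pi^{-1}(A)$. Pulling the defining properties of an approximate subgroup back through the homomorphism $\pi$ (using that each fibre of $\pi$ is a coset of $\ker \pi$, so that $\pi^{-1}(A) \cdot \pi^{-1}(A) \subseteq \pi^{-1}(A \cdot A)$ and $\pi^{-1}(X \cdot A) \subseteq \pi^{-1}(X) \cdot \pi^{-1}(A)$) shows that $\tilde A$ is a symmetric subset containing the identity with $|A| \leq |\tilde A| = |\ker\pi|\,|A|$, and that $\tilde A$ is a $K'$-approximate subgroup of $\tilde G$ with $K' = K|\ker\pi| = O_{\dim(G)}(K)$.

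Next I would transfer the Zariski-density hypothesis. Since $\id \in A$ we have $\ker \pi \subseteq \tilde A$, whence $\langle \tilde A \rangle = \pi^{-1}(\langle A \rangle)$. If $\langle \tilde A \rangle$ were not Zariski dense in the irreducible variety $\tilde G$, its Zariski closure $W$ would be proper, hence $\dim W < \dim \tilde G = \dim G$; but then $\pi(W)$ is closed ($\pi$ being finite, hence a closed map) and contains $\langle A \rangle$, forcing $\dim G = \dim \pi(W) \leq \dim W < \dim G$, a contradiction. So $\langle \tilde A \rangle$ is Zariski dense in $\tilde G$, and applying the coordinate projections $\pi_i : \tilde G \to G_i$ — which, being surjective morphisms, carry Zariski-dense sets to Zariski-dense sets — we see that $A_i := \pi_i(\tilde A)$ generates a Zariski-dense subgroup of $G_i$. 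Moreover $A_i$ is an $O_{\dim(G)}(K)$-approximate subgroup of $G_i$, since homomorphic images of approximate subgroups are approximate subgroups of no worse parameter.

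Finally I would apply Theorem \ref{mainthm-preciseform} to each $A_i \subseteq G_i$. Alternative (i) cannot hold: if $A_i$ were contained in a proper algebraic subgroup of $G_i$, then so would be $\langle A_i \rangle$, contradicting its Zariski density. Alternative (iii) cannot hold either, since a finite group cannot be Zariski dense in the positive-dimensional group $G_i$. Hence alternative (ii) holds: $|A_i| \ll_{\dim(G)} K^{O_{\dim(G)}(1)}$ for every $i$. Since $\tilde A \subseteq A_1 \times \cdots \times A_k$, we conclude
$$ |A| \leq |\tilde A| \leq \prod_{i=1}^{k} |A_i| \ll_{\dim(G)} \bigl( K^{O_{\dim(G)}(1)} \bigr)^{k} = K^{O_{\dim(G)}(1)}, $$
using $k \leq \dim(G)$. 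The only point that is not pure bookkeeping is the density transfer through the isogeny $\pi$; everything else consists of routine manipulations of approximate subgroups under images and preimages of group homomorphisms, together with the two easy exclusions inside Theorem \ref{mainthm-preciseform}.
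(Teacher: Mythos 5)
Your proof is correct and follows essentially the same route as the paper: lift $A$ through the isogeny $\pi$ from Proposition \ref{classification}, verify that the pullback is an $O_{\dim(G)}(K)$-approximate subgroup generating a Zariski-dense subgroup, project to each almost simple factor, rule out alternatives (i) and (iii) of Theorem \ref{mainthm-preciseform} on density grounds, and multiply the resulting bounds over the $k \leq \dim(G)$ factors. You have merely filled in the details the paper dismisses as ``an easy exercise'' and ``clearly'' (the pullback estimate, the density transfer through the finite map, and the product bound $\tilde A \subseteq \prod_i A_i$), all of which are sound.
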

\begin{proof} Decompose $G$ as an almost direct product $\pi(G_1 \times \dots \times G_k)$ of almost simple linear algebraic groups over $\C$ as in Proposition \ref{classification}, the kernel of $\pi$ having size $t = O_{n}(1)$. An easy exercise confirms that the pullback $B := \pi^{-1}(A)$ is a $(Kt)$-approximate subgroup of $G_1 \times \dots \times G_m$ which generates a Zariski-dense subgroup. Each of the projections $\pi_i(B)$ to the factors $G_i$ is then a $(Kt)$-approximate subgroup of $G_i$ generating a Zariski-dense subgroup of $G_i$. The complexities of the groups $G_i$ are bounded uniformly by $O_{\dim(G)}(1)$, and so we may apply Theorem \ref{mainthm-preciseform}. Clearly neither option (i) nor option (iii) of that theorem can occur, so we are forced to conclude that (ii) holds, or in other words that $|\pi_i(B)| = O(K^{O_{\dim(G)}(1)})$. This immediately implies the stated bound on $|A|$.
\end{proof}

One is now tempted to argue as follows. Suppose that $A$ is an approximate subgroup of $\GL_n(\C)$. Let $G$ be the Zariski closure of $\langle A \rangle$, and consider the image of $A$ under the projection to $G/\Rad(G)$. By the above remarks this is small and so a large piece of $A$ is contained in a coset of the solvable group $\Rad(G)$. It is then a relatively easy matter to apply the main result of \cite{bg-2} to show that this piece in turn is controlled by a nilpotent approximate group.

The only problem with this argument is that Proposition \ref{classification} applies only to \emph{connected} semisimple groups. Although a linear algebraic group has only finite many connected components, there is no absolute bound on their number in terms of the underlying dimension $n$. It is nonetheless possible to make this sketch of the argument work, but to do so we were forced to introduce a notion of  ``bounded complexity Zariski-closure'' so as to control the number of connected components of $G/\Rad(G)$. The details were somewhat complicated. However it turns out that by resting a little more heavily on known facts from the theory of linear algebraic groups we can use a somewhat different argument and thereby sidestep this issue. Critical in this regard is the following result, which makes up for the fact that we cannot bound the number of connected components of a virtually solvable group, by bounding instead the number of \emph{solvable} components.

\begin{lemma}[Number of solvable components]\label{virtually-solvable}
Any virtually solvable subgroup of $\GL_n(\C)$ \textup{(}that is to say, any subgroup of $\GL_n(\C)$ with a solvable subgroup of bounded index\textup{)} contains a normal solvable subgroup of index $O_n(1)$.
\end{lemma}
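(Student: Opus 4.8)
The plan is to invoke the classical theorem of Mal'cev (or Platonov) on virtually solvable linear groups together with the Jordan-type bound on finite linear groups, and to run an induction on $n$. First I would recall the relevant ingredient: by a theorem of Mal'cev, every virtually solvable subgroup $H \leq \GL_n(\C)$ has a subgroup of index $O_n(1)$ which is \emph{triangularisable}, i.e. conjugate into the group $\Upp_n(\C)$ of upper-triangular matrices. (This is proved by passing to the Zariski closure $\overline H$, whose identity component $\overline H^{\,0}$ is connected solvable and hence, by the Lie--Kolchin theorem, conjugate into $\Upp_n(\C)$; the index of $H \cap \overline H^{\,0}$ in $H$ is the number of connected components of $\overline H$, which is $O_n(1)$ since $\overline H / \overline H^{\,0}$ is a finite linear group of bounded rank --- one can bound its order via Jordan's theorem applied to a faithful finite-dimensional representation, or simply cite that a solvable-by-finite linear group in dimension $n$ has a triangularisable subgroup of $n$-bounded index.)

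The subtlety the lemma addresses is that this index-$O_n(1)$ subgroup need not be \emph{normal} in $H$, and more importantly we are asked for a \emph{solvable} normal subgroup, not merely a triangularisable one; but any triangularisable group is solvable (of derived length $\leq n$), so it suffices to produce a normal triangularisable subgroup of bounded index. So the real task is: given that $H$ has a triangularisable subgroup $H_0$ of index $m = O_n(1)$, produce a \emph{normal} triangularisable subgroup of index $O_n(1)$. The natural move is to take the normal core $N := \bigcap_{h \in H} h^{-1} H_0 h$, which is automatically normal in $H$ and has index at most $m!$ in $H$, hence index $O_n(1)$; and $N$, being a subgroup of the triangularisable (hence solvable) group $H_0$, is itself solvable. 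This already gives a normal solvable subgroup of index $O_n(1)$, which is exactly the assertion. (One does not even need triangularisability here beyond using it to produce \emph{some} solvable subgroup of bounded index; but note the hypothesis ``virtually solvable with a solvable subgroup of \emph{bounded} index'' is what makes $m = O_n(1)$, and the linearity is used precisely to guarantee that a virtually solvable linear group in dimension $n$ has a solvable subgroup whose index is bounded in terms of $n$ alone --- this is the content of Mal'cev/Jordan that must be cited.)

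Thus the argument has two inputs: (a) a virtually solvable subgroup of $\GL_n(\C)$ has a solvable subgroup of index bounded purely in terms of $n$ (Mal'cev's theorem, via Lie--Kolchin plus Jordan's bound on finite subgroups of $\GL_n(\C)$); and (b) the elementary normal-core construction, which converts a bounded-index subgroup into a bounded-index normal subgroup while only shrinking it (so solvability is preserved, as solvability passes to subgroups). I expect step (a) --- specifically, extracting the uniform-in-$\C$ bound on the index, independent of the group --- to be the only real content; it is where linearity and characteristic zero are used (Jordan's theorem fails in positive characteristic, which is consistent with the remark earlier in the paper about the solvable groups $G_k$ behaving badly in positive characteristic). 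Step (b) is a one-line group-theory observation. I would therefore write the proof as: cite Mal'cev for the triangularisable-by-(bounded finite) structure, deduce a solvable subgroup $H_0 \leq H$ of index $O_n(1)$, pass to its normal core $N \trianglelefteq H$ of index $\leq [H:H_0]! = O_n(1)$, and observe $N \leq H_0$ is solvable.
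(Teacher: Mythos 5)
Your proposal is correct, and it rests on the same essential ingredients as the paper's. Both arguments turn on Mal'cev's theorem (a virtually solvable subgroup of $\GL_n(\C)$ has a triangularisable subgroup of index $O_n(1)$), but where you cite it as a black box, the paper's appendix supplies a self-contained sketch: take the Zariski closure $G$ of $\Gamma$, induct on $n$ to reduce to the case where $G$ and all its subgroups of index $\leq n!$ act irreducibly, show the unipotent radical $U$ of $G_0$ is trivial (else its fixed subspace is a proper $G$-invariant subspace), deduce that $G_0$ is a torus which irreducibility then forces to be trivial after passing to an index-$\leq n!$ subgroup centralising it, so $G$ is finite, and finish with Jordan's theorem --- i.e.\ precisely the Lie--Kolchin-plus-Jordan route you outline in your parenthetical remark. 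Your explicit normal-core step (take $N := \bigcap_{h\in H} h^{-1}H_0 h$, which is normal in $H$ of index $\leq m!$ and solvable as a subgroup of $H_0$) is a clean, uniform way to secure normality; the paper's sketch is terser here, implicitly relying on the fact that Jordan's theorem produces a \emph{normal} abelian subgroup in the finite case, and not spelling out that normality survives the intermediate reductions. One small reading point: the word ``bounded'' in the lemma statement should be read as ``finite'' (compare the restatement in the appendix, which says exactly that); the $O_n(1)$ bound on the index is the \emph{conclusion} of Mal'cev's theorem, not part of the hypothesis --- your closing paragraph handles this correctly even though the earlier parenthetical phrasing blurs it slightly.
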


\begin{proof}
In fact, any such subgroup contains a subgroup of index $O_n(1)$ which can be conjugated inside the group $\Upp_n(\C)$ of upper-triangular matrices. This lemma is ``well-known'' and follows from results of Malcev \cite[\S 3.6]{wehrfritz} and Platonov \cite[\S 10.11]{wehrfritz}. We offer a self-contained sketch in Appendix \ref{solv-app}.
\end{proof}

We will also require the following bound on the outer automorphism group of a semisimple group.

\begin{lemma}[Bound on $\Out(H)$]\label{aut}
Suppose that $H$ is a connected semisimple complex algebraic group. Then the group $\Out(H) := \Aut(H)/\Inn(H)$ has size $O_{\dim H}(1)$.
\end{lemma}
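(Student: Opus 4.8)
The plan is to reduce to the well-known fact that for a connected semisimple group the outer automorphism group is, modulo a piece coming from the isogeny/centre data, the automorphism group of the Dynkin diagram, which is finite of size at most some absolute constant (indeed $\leq 24$, the order of $\Out(D_4)$-type pieces coming from the symmetric group $S_3$ together with products). First I would use the structure theory (Proposition \ref{classification}): write $H$ as an almost direct product $\pi(H_1 \times \dots \times H_m)$ of almost simple factors, where each $H_i$ has complexity $O_{\dim H}(1)$ and $|\ker\pi| = O_{\dim H}(1)$. Since $\dim H = \sum_i \dim H_i$ and each almost simple factor has dimension at least $3$, the number $m$ of factors is at most $\dim(H)/3 = O_{\dim H}(1)$.

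Next I would observe that any automorphism of $H$ induces an automorphism of its root datum, hence permutes the set of minimal nontrivial connected normal subgroups, i.e. permutes the factors $H_i$; as there are only $m = O_{\dim H}(1)$ of these, the permutation part contributes only $O_{\dim H}(1)$ to $|\Aut(H)|/|\Aut(H)^0|$ where $\Aut(H)^0$ is the subgroup preserving each factor. The subgroup preserving each factor maps, via restriction, into $\prod_i \Aut(H_i)$ with kernel consisting of automorphisms trivial on each $H_i$ but possibly nontrivial on $H$ because of the isogeny $\pi$; such automorphisms are controlled by $\Hom$ into the (finite, bounded) centre, hence there are only $O_{\dim H}(1)$ of them. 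For each individual almost simple factor $H_i$, one has the standard exact sequence $1 \to \Inn(H_i) \to \Aut(H_i) \to \operatorname{Aut}(\text{Dynkin diagram of } H_i) \to 1$ up to a further finite correction by the centre data of $H_i$ (the discrepancy between the weight and root lattices), and the diagram automorphism group is a subgroup of $S_3 \times (\text{a product of } \Z/2)$'s, of order at most $6$; the centre corrections are again of size $O_{\dim H_i}(1)$ by the remarks preceding Lemma \ref{maxtor}. Multiplying the finitely many $O_{\dim H}(1)$ contributions gives $|\Out(H)| = |\Aut(H)/\Inn(H)| = O_{\dim H}(1)$.

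The main obstacle I anticipate is bookkeeping the interaction between the isogeny $\pi$ and the automorphism group: an automorphism of $H$ need not lift to $H_1 \times \dots \times H_m$, and automorphisms of the simply connected (or adjoint) cover that descend to $H$ form a subgroup of bounded but not totally transparent index. The clean way to handle this is to pass first to the simply connected cover $\tilde H$ of $H$ (which is an honest direct product $\tilde H_1 \times \dots \times \tilde H_m$ of simply connected almost simple groups), use $\Aut(\tilde H) = (\text{permutations of isomorphic factors}) \ltimes \prod_i \Aut(\tilde H_i)$, bound each $\Aut(\tilde H_i)/\Inn(\tilde H_i)$ by the diagram automorphism group (a genuinely finite absolute bound for simply connected groups, no centre correction needed), and then note that $\Aut(H)$ and $\Aut(\tilde H)$ differ only by the stabiliser data of the kernel of $\tilde H \to H$, a subgroup of the finite group $Z(\tilde H)$ of size $O_{\dim H}(1)$; so $\Aut(H)$ injects into $\Aut(\tilde H) \times \operatorname{Sym}(Z(\tilde H))$ up to bounded index, and since $\Inn(H)$ has bounded index in the image, $\Out(H) = O_{\dim H}(1)$ follows. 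All the pieces are standard facts from Borel \cite{borel} or Humphreys \cite{humphreys}; the only work is verifying each contributing factor is $O_{\dim H}(1)$ and there are only $O_{\dim H}(1)$ of them.
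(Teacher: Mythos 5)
The paper's ``proof'' of this lemma is a single citation to Borel \cite[IV.14.9]{borel}, so there is no argument in the text to compare against; what you have written is an unpacking of the standard structure-theoretic argument that underlies that reference, and it is correct in substance. Your cleanest route --- pass to the simply connected cover $\tilde H = \tilde H_1 \times \dots \times \tilde H_m$, identify $\Aut(H)$ with the stabiliser in $\Aut(\tilde H)$ of the finite central kernel $\mu = \ker(\tilde H \to H)$, note $\Inn(H) = \Inn(\tilde H)$, so that $\Out(H)$ embeds as a subgroup of $\Out(\tilde H)$, and then bound $\Out(\tilde H)$ by factor permutations times Dynkin diagram automorphisms --- is exactly the right one, and gives $|\Out(H)| \leq m! \cdot 6^m$ with $m \leq \dim(H)/3$, so $O_{\dim H}(1)$. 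Two small imprecisions worth flagging: (a) the ``$\leq 24$'' remark in your opening paragraph conflates $|S_3| = 6$ with $|S_4| = 24$; the diagram automorphism group of a single connected Dynkin diagram is $1$, $\Z/2$, or $S_3$, hence of order at most $6$, as you correctly state later; (b) the last step does not need the factor $\operatorname{Sym}(Z(\tilde H))$ nor the hedge ``up to bounded index'' --- since every automorphism of $H$ lifts uniquely to $\tilde H$ and the lift preserves $\mu$, you get an honest inclusion $\Aut(H) \hookrightarrow \Aut(\tilde H)$ already, and $\Out(H) \hookrightarrow \Out(\tilde H)$ directly. Neither issue affects the conclusion; both approaches (yours and the citation) buy the same thing, but yours makes the dependence of the bound on $\dim H$ visible.
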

\begin{proof}
See \cite[IV.14.9]{borel}.
\end{proof}

We turn now to the details of the proof of Theorem \ref{mainthm3}. Henceforth, all implied constants to depend on $n$.
Let $\Gamma := \langle A \rangle$ be the subgroup of $\GL_n(\C)$ generated by $A$, and let $G$ be the Zariski closure of $\Gamma$. Let $G_0$ be the connected component of the identity of $G$, thus $G_0$ is a connected normal subgroup of $G$ of finite (but potentially large) index.  Let $H := G_0/\Rad(G_0)$ be the quotient of $G_0$ by its radical; $H$ is then a connected semisimple algebraic group whose structure is as described in Proposition \ref{classification}. We distinguish two cases, depending on whether $H$ is trivial or not.

\emph{Case 1:} $H$ is trivial. This implies that $\Gamma$ is virtually solvable. In this case, we can apply Lemma \ref{virtually-solvable} and infer that $\Gamma$ has a normal solvable (indeed upper triangular) subgroup $\Gamma_0$ of index $O(1)$. By the pigeonhole principle some coset of $\Gamma_0$ contains at least $|A|/C$ points of $A$, where $C = O(1)$, and thus $|A^2 \cap \Gamma_0| \geq \frac{1}{C}|A|$. It follows from basic multiplicative combinatorics \cite{tao-noncommutative}, or more specifically \cite[Proposition 2.1 (iv)]{bg-2}, that $B := (A^2 \cap \Gamma_0)^3$ is a $K^{O(1)}$-approximate group that $K^{O(1)}$-controls $A^2 \cap \Gamma_0$. Furthermore, $A$ is itself $m$-controlled by $A^2 \cap \Gamma_0$, where $m = O(1)$ is the number of cosets of $\Gamma_0$ containing an element of $A$. It follows that $B$ is a solvable $K^{O(1)}$-approximate subgroup of $\GL_n(\C)$ which $K^{O(1)}$-controls $A$. Applying the main result of \cite{bg-2}, Theorem \ref{mainthm3} follows immediately.

\emph{Case 2:} $H$ is non-trivial. Write $R := \Rad(G_0)$ for brevity. Now $R$ is a characteristic subgroup of $G_0$, since the image of $R$ under any automorphism of $G_0$ is also a connected normal algebraic solvable subgroup of $G_0$.  Noting that $G$ normalises $G_0$, it follows that $G$ normalises $R$, and thus $G$ also acts by conjugation on the quotient group $H=G_0/R$. This conjugation action induces a homomorphism $\pi: G \rightarrow \Aut(H)$. Note that $\ker(\pi)R/R=Z_{G/R}(H)$ is finite, since its intersection with the finite index subgroup $G_0$ of $G$ lies in the center $Z(H)$ of $H$, which is finite as $H$ is semisimple. In particular, $\ker \pi$ is virtually solvable. By essentially the same argument we used in Case 1, it suffices to show that some coset of $\ker \pi$ contains at least $K^{-O(1)}|A|$ points of $A$. To achieve this, it is obviously sufficient to show that $|\pi(A)| \leq K^{O(1)}$.

To do this, observe that $\pi(G)$ certainly contains the inner automorphisms $\Inn(H) = \pi(G_0)$. Recall also Lemma \ref{aut}, which asserts that the index $[\Aut(H) : \Inn(H)]$ is at most $O(1)$. Setting $\Gamma_0 = \Gamma \cap \pi^{-1}(\Inn(H))$, we conclude that $\Gamma_0$ is a subgroup of index $d = O(1)$ in $\Gamma$ such that $\pi(\Gamma_0)$ is Zariski dense in $\Inn(H)$. Now $\Inn(H)$ is a (connected) semisimple algebraic group, and by Lemma \ref{finite-index} the set $B:=A^{2d-1} \cap \Gamma_0$ generates $\Gamma_0$. Moreover, by the same multiplicative combinatorics results mentioned above, $B^3$ is an $O(K^{O(1)})$-approximate subgroup which $O(K^{O(1)})$-controls $A$, and therefore $\pi(B^3)$ is a Zariski-dense $O(K^{(O(1)})$-approximate subgroup of $\Inn(H)$ which $O(K^{O(1)})$-controls $\pi(A)$. It follows immediately from Proposition \ref{semisimple-c} that $|\pi(B^3)| = O(K^{O(1)})$, and hence that $|\pi(A)| = O(K^{O(1)})$ as required.\hfill $\Box$\vspace{11pt}

We turn now to the deduction of Corollary \ref{gromov-cor}, whose statement we recall below. Recall that a finitely generated group $G$ is said to have \emph{polynomial growth} (with exponent $d$) if it is generated by a symmetric set $S$ which satisfies some bound of the form
\begin{equation}\label{poly}
|S^r| \leq C_S r^d
\end{equation}
for all $r \in \N$, where $C_S$ is a constant which may depend on $S$. It is an easy exercise to show that this a group property, that is to say it does not depend on the choice of generators. Recall also that $G$ is \emph{virtually nilpotent} if it has a nilpotent subgroup of finite index.

\begin{gromov-cor-repeat}
Suppose that $k$ is a field of characteristic zero and that $G \leq \GL_n(k)$ is a \textup{(}finitely-generated\textup{)} subgroup of $G$ with polynomial growth. Then $G$ is virtually nilpotent.
\end{gromov-cor-repeat}
\begin{proof}
Let $S$ be a symmetric generating set containing $\id$. There are clearly arbitrarily large $r$ for which
\begin{equation}\label{good-r-bound} |S^{7r}| \leq 8^d |S^r|,\end{equation} since if not the polynomial growth hypothesis would be violated. Call these values good, and suppose in what follows that $r$ is good. By \cite[Corollary 3.10]{tao-noncommutative} the set $A := S^{3r}$ is a $K$-approximate group for some $K = O(1)^d$. By Theorem \ref{mainthm3}, there is some nilpotent group $H \leq \GL_n(k)$ and a coset $Hx$ such that $|A \cap Hx| \geq c_{n,d}|A|$, where $c_{n,d} > 0$ depends only on $n$ and $d$. We therefore have
\begin{equation}\label{s6-bd} |S^{6r} \cap H| = |A^2 \cap H| \geq c_{n,d}|A| \geq c_{n,d} |S^r|.\end{equation}
Replacing $H$ by the subgroup generated by $A^2 \cap H$ (if necessary) we may assume without loss of generality that $H \leq G$. Assume that $[G : H] = \infty$. Then, since $S$ generates $G$, it is easy to see that $S^m$ meets at least $m$ different right cosets of $H$, for every integer $m \geq 1$. It follows from this observation and \eqref{s6-bd} that
\[ |S^{6r + m}| \geq m c_{n,d} |S^r|.\]
Choosing $m > 8^d/c_{n,d}$ and some good value of $r$ with $r > m$, we obtain a contradiction to \eqref{good-r-bound}. Thus we were wrong to assume that $[G : H] = \infty$, and this concludes the proof.
\end{proof}

The above argument does not give effective bounds on $[G : H]$ (in terms of $n, C_s$ and $d$) on account of the ineffectivity in Theorem \ref{mainthm3}. To conclude this section we offer now a very brief sketch of how an explicit bound could be obtained without effectivising our main theorem. First, note that an immediate consequence of Corollary \ref{gromov-cor} and Lemma \ref{virtually-solvable} is that any group $G \leq \GL_n(k)$ with polynomial growth has a \emph{solvable} subgroup $H$ with index $[G : H] = O_n(1)$, where this $O_n(1)$ can be taken to be some explicit function of the form $\exp(n^C)$ by working through the proof of Lemma \ref{virtually-solvable} in Appendix \ref{solv-app}. By Lemma \ref{finite-index}, $H$ is finitely-generated and has polynomial growth (with effectively computable constant and growth exponent). Finally, one may run the argument used to prove Corollary \ref{gromov-cor} again, but now with $H$ in place of $G$, and with the main result of \cite{bg-2} (which is completely effective) in place of Theorem \ref{mainthm3}. We leave the details to the interested reader.


Finally let us remark that an inspection of the argument used to prove Corollary \ref{gromov-cor} shows that we do not need the full strength of the polynomial growth hypothesis \eqref{poly}; indeed we only need this hypothesis for a \emph{single} (sufficiently large) value of $r$. This type of strengthened version of Gromov's theorem first appeared in \cite{tao-solvable} in the solvable case and \cite{shalom} for arbitrary finitely generated groups. For non-virtually solvable linear groups it is also a immediate consequence of the uniform Tits alternative proved in \cite{breuillard-tits}.

\section{A conjecture of Babai and Seress}\label{babai-sec}

Some applications of Helfgott's product theorem regarding the diameter of the Cayley graphs of $\SL_2(\F_p)$, or equivalently the speed of generation of $\SL_2(\F_p)$, are mentioned in Helfgott's original paper \cite{helfgott-sl2}. Not surprisingly, our result provides similar corollaries. For example, we get a special case of a conjecture of Babai and Seress (\cite[Conjecture 1.7]{babai}).

\begin{theorem}[Diameter of $\G(k)$]\label{worst-case}
For every integer $d \in \N$ there is a constant $C = C_d>0$ such that the following holds. If $\G$ is an absolutely almost simple algebraic group with $\dim(\G) \leq d$ defined over a finite field $k$ and if $S$ is a set of generators for $G:=\G(k)$, then every element of $G$ is a word of length at most $C\log^{C}|G|$ in the elements of $S$.
\end{theorem}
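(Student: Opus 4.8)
The plan is to deduce this diameter bound from the main theorem (Theorem \ref{mainthm2}) via a standard "tripling-or-doubling" iteration argument, exactly as in Helfgott's original deduction for $\SL_2(\F_p)$. First I would set $G := \G(k)$ and, replacing $S$ by $S \cup S^{-1} \cup \{\id\}$ (which only changes word lengths by a bounded factor), assume $S$ is symmetric and contains the identity. Since $S$ generates $G$, each power set $S^r$ is also a symmetric generating set, and we track the growth of $|S^r|$ as $r$ doubles. The key dichotomy: either the set $A := S^r$ satisfies $|A^3| \geq |A|^{1+\epsilon}$, in which case $|S^{3r}|$ is substantially larger than $|S^r|$, or else $|A^3| < |A|^{1+\epsilon}$, in which case — taking $\epsilon$ small enough depending only on $d$ — Corollary \ref{helfgott-type-expansion} (applied with $\dim(\G) \le d$) forces $|A| \gg_d |G|^{1 - C_d \epsilon}$, i.e.\ $S^r$ already fills up a positive power of $|G|$.

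The iteration then runs as follows. Start with $r_0 = 1$, so $|S^{r_0}| \geq 2$ (as $S$ is a nontrivial generating set). At each stage, if $|S^{3r_j}| \geq |S^{r_j}|^{1+\epsilon}$, set $r_{j+1} = 3 r_j$; otherwise we are in the "structured" case and by the corollary $|S^{r_j}| \gg_d |G|^{1-C_d\epsilon}$, and we stop the growth phase. In the growth phase, $\log |S^{r_j}|$ multiplies by at least $(1+\epsilon)$ each step, so after $O_d(\log\log|G|)$ steps — wait, more carefully: since $|S^{r_j}| \ge 2^{(1+\epsilon)^j}$, we reach $|S^{r_j}| \ge |G|^{1 - C_d\epsilon}$ once $(1+\epsilon)^j \gtrsim \log|G|$, i.e.\ after $j = O_d(\log\log|G|)$ doublings of the exponent; but each step multiplies $r$ by $3$, so $r_j = 3^j = (\log|G|)^{O_d(1)}$. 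Thus for some $r^\ast = \log^{O_d(1)}|G|$ we have a symmetric generating set $S^{r^\ast}$ with $|S^{r^\ast}| \gg_d |G|^{1 - C_d \epsilon}$. A final covering/pigeonhole step finishes the job: if a symmetric set $T \ni \id$ generating $G$ has $|T| > |G|/2$, then $T \cdot T = G$ (since for any $g$, the sets $T$ and $gT^{-1} = gT$ must intersect by cardinality); more generally if $|T| \geq |G|^{1-\delta}$ then $O(1/\delta)$-fold products of $T$ cover $G$ — one shows $|T^{m}|$ keeps growing by a factor $\ge \min(|T|, |G|/|T^m|) \cdot (\text{const})$ until it exceeds $|G|/2$, which takes $O_d(1)$ steps since $\delta = C_d\epsilon$ is a constant. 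Hence $G = (S^{r^\ast})^{O_d(1)} = S^{O_d(1) \cdot r^\ast}$, giving every element of $G$ a word of length $\log^{O_d(1)}|G|$ in $S$.

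The main obstacle — though it is more bookkeeping than genuine difficulty — is correctly calibrating the choice of $\epsilon = \epsilon(d)$ so that Corollary \ref{helfgott-type-expansion} is applicable (it requires $0 < \epsilon < \epsilon(d)$), and then checking that the number of iterations in the growth phase is genuinely $O_d(\log\log|G|)$ while the word length accumulated is $3^{O_d(\log\log|G|)} = \log^{O_d(1)}|G|$, and that the final covering phase adds only a bounded factor. One subtlety worth a remark: the case $|A| = O_d(1)$ must be handled separately (there the group $G$ itself has bounded order only if $|k|$ is bounded, and otherwise $A$ generating $G$ forces $|A^3| \ge |A| + 1$, keeping the iteration going) — but this is exactly the degenerate case already dispatched inside the proof of Corollary \ref{helfgott-type-expansion}, so invoking that corollary as a black box absorbs it. I expect the whole argument to take under a page.
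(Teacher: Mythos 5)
Your iteration phase is exactly the paper's argument: iterate Corollary~\ref{helfgott-type-expansion} on $S^{3^n}$ until the tripling-or-doubling dichotomy terminates, obtaining a symmetric set $B = S^N$ with $N = C_d\log^{C_d}|G|$ and $|B| \geq |G|^{1-\delta}$. So far so good.

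The genuine gap is the final covering step. You assert that if a symmetric generating set $T$ satisfies $|T| \geq |G|^{1-\delta}$ then $T^{O(1/\delta)} = G$, justified by ``$|T^m|$ keeps growing by a factor $\geq \min(|T|, |G|/|T^m|)\cdot\text{const}$.'' This is false in general finite groups. For instance in $G = \Z/N\Z$ with $T = \{-M,\dots,M\}$ and $M \approx N^{1-\delta}$, one has $|T^m| = \min(2mM+1, N)$, which grows only \emph{additively}; covering $G$ requires $m \approx N^\delta = |G|^\delta$ steps, which is polynomial in $|G|$ and would completely destroy your polylogarithmic word-length bound. What saves the day when $G = \G(k)$ is \emph{quasirandomness}: $\G(k)$ (more precisely its Chevalley subgroup $\G(k)^+$) has no nontrivial complex representations of small dimension. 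The paper invokes Gowers' theorem (if $H$ has no nontrivial complex representation of dimension $< d_{\min}$ and $|A| > |H|/d_{\min}^{1/3}$, then $A^3 = H$) together with the Landazuri--Seitz lower bound $d_{\min} \gg_r |H|^{r/\dim\G}$, applied to $H = \G(k)^+$, plus a small bookkeeping step via Lemma~\ref{finite-index} to pass between $\G(k)^+$ and $\G(k)$. None of this representation-theoretic input appears in your sketch, and without it the ``final covering step adds only a bounded factor'' claim is unsupported. You should replace the heuristic multiplicative-growth argument with an explicit appeal to Gowers and Landazuri--Seitz (or an equivalent quasirandomness input).
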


\emph{Remark.} In this section, when we refer to a word of length at most $L$ in elements $x_1,\dots,x_n$, we refer to some expression of the form $x_{i_1}^{\eps_1}x_{i_2}^{\eps_2}\dots x_{i_L}^{\eps_L}$, where each $\eps_j$ lies in the set $\{-1,0,1\}$.

\emph{Proof.} Let $\delta := d/10\dim(\G)$. Using Corollary \ref{helfgott-type-expansion}, we examine the powers $S^3, S^9, S^{27},\dots$. Unless
\begin{equation}\label{big} |S^{3^n}| \geq |G|^{1-\delta},\end{equation}
that result implies that we have the growth bound
\[ |S^{3^{n+1}}| \geq |S^{3^n}|^{1 + \eps},\]
where $\eps = \eps_d > 0$. It follows that if \eqref{big} does not hold then
\[ |S^{3^n}| \geq 2^{(1+\eps)^n},\] and so
\[ N := 3^n \leq C_d \log^{C_d} |G|.\]
\newcommand\ad{\operatorname{ad}}
We have shown that words of length $N \leq C_d\log^{C_d}|G|$ in the generating set $S$ cover a set $B$ ($= S^N$) of size $|G|^{1-\delta}$. For such large subsets of $G$, arguments of Gowers \cite{gowers} may be used to complete the proof. Gowers proved that if $H$ is a group in which the smallest dimension of a non-trivial representation (over $\C$) is $d_{\min}$ then $A^3 = H$ whenever $|A| > |G|/d_{\min}^{1/3}$. See also \cite{babai-nikolov-pyber,nikolov-pyber}. To use Gowers' result, we note that by a theorem of Landazuri and Seitz \cite{landazuri-seitz} the smallest dimension of a non-trival \emph{projective} complex representation is at least $c_r |H|^{\frac{r}{\dim(\G)}} \geq c_r |H|^{10\delta}$ for every Chevalley group\footnote{Landazuri and Seitz state their result for groups $\G_{\ad}(\F_q)^+$ with $\G_{\ad}$ of adjoint type, but as they point out the general case follows immediately by Schur's lemma because $\G(\F_q)^+/Z(\G(\F_q)^+)=\G_{\ad}(\F_q)^+$.} $H=\G(k)^+$ of rank $r \leq d$ over $k$. It is well known that $\G(k)$ contains the Chevalley group $\G(k)^+$ as a subgroup of index $O_r(1)$ (see e.g. \cite{nori} 3.6(v)). We can thus apply the result to $\G(k)^+$ instead since $|B^2 \cap \G(k)^+| \gg_r |G|^{1-\delta}$ and conclude by Lemma \ref{finite-index} that $S^{O_r(1)}B^6=G$. \hfill $\Box$\vspace{11pt}


Babai and Seress actually conjectured that the constant $C_d$ and the implied multiplicative constant in the above theorem can be taken to be independent of the dimension, and that this bound should also hold for the alternating groups. Furthermore it is likely that the above theorem is true with the smallest possible exponent $C_d$, that is $C_d=1$. See \cite{breuillard-gamburd} for some evidence supporting this belief, which is related to the so-called \emph{Lubotzky-Weiss independence problem} for expanders \cite{lubotzky-weiss}.  Below we show that one can take $C_d=1$ for a \emph{random choice} of generating set $S$.

\begin{theorem}[Logarithmic diameter for random generators]\label{random-case}
For every integer $d \in \N$ there is a constant $C_d>0$ such that the following holds. If $G=\G(k)$ is the group of $k$-points of an absolutely almost simple algebraic group $\G$ of dimension at most $d$ over a finite field $k$, and if two elements $a,b$ are chosen at random from $G$, then with probability $1 - o_{|G|\rightarrow \infty}(1)$ every element $g \in G$ is a word of length at most $C_d \log |G|$ in $a,b, a^{-1}$ and $b^{-1}$.
\end{theorem}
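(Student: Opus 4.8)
The plan is to run the argument of Theorem~\ref{worst-case} but to feed it a far better starting point: for random $a,b$ the set $S:=\{\id,a,a^{-1},b,b^{-1}\}$ enjoys \emph{single}-exponential growth, $|S^r|\geq 3^r$, throughout the range $r\leq c_D\log|G|$ (where $D:=\dim\G$), because the Cayley graph $\Cay(G,S)$ then has no short relations. Writing $q:=|k|$ and recalling $|G|\asymp_D q^{D}$ (Lang--Weil, or the exact order formulas), I would use two standard facts, each holding with probability $1-o(1)$ as $|G|\to\infty$: \emph{(a)} $a,b$ generate $G$ (random generation of finite groups of Lie type of bounded rank); and \emph{(b)} $\Cay(G,S)$ has girth $\geq c_D\log|G|$ for some $c_D=c_D(\G)>0$. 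On the event where both hold I will then exhibit words of length $O_D(\log|G|)$ hitting every element of $G$.

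The main new ingredient is fact~\emph{(b)}, which I would establish by a first-moment estimate. Fix a nontrivial reduced word $v$ of length $L$ in the free group on two generators. Since $\G$ is (absolutely) almost simple, Borel's theorem on word maps --- valid in arbitrary characteristic --- asserts that $(a,b)\mapsto v(a,b)$ is a dominant map $\G\times\G\to\G$, so its fibre $W_v:=\{(a,b):v(a,b)=\id\}$ is a \emph{proper} closed subvariety of the irreducible variety $\G\times\G$, whence $\dim W_v\leq 2D-1$. Crucially, although naively iterating Lemma~\ref{compos} would only bound the complexity of $W_v$ exponentially in $L$, a direct computation does much better: embedding $\G\hookrightarrow\GL_N$ with complexity $O_D(1)$ and writing $v(a,b)$ as an explicit product of $L$ matrices drawn from $\{a^{\pm1},b^{\pm1}\}$ (clearing the determinant denominators produced by the inversions) exhibits $W_v$ as the common zero set of $O_D(1)$ polynomials of degree $O_D(L)$ in $O_D(1)$ variables. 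The Schwarz--Zippel estimate used in the proof of Theorem~\ref{mainthm2} then gives $|W_v\cap G^2|\ll_D L^{O_D(1)}(q+1)^{2D-1}$, so $\mathbf{P}(v(a,b)=\id)\ll_D L^{O_D(1)}q^{-1}$. Summing this over the fewer than $3^{2\ell}$ nontrivial reduced words of length $\leq 2\ell$ bounds the probability that $\Cay(G,S)$ has a relation of length $\leq 2\ell$ by $\ll_D\ell^{O_D(1)}3^{2\ell}q^{-1}$, which is $o(1)$ once $\ell\leq c_D\log q$ for a small enough $c_D>0$; since $\log q\asymp_D\log|G|$, this is fact~\emph{(b)}.

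On the good event, put $\ell_0:=\lfloor c_D\log|G|/2\rfloor$; the girth bound forces all reduced words in $a,b$ of length $\leq\ell_0$ to be distinct, so $|S^{\ell_0}|\geq 3^{\ell_0}\geq|G|^{\gamma_D}$ for some $\gamma_D=\gamma_D(\G)>0$. From here the argument duplicates the proof of Theorem~\ref{worst-case}: apply Corollary~\ref{helfgott-type-expansion} in turn to $S^{\ell_0},S^{3\ell_0},S^{9\ell_0},\dots$ (each of which generates $G$) with a fixed small $\epsilon=\epsilon_D>0$; as long as the cardinality has not yet reached $|G|^{1-C_D\epsilon_D}$ each step multiplies the exponent by $1+\epsilon_D$, so after $O_D(1)$ steps one has $|S^{N}|\geq|G|^{1-C_D\epsilon_D}$ with $N=O_D(\log|G|)$. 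Choosing $\epsilon_D$ small enough that $1-C_D\epsilon_D>1-\rk(\G)/(3D)$, the Landazuri--Seitz bound $d_{\min}(\G(k)^+)\gg_D|G|^{\rk(\G)/D}$ and Gowers' theorem give $(S^{2N}\cap\G(k)^+)^3=\G(k)^+$ (using $|S^{2N}\cap\G(k)^+|\gg_D|G|^{1-C_D\epsilon_D}$), and then Lemma~\ref{finite-index} yields $S^{O_D(1)}\!\cdot S^{6N}=G$. Hence every element of $G$ is a word of length $O_D(\log|G|)$ in $a,b,a^{-1},b^{-1}$, which is the assertion with $C_D:=O_D(1)$.

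The hard part will be fact~\emph{(b)}, and within it the two points flagged above: that no nontrivial word is a law of $\G(\overline k)$ in positive characteristic (Borel's word-map theorem, which is where the simplicity hypothesis on $\G$ enters), and the polynomial --- rather than exponential --- dependence of the complexity of $W_v$ on the word length $L$, without which the first-moment bound would collapse well before $\ell\asymp\log|G|$. A lesser nuisance is fact~\emph{(a)} when the component group $\G(k)/\G(k)^{+}$ is nontrivial (e.g.\ $\PGL_2$ over a field of odd order), where two random elements need not generate all of $\G(k)$; one handles this either by restricting to simply connected $\G$ or by reading the conclusion inside $\G(k)^{+}$, which costs nothing by Lemma~\ref{finite-index}. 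Finally, the constant $C_D$ produced this way is a concrete $O_D(1)$ that is not close to $1$; squeezing it down to $C_D=1$ would require a girth lower bound of near-Moore quality (or a near-optimal spectral gap) for the random Cayley graph, which I would not attempt here.
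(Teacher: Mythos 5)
Your proof is correct and follows the same skeleton as the paper's: a logarithmic girth bound for the random Cayley graph gives $|S^{\ell_0}|\geq|G|^{\gamma}$; repeated application of Corollary \ref{helfgott-type-expansion} then pumps this up to $|G|^{1-\delta}$ in $O_d(1)$ tripling steps; and Landazuri--Seitz plus Gowers close the gap to all of $G$. The one place you diverge is fact \emph{(b)}: the paper simply cites the girth result as Theorem \ref{GHSSV} from \cite{gamburdHSSV}, whereas you unfold that citation into a first-moment argument using Borel's word-map dominance theorem and a Schwarz--Zippel count on the variety $W_v=\{(a,b):v(a,b)=\id\}$. That is in fact how \cite{gamburdHSSV} proves it, and your attention to the crucial point --- that the complexity of $W_v$ is polynomial rather than exponential in the word length $L$, which one would not get by blindly iterating Lemma \ref{compos} --- is exactly the subtlety that makes the union bound close. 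So what you buy by not citing is a self-contained argument; what you pay is the labour of re-deriving a known estimate. You also correctly flag a point the paper's proof glosses over: Corollary \ref{helfgott-type-expansion} needs $S$ to generate $\G(k)$, and when $\G(k)/\G(k)^{+}$ is nontrivial (e.g.\ $\PGL_2(\F_q)$, $q$ odd) two random elements land in $\G(k)^{+}$ with probability bounded away from zero, so the statement must really be read for $\G(k)^{+}$ (or for simply connected $\G$). Your fix via Lemma \ref{finite-index} is the right one and addresses a genuine imprecision in the paper's formulation.
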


We remark that the special case $G=\SL_2(\F_p)$, $p$ prime, of this theorem was established by Helfgott\cite{helfgott-sl2}.

To prove Theorem \ref{random-case} we will need the following result from \cite{gamburdHSSV}.

\begin{theorem}\label{GHSSV}\cite{gamburdHSSV} For every integer $d \in \N$ there is a constant $c_d>0$ such that the following holds. If $\G$ is an absolutely almost simple algebraic group of dimension at most $d$  over a finite field $k$, and if two elements $a,b$ are chosen at random from $G:=\G(k)$, then with probability $1 - o_{|G|\rightarrow \infty}(1)$, no nontrivial word $w$ of length $\leq c_d \log |G|$ in the free group $F_2$ evaluates to the identity on $a,b$.\end{theorem}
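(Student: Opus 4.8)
Since Theorem \ref{GHSSV} is quoted from \cite{gamburdHSSV}, I sketch the proof I would give. The plan is a union bound over words: fix a nontrivial $w \in F_2$ of length $\ell$, bound $\mathbb{P}_{a,b\in G}[w(a,b)=\id]$ by a quantity of the shape $C_d\,\ell^{C_d}\,|G|^{-1/d}$, and then sum over all $w$ of length at most $c_d\log|G|$; for $c_d$ small enough in terms of $d$ the total is $o_{|G|\to\infty}(1)$, which is exactly the complement of the asserted event.

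First I would set up the algebraic geometry. Write $G=\G(\F_q)$ and, exactly as in the proof of Theorem \ref{mainthm2}, reduce to the case that $\G$ is a bounded-complexity subgroup of some $\GL_N$ with $N=O_d(1)$ (there are only $O_d(1)$ isomorphism classes of absolutely almost simple $\G$ with $\dim\G\le d$, and by Steinberg--Tits only $O_d(1)$ forms over finite fields of each, so an isomorphic model of bounded complexity exists). Being absolutely almost simple and non-abelian, $\G$ is connected semisimple, hence irreducible as a variety, and so is $\G\times\G$. Consider the word map $(a,b)\mapsto w(a,b)$ on $\G\times\G$. By a theorem of Borel this map is dominant for every nontrivial $w$; in particular it is not identically $\id$, so the fibre $V_w := \{(a,b)\in \G\times\G : w(a,b)=\id\}$ is a \emph{proper} closed subvariety of the irreducible variety $\G\times\G$ and therefore $\dim V_w\le 2\dim\G-1$. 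Writing $w$ as a product of $\ell$ factors $a^{\pm1},b^{\pm1}$, the defining equations of $V_w$ have degree $O_d(\ell)$ in the $O_d(1)$ matrix-entry coordinates, so $V_w$ has complexity $O_d(\ell)$.

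The core estimate then follows by point counting. The Schwarz--Zippel bound already used in the proof of Theorem \ref{mainthm2}, applied to each of the $O_d(\ell^{O_d(1)})$ irreducible components of $V_w$, gives $|V_w\cap(G\times G)|\le C_d\,\ell^{C_d}\,q^{2\dim\G-1}$, while $|G|\ge c_d\,q^{\dim\G}$ by Lang--Weil (\cite{lang-weil}) — indeed an exact formula for $|\G(\F_q)|$ is available. Dividing and using $q\ge |G|^{1/\dim\G}\ge |G|^{1/d}$ yields
\[ \mathbb{P}_{a,b\in G}\big[w(a,b)=\id\big]\ \le\ C'_d\,\ell^{C_d}\,|G|^{-1/d}. \]
There are at most $6^{\ell_0+1}$ words of length $\le \ell_0:=c_d\log|G|$ in $a,b,a^{-1},b^{-1}$, so summing over all nontrivial such $w$ bounds the probability that \emph{some} short word evaluates to $\id$ by $6^{c_d\log|G|+1}\,C'_d\,(c_d\log|G|)^{C_d}\,|G|^{-1/d}$. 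Since $6^{c_d\log|G|}=|G|^{c_d\log 6}$, taking $c_d:=(2d\log 6)^{-1}$ makes the exponent $c_d\log 6-1/d=-1/(2d)<0$, so this is $O_d\!\big((\log|G|)^{C_d}\,|G|^{-1/(2d)}\big)\to 0$, which completes the argument.

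The only genuinely non-elementary ingredient — and the step I would flag as the main obstacle — is the algebraic input that the word map on a semisimple group is non-constant, i.e.\ Borel's dominance theorem; note one cannot substitute the usual argument ``$\G(\overline{\F_q})$ contains a nonabelian free subgroup'', since $\overline{\F_q}$ is locally finite, so the dominance statement (valid in all characteristics) is really needed. The remaining point requiring care is uniformity: the Schwarz--Zippel constant must be independent of $q$ and of $w$ beyond its length, which is exactly what the reduction to $\G$ of bounded complexity inside a fixed $\GL_N$ provides, so that $V_w$ has complexity $O_d(\ell)$ with an absolute implied constant.
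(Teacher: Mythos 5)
The paper does not prove Theorem \ref{GHSSV} but imports it from \cite{gamburdHSSV}, and your argument is correct and is essentially that reference's proof: Borel's dominance theorem makes the word fibre $V_w$ a proper subvariety of $\G\times\G$ of degree polynomial in $\ell$, Schwarz--Zippel/Lang--Weil counting gives the per-word probability $\ell^{O_d(1)}q^{-1}$, and a union bound over reduced words finishes. Your bookkeeping is just slightly lossier than optimal (using $6^{\ell}$ rather than $\sim 3^{\ell}$ reduced words and $|G|^{-1/d}$ rather than $|G|^{-1/\dim(\G)}$), which is why you get $c_d=(2d\log 6)^{-1}$ instead of the value ``any number smaller than $(\dim(\G)\log 3)^{-1}$'' recorded in the remark following the theorem.
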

\emph{Remarks.} Equivalently, the corresponding Cayley graph of $G$ on generating set $\{a,b,a^{-1},b^{-1}\}$ has \emph{girth} at least $c_d \log |G|$. The constant $c_d$ can be taken to be any number smaller than $(\dim(\G) \log 3)^{-1}$.\vspace{11pt}

\emph{Proof of Theorem \ref{random-case}.} Set $N_1 := c_d\log |G|$. From Theorem \ref{GHSSV} it follows that if $a,b \in G$ are selected at random then, with probability $1 - o_{|G|\rightarrow \infty}(1)$, we have $|S^{N_1}| \geq 3^{c_d\log |G|} =: |G|^{\eta}$, say. Applying Corollary \ref{helfgott-type-expansion} $O_d(1)$ times, we see that $|S^{N_2}| \geq |G|^{1-\delta}$ for some $N_2 \ll_r N_1$, where $\delta := d/10\dim(\G)$ is the same quantity as in the proof of Theorem \ref{worst-case}. Applying the result of Gowers exactly as before we obtain $S^{3N_2} = G$, thereby confirming the result. \endproof\vspace{5pt}

Other results in a similar vein are available.  For example if $a$ and $b$ are fixed elements of $\SL_n(\Z)$ generating a Zariski-dense subgroup then we may look at these $a$ and $b$ when reduced to lie in $\SL_n(\F_p)$. By the strong-approximation result of Matthews-Weisfeiler-Vasserstein \cite{MVW} $a$ and $b$ generate $\SL_n(\F_p)$ for $p$ sufficiently large, and applying the Tits alternative \cite{tits} one may show that there are $\gg p^{\eta}$ distinct words $w(a,b)$ of length $c \log p$ (here, $\eta$ and $c$ may depend on $a$ and $b$ as well as on $n$). Proceeding exactly as above, one may then confirm that every element of $\SL_n(\F_p)$ is a word of length at most $C\log p$ in these generators $a,b$. We leave the details of this verification to the reader.

\section{The sum-product theorem over $\F_p$}\label{sumproduct-sec}

The aim of this section is to show how the sum-product theorem over $\F_p$ follows from our main results. We begin by recalling the statement of the sum-product theorem, which was first established in \cite{bkt} for ``reasonably large'' sets and then in \cite{bourgain-glibichuk-konyagin} in full generality.

\begin{sum-product-rpt}[Sum-product theorem over $\F_p$]
Let $p$ be a prime, and suppose that $A$ is a finite subset of $\F_p$ such that $|A\cdot A|, |A+ A| \leq K|A|$. Then either $|A| \leq K^C$ or $|A| \geq K^{-C}p$.
\end{sum-product-rpt}

We will need the so-called ``Katz-Tao lemma'' \cite{bkt,katz}.

\begin{lemma}[Katz-Tao Lemma]
Let $A$ be a finite subset of some field $k$ and suppose that $|A \cdot A|, |A + A| \leq K|A|$. Then there is a set $A' \subseteq A$ with $|A'| \geq K^{-C}|A|$ such that for any rational function $\psi : k^m \rightarrow k \cup \{\infty\}$ we have the estimate $|\psi(A')| \leq K^{O_{\psi}(1)}|A'|$, where
\[ \psi(A') := \{ \psi(a_1,\dots, a_m) : a_1,\dots, a_m \in A'\}.\]
\end{lemma}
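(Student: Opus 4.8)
The plan is to isolate the one combinatorially substantial step --- the construction of the subset $A'$ --- and then let the Ruzsa calculus do the rest, by an induction on the syntactic complexity of $\psi$. Any rational function $\psi : k^m \to k \cup \{\infty\}$ is built from the coordinate maps $\vec a \mapsto a_i$ and from constants in $k$ by finitely many, say $N = N(\psi)$, applications of the four field operations $+,-,\times,\div$. Interpreting $\psi(A')$ as the set of values $\psi(\vec a)$ over those $\vec a \in (A')^m$ at which $\psi$ does not take the value $\infty$, it therefore suffices to produce $A' \subseteq A$ with $|A'| \geq K^{-C}|A|$ together with a family $\mathcal S$ of finite subsets of $k$ such that: (a) $A' \in \mathcal S$ and $\{c\} \in \mathcal S$ for every $c \in k$; (b) $|S| \leq K^{O_S(1)}|A'|$ for every $S \in \mathcal S$; and (c) $\mathcal S$ is closed under $(S_1,S_2) \mapsto S_1 + S_2$, $S_1 - S_2$, $S_1 \cdot S_2$ and $(S_1,S_2) \mapsto \{ s_1/s_2 : s_1 \in S_1,\ s_2 \in S_2 \setminus \{0\}\}$. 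Granting this, induction on $N(\psi)$ gives $\psi(A') \in \mathcal S$ and hence $|\psi(A')| \leq K^{O_\psi(1)}|A'|$. (We may delete $0$ from $A$ at the outset, losing one element: a tuple in which some argument of $\psi$ is set to $0$ corresponds to a rational function in fewer variables, handled by induction on $m$; tuples on which $\psi = \infty$ contribute nothing.)

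For $\mathcal S$ one takes, roughly, the sets $S$ with $K^{-O_S(1)}|A'| \leq |S| \leq K^{O_S(1)}|A'|$ lying at bounded additive \emph{and} bounded multiplicative Ruzsa distance from $A'$, together with all singletons. The closure statement (c) for this class is essentially formal. Having removed $0$, the hypotheses $|A+A|, |A \cdot A| \leq K|A|$ together with the Pl\"unnecke inequality, applied in $(k,+)$ and in $(k^\times,\cdot)$, control all iterated sum/difference sets and all iterated product/quotient sets of $A$, hence of $A'$. The Ruzsa triangle inequality $|X-Z|\,|Y| \leq |X-Y|\,|Y-Z|$, and its variants for sumsets (via negation) and for product and quotient sets (via inversion), then let one bound $S_1 \pm S_2$ and $S_1^{\pm 1} \cdot S_2^{\pm 1}$ from control of the difference/quotient sets of $S_1$ and of $S_2$ separately --- which is exactly what membership in $\mathcal S$ encodes --- and simultaneously propagate the Ruzsa-distance bounds, so that the output of each operation is again in $\mathcal S$ with the implied exponent grown by $O(1)$. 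Singletons are trivial, since $S+\{c\}$ and $S\cdot\{c\}$ are a translate and a dilate of $S$.

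The genuinely hard point, and the entire content of the Katz--Tao lemma, is to ensure that $A' \cdot A' \in \mathcal S$ and $A' + A' \in \mathcal S$: that $A' \cdot A'$ has small \emph{additive} doubling and $A' + A'$ has small \emph{multiplicative} doubling. Neither follows from the hypotheses on $A$ --- there is no reason for $A \cdot A$ to have a small sumset, nor for $A+A$ to have a small product set --- and forcing addition and multiplication to cooperate is precisely what passing to the subset buys us. I would do this following Katz and Tao: smallness of $|A+A|$ and of $|A \cdot A|$ forces the additive and the multiplicative energy of $A$ both to be $\gg K^{-1}|A|^3$, and a sequence of popularity refinements (restricting to elements lying in many popular sums, then to those lying in many popular products, and iterating while controlling the loss at each stage) extracts $A' \subseteq A$ with $|A'| \gg K^{-C}|A|$ on which a positive proportion of both sums and products have $\gg K^{-C}|A|$ representations; a Ruzsa-covering argument then upgrades this into genuine polynomial bounds of the form $|(A'\cdot A') + (A'\cdot A')| \leq K^{O(1)}|A'|$ and $|(A'+A')\cdot(A'+A')| \leq K^{O(1)}|A'|$, together with the mixed difference and quotient versions needed to place $A' \pm A'$, $A' \cdot A'$ and $A'/A'$ at bounded additive and bounded multiplicative Ruzsa distance from $A'$ (this is the technical core). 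With these base cases in hand the induction of the previous two paragraphs runs unimpeded, the exponent growing by $O(1)$ at each of the $N(\psi)$ steps. The sole obstacle is thus the extraction of $A'$; everything else is routine Ruzsa calculus.
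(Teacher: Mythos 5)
The paper does not prove this lemma; it cites it to Katz--Tao and to Bourgain--Katz--Tao, so there is no in-paper argument to compare against. Your overall shape (extract a good $A'$ once, then induct on the syntactic complexity of $\psi$ via Ruzsa-type tools) is the right one, but the claim that the induction is ``routine Ruzsa calculus'' once $A'\pm A'$, $A'\cdot A'$ and $A'/A'$ are known to be at bounded additive \emph{and} multiplicative Ruzsa distance from $A'$ has a gap, and that gap is where the actual work lives.

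Concretely, suppose $S_1,S_2\in\mathcal S$ and you want $S_1+S_2\in\mathcal S$; the hard half is the multiplicative bound $|(S_1+S_2)/A'|\lesssim |A'|$. The natural move is Ruzsa covering: $S_i\subseteq T_i + A'-A'$ with $|T_i|$ small, whence $S_1+S_2 \subseteq (T_1+T_2) + 2A'-2A'$. But the sets $T_i$ produced by the covering lemma are unstructured: they carry no multiplicative information, and for a generic $t\in T_1+T_2$ the set $(t + 2A'-2A')/A'$ need not be small at all (an additive translate is not a multiplicative dilate). The inclusion $(S_1+S_2)/A'\subseteq S_1/A' + S_2/A'$ does not rescue this either, since it reduces the problem to the \emph{additive} doubling of $S_i/A'$, which in turn needs multiplicative-to-additive propagation of exactly the same kind; one quickly finds oneself needing bounds such as $|A'^{\,n}+A'|\lesssim_n |A'|$ for all $n$, and trying to derive $|A'^3+A'|$ from $|A'^2+A'|$ by the identities $abc+d=c(ab+d/c)=a(bc+d/a)$ lands you back at $|A'^3+A'|$ --- a genuine circularity. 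So the two ``hard'' estimates you isolate, $|(A'\cdot A')+(A'\cdot A')|$ and $|(A'+A')\cdot(A'+A')|$ small (both of which actually follow from the single estimate $|A'^2+A'|\lesssim|A'|$ together with Pl\"unnecke), do not by themselves close the induction. The actual content of the Katz--Tao lemma is precisely that one can choose $A'$ so that \emph{all} of the iterated mixed quantities $|\,nA'^{\pm m}\cdots \pm A'\,|$ are controlled simultaneously --- i.e.\ that the induction invariant is an explicitly closed family of bounds, not merely ``bounded Ruzsa distance in each of the two metrics separately.'' Your write-up asserts the closure of $\mathcal S$ as though it were formal; it is not, and this is where the proposal would need to be substantially reworked (say, by replacing $\mathcal S$ with the class of sets of the form $\psi(A')$ for $\psi$ of bounded complexity and proving the needed size bounds directly by a careful double induction, as in the original sources).
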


In proving Theorem \ref{sum-product-fp} we may replace $A$ by this set $A'$. Henceforth, then, we assume that
\begin{equation}\label{rat-funct-bd} |\psi(A)| \leq K^{O_{\psi}(1)}|A|\end{equation} for all rational functions $\psi : \F_p^m \rightarrow \F_p \cup \{\infty\}$. We may also assume that $0 \notin A$.

Consider the set $X \subseteq \SL_2(\F_p)$ defined by
\[ X := \left\{ \begin{pmatrix} a_1 & a_2 \\ a_3 & \frac{1 + a_2 a_3}{a_1}\end{pmatrix} : a_1, a_2, a_3 \in A.\right\}\]
If one imagines $A$ to be an ``approximate subfield'' of $\F_p$ then $X$ might be considered thought of as an appropriate definition of $\SL_2(A)$.

It is clear from \eqref{rat-funct-bd} that $|X^3| \leq K^C|X|$. Applying Corollary \ref{helfgott-type-expansion} with $G = \SL_2(\F_p)$ it follows that one of the following occurs:
\begin{enumerate}
\item  $|X| \leq K^C$;
\item  $|X| \geq K^{-C}|\SL_2(\F_p)|$, or
\item $X$ does not generate $\SL_2(\F_p)$.
\end{enumerate}
Obviously (i) implies that $|A| \leq K^{C'}$, whilst (ii) implies that $|A| \geq K^{-C'}p$. It remains, then, to rule out (iii). For this we use Dickson's classification of subgroups of $\SL_2(\F_p)$ (see \cite{dickson}). In fact, we only need the following consequence, reported for instance in \cite[Proposition 3]{bourgain-gamburd}.

\begin{proposition}
Suppose that $H$ is a proper subgroup of $\SL_2(\F_p)$ with $|H| > 60$. Then $H$ is 2-step solvable, and hence
\begin{equation}\label{comm} [[h_1, h_2], [h_3,h_4]] = \id\end{equation} for all $h_1,h_2,h_3,h_4 \in H$.
\end{proposition}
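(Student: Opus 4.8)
The statement is, in essence, a direct consequence of Dickson's classification of the subgroups of $\SL_2(\F_p)$, so the plan is to quote that classification and read off the conclusion. First I would recall that, $p$ being prime, there are no proper ``subfield'' subgroups, so that every proper subgroup $H \le \SL_2(\F_p)$ is, after conjugation, one of the following: (a) a subgroup of a Borel subgroup (the upper-triangular matrices); (b) a subgroup of the normaliser $N(T)$ of a maximal torus $T$, split or non-split; or (c) one of a short list of exceptional subgroups whose image in $\mathrm{PSL}_2(\F_p)$ is isomorphic to $A_4$, $S_4$ or $A_5$. A convenient packaging of exactly this dichotomy is recorded in \cite[Proposition 3]{bourgain-gamburd}; alternatively one passes to $\mathrm{PSL}_2(\F_p)$, where Dickson's theorem \cite{dickson} is standard, and lifts through the central subgroup $\{\pm\id\}$.

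Next I would verify that the subgroups in families (a) and (b) are metabelian (derived length at most $2$), from which it follows that $H$ is metabelian as well, since $H'' \le N''$ whenever $H \le N$. For a Borel $B$, the derived subgroup $[B,B]$ is the unipotent radical, isomorphic to the additive group of $\F_p$ and hence abelian; for a torus normaliser $N(T)$ one has $[N(T),N(T)] \le T$ because $N(T)/T$ is cyclic of order $2$ and $T$ is abelian, so $N(T)$ is metabelian. Thus in cases (a) and (b) we get $H'' = \{\id\}$.

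It remains to dispose of the exceptional subgroups in family (c). Each of $A_4,S_4,A_5$ has even order, and $-\id$ is the unique involution of $\SL_2(\F_p)$ (a matrix of order at most $2$ and determinant $1$ is $\pm\id$), so any such $H$ contains $-\id$ and therefore has order $24$, $48$ or $120$; in particular it has order $O(1)$, and the size hypothesis rules all of them out once the threshold is large enough. (The only borderline case is $H\cong\SL_2(5)$, of order exactly $120$; this is harmless for the application, where $H=\langle X\rangle$ with $|H|=|X|=|A|^3$, so that $|H|\le 120$ already forces $|A|$ bounded.) Hence $H$ lies in family (a) or (b) and is $2$-step solvable. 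The identity \eqref{comm} is then automatic: if $H$ is metabelian then $H'=[H,H]$ is abelian, and since $H'$ is generated by the commutators $[h_1,h_2]$, any two such commutators commute, i.e. $[[h_1,h_2],[h_3,h_4]]=\id$ for all $h_1,h_2,h_3,h_4\in H$.

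The only substantive ingredient is Dickson's classification itself — in particular the completeness of the list of exceptional subgroups — which I would simply cite as a black box, exactly as the literature does; everything else (the metabelian property of Borel subgroups and torus normalisers, the order count for the exceptional groups, and the final commutator identity) is routine.
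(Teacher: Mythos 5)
Your proof is correct and takes the same route the paper does: the paper gives no proof of this proposition at all, simply citing Dickson's classification via \cite[Proposition 3]{bourgain-gamburd}, and your unpacking of the three Dickson families (Borel subgroup, torus normaliser, exceptional) together with the routine derived-length computations is the standard way to verify it. You also rightly flag that the stated threshold $|H|>60$ is carried over from the $\mathrm{PSL}_2$ formulation and is too weak as written: the binary icosahedral group $\SL_2(5)$, of order $120$ and perfect (so certainly not metabelian, and \eqref{comm} fails for it), does occur as a proper subgroup of $\SL_2(\F_p)$ for suitable $p$ (e.g.\ $p=11$), so the hypothesis should really read $|H|>120$ --- though, as you correctly observe, this imprecision is immaterial for the sum-product deduction, where a bounded $|H|$ already forces $|A|=O(1)$.
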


Supposing that $X$ generates such a group $H$, consider the condition \eqref{comm} with $h_1,h_2,h_3,h_4$ being arbitrary elements of $X$ parametrised by twelve elements $a_1,\dots, a_{12}$ of $A$. Clearing denominators, this yields a polynomial $\phi : \F_p^{12} \rightarrow \F_p$ of degree $O(1)$ which vanishes identically on $A^{12}$. Since $\SL_2(\F_p)$ is not solvable, this polynomial cannot vanish identically. We may then apply the following well-known lemma, which is proved in \cite[Lemma 2.1]{combinatorial-nullstellensatz}.

\begin{lemma}
Let $\F$ be any field and suppose that $\phi : \F^m \rightarrow \F$ is a polynomial in variables $x_1,\dots,x_m$. Suppose that the degree $\deg_i \phi$ in each of the variables $x_i$ is at most $d$, and that there is a set $S \subseteq \F$ with $|S| > d$ such that $\phi(s_1,\dots, s_m) = 0$ whenever $s_1,\dots, s_m \in S$. Then $\phi$ is identically zero.
\end{lemma}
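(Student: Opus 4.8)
The plan is to prove this by induction on the number of variables $m$, using as the only external input the familiar one-variable fact that a nonzero polynomial of degree at most $d$ over a field has at most $d$ roots. It is worth emphasising at the outset that ``identically zero'' is to be understood in the formal sense, i.e. all coefficients of $\phi$ vanish; the hypothesis, by contrast, concerns only the values of $\phi$ on the grid $S^m$. The point of the lemma is precisely that when $|S|$ is large compared with the per-variable degree these two notions agree, so the induction should be carried out at the level of coefficients rather than values (this is the only subtlety, and it is relevant here since $\F$ may be finite, as in the intended application with $\F = \F_p$).

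For the base case $m=1$, the polynomial $\phi(x_1)$ has degree at most $d$ and vanishes at every point of $S$, a set of size $|S| > d$; since a nonzero polynomial of degree at most $d$ over a field has at most $d$ roots, $\phi$ must be the zero polynomial.

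For the inductive step, assume the statement for polynomials in $m-1$ variables, and expand $\phi$ as a polynomial in $x_m$ with coefficients in $\F[x_1,\dots,x_{m-1}]$,
\[ \phi(x_1,\dots,x_m) = \sum_{j=0}^{d} \phi_j(x_1,\dots,x_{m-1})\, x_m^j, \]
where each $\phi_j$ has degree at most $d$ in each of the variables $x_1,\dots,x_{m-1}$. Fixing any $(s_1,\dots,s_{m-1}) \in S^{m-1}$, the one-variable polynomial $t \mapsto \phi(s_1,\dots,s_{m-1},t)$ has degree at most $d$ and vanishes on all of $S$, hence is the zero polynomial by the base case; thus $\phi_j(s_1,\dots,s_{m-1}) = 0$ for every $j$. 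As $(s_1,\dots,s_{m-1})$ ranged over an arbitrary point of $S^{m-1}$, each $\phi_j$ vanishes on all of $S^{m-1}$, and the inductive hypothesis (applicable since $|S|>d$) forces $\phi_j \equiv 0$ for all $j$. Therefore $\phi \equiv 0$, completing the induction.

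There is no genuine obstacle here: the argument is entirely elementary, and the only place demanding any care is keeping the distinction between a formal polynomial and the function it induces, which is handled automatically by inducting on coefficients as above.
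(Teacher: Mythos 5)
Your proof is correct and is essentially the same as the one referenced by the paper (the paper does not prove the lemma itself but cites Alon's Lemma 2.1 from the Combinatorial Nullstellensatz paper, whose proof is exactly this induction on the number of variables, expanding in the last variable and applying the one-variable fact to kill each coefficient on the grid). You also rightly emphasise the formal-versus-functional distinction, which is the only real subtlety over a finite field.
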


It follows immediately from this that condition (iii) can only hold if $|A| = O(1)$. This completes the proof of Theorem \ref{sum-product-fp}.\hfill $\Box$

\appendix

\section{Quantitative algebraic geometry via ultrafilters}\label{compact-sec}

The purpose of this appendix is to use ultrafilter methods to establish the quantitative algebraic geometry results in Lemmas \ref{zar}, \ref{compos},\ref{bezout}, \ref{dimlem}, \ref{slice-lem}, \ref{centralem}, \ref{zark} and Lemma \ref{escape}.  In principle, all of these arguments could be replaced by more involved effective analogues, but we do not do this here. Doing so would make all the constants used in the main theorem (Theorem \ref{mainthm2} computable in principle.

The key point here is that many basic concepts in the theory of algebraic varieties (such as dimension, irreducibility, or degree) are continuous with respect to the operation of taking ultralimits\footnote{This is analogous, but not quite identical to, the more well-known fact that such concepts are continuous with respect to operations such as projective limits; see \cite{ega}.}.  As a consequence of this and a compactness-type argument, many qualitative statements regarding such concepts automatically (but ineffectively) have quantitative analogues that are uniform over all choice of (algebraically closed) base field and all choices of coefficients used to define the varieties at hand.

We turn to the details.  We will need a \emph{non-principal ultrafilter} $\alpha_\infty \in \beta \N \backslash \N$, i.e. a collection of subsets of $\N$ with the following properties:
\begin{itemize}
\item No finite set lies in $\alpha_\infty$.
\item If $A \subset \N$ is in $\alpha_\infty$, then any subset of $\N$ containing $A$ is in $\alpha_\infty$.
\item If $A, B$ lie in $\alpha_\infty$, then $A \cap B$ also lies in $\alpha_\infty$.
\item If $A \subset \N$, then exactly one of $A$ and $\N \backslash A$ lies in $\alpha_\infty$.
\end{itemize}
An easy consequence of these axioms is that if $A \cup B \in \alpha_{\infty}$ then at least one of $A$ and $B$ lies in $\alpha_{\infty}$. Given a property $P(\alpha)$ which may be true or false for each natural number $\alpha$, we say that $P$ is true for $\alpha$ \emph{sufficiently close to} $\alpha_\infty$ if the set $\{ \alpha \in \N: P(\alpha) \hbox{ holds}\}$ lies in $\alpha_\infty$.  The existence of a non-principal ultrafilter $\alpha_\infty$ is guaranteed by the axiom of choice.  We fix $\alpha_\infty$ throughout the rest of this appendix.  We make the important observation that if $x_\alpha$ takes on only a finite number of values for $\alpha$ sufficiently close to $\alpha_\infty$ (e.g. if $x_\alpha$ ranges in a discrete space and is also bounded), then it is in fact \emph{constant} for $\alpha$ sufficiently close to $\alpha_\infty$.  We write this constant as $\lim_{\alpha \to \alpha_\infty} x_\alpha$.

Given any sequence $(X_\alpha)_{\alpha \in \N}$ of sets, we define the \emph{ultraproduct} $\prod_{\alpha \to \alpha_\infty} X_\alpha$ to be the space of equivalence classes of tuples $(x_\alpha)_{\alpha \in \N}$ with $x_\alpha \in X_\alpha$ for all $\alpha \in \N$, where the equivalence relation is given by requiring $(x_\alpha)_{\alpha \in \N}$ and $(y_\alpha)_{\alpha \in \N}$ to be equivalent if $x_\alpha =y_\alpha$ for all $\alpha$ sufficiently close to $\alpha_\infty$.  We call such an equivalence class the \emph{ultralimit} of the sequence $(x_\alpha)_{\alpha \in \N}$ and write it as $\lim_{\alpha \to \alpha_\infty} x_\alpha$.  Note that the ultraproduct $\prod_{\alpha \to \alpha_\infty} X_\alpha$ and ultralimit $\lim_{\alpha \to \alpha_\infty} x_\alpha$ remain well defined even if $X_\alpha$ or $x_\alpha$ are only defined for $\alpha$ sufficiently close to $\alpha_\infty$.

We make the basic observation that two ultraproducts \[ \prod_{\alpha \to\alpha_\infty} X_\alpha,\prod_{\alpha \to\alpha_\infty} Y_\alpha\] agree\footnote{Strictly speaking, this statement is not precisely correct, because the equivalence relations used to define the two ultraproducts are defined on distinct domains.  But if one extends the equivalence relation to a common domain, such as $(\bigcup_\alpha X_\alpha \cup Y_\alpha)^\N$, then the statement becomes valid.  Alternatively, one can embed all the groups, fields, varieties, etc. one is interested in studying in a single \emph{standard universe} ${\mathcal U}$, which is assumed to be a set, and define the ultralimit equivalence relation on all sequences $(x_\alpha)_{\alpha \in \alpha_\infty}$ in ${\mathcal U}^\N$ (or more generally, on sequences in ${\mathcal U}$ defined for $\alpha$ sufficiently close to $\alpha_\infty$).  We will not dwell on this foundational issue in the rest of this paper, as it makes no significant impact on the actual arguments here.}  if and only if $X_\alpha=Y_\alpha$ for all $\alpha$ sufficiently close to $\alpha_0$.

Any operation or relation on a sets $X_\alpha$, $\alpha \in \N$ carries over to the ultraproduct, and a famous theorem of {\L}os asserts that any statement in first-order logic that is true for all the $X_\alpha$ (or for $X_\alpha$ with $\alpha$ sufficiently close to $\alpha_\infty$) is also true in the ultralimit.  For instance, if $k_\alpha$ is a sequence of algebraically closed fields, then the ultraproduct $k := \prod_{\alpha \to \alpha_\infty} k_\alpha$ is also an algebraically closed field, because the property of being an algebraically closed field can be expressed as a set of first-order sentences involving the field operations.

Note that if $k := \prod_{\alpha \to \alpha_\infty} k_\alpha$ is an ultraproduct of algebraically closed fields, then we have some canonical identifications
$$ \A^n(k) \equiv \prod_{\alpha \to \alpha_\infty} \A^n(k_\alpha)$$
and
$$ \P^n(k) \equiv \prod_{\alpha \to \alpha_\infty} \P^n(k_\alpha)$$
for fixed $n$.  Furthermore, if $V_\alpha$ is a sequence of affine varieties in $\A^n(k_\alpha)$, with complexity bounded uniformly in $\alpha$, we see that the ultraproduct $V := \prod_{\alpha \to \alpha_\infty} V$ is an affine variety in $\A^n(k)$. This is basically because the ultralimit of polynomials of bounded degree remains polynomial.  Conversely, every affine variety in $\A^n(k)$ can be expressed as an ultralimit of affine varieties in $\A^n(k_\alpha)$ of uniformly bounded complexity for $\alpha$ sufficiently close to $\alpha_\infty$.  Similar claims of course hold for projective varieties, quasiprojective varieties, and constructible sets.

Now we investigate various continuity properties of the ultralimit of varieties.  We begin with the continuity of dimension.

\begin{lemma}[Continuity of dimension]\label{dimcont} Let $k_\alpha$ be a sequence of algebraic- ally closed fields, let $n \geq 1$, and let $V_\alpha \subset \A^n(k_\alpha)$ be a family of affine algebraic varieties of complexity uniformly bounded for $\alpha$ sufficiently close to $\alpha_\infty$.  Write $k := \prod_{\alpha \to \alpha_\infty} k_\alpha$ and $V := \prod_{\alpha \to\alpha_\infty} V_\alpha$.  Then $\dim(V) = \lim_{\alpha \to \alpha_\infty} \dim(V_\alpha)$.  In other words, we have $\dim(V) = \dim(V_\alpha)$ for all $\alpha$ sufficiently close to $\alpha_\infty$.

Similarly with affine varieties replaced by projective varieties, quasiprojective varieties, or constructible sets.
\end{lemma}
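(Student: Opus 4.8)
\emph{Proof plan.} The strategy is to express, for each integer $d$, the assertion ``$\dim(V)=d$'' as a \emph{single} first-order sentence in the language of fields, whose parameters are the coefficients of the polynomials defining $V$ and whose numerical constants depend only on $n$ and the complexity bound $M$ (and not on the base field). Granting such a characterisation, {\L}os's theorem finishes the job: writing $V$ as the ultralimit of the $V_\alpha$ (which are cut out by polynomials $P_i^{(\alpha)}$ of uniformly bounded degree and number, with $P_i:=\lim_{\alpha\to\alpha_\infty}P_i^{(\alpha)}$ defining $V$), the sentence holds for $V$ over $k$ iff it holds for $V_\alpha$ over $k_\alpha$ for $\alpha$ sufficiently close to $\alpha_\infty$. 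Since $\dim(V_\alpha)\in\{-1,0,\dots,n\}$ takes finitely many values it is constant, equal to some $d_0$, for $\alpha$ sufficiently close to $\alpha_\infty$, and then the characterisation forces $\dim(V)=d_0=\lim_{\alpha\to\alpha_\infty}\dim(V_\alpha)$. The case $\dim=-1$ is handled separately, since $V=\emptyset$ iff $V_\alpha=\emptyset$ for $\alpha$ sufficiently close to $\alpha_\infty$, directly from the definition of the ultraproduct.

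I would first build the characterisation for an \emph{affine variety} $V\subseteq\A^n$. For each $d\ge 0$ let $\psi_d(V)$ be the assertion: \emph{there is a linear map $\pi\colon\A^n\to\A^d$ whose restriction to $V$ is surjective onto $\A^d$ and all of whose fibres have cardinality at most $N$}, where $N=N(n,M)$ is a field-independent bound on the size of a finite constructible subset of $\A^n$ of complexity $O_M(1)$ (such a bound follows from elementary elimination theory, or from Lemma \ref{bezout}); note that each fibre $\pi^{-1}(y)\cap V$ is $V$ cut by an affine subspace, so has complexity $O_M(1)$. I claim $\psi_d(V)$ holds iff $\dim(V)=d$. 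If $\dim(V)=d$, Noether normalisation produces $d$ \emph{linear} forms $y_1,\dots,y_d$ on $\A^n$ (using that $k$, being algebraically closed, is infinite) with $k[V]$ module-finite over $k[y_1,\dots,y_d]$; the associated linear map $V\to\A^d$ is finite and surjective (lying-over), and finiteness makes the fibres finite, hence of size $\le N$. Conversely, if $\psi_d(V)$ holds then surjectivity gives $\dim(V)\ge\dim\overline{\pi(V)}=d$, while the finiteness of the fibres, combined with the equality $\dim(\text{component})=\dim(\text{closure of its image})+\dim(\text{generic fibre})$ applied to $\pi$ restricted to each irreducible component of $V$, gives $\dim(V)\le d$. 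Since $\psi_d(V)$ unwinds to a first-order sentence with parameters the coefficients of the $P_i$ and with field-independent constants, {\L}os's theorem applies, giving the affine variety case.

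For a \emph{constructible} set $C\subseteq\A^n$ I would reduce to the variety case: by Lemma \ref{zar} the Zariski closures $\overline{C_\alpha}$ have complexity bounded by some $M'=O_M(1)$ uniformly in $\alpha$, and writing them as common zero loci of polynomials of degree $\le M'$ and passing to ultralimits of these polynomials — using {\L}os's theorem in the form ``$P$ vanishes on $C=\prod_\alpha C_\alpha$ iff $P^{(\alpha)}$ vanishes on $C_\alpha$ for $\alpha$ close to $\alpha_\infty$'' — one checks $\overline{\prod_{\alpha\to\alpha_\infty}C_\alpha}=\prod_{\alpha\to\alpha_\infty}\overline{C_\alpha}$, whence the constructible case follows from the affine variety case (as $\dim$ of a constructible set is by definition the dimension of its closure). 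The projective, quasiprojective and constructible cases in $\P^n$ reduce in turn to the affine constructible case by covering $\P^n$ with the $n+1$ standard charts $\A^n_0,\dots,\A^n_n$ and using that the dimension of a set is the maximum of the dimensions of its intersections with the charts, an operation commuting with ultralimits.

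The main obstacle is choosing the characterisation so that it is correct for \emph{reducible} and for merely \emph{constructible} sets: the naive guesses fail. ``$\dim(V)\ge d$ iff some codimension-$d$ affine subspace meets $V$ in a finite nonempty set'' is false, since a hyperplane can annihilate a high-dimensional component of $V$ while cutting a low-dimensional component in a single point; and ``$\dim(C)=d$ iff the identity map exhibits $C$ as dense with finite fibres over $\A^d$'' fails for $C=\A^2\setminus(\text{line})$. The surjectivity-with-finite-fibres formulation, together with the reduction of constructible sets to their closures, is what avoids these pathologies; the only external inputs are a field-uniform cardinality bound for finite bounded-complexity sets and a field-uniform complexity bound for Zariski closures, both elementary (and already recorded as Lemmas \ref{bezout} and \ref{zar}).
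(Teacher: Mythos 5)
Your argument for the \emph{affine variety} case is correct and takes a genuinely different route from the paper. The paper proves that case by induction on the ambient dimension $n$, slicing $V$ by the last coordinate and transferring dimension statements about the slices $V_t$ across the ultralimit. You instead encode ``$\dim V = d$'' directly as a single first-order sentence $\psi_d$ via Noether normalisation (existence of a surjective linear projection $V\to\A^d$ with fibres of cardinality at most a field-independent threshold $N(n,M)$) and invoke {\L}os once. Your version is shorter and dispenses with the induction; the price is that you need a field-uniform cardinality bound for zero-dimensional sets of bounded complexity, which, as you note, can be taken from classical Bezout/elimination theory. Be careful, though: you should \emph{not} source that bound from Lemma \ref{bezout}, since the paper derives Lemma \ref{bezout} from Lemma \ref{dimcont} itself, so that reference would be circular. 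Your projective reduction via affine charts matches the paper.

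There is, however, a genuine gap in your treatment of the constructible and quasiprojective cases: you invoke Lemma \ref{zar} (that $\overline{C_\alpha}$ has complexity $O_M(1)$ uniformly in $\alpha$) to reduce to the affine variety case. But in the paper's Appendix A, Lemma \ref{zar} is \emph{proved from} Lemma \ref{cont-zarisk}, which in turn is proved from Lemma \ref{dimcont} (the result you are trying to establish) together with Lemma \ref{cont-irred}. So as written your argument is circular. Your closing remark that Lemma \ref{zar} is ``elementary'' is misleading in this framework: controlling the complexity of a Zariski closure effectively requires bounding irreducible decompositions (essentially Lemma \ref{quivar}), which is exactly the kind of effective algebraic geometry this ultrafilter appendix is set up to avoid. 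The fix is to mimic the paper's handling of the quasiprojective and constructible cases: perform the decomposition on the ultraproduct side. Over $k$ (an algebraically closed field, so classical algebraic geometry applies with no uniformity needed) write the constructible set $C$ as a finite union of locally closed pieces $L_i = \overline{L_i}\setminus W_i$ with $\dim W_i < \dim \overline{L_i}$, express each $\overline{L_i}$ and $W_i$ as an ultralimit of projective varieties of \emph{some} finite (not necessarily $O_M(1)$) complexity, and then use the already-established affine/projective variety case on these pieces, together with uniqueness of ultraproduct factorisation, to read off $\dim(C_\alpha)=\dim(C)$ for $\alpha$ close to $\alpha_\infty$. This removes the dependence on Lemma \ref{zar} entirely.
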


\begin{proof}  We begin with the claim for affine varieties.

We induct on dimension $n$.  The case $n=0$ is trivial, so suppose that $n \geq 1$ and the claim has already been shown for $n-1$.  Write $d$ for the dimension of $V$.  If $d=-1$, then $V$ is empty and so $V_\alpha$ must be empty for all $\alpha$ sufficiently close to $\alpha_\infty$, so suppose that $d \geq 0$.  Since $V$ has dimension $d$, we see from standard algebraic geometry theory that the slices
$$ V_t := \{ x \in \A^{n-1}(k_\alpha): (x,t) \in V \}$$
all have dimension $d-1$ (or are all empty) for all but finitely many values $t_1,\ldots,t_r$ of $t \in k$, and the exceptional slices $V_{t_i}$ have dimension at most $d$.  If the $V_t$ for $t \neq t_1,\ldots,t_r$ are all empty, then one of the exceptional slices $V_{t_i}$ has to have dimension exactly $d$.  As $k$ is the ultraproduct of the $k_\alpha$, we can write $t_i = \lim_{\alpha \to \alpha_\infty} t_{\alpha,i}$ for each $1 \leq i \leq r$.

Suppose first that the $V_t$ have dimension $d-1$ for all $t \neq t_1,\ldots,t_r$. We claim that for $\alpha$ sufficiently close to $\alpha_\infty$, the slices $(V_\alpha)_{t_{\alpha}}$ have dimension $d-1$ whenever $t_{\alpha} \neq t_{\alpha,1},\ldots,t_{\alpha,r}$.  Indeed, suppose that this were not the case.  Carefully negating the quantifiers (and using the ultrafilter property), we see that for $\alpha$ sufficiently close to $\alpha_\infty$, we can find $t_{\alpha} \neq t_{\alpha,1},\ldots,t_{\alpha,r}$ such that $(V_\alpha)_{t_{\alpha}}$ has dimension different from $d-1$.  Taking ultraproducts and writing $t := \lim_{\alpha \to \alpha_\infty} t_\alpha$, we see from the induction hypothesis that $V_t$ has dimension different from $d-1$, contradiction.  Thus $(V_\alpha)_t$ has dimension $d-1$ whenever $t \neq t_{\alpha,1},\ldots,t_{\alpha,r}$ and $\alpha$ is sufficiently close to $\alpha_\infty$ (uniformly in $t$).  A similar argument also shows that $(V_\alpha)_t$ has dimension at most $d$ whenever $t \in \{t_{\alpha,1},\ldots,t_{\alpha,r}\}$  and $\alpha$ is sufficiently close to $\alpha_\infty$.  Hence $V_\alpha$ has dimension exactly $d$.

A similar argument applies when $V_t$ is empty for all $t \neq t_1,\ldots,t_r$, and has dimension exactly equal to $d$ for at least one of the $t_1,\ldots,t_r$, and at most $d$ for the other slices.  This establishes the claim for affine varieties.

The projective case follows from the affine one by covering projective space by finitely many copies of affine space.

The quasiprojective case follows from the projective one by expressing a quasiprojective variety $V$ of dimension $d$ as the projective variety of dimension at most $d$, with a projective variety of dimension at most $d-1$ removed; this can be achieved by starting with the Zariski closure of $V$ and decomposing into irreducible components.

The constructible set case then follows from the quasiprojective one by expressing a constructible set of dimension $d$ as the union of finitely many quasiprojective varieties of dimension at most $d$, with at least one of these varieties having dimension exactly $d$.
\end{proof}

This already gives a quick proof of Lemma \ref{bezout}:

\emph{Proof of Lemma \ref{bezout}.} Suppose this lemma failed.  Carefully negating the quantifiers (and using the axiom of choice), we may find a dimension $n$, a sequence $V_\alpha \subset \A^n(k_\alpha)$ of dimension $0$ constructible sets and uniformly bounded complexity over algebraically closed fields $k_\alpha$, such that $|V_\alpha| \to \infty$ as $\alpha \to \infty$.  We pass to an ultralimit to obtain a constructible set $V := \prod_{\alpha \to \alpha_\infty} V_\alpha$, which by Lemma \ref{dimcont} has dimension $0$, and is thus finite.  But then this forces $V_\alpha$ to be finite for $\alpha$ sufficiently close to $\alpha_\infty$ (indeed we have $|V_\alpha| = |V|$ in such a neighbourhood), contradiction.\hfill $\Box$\vspace{11pt}

Now we study continuity of irreducibility.  We will shortly establish the following result.

\begin{lemma}[Continuity of irreducibility]\label{cont-irred}  Suppose that $V_\alpha \subset \A^n(k_\alpha)$ are affine varieties of uniformly bounded complexity over algebraically closed fields $k_\alpha$, and let $V := \prod_{\alpha \to \alpha_\infty} V_\alpha$ be the ultraproduct.  Then $V$ is irreducible if and only if $V_\alpha$ is irreducible for all $\alpha$ sufficiently close to $\alpha_\infty$.  Similarly for projective or quasiprojective varieties instead of affine varieties.
\end{lemma}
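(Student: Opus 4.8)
The plan is to reduce to the affine case and then transfer reducibility between the fields $k_\alpha$ and the ultraproduct $k=\prod_{\alpha\to\alpha_\infty}k_\alpha$ by a compactness argument in the spirit of the proof of Lemma \ref{dimcont}. A quasiprojective variety is irreducible if and only if its Zariski closure is (a nonempty open subset of an irreducible set is irreducible, and the closure of an irreducible set is irreducible), and that closure has complexity $O_M(1)$ by Lemma \ref{zar}; a projective variety is irreducible if and only if its affine cone in $\A^{n+1}$ is, at the cost of another $O_M(1)$ in complexity. Since ultralimits commute with these operations up to the canonical identifications $\P^n(k)\equiv\prod_{\alpha\to\alpha_\infty}\P^n(k_\alpha)$ and $\A^{n+1}(k)\equiv\prod_{\alpha\to\alpha_\infty}\A^{n+1}(k_\alpha)$ already in place, it suffices to treat affine $V_\alpha\subset\A^n(k_\alpha)$ with $V=\prod_{\alpha\to\alpha_\infty}V_\alpha$.

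The key auxiliary input is a uniform bound on decompositions: an affine variety of complexity at most $M$ has at most $O_M(1)$ irreducible components, each of complexity $O_M(1)$, so that whenever such a variety is reducible it can be written as $Y_1\cup Y_2$ with $Y_1,Y_2$ proper closed subvarieties of complexity $O_M(1)$. (This is the standard Bezout/elimination bound on the number and degree of components; if one prefers to stay within the ultrafilter framework it can itself be proved by compactness, after checking the argument does not circularly invoke the present lemma.) Granting this, the two implications are routine. If $V$ is reducible, write $V=Y_1\cup Y_2$ with proper closed $Y_i$ of complexity $O_M(1)$, realise $Y_i=\prod_{\alpha\to\alpha_\infty}Y_{i,\alpha}$ as ultralimits of uniformly bounded complexity affine varieties, and pick $p_i=\prod_{\alpha\to\alpha_\infty}p_{i,\alpha}$ in the nonempty set $V\setminus Y_i$; since ultraproducts agree if and only if the factors agree for $\alpha$ near $\alpha_\infty$, and membership transfers, we get $V_\alpha=Y_{1,\alpha}\cup Y_{2,\alpha}$ with $p_{i,\alpha}\in V_\alpha\setminus Y_{i,\alpha}$ for $\alpha$ sufficiently close to $\alpha_\infty$, whence $V_\alpha$ is reducible there. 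Conversely, if $V_\alpha$ is reducible for $\alpha$ in a set belonging to $\alpha_\infty$, then for each such $\alpha$ choose a proper decomposition $V_\alpha=Y_{1,\alpha}\cup Y_{2,\alpha}$ with $Y_{i,\alpha}$ of complexity $O_M(1)$ uniformly, together with points $p_{i,\alpha}\in V_\alpha\setminus Y_{i,\alpha}$; passing to ultralimits, $Y_i:=\prod_{\alpha\to\alpha_\infty}Y_{i,\alpha}$ are genuine subvarieties of $V$ of complexity $O_M(1)$ with $V=Y_1\cup Y_2$ and $p_i:=\prod_{\alpha\to\alpha_\infty}p_{i,\alpha}\in V\setminus Y_i$, so $V$ is reducible. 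Taking contrapositives gives the stated equivalence.

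The step I expect to be the main obstacle — and the reason the conclusion is a genuine two-sided equivalence — is the \emph{uniformity} in the decomposition bound. Irreducibility is not itself a first-order property, so {\L}os's theorem does not apply directly; what rescues the argument is that reducibility of a bounded-complexity variety is witnessed by a pair of proper closed subvarieties whose complexity is bounded purely in terms of that of the variety, and it is only this uniformity that lets the ultraproduct $\prod_{\alpha\to\alpha_\infty}Y_{i,\alpha}$ in the converse direction be a bona fide variety over $k$ rather than a set cut out by polynomials of nonstandard degree. Once the uniform component bound is in hand, everything else is bookkeeping with ultralimits entirely analogous to the proof of Lemma \ref{dimcont}.
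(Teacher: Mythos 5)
Your overall architecture is right, and the easy direction (reducible $V$ implies reducible $V_\alpha$ near $\alpha_\infty$) is exactly the paper's argument. But your proof hinges entirely on the ``key auxiliary input'': that a variety of complexity at most $M$ decomposes into $O_M(1)$ irreducible components each of complexity $O_M(1)$. You have correctly located this as the only place where real work is needed — as you observe, irreducibility is not first-order, so {\L}os does not apply, and for the converse direction one must know that the pieces $Y_{i,\alpha}$ of the decompositions $V_\alpha = Y_{1,\alpha}\cup Y_{2,\alpha}$ have \emph{uniformly} bounded complexity, or else $\prod_{\alpha\to\alpha_\infty} Y_{i,\alpha}$ is merely an internal set cut out by polynomials of nonstandard degree, not a genuine variety over the ultraproduct field.

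The gap is that your parenthetical escape hatch for establishing this uniform decomposition bound (``it can itself be proved by compactness, after checking the argument does not circularly invoke the present lemma'') \emph{is} circular. That compactness argument is precisely the paper's Lemma \ref{quivar}, and its proof proceeds by taking an ultraproduct, decomposing the limit variety $V$ into irreducible components $V_i$ over the algebraically closed ultraproduct field, writing each $V_i$ as an ultralimit, and then invoking Lemma \ref{cont-irred} — the very statement you are trying to prove — to conclude that the factors $V_{\alpha,i}$ are irreducible for $\alpha$ near $\alpha_\infty$. There is no way to run that compactness argument first.

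The non-circular input the paper uses is Theorem \ref{klei}: an \emph{irreducible} affine variety in $\A^n$ of degree $D$ has complexity $O_{n,D}(1)$. This is a genuinely nontrivial fact (the proof in the paper, attributed to Mumford, goes via cones over generic linear centres producing degree-$D$ hypersurfaces cutting out $V$), and it cannot itself be obtained by a routine ultrafilter compactness argument. Once one has it, the paper even sidesteps the full decomposition bound: given an irreducible $V$, it reads off the degree $D$ of $V$ from the Grassmannian intersection count, transfers this to $V_\alpha$ by continuity of dimension, picks a \emph{single} top-dimensional irreducible subvariety $U_\alpha \subseteq V_\alpha$ (of degree $\leq D$, hence complexity $O_{n,D}(1)$ by Theorem \ref{klei}), and shows the ultralimit $U$ equals $V$ by irreducibility, whence $U_\alpha = V_\alpha$ near $\alpha_\infty$. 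Your route — via a full bounded-complexity decomposition of each $V_\alpha$ — also works once Theorem \ref{klei} is available, but the point is that this input must be named and justified separately; it cannot be waved at as ``standard Bezout/elimination'' or deferred to a compactness argument that would beg the question.
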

\newcommand\Grass{\operatorname{Grass}}
This result is however a little bit more difficult to establish than Lemma \ref{dimcont}, because it requires one to understand the relationship between the complexity of a variety and its degree.  Recall that the \emph{degree} of an affine variety $V \subset \A^n(k)$ of dimension $d$ is the cardinality of $|V \cap W|$, where $W$ is a generic affine $n-d$-dimensional subspace of $\A^n(k)$ (i.e. for all $W$ in the affine Grassmanian $\Grass_{n,n-d}(k)$ of affine $n-d$-dimensional affine subspaces of $\A^n(k)$, outside of a subvariety of
$\Grass_{n,n-d}(k)$ of strictly smaller dimension).  It is a standard fact in algebraic geometry that the degree is well-defined as a natural number.  We also have the following nontrivial fact.

\begin{theorem}[Degree controls complexity]\label{klei}  Let $V$ be an \emph{irreducible} affine variety in $\A^n(k)$ of degree $D$  over an algebraically closed field $k$.  Then $V$ has complexity at most $C_{n,D}$ for some constants $n, D$ depending only on $n, D$.
\end{theorem}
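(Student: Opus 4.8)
The plan is to reduce the affine statement to the corresponding statement for irreducible projective varieties, establish the latter by cutting $V$ out with cones of bounded degree, and then bound the number of defining forms by a dimension count.

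\emph{Reduction to the projective case.} Given an irreducible affine variety $V\subseteq\A^n(k)$ of degree $D$, pass to its projective closure $\overline V\subseteq\P^n(k)$ with respect to the standard embedding $\A^n_0(k)\subseteq\P^n(k)$. Then $\overline V$ is irreducible of the same dimension, and a routine argument with generic linear sections shows $\deg\overline V=D$ as well. If one can cut out $\overline V$ by homogeneous forms $G_1,\dots,G_N$ of degree $\leq D$ with $N\leq C_{n,D}$, then dehomogenising (setting the $0$-th coordinate to $1$) produces polynomials $g_j$ of degree $\leq D$ whose common zero set in $\A^n(k)$ is $\overline V\cap\A^n_0(k)=V$, using that $V$ is dense in $\overline V$ and closed in $\A^n_0(k)$. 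This realises $V$ with complexity $\leq\max(n,D,N)=O_{n,D}(1)$, so everything comes down to the projective case.

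\emph{Cutting out a projective variety by cones.} Let $W\subseteq\P^n(k)$ be irreducible of dimension $r$ and degree $D$, and set $c:=n-r$. For a linear subspace $\Lambda\cong\P^{c-2}(k)$ disjoint from $W$, form the cone $\mathrm{Cone}_\Lambda(W):=\bigcup_{w\in W}\langle\Lambda,w\rangle$, the union of the $(c-1)$-planes spanned by $\Lambda$ and a point of $W$; it contains $W$, has dimension $n-1$, and is the preimage under the linear projection $\pi_\Lambda$ away from $\Lambda$ (which, since $\Lambda\cap W=\emptyset$, is finite on $W$) of the image hypersurface $\pi_\Lambda(W)\subseteq\P^{r+1}(k)$, whose degree divides $D$. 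Hence $\mathrm{Cone}_\Lambda(W)=\{F_\Lambda=0\}$ for a single form $F_\Lambda$ of degree $\leq D$. I claim $W=\bigcap_\Lambda\mathrm{Cone}_\Lambda(W)$, the intersection over all admissible $\Lambda$: a point $x$ lies in $\mathrm{Cone}_\Lambda(W)$ if and only if the $(c-1)$-plane $\langle\Lambda,x\rangle$ meets $W$, and when $x\notin W$ the expected dimension $(c-1)+r-n=-1$ forces a generic $(c-1)$-plane through $x$ to miss $W$ (project from $x$: a generic $(c-2)$-plane in $\P^{n-1}(k)$ misses the image of $W$, which has dimension $\leq r$), so $x$ escapes some $\mathrm{Cone}_\Lambda(W)$ and every such $\Lambda$ may be taken disjoint from $W$ as well.

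\emph{Finiteness and conclusion.} The forms $F_\Lambda$ all lie in the space $\Sigma$ of homogeneous degree-$D$ forms in $n+1$ variables, which has dimension $\binom{n+D}{n}$. Writing $U:=\operatorname{span}\{F_\Lambda\}\subseteq\Sigma$, the common zero set of $U$ is the common zero set of the $F_\Lambda$, namely $W$; a basis $F_1,\dots,F_N$ of $U$ then cuts out $W$ with $N\leq\binom{n+D}{n}$, giving $\overline V$ complexity $\leq\max\bigl(n,D,\binom{n+D}{n}\bigr)=:C_{n,D}$. Combined with the first paragraph this proves the theorem.

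\emph{Main obstacle.} The only genuinely classical inputs are that a linear projection of $W$ away from a $\P^{c-2}$ disjoint from $W$ is finite on $W$ with image a hypersurface of degree $\leq D$, and the dimension counts used to produce, for each $x\notin W$, an admissible $\Lambda$ with $x\notin\mathrm{Cone}_\Lambda(W)$. These are standard facts of intersection theory and genericity (see e.g. Mumford, or Kleiman), and crucially the constants they supply depend only on $n$ and $D$, never on $k$ or on the coefficients — this uniformity is exactly what lets $C_{n,D}$ be field-independent. Alternatively, one can bypass the cone argument by invoking the Chow form of $W$, a single form of degree $D$ in $r+1$ blocks of Grassmann coordinates that determines $W$ and whose data has complexity $O_{n,r,D}(1)$; this yields the same conclusion.
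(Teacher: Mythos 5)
Your proposal is correct and takes essentially the same approach as the paper: the cone construction over a generic linear subspace of dimension $n-\dim V-2$, the observation that each such cone is a hypersurface of degree bounded by $D$, the escape argument for $p\notin V$ via projection from $p$, and the final dimension count on the space of degree-$\leq D$ forms. The only cosmetic difference is that you pass to the projective closure before forming cones and dehomogenise at the end, while the paper carries out the identical argument directly in affine space.
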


\begin{proof}\footnote{We thank Jordan Ellenberg and Ania Otwinowska for this argument, which goes back to  \cite{mumford}.}  It suffices to show that $V$ can be cut out by polynomials of degree $D$, since the space of polynomials of degree $D$ that vanish on $V$ is a vector space of dimension bounded only by $n$ and $D$.

Let $V$ have dimension $d$.  We pick a generic affine subspace $W$ of $k^n$ of dimension $n-d-2$, and consider the cone $C(W,V)$ formed by taking the union of all the lines joining a point in $W$ to a point in $V$.  This is an algebraic image of $W \times V \times k$ and is thus generically an algebraic set of dimension $n-1$, i.e. a hypersurface.  Furthermore, as $V$ has degree $D$, it is not hard to see that $C(W,V)$ has degree $D$ as well.  Since a hypersurface is necessarily cut out by a single polynomial, this polynomial must have degree $D$.

To finish the claim, it suffices to show that the intersection of the $C(W,V)$ as $W$ varies is exactly $V$.  Clearly, this intersection contains $V$.  Now let $p$ be any point not in $V$.  The cone of $V$ over $p$ can be viewed as an algebraic subset of the projective space $\P^{n-1}(k)$ of dimension $d$; meanwhile, the cone of a generic subspace $W$ of dimension $n-d-2$ is a generic subspace of $\P^{n-1}(k)$ of the same dimension.  Thus, for generic $W$, these two cones do not intersect, and thus $p$ lies outside $C(W,V)$, and the claim follows.
\end{proof}

\emph{Remark.} There is a stronger and more difficult theorem that asserts that if the degree of a \emph{scheme} in $k^n$ is bounded, then the complexity of that scheme is bounded as well; see \cite[Corollary 6.11]{klei:sga6}.  We will not need this stronger statement here.  The converse statement (that complexity controls degree) is also true, being a corollary of Lemma \ref{bezout}.

\emph{Proof of Lemma \ref{cont-irred}.} We first establish this claim for affine varieties.
The ``if'' direction of the lemma is the easiest. Suppose then that $V$ is reducible. Then it is the proper union of affine varieties $V_1, V_2$.  Each $V_1, V_2$ can be expressed as the ultralimit of affine varieties $V_{\alpha,1}$, $V_{\alpha,2}$ of bounded complexity, and one easily sees that $V_\alpha$ is the proper union of $V_{\alpha,1}$ and $V_{\alpha,2}$ for $\alpha$ sufficiently close to $\alpha_\infty$. Thus $V_\alpha$ is reducible for such $\alpha$, and the claim follows.

Now suppose that $V$ is irreducible; our task is to show that the $V_\alpha$ are irreducible for $\alpha$ sufficiently close to $\alpha_0$.

Let $d$ and $D$ be the dimension and degree of $V$, thus $|V \cap W| = D$ for generic $W \in \Grass(k)$.  Undoing the ultralimit using Lemma \ref{dimcont}, we see that for $\alpha$ sufficiently close to $\alpha_0$, $|V_\alpha \cap W_\alpha| = D$ for generic $W_\alpha \in \Grass(k_\alpha)$.  In other words, $V_\alpha$ has degree $D$. The same is clearly true of any irreducible subvariety of $U_{\alpha} \subseteq V_{\alpha}$ with $\dim(U_{\alpha}) = \dim(V_{\alpha})$. By Theorem \ref{klei}, then, any such subvariety $U_{\alpha}$ will have complexity bounded by $C_{n,D}$ uniformly in $\alpha$.  For each $\alpha$ select some $U_{\alpha}$, and let $U$ be the ultraproduct these $U_\alpha$.  Then by Lemma \ref{dimcont} and the uniform complexity bound, $U$ is a $d$-dimensional subvariety of $V$, and thus must equal all of $V$ by the irreducibility of $V$.  But this implies that $U_\alpha=V_\alpha$ for all $\alpha$ sufficiently close to $\alpha_0$, and the claim for affine varieties then follows.

The claim for projective varieties then follows by covering projective space by a finite number of copies of affine space.  The claim for quasiprojective varieties then follows by writing an irreducible quasiprojective variety as an irreducible projective variety (i.e. the Zariski closure) with some varieties of strictly smaller dimension removed.\hfill $\Box$\vspace{11pt}

This has the following consequence, used at several points in the paper.

\begin{lemma}\label{quivar}  Let $V \subset \A^n(k)$ be an affine variety of complexity at most $M$ over an algebraically closed field $k$.  Then $V$ can be expressed as the union of at most $O_M(1)$ irreducible varieties of complexity at most $O_M(1)$.

Similarly with affine varieties replaced by projective or quasiprojective varieties.
\end{lemma}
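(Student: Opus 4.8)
The plan is to run a compactness/ultrafilter argument of exactly the type already used in this appendix, leveraging the classical fact that every variety has a \emph{finite} decomposition into irreducible components, together with the continuity of irreducibility (Lemma \ref{cont-irred}).

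First I would treat the affine case. Suppose the claim fails there; negating the quantifiers and using the axiom of choice, we obtain for each $\alpha \in \N$ an algebraically closed field $k_\alpha$ and an affine variety $V_\alpha \subset \A^{n_\alpha}(k_\alpha)$ of complexity at most $M$ which cannot be written as a union of at most $\alpha$ irreducible varieties each of complexity at most $\alpha$. The complexity bound forces $n_\alpha \leq M$, so $n_\alpha$ is constant $=n$ for $\alpha$ sufficiently close to $\alpha_\infty$, and we discard the other indices. Form the ultraproducts $k := \prod_{\alpha \to \alpha_\infty} k_\alpha$ and $V := \prod_{\alpha \to \alpha_\infty} V_\alpha$; then $V$ is an affine variety in $\A^n(k)$ of complexity at most $M$. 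By classical algebraic geometry $V$ has a finite decomposition $V = W_1 \cup \dots \cup W_r$ into irreducible components, and each $W_i$ is an affine variety of some finite complexity; writing $W_i = \prod_{\alpha \to \alpha_\infty} W_{i,\alpha}$ with the $W_{i,\alpha}$ of complexity uniformly bounded by some $M'$, the equality of ultralimits $\lim_{\alpha\to\alpha_\infty}(W_{1,\alpha}\cup\dots\cup W_{r,\alpha}) = W_1\cup\dots\cup W_r = V = \lim_{\alpha\to\alpha_\infty} V_\alpha$ forces $V_\alpha = W_{1,\alpha} \cup \dots \cup W_{r,\alpha}$ for $\alpha$ sufficiently close to $\alpha_\infty$, and by Lemma \ref{cont-irred} (applied to each of the finitely many $i$) each such $W_{i,\alpha}$ is irreducible. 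Thus $V_\alpha$ is a union of $r$ irreducible varieties of complexity at most $M'$; picking $\alpha$ close to $\alpha_\infty$ with $\alpha \geq \max(r, M')$ contradicts the defining property of $V_\alpha$. This settles the affine case, the implicit constants $r$ and $M'$ depending only on $M$.

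For the projective case I would cover $\P^n(k)$ by the affine charts $\A^n_0(k), \dots, \A^n_n(k)$: if $V$ is projective of complexity at most $M$ then each $V \cap \A^n_i$ is an affine variety of complexity $O_M(1)$, hence (by the affine case) a union of $O_M(1)$ irreducible affine varieties $U_{i,j}$ of complexity $O_M(1)$. Taking Zariski closures in $\P^n$, which preserves complexity up to $O_M(1)$ by Lemma \ref{zar} and preserves irreducibility, and using that $V$ is closed, we get $\overline{U_{i,j}} \subseteq \overline{V \cap \A^n_i} \subseteq V$ while $V = \bigcup_i (V \cap \A^n_i) \subseteq \bigcup_{i,j} \overline{U_{i,j}}$, so $V$ is a union of $O_M(1)$ irreducible projective varieties of complexity $O_M(1)$. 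The quasiprojective case reduces to this: write $V = \overline V \setminus Z$ with $\overline V, Z$ projective of complexity $O_M(1)$ (Lemma \ref{zar}), decompose $\overline V = \bigcup_j W_j$ into irreducible projective varieties of complexity $O_M(1)$, and note $V = \bigcup_{j:\, W_j \not\subseteq Z} (W_j \setminus Z)$, each $W_j \setminus Z$ being a nonempty Zariski-open, hence irreducible, subvariety of $W_j$ of complexity $O_M(1)$.

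I do not expect a serious obstacle here: the only point requiring care is the bookkeeping in the affine case, namely extracting the number $r$ of components and the complexity bound $M'$ as quantities depending on $M$ alone, which is automatic once one has the finiteness of the irreducible decomposition of the ultraproduct $V$ together with Lemma \ref{cont-irred}; no new geometric input is needed beyond what is already developed in this appendix.
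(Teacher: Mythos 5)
Your affine-case argument is exactly the paper's: negate the quantifiers, pass to an ultraproduct $V$ of the bad examples $V_\alpha$, decompose $V$ into finitely many irreducible components using classical algebraic geometry, undo the ultraproduct and invoke Lemma~\ref{cont-irred} to get a contradiction. The paper only states that the projective and quasiprojective cases are ``similar''; your explicit reductions (covering $\P^n$ by affine charts and taking Zariski closures, respectively writing a quasiprojective variety as a projective one minus a projective one) are precisely in the spirit of how the paper handles these cases in the neighbouring Lemmas~\ref{dimcont} and~\ref{cont-irred}, so the proposal matches the paper's approach throughout.
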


\begin{proof} As $n$ is bounded by $M$, it suffices to prove the claim for a fixed $n$.  We shall just establish the claim for affine varieties, as the projective and quasiprojective cases are similar.

Fix $n$ and $M$, and suppose the claim failed.  Carefully negating all the quantifiers (and using the axiom of choice), we see that there exists a sequence $V_\alpha\subset \A^n(k_\alpha)$ of affine varieties of uniformly bounded complexity, such that $V_\alpha$ cannot be expressed as the union of $\alpha$ or fewer irreducible affine varieties of complexity at most $\alpha$.  Now we pass to an ultralimit, obtaining an affine variety $V := \lim_{\alpha \to \alpha_\infty} V_\alpha \subset \A^n(k)$.  As $V$ is an affine variety, standard algebraic geometry allows one to write $V$ as the union of finitely many irreducible affine varieties $V_1,\ldots,V_m$.  Each of these varieties $V_i$ is the ultraproduct of affine varieties $V_{\alpha,i} \subset \A^n(k_\alpha)$ of bounded complexity; by Lemma \ref{cont-irred}, the $V_{\alpha,i}$ will be irreducible for $\alpha$ sufficiently close to $\alpha_\infty$.  On the other hand, the $V_\alpha$ are the union of the $V_{\alpha,i}$ for $\alpha$ sufficiently close to $\alpha_\infty$.  This contradicts the construction of the $V_\alpha$, and the claim follows.
\end{proof}

As a consequence of the above machinery we have the following result asserting the continuity of Zariski closure and Zariski density.

\begin{lemma}\label{cont-zarisk}  Suppose that $V_\alpha \subset \P^n(k_\alpha)$ are varieties of uniformly bounded complexity over algebraically closed fields $k_\alpha$, and let $V := \prod_{\alpha \to \alpha_\infty} V_\alpha$ be the ultraproduct.  Then $\overline{V} = \prod_{\alpha \to \alpha_\infty} \overline{V_\alpha}$, where $\overline{V}$ and $\overline{V_\alpha}$ are the Zariski closures of $V, V_\alpha$ respectively.

Similarly, if $V_\alpha \subset W_\alpha$ are varieties of uniformly bounded complexity over $k_\alpha$, and $V := \prod_{\alpha \to \alpha_\infty} V_\alpha$ and $W := \prod_{\alpha \to \alpha_\infty} W_\alpha$, then $V$ is Zariski-dense in $W$ if and only if there exists a finite $M$ such that $V_\alpha$ is $M$-Zariski-dense in $W_\alpha$ for all $\alpha$ sufficiently close to $\alpha_\infty$.
\end{lemma}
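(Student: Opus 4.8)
The plan is to reduce both assertions to {\L}os's theorem, together with Lemma \ref{zar} (Zariski closure preserves bounded complexity) and the basic observation recorded just before Lemma \ref{dimcont}: an ultraproduct of projective varieties of uniformly bounded complexity over the $k_\alpha$ is again a projective variety over $k = \prod_{\alpha\to\alpha_\infty} k_\alpha$, and conversely every projective variety over $k$ arises in this way.

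First I would prove the identity $\overline{V} = \prod_{\alpha\to\alpha_\infty}\overline{V_\alpha}$. Let $M_0$ be the uniform complexity bound on the $V_\alpha$. By Lemma \ref{zar} the closures $\overline{V_\alpha}$ are projective varieties of complexity $O_{M_0}(1)$ uniformly in $\alpha$, so $Z := \prod_{\alpha\to\alpha_\infty}\overline{V_\alpha}$ is a genuine projective variety in $\P^n(k)$. Since $V_\alpha \subseteq \overline{V_\alpha}$ for every $\alpha$, passing to ultralimits pointwise gives $V \subseteq Z$, and hence $\overline{V} \subseteq Z$ because $\overline{V}$ is by definition the intersection of all projective varieties containing $V$. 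For the reverse inclusion, let $W'$ be any projective variety in $\P^n(k)$ with $V \subseteq W'$, and write $W' = \prod_{\alpha\to\alpha_\infty} W'_\alpha$ with the $W'_\alpha$ of uniformly bounded complexity. The containment ``$V_\alpha \subseteq W'_\alpha$'' is a first-order statement in the coefficients of the defining polynomials of $V_\alpha$ and $W'_\alpha$, and it holds in the ultralimit since $V \subseteq W'$; by {\L}os's theorem it therefore holds for $\alpha$ sufficiently close to $\alpha_\infty$. For such $\alpha$, as $W'_\alpha$ is Zariski closed we get $\overline{V_\alpha} \subseteq W'_\alpha$, and taking ultraproducts yields $Z \subseteq W'$. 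Intersecting over all such $W'$ gives $Z \subseteq \overline{V}$, completing the proof of the first claim.

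Next I would deduce the density statement. By definition $V$ is Zariski-dense in $W$ if and only if $\overline{V} \supseteq W$, which by the first part is the containment $\prod_{\alpha\to\alpha_\infty}\overline{V_\alpha} \supseteq \prod_{\alpha\to\alpha_\infty} W_\alpha$. Using Lemma \ref{zar} once more, each $\overline{V_\alpha}$ has complexity at most some $M = O_{M_0}(1)$ independent of $\alpha$; thus ``$\overline{V_\alpha} \supseteq W_\alpha$'' is a first-order statement in uniformly bounded defining data, and {\L}os's theorem (applied in both directions, the converse direction via the ultrafilter axioms applied to the negation) shows it is equivalent to the corresponding containment in the ultralimit. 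Hence $V$ is Zariski-dense in $W$ if and only if $\overline{V_\alpha} \supseteq W_\alpha$ for $\alpha$ sufficiently close to $\alpha_\infty$, i.e. $V_\alpha$ is $M$-Zariski-dense in $W_\alpha$, with the single uniform constant $M$ witnessing the right-hand side of the lemma; conversely, if such a finite $M$ exists then running the same argument backwards produces $\overline{V}\supseteq W$.

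There is no substantial obstacle here beyond bookkeeping. The two points requiring a little care are: (a) formulating containments such as ``$X \subseteq Y$'' between varieties as honest first-order sentences with the coefficients of the defining polynomials as parameters, so that {\L}os's theorem applies in the form of an equivalence rather than merely an implication; and (b) ensuring that the family of auxiliary varieties $W'_\alpha$ (resp. $W_\alpha$), and all the Zariski closures involved, remain within a fixed complexity bound uniform in $\alpha$ — this is precisely where Lemma \ref{zar} is indispensable, and it is also what makes the quantifier ``there exists a finite $M$'' in the statement legitimate.
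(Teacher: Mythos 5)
Your argument is clean and, \emph{granted} Lemma \ref{zar}, it does prove the statement---but it is circular in the logical structure of this paper, and the circularity lands on exactly the point you identify as indispensable. In the appendix, Lemma \ref{zar} is proved \emph{after} Lemma \ref{cont-zarisk} and \emph{by means of} it: the paper's proof of Lemma \ref{zar} says ``By Lemma \ref{cont-zarisk}, the Zariski closure $\overline{V}$ is the ultraproduct of the $\overline{V_\alpha}$, and hence the $\overline{V_\alpha}$ have bounded complexity.'' So you cannot invoke Lemma \ref{zar} here. And you genuinely need it: without a uniform complexity bound on the $\overline{V_\alpha}$, the set $Z := \prod_{\alpha\to\alpha_\infty}\overline{V_\alpha}$ is a priori only a subset of $\P^n(k)$, not a projective variety (the ultraproduct of varieties is a variety only under a uniform complexity bound, as noted just before Lemma \ref{dimcont}). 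The step ``$Z$ is a projective variety containing $V$, hence $\overline{V}\subseteq Z$'' collapses, and with it the whole first half of your argument; the same issue recurs in your deduction of the density statement, where you again use Lemma \ref{zar} to make ``$\overline{V_\alpha}\supseteq W_\alpha$'' a uniformly bounded first-order condition.

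The paper sidesteps this by working from the ultraproduct side down, rather than from the $\alpha$-level up. It decomposes the limit variety $V$ (not the individual $V_\alpha$) into finitely many Zariski-dense open subsets $V_i$ of irreducible projective varieties $\overline{V_i}$; each $\overline{V_i}$ is then automatically an ultraproduct of varieties $\overline{V_i}_\alpha$ of \emph{uniformly bounded} complexity (the converse direction of the basic principle before Lemma \ref{dimcont}), and Lemmas \ref{dimcont} and \ref{cont-irred} push irreducibility and dimension back to the $\alpha$ side, yielding $\overline{V_\alpha}=\bigcup_i \overline{V_i}_\alpha$ for $\alpha$ near $\alpha_\infty$. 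This produces the uniform bound as an \emph{output} rather than requiring it as an \emph{input}. Your {\L}os-theorem argument for the reverse inclusion $Z\subseteq\overline{V}$ (writing an arbitrary closed $W'\supseteq V$ as an ultraproduct $\prod W'_\alpha$ of bounded complexity and passing the containment $V_\alpha\subseteq W'_\alpha$ pointwise) is correct and could be kept; what needs replacing is the justification that $Z$ is a variety. If you wish to preserve the shape of your proof, you would first have to give an independent proof of Lemma \ref{zar} (say, via Lemma \ref{quivar} and Theorem \ref{klei}), which in effect reconstitutes the decomposition argument the paper already uses.
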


\begin{proof}  We just prove the first claim, as the second claim is similar.
One can represent $V$ as the union of Zariski-dense open subsets $V_i$ of irreducible projective varieties $\overline{V_i}$, with $\overline{V}$ then being the union of the $\overline{V_i}$.  One can then view $V_i$ as the irreducible projective variety $\overline{V_i}$ with a subvariety of strictly smaller dimension removed.  We can express the variety $\overline{V_i}$ as an ultraproduct $\prod_{\alpha \to \alpha_\infty} \overline{V_i}_\alpha$.
Using Lemma \ref{dimcont} and Lemma \ref{cont-irred}, we thus see that for $\alpha$ sufficiently close to $\alpha_\infty$, the $\overline{V_i}_\alpha$ are irreducible, and $V_\alpha$ is the union of Zariski-dense subsets of $\overline{V_i}_\alpha$.  Thus $\overline{V_\alpha} = \bigcup_i \overline{V_i}_\alpha$ for such $\alpha$, and the claim follows.
\end{proof}

Now we can quickly prove Lemma \ref{zar}.

\emph{Proof of Lemma \ref{zar}.} Suppose this lemma failed.  Carefully negating the quantifiers (and using the axiom of choice), we may find a dimension $n$, a sequence $V_\alpha \subset \P^n(k_\alpha)$ of constructible sets and uniformly bounded complexity over algebraically closed fields $k_\alpha$, such that the Zariski closure $\overline{V_\alpha}$ of $V_\alpha$ has complexity at least $\alpha$.  We pass to an ultralimit to obtain a constructible set $V := \prod_{\alpha \to \alpha_\infty} V_\alpha$.  By Lemma \ref{cont-zarisk}, the Zariski closure $\overline{V}$ is the ultraproduct of the $\overline{V_\alpha}$, and hence the $\overline{V_\alpha}$ have bounded complexity, a contradiction.\hfill $\Box$\vspace{11pt}

Now we apply the ultralimit machinery to regular maps.  Given a collection of maps $\phi_\alpha: X_\alpha \to Y_\alpha$ for $\alpha$ sufficiently close to $\alpha_\infty$, we can construct the ultralimit $\phi := \lim_{\alpha \to \alpha_\infty} \phi_\alpha$, defined as the map $\phi: X \to Y$ from the ultraproduct $X := \prod_{\alpha \to \alpha_\infty} X_\alpha$ to the ultraproduct $Y := \prod_{\alpha \to \alpha_\infty} Y_\alpha$ by the formula
$$ \phi( \lim_{\alpha \to \alpha_\infty} x_\alpha ) := \lim_{\alpha \to \alpha_\infty} \phi_\alpha(x_\alpha)$$
for any sequence $x_\alpha \in X_\alpha$.  It is easy to see that this limit map is well defined.

From Definition \ref{regmap} we obtain the following basic lemma.

\begin{lemma}[Ultralimits of regular maps]\label{ultralim}  For $\alpha$ sufficiently close to $\alpha_\infty$, let $V_\alpha, W_\alpha$ be varieties of uniformly bounded complexity over an algebraically closed field $k_\alpha$, and let $\phi_\alpha: V_\alpha \to W_\alpha$ be a regular map of uniformly bounded complexity.  Then the ultralimit $\phi := \lim_{\alpha \to \alpha_\infty} \phi_\alpha$ is a regular map from the ultraproduct $V := \prod_{\alpha \to \alpha_\infty} V_\alpha$ \textup{(}which is a variety over $k := \prod_{\alpha \to \alpha_\infty} k_\alpha$\textup{)} to the ultraproduct $W := \prod_{\alpha \to \alpha_\infty} W_\alpha$ \textup{(}which is also a variety over $k$\textup{)}.

Conversely, if $V := \prod_{\alpha \to \alpha_\infty} V_\alpha$ and $W := \prod_{\alpha \to \alpha_\infty} W_\alpha$ are algebraic varieties over $k = \prod_{\alpha \to \alpha_\infty} k_\alpha$, and $\phi: V \to W$ is a regular map, then one can write $\phi = \lim_{\alpha \to \alpha_\infty} \phi_\alpha$, where for $\alpha$ sufficiently close to $\alpha_\infty$, $\phi_\alpha:  V_\alpha \to W_\alpha$ is a regular map of complexity bounded uniformly in $\alpha$.
\end{lemma}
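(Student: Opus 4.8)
The plan is to prove both directions by unpacking Definition \ref{regmap} and transporting each piece of the defining data across the ultraproduct dictionary, using two facts already available in this appendix: a sequence of varieties of uniformly bounded complexity over the $k_\alpha$ has an ultraproduct which is a variety over $k := \prod_{\alpha \to \alpha_\infty} k_\alpha$ (and conversely every variety over $k$ so arises), and an ultralimit of polynomials of uniformly bounded degree is a polynomial of the same degree bound whose coefficients are the ultralimits of the coefficients. I will also repeatedly use the principle underlying Lemma \ref{bezout}: if an ultraproduct of bounded-complexity constructible sets is empty then the factors are empty for $\alpha$ sufficiently close to $\alpha_\infty$.

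For the forward direction I would argue as follows. For $\alpha$ near $\alpha_\infty$, Definition \ref{regmap} gives a covering $V_\alpha = V_{\alpha,1} \cup \dots \cup V_{\alpha,r_\alpha}$ by varieties of complexity $\le M$ with $r_\alpha \le M$, chart indices $i_{\alpha,j}$, and homogeneous polynomials $P_{\alpha,j,l}, Q_{\alpha,j,l}$ of degree $\le M$ with $Q_{\alpha,j,l}$ nowhere vanishing on $V_{\alpha,j}$ and $\phi_\alpha|_{V_{\alpha,j}} = (P_{\alpha,j,l}/Q_{\alpha,j,l})_l$. Since $r_\alpha$ and the $i_{\alpha,j}$ take only finitely many values they stabilise to some $r$ and $i_j$ near $\alpha_\infty$. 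Setting $V_j := \prod_{\alpha \to \alpha_\infty} V_{\alpha,j}$, a variety over $k$ of complexity $O_M(1)$, we get $V = \bigcup_j V_j$; letting $P_{j,l}, Q_{j,l}$ be the ultralimits of $P_{\alpha,j,l}, Q_{\alpha,j,l}$ we have that $Q_{j,l}$ is nowhere vanishing on $V_j$, since its zero locus on $V_j$ is the ultraproduct of the empty sets $\{x \in V_{\alpha,j} : Q_{\alpha,j,l}(x) = 0\}$. Finally, for $x = \lim_\alpha x_\alpha$ with $x_\alpha \in V_{\alpha,j}$, one has $\phi(x) = \lim_\alpha \phi_\alpha(x_\alpha) = (P_{j,l}(x)/Q_{j,l}(x))_l \in \A^m_{i_j}(k)$ and $\phi(V) \subseteq \lim_\alpha W_\alpha = W$, so $\phi$ is regular of complexity $O_M(1)$ in the sense of Definition \ref{regmap}.

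For the converse I would start from a regular $\phi : V \to W$ of complexity $M$ with its covering $V = V_1 \cup \dots \cup V_r$, chart indices $i_j$, and polynomials $P_{j,l}, Q_{j,l}$ over $k$, realise each $V_j$ as an ultraproduct $\prod_{\alpha \to \alpha_\infty} V_{\alpha,j}$ of bounded-complexity varieties, descend the relation $V_\alpha = \bigcup_j V_{\alpha,j}$ for $\alpha$ near $\alpha_\infty$ (since the ultraproducts $\prod_\alpha V_\alpha$ and $\prod_\alpha \bigcup_j V_{\alpha,j}$ both equal $V$), and write the coefficients of $P_{j,l}, Q_{j,l}$ as ultralimits to obtain homogeneous $P_{\alpha,j,l}, Q_{\alpha,j,l}$ over $k_\alpha$ of degree $\le M$; the nowhere-vanishing of $Q_{\alpha,j,l}$ on $V_{\alpha,j}$ near $\alpha_\infty$ follows from the emptiness principle applied to the zero locus. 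I would then define $\phi_\alpha$ by applying $(P_{\alpha,j,l}/Q_{\alpha,j,l})_l$ on the constructible piece $V_{\alpha,j} \setminus (V_{\alpha,1} \cup \dots \cup V_{\alpha,j-1})$, and verify $\lim_\alpha \phi_\alpha = \phi$ by noting that if $x = \lim_\alpha x_\alpha$ lies in the $j$-th piece of $V$ then $x_\alpha$ lies in the $j$-th piece of $V_\alpha$ near $\alpha_\infty$, whence $\phi_\alpha(x_\alpha) \to (P_{j,l}(x)/Q_{j,l}(x))_l = \phi(x)$.

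The main obstacle, and the only step needing real care, is in the converse: confirming that these locally-defined $\phi_\alpha$ actually glue to a \emph{regular} map landing inside $W_\alpha$, not merely a set-theoretic map to $\P^m(k_\alpha)$. This is where one must observe that each of the finitely many gluing and containment requirements — agreement of two charts on an overlap $V_j \cap V_{j'}$, and $\phi(V_j) \subseteq W$ — becomes, after clearing denominators, the identical vanishing of certain bounded-degree polynomials on a bounded-complexity variety, and hence transfers from the ultraproduct back to $\alpha$ sufficiently close to $\alpha_\infty$ by the emptiness principle. Everything else is a mechanical transcription of Definition \ref{regmap} through the ultraproduct dictionary.
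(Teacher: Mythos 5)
Your proof is correct. The paper offers no argument for Lemma \ref{ultralim} at all --- it simply remarks that the lemma follows from Definition \ref{regmap} --- and what you have written is precisely the direct unwinding of that definition through the ultraproduct dictionary (stabilising the number of charts and the chart indices by the ultrafilter pigeonhole, taking ultralimits of the defining polynomials, and, in the converse, verifying the finitely many gluing and containment requirements by the emptiness-transfer principle), which is exactly the routine verification the authors leave to the reader.
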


Now we can prove Lemma \ref{compos}.

\emph{Proof of Lemma \ref{compos}.} We begin with the first claim.  If this claim failed, then there exists a two sequences of regular maps $\phi_\alpha: V_\alpha \to W_\alpha$ and $\psi_\alpha: U_\alpha \to V_\alpha$ of uniformly bounded complexity (so in particular $U_\alpha,V_\alpha,W_\alpha$ also have uniformly bounded complexity), such that $\psi_\alpha \circ \phi_\alpha: U_\alpha \to W_\alpha$ is not given by a regular map of complexity at most $\alpha$.  But then by Lemma \ref{ultralim} we may take ultralimits and ultraproducts to create two regular maps $\phi: V \to W$ and $\psi: U \to V$ in the obvious manner.  From classical algebraic geometry we know that the composition $\psi \circ \phi: U \to W$ is then regular, and so by another application of Lemma \ref{ultralim} we see that $\psi_\alpha \circ \phi_\alpha: U_\alpha \to W_\alpha$ is a regular map of bounded complexity for $\alpha$ sufficiently close to $\alpha_\infty$, giving the desired contradiction.

Now we prove the second claim, for images.  If this claim failed, then we can find a sequence of regular maps $\phi_\alpha: V_\alpha \to W_\alpha$ of uniformly bounded complexity, such that $\phi_\alpha(V_\alpha)$ is not a constructible set of complexity at most $\alpha$.  Taking ultralimits again, we obtain a regular map $\phi: V \to W$.  From classical algebraic geometry we know that $\phi(V)$ is a constructible set, which implies that for $\alpha$ sufficiently close to $\alpha_0$, $\phi_\alpha(V_\alpha)$ is a constructible set of uniformly bounded complexity, giving the desired contradiction.  The analogous claim for pre-images is proven similarly.\hfill $\Box$\vspace{11pt}

Now we can establish continuity of dominance under ultralimits.

\begin{lemma}[Continuity of dominance]\label{cont-dom}  Let $\phi_\alpha: V_\alpha \to W_\alpha$ be a sequence of regular maps of uniformly bounded complexity over an algebraically closed field $k_\alpha$.  Let $\phi :=\lim_{\alpha \to \alpha_\infty} \phi_\alpha$, $V := \prod_{\alpha \to \alpha_\infty} V_\alpha$, $W := \prod_{\alpha \to \alpha_\infty} W_\alpha$, and $k := \prod_{\alpha \to \alpha_\infty} k_\alpha$.  Then $\phi: V \to W$ is dominant if and only if $\phi_\alpha: V_\alpha \to W_\alpha$ is dominant for all $\alpha$ sufficiently close to $\alpha_\infty$.
\end{lemma}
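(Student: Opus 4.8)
The plan is to reduce the assertion to the two conditions that together constitute dominance — irreducibility of the source and Zariski-density of the image — and to treat each using the continuity lemmas already established in this appendix, together with routine ultraproduct bookkeeping.

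First I would dispose of the irreducibility condition using Lemma \ref{cont-irred}: by that lemma, either $V$ and the $V_\alpha$ (for $\alpha$ sufficiently close to $\alpha_\infty$) are all irreducible, or they are all reducible. In the reducible case neither $\phi$ nor $\phi_\alpha$ can be dominant, so both sides of the claimed equivalence are false and there is nothing to prove. Hence I may assume $V$ is irreducible and $V_\alpha$ is irreducible for $\alpha$ close to $\alpha_\infty$, and it remains to show that $\phi(V)$ is Zariski-dense in $W$ if and only if $\phi_\alpha(V_\alpha)$ is Zariski-dense in $W_\alpha$ for $\alpha$ close to $\alpha_\infty$.

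For this, the two facts I would use are: by Lemma \ref{ultralim} the map $\phi$ is regular, and unwinding the definition of the ultralimit of maps one has $\phi(V) = \prod_{\alpha \to \alpha_\infty} \phi_\alpha(V_\alpha)$; and by Lemma \ref{compos} together with Lemma \ref{zar}, each image $\phi_\alpha(V_\alpha)$ is a constructible set whose Zariski closure inside $W_\alpha$ is a subvariety, all of complexity $O_M(1)$ uniformly in $\alpha$. I would then argue both implications by contradiction, using the elementary fact that containment (resp. equality) of ultraproducts of uniformly-bounded-complexity constructible sets is detected by the corresponding relation holding for $\alpha$ sufficiently close to $\alpha_\infty$. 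If $\phi(V)$ is Zariski-dense in $W$ while $\phi_\alpha(V_\alpha)$ fails to be Zariski-dense in $W_\alpha$ on a set $S \in \alpha_\infty$, I set $W'_\alpha$ to be the Zariski closure of $\phi_\alpha(V_\alpha)$ inside $W_\alpha$ for $\alpha \in S$ (a proper subvariety of $W_\alpha$ of complexity $O_M(1)$) and $W'_\alpha := W_\alpha$ otherwise; then $W' := \prod_{\alpha \to \alpha_\infty} W'_\alpha$ is a proper subvariety of $W$ containing $\phi(V)$, a contradiction. Conversely, if $\phi_\alpha(V_\alpha)$ is Zariski-dense in $W_\alpha$ for $\alpha$ in some $S \in \alpha_\infty$ but $\phi(V) \subseteq W'$ for a proper subvariety $W' \subsetneq W$, then writing $W' = \prod_{\alpha \to \alpha_\infty} W'_\alpha$ with the $W'_\alpha$ of uniformly bounded complexity yields, for all $\alpha$ in a set lying in $\alpha_\infty$, the simultaneous inclusions $\phi_\alpha(V_\alpha) \subseteq W'_\alpha \subsetneq W_\alpha$; intersecting this set with $S$ gives an $\alpha$ for which $\phi_\alpha(V_\alpha)$ is both dense in $W_\alpha$ and contained in a proper subvariety, a contradiction.

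I do not expect any serious obstacle: the argument is essentially routine once Lemmas \ref{cont-irred}, \ref{compos}, \ref{zar} and \ref{ultralim} are in hand. The only points that need a little care are that $\phi_\alpha(V_\alpha)$ is merely constructible rather than a variety — so one must pass to its Zariski closure and invoke Lemma \ref{zar} for the uniform complexity bound — and the pedantic but necessary verification that inclusions and equalities between ultraproducts of uniformly-bounded-complexity sets reflect the pointwise relations for $\alpha$ near $\alpha_\infty$.
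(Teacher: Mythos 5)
Your proposal is correct and follows the same basic decomposition as the paper: dominance is irreducibility of the source plus Zariski-density of the image, with Lemma \ref{cont-irred} disposing of the first condition. The one difference is in handling the second. The paper simply observes that the Zariski closure of $\phi_\alpha(V_\alpha)$ equals $W_\alpha$ and then invokes Lemma \ref{cont-zarisk} (continuity of Zariski closure under ultraproducts) to conclude that the Zariski closure of $\phi(V)$ is $W$. You do not cite Lemma \ref{cont-zarisk}; instead you re-derive what is needed by hand from Lemmas \ref{zar}, \ref{compos}, and \ref{ultralim}, together with the {\L}o\'{s}-type fact that containment and equality of ultraproducts of uniformly-bounded-complexity constructible sets reflect the corresponding pointwise relations for $\alpha$ near $\alpha_\infty$. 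That reflection principle is indeed valid in this setting (these are first-order conditions in the defining coefficients once complexity is bounded), and you are right to flag the passage from constructible image to its closure via Lemma \ref{zar} as the point requiring care. So your argument works; it is simply a little longer than necessary because you do not use the continuity-of-closure lemma already prepared in the appendix.
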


\begin{proof}  Suppose first that $\phi_\alpha$ is dominant for all $\alpha$ sufficiently close to $\alpha_\infty$.  Then for $\alpha$ sufficiently close to $\alpha_\infty$, $V_\alpha$ is irreducible, hence $V$ is irreducible by Lemma \ref{cont-irred}.  The Zariski closure of $\phi_\alpha(V_\alpha)$ is $W_\alpha$, and hence by Lemma \ref{cont-zarisk} the Zariski closure of $\phi(V)$ is $W$. Therefore $\phi$ is dominant.

The converse claim follows by reversing all of these steps.
\end{proof}

Now we prove Lemma \ref{dimlem}.  The key is to establish the following qualitative variant.

\begin{lemma}[Qualitative dimension lemma]\label{dimlem-quali}   Let $V, W$ be varieties, and let $\phi: V \to W$ be a regular map.  Then there exists a Zariski open subset $V'$ of $V$, and a subvariety $W'$ of $W$ of dimension at most $\dim(V)$, with the following two properties:
\begin{itemize}
\item \textup{(}Generic mapping\textup{)} $\phi(V') \subset W'$.
\item \textup{(}Generic fibres\textup{)}  For any $w \in W'$, the set $\{ v \in V': \phi(v)=w \}$ is a constructible set of dimension at most $\dim(V) - \dim(W')$.
\end{itemize}
If $\phi$ is dominant, then we may take $W'$ to be a Zariski-dense subset of $W$.
\end{lemma}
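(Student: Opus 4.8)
The plan is to reduce to the classical fibre-dimension theorem for dominant morphisms of irreducible varieties, dealing with the reducible case by restricting to a single top-dimensional component.

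First I would reduce to the situation where $V$ is irreducible and $\phi$ is dominant. If $V$ is reducible (which is automatic unless $\phi$ is dominant, since dominance forces $V$ irreducible by definition), write $V = V_1 \cup \dots \cup V_r$ for its decomposition into irreducible components and pick $V_1$ of maximal dimension, so $\dim V_1 = \dim V$. Then $V_1^\circ := V \setminus \bigcup_{j \geq 2} V_j = V_1 \setminus \bigcup_{j \geq 2} V_j$ is Zariski open in $V$, and is a dense open subvariety of the irreducible $V_1$ (it cannot be empty or non-dense, since by irreducibility that would force $V_1 \subseteq V_j$ for some $j \geq 2$, impossible for distinct components), hence is itself irreducible with $\dim V_1^\circ = \dim V$. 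Since any Zariski open subset of $V_1^\circ$ is Zariski open in $V$ and $\dim V_1^\circ = \dim V$, it suffices to prove the lemma with $V$ replaced by $V_1^\circ$ and $\phi$ by its (still regular) restriction. Having thus made $V$ irreducible, I would then replace $W$ by the Zariski closure $\overline{\phi(V)}$; by Lemma \ref{compos} and Lemma \ref{zar} this is an irreducible subvariety of $W$, its dimension is at most $\dim V$, and it coincides with $W$ in the case that $\phi$ was already dominant. So we may assume $\phi : V \to W$ is a dominant morphism of irreducible varieties.

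Now I would invoke the classical generic fibre-dimension theorem: for a dominant morphism $\phi : V \to W$ of irreducible varieties over the algebraically closed field $k$, one has $\dim W \leq \dim V$, and there is a nonempty (hence dense) Zariski open subset $W_0 \subseteq W$, contained in $\phi(V)$, such that $\dim \phi^{-1}(w) = \dim V - \dim W$ for every $w \in W_0$. (This is standard; it follows e.g. from Chevalley's constructibility theorem and the upper semicontinuity of fibre dimension.) I set $W' := W_0$ and $V' := \phi^{-1}(W_0)$, which is Zariski open in $V$. Then $\phi(V') \subseteq W_0 = W'$, which is the generic mapping property. For $w \in W'$, any $v$ with $\phi(v) = w$ lies in $\phi^{-1}(W_0) = V'$, so $\{ v \in V' : \phi(v) = w \} = \phi^{-1}(w)$, which is closed in $V'$ (in particular constructible) and of dimension $\dim V - \dim W = \dim V - \dim W'$, giving the generic fibre property. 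Finally $\dim W' = \dim W_0 = \dim W \leq \dim V$. In the dominant case no reduction was performed, so $W = \overline{\phi(V)}$ is the original target and $W' = W_0$ is a Zariski-dense (open) subset of $W$, as required. Undoing the reduction of the first step — which leaves $\dim V$ unchanged, so all the dimension bounds are preserved — finishes the proof.

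The only point needing care, rather than a genuine obstacle, is the reduction in the reducible case: one must read "Zariski open" here in the ordinary topological sense, so that discarding entire components $V_2, \dots, V_r$ (possibly even ones of top dimension) is permissible. This is exactly the mechanism behind the non-uniqueness of the essential range noted in the remark after Lemma \ref{dimlem}. All the remaining steps are direct appeals to classical algebraic geometry, legitimate because $k$ is algebraically closed.
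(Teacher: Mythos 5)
Your argument follows essentially the same route as the paper: reduce to an irreducible dominant map, then invoke the classical fibre-dimension theorem of \cite[\S I.6.3]{shafarevich}. That part is fine. The one place you diverge is in the treatment of a reducible $V$. You keep a single top-dimensional component $V_1$ and discard the remaining components $V_2,\dots,V_r$, including possibly other components of dimension $\dim V$; the paper instead says to remove only the lower-dimensional components and handle all top-dimensional ones, combining the results ``by taking unions.'' Your choice makes $V \setminus V'$ potentially of dimension equal to $\dim V$, so $V'$, while Zariski open in the topological sense, need not have lower-dimensional complement. That matters: the quantitative version Lemma~\ref{dimlem} explicitly asks for an \emph{$O_M(1)$-Zariski open} $V'$, which by the paper's definition means $V' = V\setminus U$ with $\dim U < \dim V$, and the ultralimit deduction of Lemma~\ref{dimlem} from Lemma~\ref{dimlem-quali} needs exactly this property to survive the passage to $V'_\alpha$. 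Your appeal to the remark about non-uniqueness of essential ranges does not quite cover this case: the example given there, $V = (\{0\}\times k) \cup \{(1,0)\}$, concerns an isolated \emph{lower}-dimensional component, which one may freely drop without harming the codimension of the exceptional set. Dropping a top-dimensional component is a different matter.

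That said, you should be aware the paper's ``taking unions'' step is itself underspecified and not straightforwardly correct: if $V = V_1 \cup V_2$ with both of dimension $d$ but $\dim\overline{\phi(V_1)} > \dim\overline{\phi(V_2)}$ (for instance $V_1 = \{y=0\}$, $V_2 = \{x=0\}$ in $\A^2$ with $\phi(x,y)=x$), then no dense open $V'\subseteq V$ together with a single subvariety $W'$ can simultaneously contain $\phi(V')$ and satisfy the uniform fibre bound $\dim\phi^{-1}(w)\cap V' \le \dim V - \dim W'$ for all $w\in W'$. So the correct interpretation of ``Zariski open'' in Lemma~\ref{dimlem-quali} really does have to be the plain topological one, and your version is the honest one; but you should then flag that the phrase ``$O_M(1)$-Zariski open'' in Lemma~\ref{dimlem}, as defined, cannot be taken literally either, and that what both lemmas and their applications actually require (e.g.\ in the bound $\dim S < d_- + d_+$ in the proof of Lemma~\ref{ind-lp}) needs to be traced more carefully when the source variety is reducible.
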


\begin{proof}  Let $d$ be the dimension of $V$.  We may view $V$ as the union of Zariski-open subsets of irreducible projective varieties $V_i$ of dimension at most $d$.  We may remove all varieties $V_i$ of dimension strictly less than $d$ by placing them in the exceptional set $V \backslash V'$.  We may then work with just a single $V_i$, as the general case follows by taking unions.  Thus, $V$ is now a Zariski-dense subvariety of an irreducible projective variety $\overline{V}$.

We may replace $W$ by the Zariski closure of $\phi(V)$.  When one does so, $\phi$ becomes a dominant map; also we now have $\dim(W) \leq \dim(V)$.  But now, a standard result in classical algebraic geometry (see \cite[\S I.6.3]{shafarevich}) shows that there exist Zariski-open subsets $V', W'$ of $V, W$ respectively, such that $\phi$ restricts to a
dominant map $\phi': V' \to W'$, whose fibres $\{ v \in V': \phi(v)=w\}$ have dimension $\dim(V)-\dim(W)$.  The claims then follow.
\end{proof}

\emph{Proof of Lemma \ref{dimlem}.} This is deduced from Lemma \ref{dimlem-quali} by a very similar ultralimit argument to previous arguments, so we shall only give a sketch here.  If the first claim failed, then one can find a sequence of regular maps $\phi_\alpha: V_\alpha \to W_\alpha$ of uniformly bounded complexity, such that one cannot find a $\alpha$-Zariski open subset $V'_\alpha$ of $V_\alpha$ and a subvariety $W'_\alpha$ of dimension at most $\dim(V)$ with complexity at most $\alpha$ such that $\phi(V'_\alpha) \subset W'_\alpha$ and for every $w \in W'_\alpha$, the set  $\{ v \in V'_\alpha: \phi(v)=w \}$ is a constructible set of dimension at most $\dim(V_\alpha) - \dim(W'_\alpha)$ and complexity at most $\alpha$.  One then takes ultralimits of the $\phi_\alpha, V_\alpha, W_\alpha$ to create a regular map $\phi: V \to W$.  Applying Lemma \ref{dimlem-quali} and undoing the ultralimit (using Lemma \ref{dimcont}) we obtain the required contradiction.

The argument when the second claim fails is similar, except that now the $\phi_\alpha$ are also dominant maps, and $W'_\alpha$ is required to be $\alpha$-Zariski dense in $W$.  One then argues as before but also uses Lemma \ref{cont-dom} and Lemma \ref{cont-zarisk}.\hfill $\Box$\vspace{11pt}

In a similar spirit, Lemma \ref{slice-lem} follows by the usual ultralimit argument from the following qualitative lemma.

\begin{lemma}[Qualitative slicing lemma]\label{slice-lem-quali}   Let $V, W$ be varieties, and let $S$ be a subvariety of $V \times W$ of dimension strictly less than $\dim(V) + \dim(W)$.  Then for generic $v \in V$, the set $\{ w \in W: (v,w) \in S \}$ is a constructible set of dimension strictly less than $\dim(W)$.
\end{lemma}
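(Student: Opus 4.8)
The plan is to reduce the statement to the classical theorem on the dimension of the fibres of a morphism between irreducible varieties, exactly the ingredient already used in the proof of Lemma~\ref{dimlem-quali}. We may clearly assume that $V$ and $W$ are non-empty. First I would decompose $S$ into its finitely many irreducible components $S=S_1\cup\dots\cup S_m$; since for any point $v\in V$ one has $\{w\in W:(v,w)\in S\}=\bigcup_i\{w\in W:(v,w)\in S_i\}$ and the dimension of a finite union is the maximum of the dimensions, it suffices to produce, for each $i$, a subvariety $E_i\subseteq V$ with $\dim E_i<\dim V$ such that $\dim\{w\in W:(v,w)\in S_i\}<\dim W$ for every $v\in V\setminus E_i$. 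The exceptional set for $S$ is then $E:=E_1\cup\dots\cup E_m$, still a subvariety of $V$ of dimension $<\dim V$, so that $V\setminus E$ consists of generic points. Note also that $\dim S_i\le\dim S<\dim V+\dim W$ for every $i$.

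Fix $i$ and restrict the projection $\pi\colon V\times W\to V$ to $S_i$. Since $S_i$ is irreducible, the Zariski closure $W_i:=\overline{\pi(S_i)}$ is an irreducible subvariety of $V$ with $\dim W_i\le\dim S_i$, and $\pi|_{S_i}\colon S_i\to W_i$ is dominant. By the theorem on the dimension of fibres (see \cite[\S I.6.3]{shafarevich}), there is a non-empty Zariski-open $U_i\subseteq W_i$ such that $(\pi|_{S_i})^{-1}(v)$ is non-empty of dimension exactly $\dim S_i-\dim W_i$ for every $v\in U_i$; identifying $\{v\}\times W$ with $W$, this fibre is precisely $\{w\in W:(v,w)\in S_i\}$. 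Now distinguish two cases. If $\dim W_i<\dim V$, set $E_i:=W_i$; then for $v\in V\setminus W_i$ the fibre is empty, hence of dimension $<\dim W$. If $\dim W_i=\dim V$, set $E_i:=W_i\setminus U_i$, which is a proper closed subset of the irreducible variety $W_i$ and hence a subvariety of $V$ with $\dim E_i<\dim W_i=\dim V$; then for $v\in V\setminus E_i$ the fibre of $S_i$ over $v$ is either empty or has dimension $\dim S_i-\dim W_i=\dim S_i-\dim V<\dim W$, using $\dim S_i<\dim V+\dim W$. In either case $\dim E_i<\dim V$, as needed.

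It remains to observe that, for $v\in V\setminus E$, the set $\{w\in W:(v,w)\in S\}$ equals $S\cap(\{v\}\times W)$, a Zariski-closed subset of the variety $\{v\}\times W\cong W$, and is therefore a subvariety of $W$ and in particular constructible; by the previous paragraph it has dimension strictly less than $\dim W$. I do not expect any serious obstacle here: the one point requiring care is keeping the exceptional set of dimension strictly below $\dim V$ when $V$ or $S$ is reducible, which is precisely what the component decomposition and the case split on whether $\dim W_i=\dim V$ are designed to handle.
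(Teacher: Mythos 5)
Your proof is correct and, at its core, uses the same ingredients as the paper's argument: apply the classical fibre-dimension theorem to the projection $\pi\colon S\to V$, and split into cases according to whether the (closure of the) image has full dimension in $V$ or not. The paper compresses this into a one-line appeal to Lemma~\ref{dimlem-quali} applied to $\pi$, followed by the same dichotomy.

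Where your write-up differs, and to its advantage, is in the explicit decomposition of $S$ into \emph{all} of its irreducible components $S_1,\dots,S_m$, including the lower-dimensional ones. If one tries to deduce the slicing lemma purely by quoting Lemma~\ref{dimlem-quali} for $\pi\colon S\to V$, one only gets control of the fibres of the restriction $\pi|_{S'}$ to a Zariski-open $S'\subset S$ in which the low-dimensional components have been discarded; but the set $\{w\in W:(v,w)\in S\}$ one actually needs to bound is the fibre of $\pi$ over \emph{all} of $S$, so the contribution of $S\setminus S'$ must still be accounted for (by an implicit recursion on $\dim S$, say). Your version avoids this by treating each $S_i$ on its own, constructing a genuine exceptional set $E_i\subsetneq V$ for it, and then taking the union; the observation that the fibre of $S_i$ over any $v\notin \overline{\pi(S_i)}$ is empty is what makes the lower-dimensional components harmless. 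Thus your argument is essentially the paper's argument, but with the details that the paper leaves implicit filled in cleanly. The one cosmetic remark is that $S\cap(\{v\}\times W)$ is in general only locally closed (since $S$ is quasiprojective rather than closed), hence a quasiprojective subvariety of $W$; this is exactly what the statement requires, so no harm is done.
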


\begin{proof}  The projection map $\pi: S \to V$ that maps $(v,w)$ to $v$ is a regular map.  Let $V'$ be the effective image of this map, given by Lemma \ref{dimlem-quali}.  The claim then follows from that lemma (dividing into two cases, depending on whether the effective image $V'$ has dimension equal to that of $V$, or has strictly lower dimension).
\end{proof}

Finally, we discuss algebraic groups.  From Definition \ref{algdef} and Lemma \ref{ultralim} we have the following lemma.

\begin{lemma}[Ultralimits of algebraic groups]\label{ultralim-gp}  For $\alpha$ sufficiently close to $\alpha_\infty$, let $G_\alpha$ be an algebraic group of uniformly bounded complexity over an algebraically closed field $k_\alpha$.  Then the ultraproduct $G := \prod_{\alpha \to \alpha_\infty} G_\alpha$ is an algebraic group also.  Conversely, every algebraic group over an ultraproduct $G = \prod_{\alpha \to \alpha_\infty} k_\alpha$ of algebraically closed fields is an ultraproduct of algebraic groups $G_\alpha$ of bounded complexity over $k_\alpha$.
\end{lemma}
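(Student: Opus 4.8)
The plan is to deduce both halves from Lemma \ref{ultralim} together with an application of the theorem of {\L}os to the (finitely many) first-order axioms defining a group. For the forward direction, suppose $G_\alpha$ is an algebraic group over $k_\alpha$ of complexity at most $M$ for all $\alpha$ sufficiently close to $\alpha_\infty$, with multiplication $m_\alpha \colon G_\alpha \times G_\alpha \to G_\alpha$, inversion $i_\alpha \colon G_\alpha \to G_\alpha$ (both regular of complexity $O_M(1)$, by Definition \ref{algdef}), and identity $e_\alpha \in G_\alpha$. By the discussion preceding Lemma \ref{dimcont}, the ultraproduct $G := \prod_{\alpha \to \alpha_\infty} G_\alpha$ is a variety over $k := \prod_{\alpha \to \alpha_\infty} k_\alpha$ of complexity $O_M(1)$, and the canonical identifications of $\A^n$ and $\P^n$ with their ultraproducts give $G \times G \equiv \prod_{\alpha \to \alpha_\infty}(G_\alpha \times G_\alpha)$. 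Lemma \ref{ultralim} then shows that the ultralimits $m := \lim_{\alpha \to \alpha_\infty} m_\alpha$ and $i := \lim_{\alpha \to \alpha_\infty} i_\alpha$ are regular maps (of complexity $O_M(1)$) on $G$. It remains to verify that $(G, m, i)$, with distinguished element $e := \lim_{\alpha \to \alpha_\infty} e_\alpha$, is a group; but associativity, the identity laws $m(e, x) = x = m(x, e)$, and the inverse laws $m(x, i(x)) = e = m(i(x), x)$ are first-order sentences in the structure $(G_\alpha; m_\alpha, i_\alpha, e_\alpha)$ regarded as a bare set with two operations and a constant, each valid for $\alpha$ close to $\alpha_\infty$, hence valid in the ultraproduct by {\L}os's theorem. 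Thus $G$ is an algebraic group.

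For the converse, let $G = \prod_{\alpha \to \alpha_\infty} G_\alpha$ be an algebraic group over $k = \prod_{\alpha \to \alpha_\infty} k_\alpha$, with regular multiplication $m$, inversion $i$, and identity $e$, where each $G_\alpha$ is a variety over $k_\alpha$ of complexity uniformly bounded for $\alpha$ close to $\alpha_\infty$. Using $G \times G \equiv \prod_{\alpha \to \alpha_\infty}(G_\alpha \times G_\alpha)$ and the converse half of Lemma \ref{ultralim}, we may write $m = \lim_{\alpha \to \alpha_\infty} m_\alpha$ and $i = \lim_{\alpha \to \alpha_\infty} i_\alpha$ for regular maps $m_\alpha \colon G_\alpha \times G_\alpha \to G_\alpha$, $i_\alpha \colon G_\alpha \to G_\alpha$ of uniformly bounded complexity, and $e = \lim_{\alpha \to \alpha_\infty} e_\alpha$ with $e_\alpha \in G_\alpha$. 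Running the same argument in reverse: the group axioms hold in $(G; m, i, e)$, so, being first-order (some with the parameter $e_\alpha$), they hold in $(G_\alpha; m_\alpha, i_\alpha, e_\alpha)$ for $\alpha$ sufficiently close to $\alpha_\infty$ by the ultrafilter stability of first-order statements. Hence each such $G_\alpha$ is a group with regular operations of bounded complexity, i.e., an algebraic group of uniformly bounded complexity, as claimed.

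I expect the only real subtlety, and hence the main obstacle, to be the bookkeeping in the {\L}os step: one must phrase the group axioms purely first-order over the set $G_\alpha$ equipped with its operations and identity constant (rather than over the ambient projective space or field), track the identity as a genuine ultralimit $\lim_{\alpha \to \alpha_\infty} e_\alpha$ instead of assuming any uniform description of it, and appeal to stability of first-order statements \emph{with parameters} under the ultrafilter. None of this is difficult, but it is the step most prone to slips if handled carelessly.
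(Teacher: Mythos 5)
Your proposal is correct and matches the paper's (implicit) argument: the paper offers no separate proof, merely observing that the lemma follows ``From Definition \ref{algdef} and Lemma \ref{ultralim},'' and your write-up simply fills in exactly that reasoning, with the group axioms transported across the ultraproduct by {\L}os's theorem as described in the appendix's preamble.
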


It is clear that if $G$ is an algebraic group, then the centralisers $Z(a)$ and conjugacy classes $a^G$ are constructible sets, and that the normaliser and centraliser of an algebraic group $H$ is another algebraic group; and so Lemma \ref{centralem} follows from Lemma \ref{ultralim-gp} and the usual ultralimit argument.

Now we turn to Lemma \ref{zark}.  This will be deduced from the following qualitative fact.

\begin{lemma}[Zariski closure of groups]\label{zclg}  Let $A \subset G$ be any subgroup of an algebraic group $G$.  Then the Zariski closure $\overline{A}$ of $A$ is an algebraic subgroup of $G$.
\end{lemma}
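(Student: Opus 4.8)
The plan is to run the classical argument that the Zariski closure of a subgroup is a subgroup, phrased entirely in terms of the fact that left translations, right translations and inversion are regular automorphisms of $G$, hence Zariski homeomorphisms. Since the statement is purely qualitative (no complexity bounds are being asserted here), none of the quantitative machinery of Section \ref{larsen-pink-sec} is needed; only the elementary topological principle that a continuous map $f$ satisfies $f(\overline{S}) \subseteq \overline{f(S)}$, with equality when $f$ is a homeomorphism.

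First I would record the building blocks: for each $a \in G$ the maps $L_a \colon g \mapsto ag$ and $R_a \colon g \mapsto ga$, as well as $\iota \colon g \mapsto g^{-1}$, are regular bijections of $G$ with regular inverses (this is immediate from Definition \ref{algdef}), hence Zariski homeomorphisms of $G$; consequently each of them carries the Zariski closure of a subset to the Zariski closure of its image. Next I would establish a one-sided invariance of $\overline{A}$: because $A$ is a subgroup, $aA = A$ for every $a \in A$, so $a\overline{A} = L_a(\overline{A}) = \overline{L_a(A)} = \overline{aA} = \overline{A}$. The same reasoning with $\iota$ gives $\overline{A}^{-1} = \iota(\overline{A}) = \overline{A^{-1}} = \overline{A}$, and clearly $\id \in A \subseteq \overline{A}$.

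Then I would bootstrap the one-sided invariance to two-sided closure under multiplication. The previous step says $A \cdot \overline{A} = \overline{A}$, so for any fixed $h \in \overline{A}$ we have $Ah \subseteq \overline{A}$; applying the homeomorphism $R_h$ yields $\overline{A}\,h = R_h(\overline{A}) = \overline{R_h(A)} = \overline{Ah} \subseteq \overline{A}$. Since $h \in \overline{A}$ was arbitrary, $\overline{A}\cdot\overline{A} \subseteq \overline{A}$. Combining this with the inversion invariance and $\id \in \overline{A}$, we conclude that $\overline{A}$ is a subgroup of $G$; being a closed subvariety of $G$ on which the ambient group operations $\cdot$ and $()^{-1}$ restrict to regular maps (classically, the restriction of a regular map to a subvariety is regular), it is an algebraic subgroup, as claimed.

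I do not expect a genuine obstacle here: the only point requiring mild care is the passage from the one-sided identity $A\cdot\overline{A} = \overline{A}$ to the two-sided inclusion $\overline{A}\cdot\overline{A} \subseteq \overline{A}$, and even this can be bypassed entirely by arguing in one stroke — using that $\overline{A \times A} = \overline{A} \times \overline{A}$ inside $G \times G$ and that the multiplication map $\mu \colon G \times G \to G$ is regular (hence continuous), so that $\overline{A}\cdot\overline{A} = \mu(\overline{A}\times\overline{A}) = \mu(\overline{A\times A}) \subseteq \overline{\mu(A\times A)} = \overline{A\cdot A} = \overline{A}$. Either route completes the proof.
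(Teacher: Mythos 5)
Your proof is correct and follows essentially the same route as the paper: both establish $a\overline{A} = \overline{A}$ for $a \in A$ via the observation that left-translation by $a$ is a Zariski homeomorphism preserving $A$. The only divergence is in upgrading to two-sided closure under multiplication — the paper observes that $\{g \in G : g\overline{A} = \overline{A}\}$ is a closed subvariety containing $A$ and hence containing $\overline{A}$, while you instead apply right-translation by $h \in \overline{A}$ as a second homeomorphism — but these are interchangeable bookkeeping choices.
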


\begin{proof} $A$ contains the identity and is closed with respect to inverses, so $\overline{A}$ is also.  For any $a \in A$, the map $g \mapsto ag$ is a regular isomorphism that preserves $A$, and thus must also preserve $\overline{A}$; thus $a \overline{A} = \overline{A}$.  On the other hand, the set $\{ g \in G: g \overline{A} = \overline{A} \}$ is a variety (being the intersection of a family of closed subvarieties in $G$) that contains $A$, and so must contain $\overline{A}$.  Thus $\overline{A}$ is closed under multiplication, and the claim follows.
\end{proof}

\emph{Proof of Lemma \ref{zark}.} Suppose the claim failed.  Then there exists an $M$ and a sequence of algebraic groups $G_\alpha$ of complexity at most $M$, subvarieties $V_\alpha$ of complexity at most $M$, and symmetric subsets $A_\alpha$ containing $\id$ in $G_\alpha$ with $A^\alpha_\alpha \subset V_\alpha$, such that $A_\alpha$ is not contained in any algebraic subgroup of $G_\alpha$ contained in $V_\alpha$ of complexity at most $\alpha$.   Let $G, A, V$ be the ultraproducts of $G_\alpha, A_\alpha, V_\alpha$ respectively.  By construction, $A$ contains the identity, is closed with respect to inverses, and $A^m \subset V$ for any finite $m$.  In particular, the group $\langle A \rangle$ generated by $A$ is contained in $V$.  By Lemma \ref{zclg}, the Zariski closure $H := \overline{\langle A \rangle}$ of this group is an algebraic subgroup of $G$ contained in $V$.  Undoing the ultraproduct using Lemma \ref{ultralim-gp}, we can express $H$ as the ultraproduct of algebraic subgroups $H_\alpha$ of $G_\alpha$ which contain $A$ and are of bounded complexity for $\alpha$ sufficiently close to $\alpha_\infty$.  But this contradicts the construction of $A_\alpha$.\hfill $\Box$\vspace{11pt}

By using very similar ultralimit arguments to those already employed, we see that Lemma \ref{escape} follows from the next result, which might be called a qualitative product-conjugation phenomenon for subvarieties.

\begin{lemma}\label{escape-2}  Let $G$ be an algebraic group.  Let $V, W$ be algebraic varieties in $G$ such that
$$ 0 < \dim(V), \dim(W) < \dim(G).$$
Then at least one of the following holds:
\begin{itemize}
\item \textup{($G$ is not almost simple)} $G$ contains a proper normal algebraic subgroup $H$ of positive dimension.
\item \textup{(Escape)} For generic $a \in G$, there exists an essential product $(V^a \cdot W)^{\ess}$ of $V^a := a^{-1} Va$ and $W$ of dimension strictly greater than $\dim(W)$.
\end{itemize}
\end{lemma}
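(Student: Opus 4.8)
\emph{Proof plan.}
The strategy is a dichotomy. Either the essential product of $V^a$ and $W$ gains dimension for generic $a$, which is conclusion (ii); or else, for all $a$ in a Zariski-dense set, the left translates $v\overline{W}$ ($v\in V^a$) must all coincide, which forces $V^a$ to lie inside a single coset of a proper closed stabiliser subgroup $\Lambda$, and then the normal core of $\Lambda$ turns out to be a proper normal algebraic subgroup of positive dimension, which is conclusion (i).

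First I would reduce to the case where $V$ and $W$ are irreducible, by replacing them with top-dimensional irreducible components $V'\subseteq V$ and $W_0\subseteq W$. This preserves the hypotheses $0<\dim(V'),\dim(W_0)<\dim(G)$; conclusion (i) for $(V',W_0)$ is literally conclusion (i) for $(V,W)$; and for conclusion (ii), note that $V'^a\times W_0$ is a \emph{top-dimensional} irreducible component of $V^a\times W$, so any Zariski-dense open $V''\subseteq V^a\times W$ provided by Lemma~\ref{dimlem-quali} meets it in a dense open subset, whence (since the product map restricted to $V'^a\times W_0$ is dominant onto $\overline{V'^aW_0}$) any essential range of $\cdot\colon V^a\times W\to G$ contains $\overline{V'^aW_0}$, and so has dimension $\geq\dim\overline{V'^aW_0}$; if conclusion (ii) holds for $(V',W_0)$ this is $>\dim W_0=\dim W$. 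So I assume henceforth that $V,W$ are irreducible.

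Now $\dim(V^aW)\geq\dim W$ always, because $V^aW$ contains the isomorphic translate $vW$ of $W$ for any $v\in V^a$. Since $V^a\times W$ is irreducible, the product map is dominant onto $\overline{V^aW}$, so by Lemma~\ref{dimlem-quali} every essential product of $V^a$ and $W$ has dimension exactly $\dim(V^aW)$; thus if $\dim(V^aW)>\dim W$ for generic $a$ we are in conclusion (ii). Otherwise the set $U:=\{a\in G:\dim(V^aW)=\dim W\}$ is Zariski-dense. Fix $a\in U$; write $e:=\dim W$, $Z:=\overline{W}$ (irreducible, closed, of dimension $e$), and $\Lambda:=\{g\in G:gZ=Z\}$, a closed subgroup of $G$. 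Then $\overline{V^aW}$ is irreducible of dimension $e$, and for each $v\in V^a$ the set $vZ=\overline{vW}$ is irreducible, closed, of dimension $e$, and contained in $\overline{V^aW}$, hence equals $\overline{V^aW}$. So $vZ=v'Z$ for all $v,v'\in V^a$, i.e.\ $V^a$ lies in a single left coset of $\Lambda$. Fixing a base point $v_*\in V$ and using $a^{-1}v_*a\in V^a$, this coset must be $a^{-1}v_*a\,\Lambda$, so $a^{-1}Va\subseteq a^{-1}v_*a\,\Lambda$, which rearranges to $v_*^{-1}V\subseteq a\Lambda a^{-1}$ for every $a\in U$. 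Consequently, for each fixed $x\in v_*^{-1}V$ the locus $\{a\in G:a^{-1}xa\in\Lambda\}$ is Zariski-closed (it is the preimage of the closed set $\Lambda$ under the regular conjugation map $a\mapsto a^{-1}xa$) and contains the Zariski-dense set $U$, hence equals $G$; therefore $v_*^{-1}V\subseteq N:=\bigcap_{a\in G}a\Lambda a^{-1}$. This $N$ is an intersection of closed subgroups, so a closed algebraic subgroup, and it is normal in $G$; moreover $\dim N\geq\dim(v_*^{-1}V)=\dim V>0$; and $N$ is proper since $N\subseteq\Lambda\subsetneq G$ (if $\Lambda=G$ then $Z$ would be left-translation invariant, forcing $G=Gz\subseteq Z$ for $z\in Z$, contradicting $\dim Z=\dim W<\dim G$). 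This is conclusion (i).

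The only delicate point is the bookkeeping in the reduction to irreducible $V,W$: an essential product of $V^a$ and $W$ is not unique once $V^a\times W$ is reducible, and one must check it still inherits the dimension bound, which is exactly where it matters that $V'^a\times W_0$ sits inside $V^a\times W$ as a top-dimensional component. Everything else is a routine application of the dimension theory of Lemma~\ref{dimlem-quali} together with the elementary fact that the left-translation stabiliser of a proper subvariety is a proper subgroup.
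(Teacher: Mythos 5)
Your proof is correct, and it follows the same overall dichotomy as the paper's (if the essential product of $V^a$ and $W$ does not gain dimension, then $V^a$ must lie in a single coset of the stabiliser of $\overline{W}$, and varying $a$ forces a positive-dimensional normal subgroup), but the execution of the second branch is genuinely different and cleaner in two places. First, to show that all translates $v\overline{W}$ with $v\in V^a$ coincide, you exploit that $\overline{V^aW}$ is irreducible of dimension $e=\dim W$ and contains each irreducible $e$-dimensional $v\overline{W}$, hence equals each of them; the paper instead shows the set $\Sigma$ of translates is finite (else the essential product would have dimension $>\dim W$) and then argues by constructibility that \emph{generic} $v$ land in a common translate, which is a weaker and slightly more laborious conclusion. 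Second, to produce the normal subgroup you take the full normal core $N=\bigcap_{a\in G}a\Lambda a^{-1}$ and show directly that $v_*^{-1}V\subseteq N$ via a closedness-of-conjugation argument (the locus $\{a: a^{-1}xa\in\Lambda\}$ is closed and contains the dense set $U$, hence is all of $G$); the paper instead defines ``good sets'' as finite intersections of conjugates of the stabiliser, takes a minimal one by the Noetherian property, and deduces normality from its invariance under generic conjugation. Your route avoids the somewhat delicate bookkeeping of the good-set minimality argument at the mild cost of needing to note that the a priori infinite intersection $N$ is a closed subgroup (which is automatic, by Noetherianity) and that it has positive dimension because it contains a translate of $V$. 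Your reduction to the irreducible case is also slightly more carefully justified than the paper's one-line remark, and your observation that for irreducible $V,W$ every essential product of $V^a$ and $W$ has dimension exactly $\dim(V^aW)$ is a useful clarification that the paper leaves implicit.
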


\begin{proof}  Suppose that $G$ is \emph{almost simple}, that is to say it contains no normal subgroups $H$ of positive dimension.

We can express $V$ and $W$ as unions of Zariski-open subsets of irreducible subvarieties of $G$.  By restricting to just one such top-dimensional subvariety for both $V$ and $W$, we may assume that the Zariski closures $\overline{V}, \overline{W}$ of $V, W$ in $G$ are irreducible.

Suppose that $V^a$ is such that $(V^a \cdot W)^{\ess}$ has dimension less than or equal to $\dim(W)$.
As $W$ is irreducible, the translates $v^a W$ for $v^a \in V^a$ are also irreducible with dimension $\dim(W)$.  Suppose that the set $\Sigma := \{ v^a W: v^a \in V^a \}$ of such translates is infinite.  Then any essential product $(V^a \cdot W)^{\ess}$ must contain a Zariski-dense subset of all but finitely many of such translates, and so has dimension strictly greater than $\dim(W)$, contradiction.  Thus $\Sigma$ is finite.  For each $W'$ in $\Sigma$, the set $\{ v^a \in V^a: v^a W = W' \}$ is easily seen\footnote{Here we use the Noetherian condition that there does not exist an infinite descending chain of closed varieties.} to be a constructible subset of $V^a$, and the union is all of $V^a$.  Thus there exists $W' \in \Sigma$ such that $\{ v^a \in V^a: v^a W = W' \}$ is Zariski-dense in $V$, i.e. $v^a W = W'$ for generic $v \in V$.  In particular, this implies that $(v^{-1} v')^a W = W$ for generic $v, v' \in V$.

The set $S := \{ g \in G: gW = W \}$ is an intersection of closed varieties $W w^{-1}$, $w \in W$ in $G$ and is thus closed (by the Noetherian condition); it is clearly a group, and is thus an algebraic subgroup of $G$.  Since each of the varieties $W w^{-1}$ has dimension $\dim(W)$, $S$ has dimension at most $\dim(W)$.

From the previous discussion we see that if $(V^a \cdot W)^{\ess}$ has dimension less than or equal to $\dim(W)$, then $(v^{-1} v')^a \in S$ for generic $v, v' \in V$.  In particular $S$ cannot have dimension $0$.

If the escape property fails, we conclude that $(v^{-1} v')^a \in S$ for generic $v, v' \in V$ and $a \in G$.  Call an $a$ \emph{good} if we have $v^{-1} v' \in S^{a^{-1}}$ for generic $v, v' \in V$. Then a generic element of $G$ is good.
Define a \emph{good set} to be an intersection of finitely many $S^{a^{-1}}$, such that $a$ is good.  By construction, all good sets $H$ are closed algebraic subgroups of $G$, and they have the property that $v^{-1} v' \in H$ for generic $v, v' \in V$.

By the Noetherian condition, there exists a good set $H$ which is minimal with respect to set inclusion.  Observe that $H^a$ is also good for generic $a$, and hence by minimality $H = H^a$ for generic $a$.  In other words, the normaliser $N(H)$ is Zariski dense in $G$.  But $N(H)$ is also a closed variety, and so $N(H) = G$, thus $H$ is normal in $G$.  By the almost simplicity of $G$, this forces $H$ to be zero-dimensional.  But this contradicts the fact that $v^{-1} v' \in H$ for generic $v, v' \in V$, since $\dim(V) > 0$, and the claim follows.
\end{proof}



\section{A lemma of Malcev and Platonov}\label{solv-app}

In this short appendix we sketch a brief proof of Lemma \ref{virtually-solvable}, whose statement we recall now.

\begin{virtually-solvable-repeat}
Any virtually solvable subgroup of $\GL_n(\C)$ \textup{(}that is to say, any subgroup of $\GL_n(\C)$ with a solvable subgroup of finite index\textup{)} contains a normal subgroup of index $O_n(1)$ which is simultaneously triangularisable, hence solvable.
\end{virtually-solvable-repeat}
\begin{proof}
Let $G$ be the Zariski closure of $\Gamma$. By inducting on $n$ we may assume that $G$, together with all of its subgroups of index at most $n!$, acts irreducibly on $\C^n$. Let $G_0$ be the (solvable) connected component of the identity of $G$ and let $U$ be its unipotent radical. If $U$ is non-trivial, the subspace $V$ of $\C^n$ consisting of all $x$ for which $ux = x$ for all $u \in U$ must be non-trivial. Since $G$ normalizes $U$ we have $g^{-1}ugx = x$ for all $g \in G$, which implies that $gV \subseteq V$. This, of course, contradicts the irreducibility assumption. It follows that $U$ is trivial and thus $G_0$ is a torus which, after a change of basis, is diagonal. Note that $G$ is contained in the normalizer $N(G_0)$ of $G_0$ inside $\GL_n(\C)$. The centralizer $Z(G_0)$ of $G_0$ in $\GL_n(\C)$ is easy to compute explicitly: it is a block diagonal subgroup. Moreover $N(G_0)$ permutes these diagonal blocks, and hence $|N(G_0)/Z(G_0)| \leq n!$. Passing to a subgroup of $G$ of index at most $n!$ we may thus assume that $G$ centralizes $G_0$. By assumption $G$ acts irreducibly: this forces $G_0$ to be trivial as $G$ fixes the weight spaces of $G_0$. It follows that $G_0$ is trivial and hence that $G$ is finite. An appeal to Jordan's theorem, which states that $G$ then has an \emph{abelian}, hence simultaneously diagonalisable, subgroup of index $O_n(1)$, concludes the argument.
\end{proof}

\section{Generating finite index subgroups}

The purpose of this appendix is to establish the following well-known lemma (which appears for instance as \cite[Lemma 4.8]{shalom} or as \cite[Lemma 6.7]{breuillard-tits}), required towards the end of the proof of Theorem \ref{mainthm3}.

\begin{lemma}[Bounded index subgroups have bounded length generators]\label{finite-index} Let $\Gamma$ be a group, and $S$ a finite symmetric generating set containing the identity. Let $\Gamma_0$ a subgroup of index $d$. Then $S^{2d-1}$ contains a generating set for $\Gamma_0$.
\end{lemma}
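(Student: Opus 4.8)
The plan is to use a Schreier-type argument based on coset representatives. First I would fix a set of left coset representatives $g_1 = \id, g_2, \ldots, g_d$ for $\Gamma_0$ in $\Gamma$, chosen so that each $g_i$ has minimal word length in $S$; call this length $\ell(g_i)$. The key observation is that since $S$ generates $\Gamma$ and contains the identity, the balls $S^r$ exhaust $\Gamma$, and one can choose the representatives to lie in $S^{d-1}$: indeed, if the set of cosets reachable by representatives in $S^r$ is not all of $\Gamma_0 \backslash \Gamma$, then multiplying by $S$ must reach a new coset (because $S$ generates and the cosets not yet reached form a nonempty complement), so each successive power of $S$ picks up at least one new coset, and after $d-1$ steps all $d$ cosets are covered. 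Hence we may assume $\ell(g_i) \leq d-1$ for all $i$.

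Next, for each $i$ and each $s \in S$, the element $g_i s$ lies in some coset $\Gamma_0 g_{j}$ (using a suitable consistent left/right convention), so $g_i s g_{j}^{-1} \in \Gamma_0$ — or more precisely, with the convention matched to left cosets, one gets elements of the form $g_{\sigma(i,s)}^{-1} g_i s \in \Gamma_0$. The standard Schreier lemma asserts that the collection of all such elements generates $\Gamma_0$. Each of these generators is a product of a representative (length $\leq d-1$), one element of $S$, and the inverse of a representative (length $\leq d-1$), hence lies in $S^{(d-1) + 1 + (d-1)} = S^{2d-1}$. This gives the claim.

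The one point requiring a little care — and the main obstacle, such as it is — is bookkeeping the left-versus-right coset conventions so that the Schreier generators genuinely land in $\Gamma_0$ and the word-length count comes out as $2d-1$ rather than something slightly larger; since $S = S^{-1}$ and $\id \in S$, the powers $S^r$ are nested and symmetric, which makes the inverses of representatives also lie in $S^{d-1}$ and smooths over any asymmetry between the two conventions. Once the conventions are fixed the rest is immediate from the classical Schreier generating set theorem, so I would simply cite or briefly reprove that, and conclude.
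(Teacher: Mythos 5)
Your proposal is correct and follows essentially the same route as the paper's proof: both establish that coset representatives can be taken in $S^{d-1}$ (via the observation that each successive power of $S$ must pick up a new coset until all $d$ are covered), and both then form Schreier generators of the form $\gamma_j^{-1} s \gamma_i \in S^{2d-1}$ and invoke the Schreier generating set lemma. Your choice $g_1 = \id$ is a slight simplification of the paper, which allows $\gamma_1$ to be any representative in $\Gamma_0 \cap S^{d-1}$ and then throws $\gamma_1$ into the generating set for safety.
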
	

\begin{proof} By the pigeonhole principle there is some $n \leq d-1$ such that $|S^n \Gamma_0/\Gamma_0| = |S^{n+1}\Gamma_0/\Gamma_0|$. For this $n$ we have $S^n\Gamma_0 = S^{n+1}\Gamma_0$. In particular, $S^n\Gamma_0$ is invariant under left multiplication by $S$ and is therefore the whole of $\Gamma$. It follows, of course, that $S^{d-1}\Gamma_0 = \Gamma$. Let $\gamma_i \in S^{d-1}$ for $i=1,\ldots,d$ be a full set of coset representatives of $\Gamma_0$ inside $\Gamma$ with $\gamma_1 \in \Gamma_0$. For each $i = 1,\dots, d$ and each $s \in S$ let $j = j(i,s) \in \{1,\dots, d\}$ be that index for which $s\gamma_i\Gamma_0=\gamma_{j}\Gamma_0$. Then it is straightforward to verify that the elements $\gamma_{j}^{-1}s\gamma_i\in \Gamma_0 \cap S^{2d-1}$ together with $\gamma_1$ form a generating set of $\Gamma_0$; just rewrite any word with letters in $S$ in terms of those elements.
\end{proof}

\setcounter{tocdepth}{1}

\end{document}